\let\oldtocsection=\tocsection
\let\oldtocsubsection=\tocsubsection
\let\oldtocsubsubsection=\tocsubsubsection
\renewcommand{\tocsection}[2]{\hspace{0em}\oldtocsection{#1}{#2}}
\renewcommand{\tocsubsection}[2]{\hspace{1em}\oldtocsubsection{#1}{#2}}
\renewcommand{\tocsubsubsection}[2]{\hspace{2em}\oldtocsubsubsection{#1}{#2}}
\setlist[1]{labelindent=\parindent}
\setlist[enumerate, 1]{label = \textnormal{(\arabic*)}, ref = \arabic*}
\setlist[enumerate, 2]{label = \textnormal{(\alph*)}, ref = \alph*}
\setlist[enumerate, 3]{label = \textnormal{(\roman*)}, ref = \roman*}
\numberwithin{equation}{section}
\let\emptyset\varnothing
\declaretheorem[sibling=equation, style=definition]{definition}
\declaretheorem[sibling=equation]{theorem}
\declaretheorem[sibling=equation, name=Theorem-Definition]{theorem-definition}
\declaretheorem[sibling=equation]{proposition}
\declaretheorem[sibling=equation]{lemma}
\declaretheorem[sibling=equation]{corollary}
\declaretheorem[sibling=equation, style=remark]{remark}
\declaretheorem[sibling=equation, style=remark]{example}
\DeclareMathOperator{\GL}{GL}
\DeclareMathOperator{\SO}{SO}
\DeclareMathOperator{\Hom}{Hom}
\DeclareMathOperator{\Aut}{Aut}
\DeclareMathOperator{\End}{End}
\DeclareMathOperator{\Mat}{Mat}
\DeclareMathOperator{\id}{id}
\DeclareMathOperator{\im}{im}
\DeclareMathOperator{\Pic}{Pic}
\DeclareMathOperator{\Def}{Def}
\DeclareMathOperator{\tr}{tr}
\DeclareMathOperator{\pr}{pr}
\DeclareMathOperator{\Gal}{Gal}
\DeclareMathOperator{\reg}{reg}
\DeclareMathOperator{\Alb}{Alb}
\DeclareMathOperator{\tor}{tor}
\DeclareMathOperator{\MT}{MT}
\newcommand{\ZZ}{\mathbb{Z}}
\newcommand{\QQ}{\mathbb{Q}}
\newcommand{\CC}{\mathbb{C}}
\newcommand{\PP}{\mathbb{P}}
\newcommand{\HS}{\mathsf{HS}}
\newcommand{\Kum}{\text{Kum}}
\begin{document}
	\title[]{Isotrivial Lagrangian fibrations of compact hyper-kähler manifolds}
	
	\author[Y.-J. Kim]{Yoon-Joo Kim}
	  \address{Department of Mathematics, Columbia University, New York, NY 10027}
 \email{{\tt yk3029@columbia.edu}}

	\author[R. Laza]{Radu Laza}
	 \address{Department of Mathematics, Stony Brook University, New York, NY 11794}
 \email{{\tt radu.laza@stonybrook.edu}}
	\author[O. Martin]{Olivier Martin}
	  \address{Instituto de Matemática Pura e Aplicada, Rio de Janeiro, RJ 22460-320}
 \email{{\tt olivier.martin@impa.br}}


\maketitle

\begin{abstract}
	This article initiates the study of isotrivial Lagrangian fibrations of compact hyper-K\"ahler manifolds. We present four foundational results that extend well-known facts about isotrivial elliptic fibrations of K3 surfaces. First, we prove that smooth fibers of an isotrivial Lagrangian fibration are isogenous to a power of an elliptic curve.  Second, we exhibit a dichotomy between two types of isotrivial Lagrangian fibrations, which we call A and B. Third, we give a classification result for type A isotrivial Lagrangian fibrations. Namely, if a type A isotrivial Lagrangian fibration admits a rational section, then it is birational to one of two straightforward examples of isotrivial fibrations of hyper-K\"ahler manifolds of $\text{K3}^{[n]}$-type and $\text{Kum}_n$-type. Finally, we prove that a genericity assumption on the smooth fiber of an isotrivial Lagrangian fibration ensures that the fibration is of type A.
\end{abstract}

\section{Introduction}

Hyper-K\"ahler manifolds are one of the three building blocks of compact K\"ahler manifolds with trivial canonical bundles. Due to their rich geometry and relation to other objects of interest, they have become the subject of intense study over the past decades. However, only few examples of such manifolds are known, namely two deformation types, $\text{K3}^{[n]}$ and $\textup{Kum}_n$, in each even dimension $2n\geq 4$, and two sporadic deformation types, OG6 and OG10, in dimensions 6 and 10. A central question in the field is whether there are finitely many deformation types in each dimension.\\

One possible approach to this problem is via Lagrangian fibrations, as in \cite{saw16}. Specifically, it is expected that all deformations types of hyper-K\"ahler manifolds admit specializations to Lagrangian-fibered hyper-K\"ahler manifolds, thereby possibly reducing this boundedness problem to the study of families of abelian varieties. Bakker \cite{Bakker}, improving earlier results of van Geemen--Voisin \cite{vgee-voi16}, showed that Lagrangian fibrations are either isotrivial or have maximal variation. Since isotrivial Lagrangian fibrations are relatively tractable they are particularly interesting objects of study in the search for new deformation types. \\

In this article, we establish some foundational results on isotrivial Lagrangian fibrations of projective hyper-K\"ahler manifolds, which we call {\it hyper-K\"ahler manifolds} for simplicity\footnote{The degenerate twistor deformation of \cite{ver15, bog-deev-ver22} can be used to deform an isotrivial Lagrangian fibration into a projective isotrivial Lagrangian fibration. We may thus assume that $X$ is projective without loss of generality.}. We start by recalling the classification of isotrivial elliptic fibrations of projective K3 surfaces $\pi : S \longrightarrow \PP^1$ in terms of the monodromy group $G$ of the local system $R^1 (\pi_0)_* \underline \ZZ$, where $\pi_0:S_0 \longrightarrow \PP^1\setminus \Sigma$ is the smooth isotrivial fibration obtained by restricting $\pi$ to the complement of the singular fibers. The following theorem (see \Cref{K3appendix}) is well-known to experts but we were unable to find a suitable reference (see \cite[\S1.4.2]{FM94}, \cite{sawon14}, and \cite{moonen18}  for related statements):

\begin{theorem}\label{main:K3}
	Let $\pi : S \longrightarrow \PP^1$ be an isotrivial fibration of a projective K3 surface with smooth fiber $F$ and let $G:=\im\left(\pi_1(\mathbb{P}^1\setminus \Sigma) \longrightarrow \GL(H^1(F,\ZZ)\right))$ be its global monodromy group. Then $G$ is a subgroup of the automorphism group of the elliptic curve $F$. In particular, $G$ is either $\mu_2,\mu_3,\mu_4,\text{ or }\mu_6$. Furthermore:
	\begin{enumerate}
		\item If $G = \mu_2$, there is an abelian surface $A$ and a fibration $p: A\longrightarrow E$ to an elliptic curve $E$ with fiber $F$, such that $\pi$ is birational to the quotient of $p$ by the involution $[-1]$. In particular, $S$ is a Kummer surface.
		\item If $\pi$ admits a section, there is a curve $C$ with a $G$-action such that $\pi$ is birational to the quotient of the  projection $F \times C \longrightarrow C$ by a diagonal $G$-action. The possible curves $C$ for $G = \mu_3, \mu_4$, and $\mu_6$ are classified in tables \ref{table:classification for mu_3}, \ref{table:classification for mu_4}, and \ref{table:classification for mu_6} respectively.
	\end{enumerate}
\end{theorem}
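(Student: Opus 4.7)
The plan is to reduce to a quotient construction after a Galois base change. First, replace $\pi$ by its relative Jacobian fibration $j : J \to \PP^1$; this is another isotrivial elliptic fibration, carrying a canonical zero section and having the same monodromy $G$ on $R^1 \ZZ$ as $\pi$. Because $j$ is marked at the origin, the monodromy factors through the group $\Aut(F, 0)$ of automorphisms of the elliptic curve fixing the identity, which is cyclic of order $2$, $4$, or $6$. Hence $G$ is one of $\mu_2, \mu_3, \mu_4, \mu_6$ or trivial. The trivial case is excluded because a K3 admits no multiple fibers (forced by $K_S = 0$), so trivial monodromy would make $\pi$ a smooth fibration and force $S \cong F \times \PP^1$, contradicting $\chi(\mathcal{O}_S) = 2$.

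For (2), assume $\pi$ admits a section, so $\pi \cong J$. Let $c : C \to \PP^1$ be the cyclic $G$-Galois cover totally ramified over the branch locus of the monodromy. Base-changing, $\pi \times_{\PP^1} C \to C$ has trivial monodromy and a section, hence is the constant family $F \times C \to C$; the deck transformations lift to a diagonal $G$-action on $F \times C$, and dividing by $G$ recovers $\pi$ birationally. The possible curves $C$ are enumerated by imposing the K3 conditions $h^{1,0}(S) = 0$ and $h^{2,0}(S) = 1$, which translate into eigenspace conditions on $H^{1,0}(C)$, namely $\dim H^{1,0}(C)^{\chi_F^{-1}} = 1$, where $\chi_F$ is the $G$-character on $H^{1,0}(F)$. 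Evaluating these eigenspaces via the Chevalley--Weil formula in terms of the ramification data of $c$ produces the finite lists tabulated in the theorem.

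For (1), with $G = \mu_2$, the double cover $c : C \to \PP^1$ associated to the monodromy is ramified over the $I_0^*$ fibers of $\pi$; the count $\chi_{\mathrm{top}}(S) = 24$ and $\chi_{\mathrm{top}}(I_0^*) = 6$ forces exactly four such fibers, so by Riemann--Hurwitz $C = E$ is an elliptic curve. The base change $A := S \times_{\PP^1} E$ is then a smooth isotrivial elliptic fibration with fiber $F$ over $E$ with trivial monodromy, hence an abelian surface with the sought fibration $p : A \to E$. The deck involution acts as $-1$ on fibers and as a fixed-point involution on $E$; after choosing the origin of $A$ compatibly it coincides with $[-1]$, so $\pi$ is birational to the Kummer surface $A/[-1]$.

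The main obstacle is the case-by-case enumeration underlying (2): one must combine the Chevalley--Weil character count, Riemann--Hurwitz, and compatibility with Kodaira's list of admissible isotrivial singular fibers to determine exactly which cyclic $G$-covers $C \to \PP^1$ give rise to a K3 quotient, carried out separately for $G = \mu_3$, $\mu_4$, and $\mu_6$.
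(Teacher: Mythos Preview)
Your overall strategy matches the paper's, but there are two points worth flagging.

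\textbf{Part (1), the fiber product.} The assertion that $A := S \times_{\PP^1} E$ is smooth is false. Over each of the four branch points of $E \to \PP^1$ the fiber of $\pi$ is of type $\mathrm{I}_0^*$, and the fiber product of an $\mathrm{I}_0^*$ degeneration with a ramified double cover is singular (indeed not even normal along the multiplicity-two component). What is true is that after normalizing and passing to the relative minimal model one obtains a smooth surface with all fibers smooth. The paper instead contracts the sixteen $(-2)$-curves in the $\mathrm{I}_0^*$ fibers to get a singular symplectic surface $\bar S$, observes that $\bar S$ is locally $(F \times \Delta)/\mu_2$, and then glues local equivariant compactifications of $Z \to U$ to build the smooth surface $\tilde Z \to E$ directly. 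Either route works, but some care is needed; your one-line claim skips the essential step. You should also justify that the resulting surface is abelian rather than, say, bielliptic---the paper does this via the canonical bundle formula and $q \neq 0$.

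\textbf{Part (2), the enumeration.} Here your route genuinely differs from the paper's. The paper classifies by imposing $\chi_{\mathrm{top}}(S) = 24$ on the Kodaira fiber types compatible with each local monodromy order, then reads off the ramification data of $C \to \PP^1$ and computes $g(C)$ by Riemann--Hurwitz. You propose instead to impose $\dim H^{1,0}(C)^{\chi_F^{-1}} = 1$ via Chevalley--Weil. The two conditions are equivalent (for a minimal elliptic surface over $\PP^1$ with section, $p_g = 1 \Leftrightarrow \chi(\mathcal O_S) = 2 \Leftrightarrow \chi_{\mathrm{top}} = 24$), so both enumerations land on the same tables. The paper's approach is more direct: the bookkeeping is a single linear Diophantine equation in the number of fibers of each type. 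Your Chevalley--Weil approach requires tracking the local character at each branch point and is slightly heavier, though it has the advantage of making the Hodge-theoretic constraint explicit. One inaccuracy: for $G = \mu_4$ or $\mu_6$ the cover $C \to \PP^1$ is not ``totally ramified'' at every branch point---points with local monodromy of order strictly dividing $|G|$ give only partial ramification, and this affects both the genus computation and the character count.
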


Note that every elliptic fibration of a K3 surface is a Tate--Shafarevich twist of an elliptic fibration with a section (see, e.g., \cite[\S 11]{huy:k3}). \Cref{main:K3} thus provides a complete answer to the classification of isotrivial fibrations for K3 surfaces up to Tate--Shafarevich twist.

\begin{remark} \label{j}
	Note that if the elliptic curve $F$ has $j$-invariant different from $0$ or $1728$ then we are automatically in case (1). In case (2) the K3 surface $S/\PP^1$ admits an automorphism of order $m \ge 3$ such that the fibration $S\longrightarrow \PP^1$ is invariant (in particular $m \in \{3,4,6\}$). All examples of fibrations arising in (2) are obtained by considering degree $d$ cyclic covers $C\longrightarrow \PP^1$ branched in appropriate configurations of points (e.g. generically $12$ points for $m=6$) and performing a birational modification of the diagonal quotient
	$ (E\times C)/\mu_m \longrightarrow C/ \mu_m = \mathbb{P}^1$,
	where $E$ is the elliptic curve with $\mu_m$ automorphisms. 
	This construction is closely related to the work of Deligne--Mostow \cite{del-mos} (see also \cite{moonen18}) and, from the perspective of flat metrics on $S^2$ with conifold singularities, to the work of Thurston \cite{thu}. The Hodge theoretic underpinning of such examples is provided by van Geemen's theory of half-twists \cite{vanG}.
\end{remark}

Moving to higher dimension, the goal of this article is to provide an analogous classification of isotrivial Lagrangian fibrations of hyper-K\"ahler manifolds. Along the way we will discover several analogies with the K3 case. The first of these is the surprising fact that smooth fibers of an isotrivial Lagrangian fibration $\pi: X\longrightarrow B$ are isogenous to the power of an elliptic curve. Throughout the article we assume that the base $B$ of the Lagrangian fibration is normal (if $B$ is smooth, then it is isomorphic to $\PP^n$ by \cite{Hwang}).

\begin{theorem} \label{main:fiber}
	Let $\pi: X \longrightarrow B$ be an isotrivial Lagrangian fibration of a hyper-K\"ahler $2n$-fold with smooth fiber $F$. Then there is an elliptic curve $E$ such that $F$ is isogenous to $E^n$.
\end{theorem}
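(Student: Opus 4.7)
The plan is to combine Matsushita's identification of the direct image sheaves $R^q\pi_*\mathcal{O}_X$ with the exterior powers $\Omega^q_{\PP^n}$ of the base, together with the representation theory of the finite monodromy group, in order to force the isogeny decomposition of $F$ to be $E^n$.

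Since $\pi$ is isotrivial, set $U := B \setminus \Sigma$ and observe that the image $G := \im\bigl(\pi_1(U) \to \GL(H^1(F, \ZZ))\bigr)$ lies in the polarized automorphism group $\Aut(F, \theta)$, hence is finite. Let $\rho: G \to \GL(T_{F, 0})$ denote the derivative of $G \hookrightarrow \Aut(F)$ at the origin. I would write out the decomposition up to isogeny $F \sim \prod_i A_i^{m_i}$ with pairwise non-isogenous simple $A_i$; the decomposition is $G$-stable, yielding a $G$-equivariant splitting $T_{F, 0} = \bigoplus_i T_{A_i, 0}^{m_i}$.

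The main input will be Matsushita's theorem $R^q \pi_* \mathcal{O}_X \cong \Omega^q_{\PP^n}$. Restricted to $U$, the fiber at $b$ is $H^q(F_b, \mathcal{O}_{F_b}) \cong \Lambda^q T_{F_b, 0}$, so this yields an isomorphism of holomorphic vector bundles $\Omega^q_{\PP^n}|_U \cong \Lambda^q \mathcal{T}_U$, where $\mathcal{T}_U$ is the flat bundle on $U$ with fiber $T_{F, 0}$ and monodromy $\rho$. Parallel sections of $\Lambda^q \mathcal{T}_U$ correspond to $G$-invariants in $\Lambda^q T_{F, 0}$; since each such invariant is fixed by every local monodromy element around the components of $\Sigma$, one verifies that parallel sections extend to global sections of $\Omega^q_{\PP^n}$. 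Combined with $H^0(\PP^n, \Omega^q_{\PP^n}) = 0$ for $1 \le q \le n$, this produces the key vanishing
\[
(\Lambda^q T_{F, 0})^G = 0, \qquad 1 \le q \le n.
\]

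The heart of the proof is to deduce $F \sim E^n$ from this vanishing together with $G \subset \Aut(F, \theta)$. My plan is as follows. First, if $F$ had more than one isogeny type, the induced $G$-decomposition $T_{F, 0} = \bigoplus_i T_{A_i, 0}^{m_i}$ would have at least two summands, and a careful comparison of the characters by which $G$ acts on the top exterior powers of the summands (which are tightly constrained because $G$ preserves a principal polarization on each isotypic component) should produce a nonzero $G$-invariant in some $\Lambda^q T_{F, 0}$, contradicting the vanishing above. Second, once $F \sim A^m$ is isotypic, the refined action of $\End^0(A) \otimes \CC$ on $T_{A, 0}^{\oplus m}$ combined with the same vanishing should force $\dim A = 1$, yielding $A = E$ and $F \sim E^n$. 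The main obstacle will be the representation-theoretic bookkeeping in this last step, particularly tracking how the characters of $G$ on top exterior powers of distinct isotypic summands can interact, and ruling out a simple factor of dimension $\ge 2$ with a specific CM structure; I expect this to require additional input from the structure theory of polarized automorphism groups of abelian varieties.
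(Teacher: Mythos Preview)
Your plan has a genuine gap at its core. The vanishing $(\Lambda^q T_{F,0})^G = 0$ for $1 \le q \le n$ that you extract from Matsushita's theorem is too weak to force $F \sim E^n$, and the representation-theoretic reduction you sketch cannot succeed without further input.

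Here is a concrete obstruction to each half of your Step~2. For isotypicity, take $n=2$, $F = E_1 \times E_2$ with $E_1, E_2$ non-isogenous elliptic curves having CM by $\QQ(\zeta_3)$ and $\QQ(i)$, and $G = \mu_3 \times \mu_4 \subset \Aut_0(E_1)\times\Aut_0(E_2) \subset \Aut_0(F)$. Then $T_{F,0}$ is a sum of two one-dimensional characters $\chi_1,\chi_2$ of orders $3$ and $4$, and $\chi_1,\chi_2,\chi_1\chi_2$ are all nontrivial, so your vanishing holds; yet $F$ is not isotypic. Your proposed ``comparison of characters on top exterior powers'' does not help: $\det T_{E_1,0}=\chi_1$ and $\det T_{E_2,0}=\chi_2$ are simply different nontrivial characters. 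For the second half, take $n=3$, $F=A$ a simple abelian threefold with CM by $\QQ(\zeta_7)$ and CM-type $\{\sigma_1,\sigma_2,\sigma_3\}$, and $G = \mu_7 \subset \Aut_0(A)$. Every nonempty partial sum of $\{1,2,3\}$ is nonzero mod $7$, so $(\Lambda^q T_{A,0})^G=0$ for $q=1,2,3$; yet $A$ is simple of dimension $3$. Thus neither conclusion follows from your vanishing together with $G\subset\Aut(F,\theta)$.

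What the paper does differently is twofold. First, it uses the stronger Shen--Yin--Voisin result that $\dim H^k(F,\QQ)^G = 1$ for every even $k$, not just the vanishing of the $(q,0)$ part. In particular $\dim H^{1,1}(F)^G = 1$, which immediately kills both counterexamples above (they have $\dim H^{1,1}(F)^G\ge 2$ and $3$ respectively) and shows that $V = H^{1,0}(F)$ is an irreducible $G$-module; isotypicity of $F$ then follows from a bimodule argument. Second, and this is the step your outline lacks entirely, to prove the simple factor is an elliptic curve the paper constructs a \emph{second} rational Lagrangian fibration $\pi': X \dashrightarrow \bar F/G$ with $\bar F$ isogenous to $F$, shows that $\bar F/G$ is $\QQ$-Fano because it is dominated by a hyper-K\"ahler manifold, and deduces that the quotient map $\bar F \to \bar F/G$ must ramify along a divisor. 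That ramification divisor has nontrivial stabilizer and is therefore a translate of an abelian subvariety of codimension one, which supplies the elliptic factor. There is no representation-theoretic substitute for this geometric step in your proposal.
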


Consider an isotrivial Lagrangian fibration $\pi : X \longrightarrow B$ with general fiber $F$. Let $B_0 = B \setminus \Sigma$ be the complement of the discriminant locus $\Sigma$ and $X_0$ be the base change of $X$ to $B_0$. We write $\pi_0:X_0\longrightarrow B_0$ for the the restriction of $\pi$ to $X_0$, which is a (smooth) isotrivial fibration. As in the K3 case, we consider the monodromy group $G=\im\left(\pi_1(B_0)\longrightarrow \GL(H^1(F,\ZZ)\right)$ and the minimal Galois covering $U \longrightarrow B_0$ that trivializes the local system $R^1(\pi_0)_* \underline \ZZ$. The base change $Z\longrightarrow U$ of $\pi_0$ by $U \longrightarrow B_0$ may not be a trivial family but it plays an important role.
For this reason, we refer to $Z\longrightarrow U$ as the {\it intermediate trivialization}. We will argue that the
geometry of the isotrivial fibration $X/B$ is controlled to a large extent by the Kodaira dimension (of a smooth compactification) of the intermediate base $U$. One of our main results states that this Kodaira dimension is either $0$ or maximal, and we call the fibration $\pi$ of \emph{type A} or \emph{type B} accordingly.

\begin{theorem} \label{main:typeAB}
	Let $\pi: X\longrightarrow B $ be an isotrivial Lagrangian fibration of a hyper-K\"ahler $2n$-fold and $U$ be the base of the intermediate trivialization.
Then either
\begin{enumerate}
	\item[$\textup{(A)}$] $U$ compactifies to an abelian variety isogenous to a power of an elliptic curve, or
	\item[$\textup{(B)}$] $U$ compactifies to a smooth projective variety of general type.
\end{enumerate}
\end{theorem}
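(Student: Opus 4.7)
The plan is to analyze a smooth projective compactification $\bar U$ of $U$ by exploiting its two complementary structures: as a finite Galois cover of $B \cong \PP^n$ with group $G$ branched along a subdivisor of $\Sigma$, and as the base of a proper family $\bar Z \to \bar U$ extending the intermediate trivialization $Z \to U$.

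As a first step, I would invoke \Cref{main:fiber} to identify $F$ with $E^n$ up to isogeny, so that $G \hookrightarrow \Aut(F)$ is a finite subgroup. Since translations on an abelian variety act trivially on $H^1$ while $G$ acts faithfully on $H^1(F,\ZZ)$ by its definition, $G$ intersects trivially with the translation subgroup of $\Aut(F)$. Consequently, after isogeny, $G$ may be viewed as a finite subgroup of $\GL_n(\End(E))$ acting linearly on $E^n$ and fixing the origin. This algebraic rigidity of $G$ is the primary input for the dichotomy.

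To establish the dichotomy, I would study the Albanese morphism $\alpha : \bar U \to \Alb(\bar U)$ together with the family $\bar Z \to \bar U$. Using that the fibers of $\bar Z$ are isogenous to $E^n$ and that $R^1(\pi_0)_*\ZZ$ is trivialized on $U$, one should be able to produce a natural rational map from $\bar U$ to an abelian variety $A$ isogenous to a power of $E$. If this map is dominant (equivalently, $\dim \alpha(\bar U) = n$), rigidity of abelian varieties forces $\bar U$ to be an abelian variety isogenous to $E^n$, placing us in case (A); otherwise, the branching of $\bar U \to \PP^n$ combined with the Kodaira-dimension formula for ramified covers should force $K_{\bar U}$ to be big, placing us in case (B).

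The main obstacle I anticipate is ruling out intermediate Kodaira dimensions $0 < \kappa(\bar U) < n$. This appears to require going beyond the bare Galois cover structure and using the Lagrangian structure on $X$ (equivalently, the extra geometry of $\bar Z \to \bar U$) in an essential way. One possible strategy is to analyze the Iitaka fibration $\bar U \dashrightarrow V$, which is canonical and hence $G$-equivariant, and to descend it along the $G$-quotient to a rational fibration on $B = \PP^n$. Combined with the Lagrangian constraints on the descended fibration and the trivialization of the local system on $U$, this should force either $\kappa(\bar U) = n$ or $\kappa(\bar U) = 0$. In the latter case, identifying $\bar U$ as an abelian variety via Kawamata-type characterizations of compact Kähler manifolds with $\kappa = 0$ and trivial Albanese dimension equal to $\dim \bar U$, together with the constraint that $\bar U$ carries a family isogenous to powers of $E$, will complete the argument. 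Executing this descent rigorously and carrying out the abelian identification in the $\kappa = 0$ case constitutes the technical heart of the proof.
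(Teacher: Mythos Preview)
Your outline identifies the right ingredients---the Albanese map of a compactification $\bar U$, the Iitaka fibration, and a Kawamata-type characterization in the $\kappa=0$ case---but it misses the key mechanism that actually forces the dichotomy, and the case split you propose (``dominant $\Rightarrow$ abelian, non-dominant $\Rightarrow$ big'') is not correct as stated.

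The paper's argument hinges on two points you do not have. First, a Hodge-theoretic calculation using $h^{2,0}(X)=1$ (\Cref{lem:symplectic form generates G-invariant}, \Cref{lem:isotypic G-component in U}) produces a $G$-equivariant morphism $U\to A$ to a $G$-abelian torsor that is \emph{simple as a $G$-abelian variety} and of dimension a positive multiple of $n$; moreover this morphism is generically finite onto its image (\Cref{prop:simple Albanese morphism}). This $A$ is not the full Albanese of $\bar U$, and its construction uses the symplectic form in an essential way---your proposal to ``produce a natural rational map from $\bar U$ to an abelian variety isogenous to a power of $E$'' via the trivialization of the local system does not give simplicity and does not explain why the map is generically finite. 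Second, one applies an \emph{equivariant} Ueno--Kawamata theorem (\Cref{thm:equivariant Kawamata}) to a finite model $Y\to A$: the Iitaka fibration of $Y$ is governed by a $G$-abelian subvariety $A_1\subset A$, with $\kappa(Y)=n-\dim A_1$. Because $A$ is $G$-simple, $A_1$ is either $0$ or $A$, and this is precisely what rules out intermediate Kodaira dimensions. Your suggestion to descend the Iitaka fibration to $\PP^n$ and invoke unspecified ``Lagrangian constraints'' does not yield this conclusion; without $G$-simplicity of the target there is no obstruction to $0<\kappa(\bar U)<n$.

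Finally, your proposed case split is backwards: the morphism $U\to A$ is \emph{always} generically finite on its image, regardless of type. The dichotomy is not ``dominant vs.\ not'' but ``$A_1=A$ vs.\ $A_1=0$'' in the equivariant Kawamata decomposition. In the $\kappa=0$ case one then invokes Kawamata--Viehweg to see $Y\to A$ is \'etale (so $Y$ is an abelian torsor), and the elliptic-curve-power statement follows from \Cref{cor:G-invariant cohomology for base} together with $Y/G$ being $\QQ$-Fano---not from \Cref{main:fiber} directly.
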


For any $n > 1$ there are two straightforward examples of isotrivial Lagrangian fibrations, which we call {\it 
 $\textup{K3}^{[n]}$-fibration and $\textup{Kum}_{n}$-fibration} (see \Cref{S:ex-trivial} for a detailed discussion).

\begin{definition}[$\text{K3}^{[n]}$ and $\text{Kum}_{n}$ fibrations] \label{def:K3n Kumn fibrations}
	An isotrivial Lagrangian fibration of a hyper-K\"ahler $2n$-fold $\pi : X\longrightarrow B$ is a
	\begin{itemize}
	\item[\emph{i})] \emph{$\text{K3}^{[n]}$-fibration} if there is an isotrivial elliptic K3 surface $f: S\longrightarrow \mathbb{P}^1$ and a commutative diagram
	\[
	\begin{tikzcd}[column sep=normal]
	X \ar[r, "\cong", "\phi"'] \ar[d,"\pi"]& S^{[n]} \ar[d,"f^{[n]}"]\\
	B\ar[r,"\cong"]& \PP^n.
	\end{tikzcd}
	\]
	
	\item[\emph{ii})] \emph{$\text{Kum}_{n}$-fibration} if there is an abelian surface $A$, an elliptic curve $E'$, a surjective homomorphism $f: A\longrightarrow E'$, and a commutative diagram  \[
	\begin{tikzcd}[column sep=normal]
	X \ar[r, "\phi"', "\cong"] \ar[d,"\pi"]& \textup{Kum}_n(A) \ar[d,"g"]\\
	B\ar[r,"\cong"]& \mathbb{P}^n,
	\end{tikzcd}
	\]
	where the morphism $g$ is induced by $f$.
	\end{itemize}
	
	We say that $\pi$ is \emph{birational to a $\text{K3}^{[n]}$-fibration} (resp. \emph{birational to a $\text{Kum}_n$-fibration}) if the isomorphism $\phi$ appearing in the above definitions is replaced by a birational map.
\end{definition}

Our next result states that a type A fibration with a rational section is birational to either a $\textup{K3}^{[n]}$-fibration or a $\textup{Kum}_{n}$-fibration. In particular, the base $B$ is isomorphic to $\PP^n$.

\begin{theorem}\label{main:classification}
	Let $\pi: X\longrightarrow B $ be a type A isotrivial Lagrangian fibration of a hyper-K\"ahler $2n$-fold. Assume that $\pi$ has a rational section. Then $\pi$ is birational to either a $\textup{K3}^{[n]}$-fibration or a $\textup{Kum}_n$-fibration.
\end{theorem}

In analogy with \Cref{j} in the K3 case, a certain genericity assumption on the fiber $F$ ensures that $\pi$ is of type A. A fiber of $\pi$ is called \emph{non-multiple} if its cycle class is of the form $\sum_j a_j [Y_j]$ with $\gcd \{ a_j \} = 1$, where $Y_j$ denotes the irreducible components of the support of the fiber.

\begin{theorem} \label{main:genimpliestypeA}
	Assume that the general singular fibers of $\pi$ are non-multiple. If the endomorphism field of the elliptic factor $E$ of $F$ is different from $\QQ(\sqrt{-1})$ or $\QQ(\sqrt{-3})$ then $\pi$ is of type A. Moreover, if $B \cong \PP^n$ then the degree of the reduced discriminant divisor $\Sigma_{\textup{red}}$ is $2(n+1)$.
\end{theorem}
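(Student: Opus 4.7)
The plan is to show first that each local monodromy $\sigma_i$ around a component $\Sigma_i$ of the discriminant is forced to be an involution (equivalently, $e_i = 2$), and then to deduce both conclusions from a Riemann--Hurwitz computation on the intermediate cover $\overline\pi : \overline U \to \PP^n$ together with the canonical bundle formula for the Lagrangian fibration. By \Cref{main:fiber}, the smooth fiber $F$ is isogenous to $E^n$, so each $\sigma \in G$ lies in $\End(F) \otimes \QQ = M_n(K)$ with $K := \End(E) \otimes \QQ$. The hypothesis on $K$ is exactly the statement that the only roots of unity in $K$ are $\pm 1$.

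The key geometric step is a \emph{symplectic-reflection} property of each $\sigma_i$. I would work near a general point of $\Sigma_i$: choosing local coordinates $(z, w)$ on $B$ with $\Sigma_i = \{z = 0\}$, the cyclic $\mu_{e_i}$-cover $\tilde z \mapsto z = \tilde z^{e_i}$ trivializes the family to $Z_{\mathrm{loc}} \cong F \times \Delta_{\tilde z} \times W$, with $\mu_{e_i}$ acting diagonally as $(\sigma_i,\, \zeta_{e_i}\cdot,\, \id)$. The holomorphic symplectic form on $X$ lifts to a $\mu_{e_i}$-invariant symplectic form on $Z_{\mathrm{loc}}$; since fibers are Lagrangian, this form is a perfect pairing $T_F \times T_{\Delta_{\tilde z} \times W} \to \CC$ at each point. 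Equivariance under the $\mu_{e_i}$-action, whose restriction to $T_{\Delta_{\tilde z} \times W}$ has eigenvalues $(\zeta_{e_i}, 1, \ldots, 1)$, forces $\sigma_i$ to act on $T_F$ with the contragredient eigenvalues $(\zeta_{e_i}^{-1}, 1, \ldots, 1)$; thus $\sigma_i$ is a symplectic reflection. The characteristic polynomial $(x - \zeta_{e_i}^{-1})(x - 1)^{n-1}$ has coefficients in $K$, so $\zeta_{e_i}^{-1} \in K$, hence $\zeta_{e_i} \in \{\pm 1\}$. The non-multiple assumption rules out $\zeta_{e_i} = 1$ (which would correspond to a multiple fiber), so $\zeta_{e_i} = -1$ and $e_i = 2$.

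With $e_i = 2$ for every $i$, Riemann--Hurwitz yields $R = \tfrac{1}{2}\overline\pi^*\Sigma_{\mathrm{red}}$ and hence
\[
K_{\overline U} \,=\, \overline\pi^* \! \left( -(n+1) H + \tfrac{1}{2} \Sigma_{\mathrm{red}} \right).
\]
I would then compute $\pi_*\omega_{X/\PP^n}$ in two ways. First, $K_X \equiv 0$ combined with the Lagrangian identification $\omega_{X/\PP^n} \cong \pi^*\omega_{\PP^n}^{-1}$ gives $\pi_*\omega_{X/\PP^n} \cong \mathcal{O}_{\PP^n}(n+1)$, of degree $n+1$. Second, a Kodaira--Fujino-type canonical bundle formula for the isotrivial fibration identifies $c_1(\pi_*\omega_{X/\PP^n})$ with $\tfrac{1}{2}\Sigma_{\mathrm{red}}$, each order-$2$ symplectic reflection contributing $\tfrac{1}{2}$ per unit of $\deg \Sigma_i$ (directly analogous to the $\tfrac{6}{12}$ contribution of an $I_0^*$ fiber in the elliptic K3 case). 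Combining the two gives $\deg \Sigma_{\mathrm{red}} = 2(n+1)$, hence $K_{\overline U} \equiv 0$, and \Cref{main:typeAB} rules out the general-type alternative, placing $\pi$ in type~A.

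The main obstacle will be the identification $c_1(\pi_*\omega_{X/\PP^n}) = \tfrac{1}{2}\Sigma_{\mathrm{red}}$: while the symplectic-reflection description of each $\sigma_i$ makes the local singular-fiber structure transparent (essentially ``$I_0^*$ in one direction, smooth in the remaining $n-1$''), extracting the $\tfrac{1}{2}$-coefficient globally requires either a direct residue/period computation near each $\Sigma_i$ or an application of the canonical bundle formula for fibrations with abelian fibers in the isotrivial setting.
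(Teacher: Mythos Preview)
Your overall architecture matches the paper's: prove that every local monodromy has order $2$, then use Hurwitz for $q:Y\to B$ together with the canonical bundle formula for $\pi$ to get both $K_Y\sim_{\QQ}0$ (hence type~A) and $\deg\Sigma_{\mathrm{red}}=2(n+1)$. Two comments on the details.

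\textbf{The symplectic-reflection step.} Your pairing argument is the same content as the paper's \Cref{prop: local monodromy}, just packaged differently. The paper reaches ``$\sigma_i$ acts on $H^{1,0}(F)$ with eigenvalues $(\zeta^{-1},1,\dots,1)$'' by invoking Hwang--Oguiso's $n-1$ pointwise independent $G_i$-invariant vertical vector fields on a transversal slice; this is exactly dual to your statement that the symplectic form pairs $T_F$ with $T_{\tilde\Delta\times W}$ equivariantly. Your deduction $\zeta^{-1}\in K$ from the characteristic polynomial of $\sigma_i\in M_n(K)$ is correct; the paper phrases the same conclusion as ``$G_i$ acts faithfully on an elliptic curve $E_i$ isogenous to $E$'' (\Cref{cor:local monodromy index}). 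One small technical point: to run your pairing argument at the fixed point $(0_F,0,w)$ you need the pulled-back $2$-form to extend holomorphically across $\tilde z=0$; this is not entirely automatic since $F\times\tilde V$ is only \emph{birational} to the base change of $X_V$, which is why the paper routes the argument through the Hwang--Oguiso vector fields on $X$ itself.

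\textbf{The $\tfrac12$-coefficient.} This is exactly where the paper's approach diverges from what you sketch, and it resolves your ``main obstacle''. The paper does not compute $c_1(\pi_*\omega_{X/\PP^n})$ directly. Instead it uses the canonical bundle formula in the form $K_X\sim_{\QQ}\pi^*(K_B+D_\pi)$ with $D_\pi=\sum_i(1-c_i)D_i$ and $c_i=\operatorname{lct}_{D_i}(X,X_{D_i})$; the vanishing of the moduli part is the isotrivial input from Druel--Bianco (\Cref{thm:canonical bundle formula}). The coefficient $c_i$ is then computed by a local structure theorem (\Cref{prop:local structure of pi}): after the non-multiple hypothesis one proves that $X_V$ is an \'etale quotient of $A_i\times S_i\times\Delta^{n-1}$ with $S_i\to\Delta$ the relative minimal model of $(E_i\times\tilde\Delta)/\mu_{m_i}$, so $c_i=\operatorname{lct}(S_i,S_0)$ reduces to the Kodaira table. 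For $m_i=2$ the fiber is $I_0^*$ and $c_i=\tfrac12$. This gives $D_\pi=\tfrac12\Sigma_{\mathrm{red}}$ on the nose, hence $\deg\Sigma_{\mathrm{red}}=2(n+1)$, and simultaneously $D_q=D_\pi$ so $K_Y\sim_{\QQ}0$ via Hurwitz (this equivalence is \Cref{prop:equivalent characterization of type A}). Your residue/period idea would eventually land in the same place, but framing it via log canonical thresholds and the \'etale reduction to an elliptic surface is what makes the computation clean.
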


We do not know examples of Lagrangian fibrations of hyper-K\"ahler manifolds with a multiple general singular fiber. For example, if $\pi$ admits a rational section then the general singular fibers are not multiple. Note also that the degree of the reduced discriminant divisor of any Lagrangian fibration is at least $n+2$ by \cite[Rmk 1.11]{park22}.

\begin{remark}
	One can consider the endomorphism algebra $\textup{End}_{\text{HS}}(H^2(X,\mathbb{Q})_{\mathrm{tr}})$ of the transcendental cohomology, which is either a totally real field or a CM field by \cite{Zarhin}. One says that $X$ has {\it weak CM} by a CM field $K$ (e.g. $\QQ(\sqrt{-d})$) if $K\subseteq \End_{\textup{HS}}(H^2_{\tr}(X,\QQ))$. \Cref{main:genimpliestypeA} (as well as \Cref{j}) essentially says that if $X$ does not have weak CM by $\QQ(\sqrt{-1})$ or $\QQ(\sqrt{-3})$, then $\pi: X\longrightarrow B$ is of type A. Note that K3-type Hodge structures with weak CM by a CM field $K$ are parameterized by Shimura subvarieties of ball quotient type (of dimension at most half the dimension of the period domain). See \cite{dol-kon07}.
\end{remark}

An interesting question is whether every deformation class of hyper-K\"ahler manifolds contains a member with an isotrivial Lagrangian fibration. We are not able to answer this question for the exceptional OG6 and OG10 types. However, our results above show that if such isotrivial Lagrangian fibrations exist they have to be somewhat exotic;  they are either of type B, or they do not admit a rational section. For instance, in view of \Cref{main:classification} and \Cref{main:genimpliestypeA}, we conclude:

\begin{corollary}\label{cor-LSV}
	Let $\pi: X\longrightarrow B$ be deformation equivalent to the LSV fibration for OG10. If $\pi$ is isotrivial then its general fiber is isogenous to $E^5$, where $E$ is an elliptic curve with $j(E)\in\{0,1728\}$. Additionally, $H^2_{\tr}(X)$ has weak CM by $\QQ(\sqrt{-1})$ or $\QQ(\sqrt{-3})$.\qed
\end{corollary}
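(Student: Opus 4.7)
Since $X$ has dimension $10$, Theorem \ref{main:fiber} with $n=5$ immediately yields an elliptic curve $E$ with smooth fiber of $\pi$ isogenous to $E^5$. The remaining assertions will follow once I show that $\pi$ is of type B, since Theorem \ref{main:genimpliestypeA} and the remark after it will then finish the argument.

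To rule out type A, I first equip $\pi$ with a rational section. Every LSV fibration admits one by construction, and this rational section (together with its cohomology class) persists in each of the subsequent extensions of LSV cited in the paragraph preceding the corollary. Since $\pi$ lies in the same deformation-equivalence class of Lagrangian fibrations, the class of the rational section deforms with $\pi$, so $\pi$ itself admits a rational section. Moreover, Conjecture \ref{conj:base} is known for every hyper-K\"ahler manifold of OG10 type, and hence for any one birational to $X$, by the cumulative results cited in the remark following that conjecture. If $\pi$ were of type A, Theorem \ref{main:classification} would then force it to be birational to either a $\textup{K3}^{[5]}$-fibration or a $\textup{Kum}_5$-fibration, which would make $X$ of $\textup{K3}^{[5]}$- or $\textup{Kum}_5$-type since birational hyper-K\"ahler manifolds share a deformation type. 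This contradicts the OG10 hypothesis, so $\pi$ must be of type B.

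Since the rational section makes the general singular fibers non-multiple, the contrapositive of Theorem \ref{main:genimpliestypeA} forces the endomorphism field of $E$ to be $\QQ(\sqrt{-1})$ or $\QQ(\sqrt{-3})$, equivalently $j(E)\in\{0,1728\}$. The weak CM assertion for $H^2_{\tr}(X)$ follows identically from the strengthened formulation recorded in the remark immediately after Theorem \ref{main:genimpliestypeA}. The most delicate step in my plan is ensuring that $\pi$ inherits a rational section under deformation equivalence of Lagrangian fibrations; a back-up strategy, in case the section fails to specialize, is to pass to a Tate--Shafarevich twist $\pi'$ of $\pi$ which does admit a section, derive the $\textup{K3}^{[5]}$-/$\textup{Kum}_5$-birationality contradiction for $\pi'$, and then transfer the resulting conclusions about the fiber and the transcendental Hodge structure back to $\pi$ using that twisting preserves the smooth fiber and the endomorphism algebra of $H^2_{\tr}$.
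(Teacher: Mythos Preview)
Your argument is correct and follows the same route the paper has in mind: use the rational section coming from the LSV construction together with the validity of Conjecture~\ref{conj:base} for OG10 to feed into Theorem~\ref{main:classification}, rule out type~A by the deformation-type contradiction, and then invoke Theorem~\ref{main:genimpliestypeA} (and the remark after it) for the conclusions about $E$ and $H^2_{\tr}(X)$.

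One small comment on the step you flagged as delicate. Your phrasing ``the class of the rational section deforms with $\pi$, so $\pi$ itself admits a rational section'' is not quite how the paper justifies this: persistence of a cohomology class alone does not obviously produce an actual section. The paper's point, made in the paragraph preceding the corollary, is rather that the LSV construction together with its extensions to singular and degenerate cubics \emph{exhausts} the deformation class of the LSV fibration, and each of these explicit constructions comes equipped with a rational section. So the rational section is present because $\pi$ is itself one of these constructions, not because a section class has been deformed abstractly. Your back-up Tate--Shafarevich strategy is unnecessary here, though it is a reasonable instinct.
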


\begin{remark}
	In fact, a more thorough analysis of the OG10 case is possible. We plan to revisit the question of the existence of isotrivial Lagrangian fibrations for the OG10 deformation class in the near future.
\end{remark}

We close by mentioning two additional results regarding isotrivial Lagrangian fibrations. First, isotriviality is a closed condition in moduli for Lagrangian fibered hyper-K\"ahler manifolds (\Cref{prop:isotriviality is a closed condition}). Second, a general deformation of a Lagrangian fibration of a hyper-K\"ahler manifold $X$ with $b_2(X) \ge 7$ is of maximal variation. (Note that all known examples of hyper-K\"ahler manifolds satisfy $b_2 \ge 7$.)

\begin{theorem}\label{thm:isos-are-rare}
	Let $\pi : X \longrightarrow B$ be a Lagrangian fibration of a hyper-K\"ahler manifold with $b_2(X) \ge 7$. There exists a deformation of the Lagrangian fibration $\pi$ which is not isotrivial.
\end{theorem}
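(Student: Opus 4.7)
The plan is to combine Bakker's dichotomy with a codimension count on the deformation space of the pair $(X, L)$, where $L = \pi^*\mathcal{O}_{\PP^n}(1)$ is the Lagrangian class. By \cite{Bakker}, every Lagrangian fibration of a projective hyper-K\"ahler manifold is either isotrivial or of maximal variation, so it suffices to exhibit a small deformation $(X_t, L_t)$ for which the induced fibration $\pi_t : X_t \longrightarrow B_t$ is \emph{not} isotrivial. The parameter space $\Def(X, L) \subset \Def(X)$ is the Noether-Lefschetz divisor along which $L$ remains of type $(1,1)$; by local Torelli it is a smooth analytic germ of dimension $b_2(X) - 3$, which under the hypothesis $b_2(X) \geq 7$ is at least $4$.

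The core of the argument is to show that the isotrivial locus $\mathcal{I} \subset \Def(X, L)$ is contained in a countable union of proper analytic subvarieties; a Baire-type argument then yields the desired non-isotrivial deformation. For each $t \in \mathcal{I}$, \Cref{main:fiber} forces the smooth fiber $F_t$ of $\pi_t$ to be isogenous to $E_t^n$ for an elliptic curve $E_t$, which is a strong Mumford-Tate constraint on the period of $(X_t, L_t)$. In the K3 case (\Cref{main:K3}) the analogous constraint essentially cuts out the Kummer locus inside the moduli of elliptic K3 surfaces. In higher dimensions, stratifying $\mathcal{I}$ by the dichotomy of \Cref{main:typeAB} yields an analogous picture: in the spirit of \Cref{main:classification}, type A deformations are parameterized, up to isogeny, by moduli of lower-dimensional objects (such as isotrivial elliptic K3 surfaces or abelian surfaces admitting an elliptic quotient), which are at most three-dimensional, while type B deformations satisfy the even stronger rigidity that the intermediate base is of general type.

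The main obstacle I anticipate is making this codimension estimate rigorous and uniform across deformation types. For each known type, the classification results of the paper suffice to verify directly that $\mathcal{I}$ has positive codimension in $\Def(X, L)$; the extremal example is $\Kum_n$, where $b_2 = 7$ gives $\dim \Def(X, L) = 4$ while the isotrivial locus is cut out by the two-dimensional locus of abelian surfaces admitting a surjection to an elliptic curve. A cleaner deformation-type-independent argument would instead show directly that the Hodge-theoretic condition ``$F_t$ is isogenous to $E_t^n$'' carves out a proper Shimura-type subvariety of the period domain $\Omega_L$, in the spirit of the Mumford-Tate analysis following \Cref{main:genimpliestypeA}. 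The hypothesis $b_2(X) \geq 7$ appears precisely as the minimal threshold ensuring that $\dim \Def(X, L)$ strictly exceeds the expected dimension of $\mathcal{I}$.
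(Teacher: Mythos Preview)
Your setup is correct: the deformation space $\Def(\pi)$ of the Lagrangian fibration has dimension $b_2(X)-3\ge 4$, and the task is to show that the isotrivial locus cannot fill it. But the mechanism you propose for bounding the isotrivial locus---stratifying by \Cref{main:typeAB} and invoking the classification of \Cref{main:classification}---does not close. \Cref{main:classification} is conditional on Conjecture~\ref{conj:base} and requires a rational section, and there is no classification at all for type~B fibrations; even in type~A the ``at most three-dimensional'' estimate is not established uniformly. So as written the argument only works for the known deformation types (where you could check it by hand), which is precisely the case you flag as unsatisfying.

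The paper bypasses classification entirely with a Hodge-theoretic argument of the Shimura-subvariety flavor you hint at but do not carry out. Assume for contradiction that every fibration over $\Def(\pi)$ is isotrivial. Since $\dim\Def(\pi)\ge 4$, one can cut down by Noether--Lefschetz conditions to a $3$-dimensional family of \emph{projective} isotrivial fibrations whose generic transcendental lattice $T$ has $\dim_{\QQ} T=5$ and generic Mumford--Tate group $\SO(T,q)$; by Zarhin this forces $\End_{\HS}(T)=\QQ$. The contradiction comes from a sharp lemma: for an isotrivial Lagrangian fibration with $\End_{\HS}(T)=\QQ$, one must have $\dim_{\QQ} T\in\{3,4\}$. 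The proof of this lemma is the missing idea. From \Cref{prop:simple Albanese morphism} one extracts an inclusion $T\subset H^1(E,\QQ)\otimes W$ with $W$ a simple weight-$1$ Hodge structure (here \Cref{main:fiber} enters, giving the elliptic factor $E$). A theorem of van~Geemen--Voisin and Moonen then says that when $\End_{\HS}(T)=\QQ$, any such tensor factorization forces both factors to be isomorphic to the \emph{simple Kuga--Satake partner} $\operatorname{KS}(T)$; in particular $\dim_{\QQ}\operatorname{KS}(T)=\dim_{\QQ} H^1(E,\QQ)=2$. A dimension count on the spin representation then pins down $\dim_{\QQ} T$ to $3$ or $4$. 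This Kuga--Satake step is what makes the argument uniform across deformation types and independent of any section or base conjecture.
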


\begin{remark}
	Type A isotrivial fibrations vary in $2$-dimensional moduli with generic endomorphism field $\End_{\HS} (H^2_{\tr} (X, \QQ)) = \QQ$. In contrast, type B isotrivial fibrations can have higher-dimensional moduli (e.g., up to dimension $9$ for K3 surfaces), in which case the generic member of the family has weak CM by $\QQ(\sqrt{-1})$ or $\QQ(\sqrt{-3})$. In the language of Shimura varieties, the former period domain is of orthogonal type whereas the latter is of ball quotient type. Therefore, moduli of type A and type B isotrivial fibrations have distinct irreducible components. Nonetheless some families of type B isotrivial fibrations specialize to a type A fibration with weak CM by $\QQ(\sqrt{-1})$ or $\QQ(\sqrt{-3})$. For example, see cases 45--47 in \Cref{table:classification for mu_6}.
\end{remark}

\medskip

Let us briefly review the content of the paper and sketch some key points.  Let $G$ be the monodromy group of the local system $R^1(\pi_0)_*\underline \ZZ$. To tackle \Cref{main:fiber}, we exploit the compatibility of two structures on the cohomology of the fiber $F$: the Hodge structure and the $G$-module structure. The $G$-equivariant Hodge structure $H^1 (F, \QQ)$ is significantly constrained due to the simplicity of the variation of Hodge structures (VHS) $R^1(\pi_0)_* \underline \QQ$, a result that goes back to \cite{voi92}, \cite{ogu09}, \cite{mat16}, and \cite{shen-yin22}. Representation theory of Mumford--Tate and finite groups shows that $F$ is necessarily isogenous to a power of a simple abelian variety (\Cref{cor:H1 is isotypic}). To show this simple abelian variety is an elliptic curve, we exploit a second rational fibration $\pi': X \dashrightarrow B'$, where $B'=\bar F/G$ is a finite quotient of some abelian variety $\bar F$ isogenous to $F$ (see \Cref{thm:diagonalization}). The variety $B'$ is $\QQ$-Fano as it is dominated by a hyper-K\"ahler manifold, so the quotient map $\bar F \longrightarrow B'$ must be ramified in codimension $1$. Its ramification locus consists of abelian subvarieties of codimension $1$, forcing $\bar F$ to have an elliptic curve factor up to isogeny.\\

For \Cref{main:typeAB}, we study the equivariant Hodge structure on the cohomology of the intermediate base $U$. The K\"unneth formula together with the fact that $h^{2,0}(X)=1$ provides a relationship between $H^1(F,\mathbb{Q})$ and $H^1(U,\mathbb{Q})$, which allows us to translate the information we have about the former into information about the latter. The resulting properties of $H^1 (U,\QQ)$ have geometric consequences for $U$ and its Albanese morphism. Finally, an equivariant version of the theorem of Ueno \cite{ueno} and Kawamata \cite{kaw81} reviewed in \Cref{sec:equivariant Ueno-Kawamata} allow us to conclude that $\kappa(U)$ is equal to $0$ or $n$. When $\kappa(U) = 0$, the normalization of $B$ in the function field $K(U)$ is an abelian variety (\Cref{prop:compactification of U}) and a compactification of $U$.\\

To obtain \Cref{main:genimpliestypeA}, we need to understand the behavior of isotrivial fibrations in codimension $1$. The two principal tools that we use for this purpose (see \Cref{S-can}) are the canonical bundle formula for $K$-trivial fibrations (e.g. \cite{Kollar-can}) and the classification of codimension $1$ fibers by Hwang--Oguiso \cite{HO09,HO11}. Both of them take a simpler form in the case of isotrivial fibrations. Druel--Bianco \cite{Druel-Bianco} provide a version of the canonical bundle formula for isotrivial fibrations and we will refine Hwang and Oguiso's result on general singular fibers of Lagrangian fibrations to show that only the fibers of type $\text{II}$, $\text{III}$, $\text{IV}$, $\text{I}_0^*$, $\text{II}^*$, $\text{III}^*$, and $\text{IV}^*$ may occur in the isotrivial case (\Cref{prop:local structure of pi}). The simplest situation is when the elliptic curve factor $E$ in $F$ doesn't have CM by $\QQ(\sqrt{-1})$ and $\QQ(\sqrt{-3})$, in which case the codimension $1$ fibers are necessarily of type $\text{I}_0^*$ and the local monodromy around the discriminant locus is $\mu_2$, allowing us to deduce the theorem.\\

\Cref{main:typeAB} and its more precise formulation \Cref{thm:typeAB} describe quite explicitly the birational behavior of the isotrivial fibration $\pi : X \longrightarrow B$. To obtain \Cref{main:classification}, assume that $\pi$ is a type A fibration with a rational section. In this case, $X$ is (birational to) a symplectic resolution of $(F \times F')/G$ for two abelian $n$-folds $F$ and $F'$ with $G$-actions. Since $(F \times F')/G$ has a symplectic resolution, $F/G$ is smooth. This forces $F$ and $F'$ together with their $G$-actions to belong to a short list of possibilities by the recent classification of smooth quotients of abelian varieties by Auffarth, Lucchini Arteche, and Quezada \cite{auf-art20, auf-art-que22, auf-art22}.\\

Additionally, we include three appendices. In Appendix \ref{K3appendix}, we discuss the proof of the classification of isotrivial fibrations for K3 surfaces (\Cref{main:K3}) in detail. While fairly straightforward, our treatment highlights some of the issues arising in higher dimensions. The reader may use this discussion as an introduction to the general case. The remaining two appendices contain technical statements needed in the main text.

\subsection*{Notation and Conventions}
An abelian variety $A$ may be considered as an abelian torsor by forgetting its origin. For clarity, we will sometimes write $A^{\tor}$ to emphasize that we consider $A$ as an abelian torsor. Given an abelian variety $A$, we write $\Aut A^{\tor}$ for the automorphism group of the torsor $A^{\tor}$ and $\Aut_0 A$ for the subgroup of automorphisms fixing the origin. The two groups are related by an isomorphism $\Aut A^{\tor} \cong A \rtimes \Aut_0 A$.\\

Given a finite group $G$, we use the term \emph{$G$-variety} for an algebraic variety with a $G$-action. A \emph{$G$-abelian variety} is an abelian variety $A$ together with a $G$-action given by a homomorphism $G \longrightarrow \Aut_0 A$. A \emph{$G$-abelian torsor} is an abelian torsor $A^{\tor}$ together with a $G$-action given by $G \longrightarrow \Aut A^{\tor}$. Note that the group action for a $G$-abelian variety fixes the origin whereas the action for a $G$-abelian torsor may not.\\

Let $A$ and $B$ be abelian varieties, $g: A\longrightarrow B$ a homomorphism, $X$ an $A$-torsor,  $Y$ a $B$-torsor, and $f: X\longrightarrow Y$ a morphism. We write
\[\begin{tikzcd}
		X \ar[d,"f"]&	A \ar[d,"g"]  \\
		Y &	B
		\end{tikzcd}\]
to express the compatibility 
$$f(a\cdot x)=g(b)\cdot f(x).$$
We will use similar notation for more involved diagrams as well.\\

Finally, given a hyper-K\"ahler manifold $X$, a \emph{rational Lagrangian fibration} of $X$ is a rational fibration $X \dashrightarrow B'$ whose general fiber is a Lagrangian subvariety of $X$. In the literature, this name is sometimes reserved for a composition $X \dashrightarrow X' \longrightarrow B'$ of a birational map from $X$ to another hyper-K\"ahler manifold $X'$ and a Lagrangian fibration of $X'$. The former definition is strictly weaker than the latter.

\subsection*{Acknowledgments}
The first author was supported by the ERC Synergy Grant HyperK (ID 854361). The second author was partially supported by NSF (DMS-2101640). The third author was partially supported by the AMS-Simons Travel Grant and would like to thank the Hausdorff Research Institute for Mathematics for its hospitality and support during the Junior Trimester Program titled \emph{Algebraic geometry: derived categories, Hodge theory, and Chow groups}  (Fall of 2023).\\ 
We thank Benjamin Bakker, Philip Engel, Robert Friedman, and Lenny Taelman for many useful comments. We thank J\'anos Koll\'ar and Chenyang Xu for pointing out an error in our original argument.

\section{Trivializations of isotrivial fibrations and Galois action on cohomology}

Throughout the paper we will consider an isotrivial Lagrangian fibration $\pi : X \longrightarrow B$ to a normal projective variety $B$, with generic fiber $F$, which we view as a (fixed) abelian torsor. We will denote by $\Delta\subset B$ the discriminant locus, which is a subvariety of pure codimension $1$, and by $B_0=B\setminus \Delta$ its complement, a smooth quasi-projective variety. The variety $X_0=\pi^{-1}(B_0)$ then admits a smooth isotrivial fibration $\pi_0=\pi_{\mid X_0}:X_0\longrightarrow B_0$. After a finite \'etale base change, such a smooth isotrivial family of abelian varieties can be trivialized. It will be useful to perform this trivialization in two steps. The first \'etale base change trivializes the global monodromy on the first cohomology of the fibers. The second, which may be trivial, provides us with a rational section.\\

To be precise, let
$$G:=\im \left( \pi_1(B_0)\to \Aut \big( H^1(F, \ZZ) \big)\right)$$ 
be the monodromy group of the local system $R^1(\pi_0)_*\underline{\ZZ}$. Since the moduli space of polarized abelian varieties with a sufficiently large level structure is a fine, $G$ is a finite group. The finite index kernel of the monodromy representation then determines a finite \'etale cover $U\longrightarrow B_0$ trivializing the local system $R^1(\pi_0)_*\underline{\ZZ}$ (see \S\ref{S:ex-trivial} for examples of this base change for well-known isotrivial Lagrangian fibrations). We denote by $Z\longrightarrow U$ the base change in the following diagram and refer to it as the \emph{intermediate trivialization}
\begin{equation} \label{diag:intermediate trivialization}
	\begin{tikzcd}
		Z \arrow[r] \arrow[d,swap, "p"] & X_0 \arrow[d,swap, "\pi_0"] \\
		U \arrow[r] & B_0.
	\end{tikzcd}
\end{equation}

If $\pi$ admits a rational section, the intermediate trivialization $Z/U$ is a trivialization of the fibration $\pi_0: X_0\longrightarrow B_0$; equivalently, $X$ is birational to a diagonal quotient $(F\times U)/G$ (Lemma \ref{lem:rational section}). In general, a further base change is needed to trivialize $X_0/B_0$. Though the action of the monodromy group of a Lagrangian fibration on the cohomology of a smooth fiber need not respect the Hodge structure, we show in \Cref{prop:G-action on cohomology} that this is indeed the case for isotrivial Lagrangian fibrations. Hence, the $G$-actions on the cohomology of the fiber $F$ and the base $U$ are compatible with Hodge structures. The assumption that $X$ is a hyper-K\"ahler manifold imposes strong restrictions on the equivariant cohomology of $F$ and $U$ (e.g. \Cref{prop:simple bimodule}, \Cref{lem:isotypic G-component in U}). In particular, we will obtain two important consequences to be used towards our main results:
\begin{itemize}
\item $F$ is isogenous to a power of a simple abelian variety (\Cref{cor:H1 is isotypic}). This falls short of \Cref{main:fiber} but plays a central role in its proof.
\item $U$ admits a map to a simple $G$-abelian torsor which is generically finite on its image. This will be applied in conjunction with \Cref{thm:equivariant Kawamata}, an equivariant version of a theorem of Kawamata, to deduce \Cref{main:typeAB}.
\end{itemize}

\subsection{Galois groups associated to isotrivial Lagrangian fibrations} \label{sec:Galois group}

As discussed above, the cover $U\longrightarrow B_0$ trivializes the polarizable VHS $R^1(\pi_0)_*\underline{\ZZ}$. There is an abelian scheme $P_0 \longrightarrow B_0$ associated to this VHS, which is also trivialized by $U\longrightarrow B_0$, and the smooth fibration $\pi_0:X_0\longrightarrow B_0$ is a torsor for $P_0/B_0$\footnote{This picture holds more generally and without isotriviality assumptions and we refer to \cite{ari-fed16} and \cite{kim25} for further discussion.}.\\

After, a further Galois base change $\tilde U\longrightarrow U\longrightarrow B_0$, which we assume minimal (but which may not be unique), we can fully trivialize the family $\pi : X_0 \longrightarrow B_0$. This is because the intermediate trivialization $p$, as a \emph{projective} $F$-torsor, determines a torsion element $\alpha \in H^1_{\text{\'et}} (U, F)$ (see \cite[Prop XIII.2.3]{ray70}). If $\alpha$ is $d$-torsion, it can be lifted to an element $\tilde \alpha \in H^1_{\text{\'et}} (U, F[d])$, where $F[d]$ is the kernel of the multiplication homomorphism $[d] : F \longrightarrow F$. The finite \'etale covering $\tilde U \longrightarrow U$ corresponding to $\tilde \alpha$ trivializes the family. Passing to a connected component or Galois closure if needed, we may assume that $\tilde U$ is connected and that $\tilde U \longrightarrow U \longrightarrow B_0$ is Galois. We get a cartesian diagram as follows and call the map $\text{pr}_2: F \times \tilde U\longrightarrow U$ the \emph{full trivialization} of $\pi$.
\begin{equation} \label{diag:full trivialization}
	\begin{tikzcd}
		F \times \tilde U \arrow[r] \arrow[d, swap, "\pr_2"] & Z \arrow[r] \arrow[d,swap, "p"] & X_0 \arrow[d,swap, "\pi"] \\
		\tilde U \arrow[r] & U \arrow[r] & B_0.
	\end{tikzcd}
\end{equation}

We denote the three Galois groups arising from the bottom row of this diagram by
\begin{equation} G = \Gal(U/B_0), \qquad \tilde G = \Gal(\tilde U/B_0), \qquad \tilde G_{\tr} = \Gal(\tilde U/U),\end{equation}
and call $G$ (resp. $\tilde G$) the {\it intermediate} (resp. {\it full}){ \it Galois group}. These groups fit in a short exact sequence
\[\begin{tikzcd}
	1 \arrow[r] & \tilde G_{\tr} \arrow[r] & \tilde G \arrow[r] & G \arrow[r] & 1
\end{tikzcd}.\]
Since a finite \'etale morphism being Galois is a property which is stable under base change, the morphism $F \times \tilde U \longrightarrow X_0$ (resp. $Z \longrightarrow X_0$) is also a Galois cover with Galois group $\tilde G$ (resp. $G$). In other words, $F \times \tilde U$ (resp. $Z$) admits a free $\tilde G$-action (resp. $G$-action) with quotient $X_0$. We emphasize that the $\tilde G$-action on $F \times \tilde U$ may not be diagonal when $\pi$ does not have a rational section. In contrast, when $\pi$ has a rational section, we have $G=\tilde G$ (thus, $\tilde G_{\tr}=1$ and $Z=F\times U$) and the $G$-action on $Z$ is diagonal:

\begin{lemma} \label{lem:rational section}
	The following are equivalent.
	\begin{enumerate}
		\item $\pi : X \longrightarrow B$ has a rational section.
		\item The intermediate trivialization $p : Z \longrightarrow U$ is $G$-equivariantly isomorphic to $\pr_2 : F \times U \longrightarrow U$, where $F \times U$ is equipped with a diagonal $G$-action
		\begin{equation} \label{eq:action on constant group scheme}
			g \cdot (x,y) = (f_g(x), \ g \cdot y) \qquad \mbox{for} \quad f_g \in \Aut_0 F .
		\end{equation}
	\end{enumerate}
\end{lemma}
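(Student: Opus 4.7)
The plan is to exploit the $F$-torsor structure of $p : Z \longrightarrow U$ inherited from the $P_0$-torsor structure of $\pi_0 : X_0 \longrightarrow B_0$, where $P_0 \to B_0$ is the associated abelian scheme, and to trivialize this torsor $G$-equivariantly using a rational section when one is available.

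The implication $(2) \Rightarrow (1)$ is essentially immediate: the zero section $y \mapsto (0, y)$ of $\pr_2 : F \times U \to U$ is $G$-equivariant under the displayed diagonal action, since every $f_g \in \Aut_0 F$ fixes the origin of $F$. Its descent to $X_0 = Z/G \to B_0 = U/G$ yields a section of $\pi_0$ that extends to a rational section of $\pi$.

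For the converse $(1) \Rightarrow (2)$, I would start from a rational section $s : B \dashrightarrow X$, restrict it to $B_0 \dashrightarrow X_0$ on a dense open, and pull it back along $U \to B_0$ to a rational section $\sigma$ of $p$ given by $\sigma(y) := (s(\bar y), y) \in X_0 \times_{B_0} U = Z$, where $\bar y$ denotes the image of $y$ in $B_0$. This $\sigma$ is automatically $G$-equivariant: the group $G$ acts on $Z = X_0 \times_{B_0} U$ trivially on the first factor and through deck transformations on the second, while $\bar y \in B_0$ is fixed by $G$. I would then use $\sigma$ as a zero section to trivialize the $F$-torsor structure on $Z \to U$, producing a $G$-equivariant $U$-isomorphism $Z \cong F \times U$. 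Matching the induced $G$-action with the prescribed one then reduces to the following observation: any $F$-torsor automorphism of $F \times U$ covering $g : U \to U$ has the form $(x,y) \mapsto (\psi_g(y)(x) + t_g(y),\, gy)$ with $\psi_g(y) \in \Aut_0 F$ and $t_g(y) \in F$. Since $G$ acts on the underlying constant abelian scheme via the monodromy representation $G \to \Aut_0 F$, we must have $\psi_g(y) = f_g$, a constant in $\Aut_0 F$; and the requirement that $\sigma$ correspond to the zero section forces $t_g \equiv 0$, yielding the diagonal form.

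The main subtlety will be the globalization step, namely verifying that the $F$-torsor trivialization produced by $\sigma$ extends from the dense open where $s$ is defined to all of $U$. I would address this via standard torsor arguments: for a smooth base $U$ and the constant abelian scheme $F \times U$, a class in $H^1_{\text{\'et}}(U, F)$ that vanishes on a dense open vanishes globally, so the rational trivialization is in fact regular.
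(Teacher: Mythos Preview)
Your proof is correct and follows essentially the same route as the paper: trivialize the $F$-torsor $Z\to U$ using a $G$-equivariant section pulled back from the rational section of $\pi$, then read off the diagonal form of the $G$-action. The only notable difference is in how the globalization step is handled: the paper cites \cite[\S 3.3]{kim21} to conclude directly that any rational section of $\pi$ is already regular over all of $B_0$ (hence lifts to a regular $G$-equivariant section of $p$), whereas you argue via torsor cohomology that a class in $H^1_{\text{\'et}}(U,F)$ trivial on a dense open is trivial globally. Your claim is correct---it amounts to Weil's extension theorem for rational maps from smooth varieties to abelian varieties, applied fiberwise---but it is slightly more roundabout than simply invoking the known fact that rational sections of Lagrangian fibrations extend over the smooth locus. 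The paper's argument also packages the verification that the transported $G$-action is diagonal a bit more cleanly, by noting that the group-scheme action $\rho : (F\times U)\times_U Z \to Z$ is $G$-equivariant once $F\times U$ carries the monodromy action; your explicit computation of $\psi_g$ and $t_g$ achieves the same thing.
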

\begin{proof}
	(2) $\Longrightarrow$ (1): The ($G$-equivariant) zero section $U \longrightarrow F \times U$ given by $y \mapsto (0, y)$ descends to a rational section of $\pi : X \longrightarrow B$.\\
	
	(1) $\Longrightarrow$ (2): Recall that $p : Z \longrightarrow U$ is an $F$-torsor, or a torsor under the constant abelian scheme $\pr_2 : F \times U \longrightarrow U$. In \Cref{prop:G-action on cohomology}, we will construct a homomorphism $f : G \longrightarrow \Aut_0 F$, but let us take it momentarily for granted. We can use \eqref{eq:action on constant group scheme} to endow the constant abelian scheme with a $G$-action. With respect to this action, the group scheme action morphism $\rho : (F \times U) \times_U Z \longrightarrow Z$ is equivariant. Now, any rational section $s : B \dashrightarrow X$ is necessarily defined over $B_0$ by \cite[\S 3.3]{kim25}, so we can lift it to an equivariant section $s : U \longrightarrow Z$ of the intermediate fibration $p$. We have constructed a sequence of equivariant morphisms
	\[\begin{tikzcd}[column sep=normal]
		F \times U \arrow[r, "{(\id, s)}"] & (F \times U) \times_U Z \arrow[r, "\rho"] & Z
	\end{tikzcd},\]
	and the theory of torsors implies that the composition $\rho \circ (\id, s)$ is an isomorphism. Hence $F \times U$ and $Z$ are equivariantly isomorphic.
\end{proof}

As the following example illustrates, isotrivial Lagrangian fibrations with $\tilde G_{\tr}\neq 0$ exist already in dimension $2$. In the same fashion, it is not hard to write down examples of Hilbert schemes or generalized Kummer varieties with isotrivial Lagrangian fibrations with $\tilde G_{\tr}\neq 0$.

\begin{example}
Consider an abelian surface $A$ with a fibration $f: A\longrightarrow E$, where $E$ is an elliptic curve. Let $F$ be the fiber of this fibration, $X$ the Kummer surface associated to $A$, and $\pi: X\longrightarrow E/[-1]\cong \mathbb{P}^1$ the elliptic fibration arising from $f$. In this situation,
\begin{itemize}
\item the global monodromy group is $G=\mu_2$,
\item the discriminant divisor $\Delta\subset E/[-1]$ is the image of the $2$-torsion $E[2]\subset E$,
\item $U=E\setminus E[2]$,
\item $Z=A\setminus f^{-1}(E[2]).$
\end{itemize} 
It follows that the family $p:Z\longrightarrow U$ has a section if and only if $f$ has a section i.e., if and only if $A\cong F\times E$ and $f$ is the projection onto the second factor. In fact, given isogenies $\eta: F\times E'\longrightarrow A$ and $\eta': E'\longrightarrow E$ such that $f\circ \eta=\eta'\circ \text{pr}_{2}$, the full trivialization diagram takes the form
\begin{equation} 
	\begin{tikzcd}
		F\times E' \arrow[r] \arrow[d, swap, "\pr_2"] & Z \arrow[r] \arrow[d,swap, "p"] & X_0 \arrow[d,swap, "\pi_0"] \\
		E'\setminus \eta'^{-1}(E[2]) \arrow[r] & E\setminus E[2] \arrow[r] & (E\setminus E[2])/[-1].
	\end{tikzcd}
\end{equation} 
Given such a diagram the group $\tilde G_{\tr}$ is $\ker(\eta)=\text{Gal}(F\times E'/A)$. Requiring that $\eta$ is minimal, namely that it does not factor through a similar isogeny $F\times E''\longrightarrow A$ is insufficient to ensure uniqueness.
\end{example}

\subsection{Cohomology of the fiber $F$} \label{sec:cohomology of F}
 In this subsection, we study the cohomology of the fiber $F$ as a Hodge structure and a $G$-representation. Recall that the intermediate Galois group $G$ is the image of the monodromy representation $\pi_1(B_0) \longrightarrow \GL(H^1 (F, \ZZ))$ associated to the local system $R^1(\pi_0)_* \underline \ZZ$ on $B_0$. A feature of isotrivial Lagrangian fibrations is that $G$ must be a subgroup of $\text{Aut}_0(F)$, and therefore is relatively small. On the other hand, a result of Shen-Yin-Voisin \cite{shen-yin22} states that the $G$-invariant cohomology of $F$ is small, so that $G$ is relatively large. It is this tension that we exploit in order to prove \Cref{main:fiber}. We will first present the proof of \Cref{main:fiber} under the assumption that $\pi$ has a rational section (see \Cref{thm:fiberdiag}), while the general case will be treated in \Cref{sec:fibrations of type AB}.\\

As just mentioned, the monodromy group for isotrivial fibrations is very special.
\begin{proposition} \label{prop:G-action on cohomology}
	The $G$-action on $H^* (F, \ZZ)$ respects the Hodge structure, the cup product, and the polarization induced from $X$. In particular, $G$ is isomorphic to a subgroup of $\Aut_0(F)$.
\end{proposition}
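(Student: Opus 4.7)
The plan is to realize the $G$-action geometrically via the intermediate trivialization $p : Z \longrightarrow U$, which arises from $\pi_0$ by base change along the Galois cover $q : U \longrightarrow B_0$, and to transport each of the relevant structures (Hodge filtration, cup product, polarization) from $\pi_0$ to $p$ using the $G$-equivariance of this base change.

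First I would describe the $G$-action on $H^*(F,\ZZ)$. By the defining property of $U$, the pullback $R^1 p_* \underline{\ZZ} = q^* R^1(\pi_0)_* \underline{\ZZ}$ is a constant local system on $U$ carrying a natural $G$-equivariant structure covering the deck transformations of $q$; evaluating its global sections at any point of $U$ recovers $H^1(F,\ZZ)$ together with the monodromy representation of $G$. Passage to wedge powers gives the analogous statement for every $R^k p_* \underline{\ZZ} \cong \wedge^k R^1 p_* \underline{\ZZ}$, and functoriality of cup product then shows that the $G$-action respects the ring structure.

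For the Hodge structure, the Hodge filtration on $R^1(\pi_0)_* \underline{\CC} \otimes \mathcal{O}_{B_0}$ pulls back $G$-equivariantly to a filtration of the constant bundle $H^1(F,\CC) \otimes \mathcal{O}_U$. Because $p$ is isotrivial with all fibers carrying the same Hodge structure, the resulting holomorphic map from the connected base $U$ to the relevant Grassmannian takes values in the discrete orbit parametrizing subspaces that realize the polarized Hodge structure on $H^1(F,\ZZ)$; it is therefore constant. Hence the Hodge filtration is the constant sub-bundle $F^1 H^1(F,\CC) \otimes \mathcal{O}_U$, and $G$-equivariance forces $F^1 H^1(F,\CC)$ to be $G$-stable, so the $G$-action is a morphism of Hodge structures.

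For the polarization, I would choose an ample class $\eta \in H^2(X,\QQ)$ and note that its restriction to $X_0$ defines, via the edge map of the Leray spectral sequence, a global section of the local system $R^2(\pi_0)_* \underline{\QQ}$; any such global section is $\pi_1(B_0)$-invariant, hence $G$-invariant after trivialization over $U$. The resulting class $\omega \in H^2(F,\QQ)$ is the restriction of $\eta$ to a smooth fiber, and ampleness of $\eta$ ensures that $\omega$ is a polarization on $F$. Combining the three, $G$ acts faithfully on $H^1(F,\ZZ)$ (by definition as the monodromy image) through polarized Hodge isometries, and the standard equivalence between polarized abelian varieties and polarized weight-one Hodge structures identifies such isometries with elements of $\Aut_0(F,\omega) \subseteq \Aut_0(F)$, yielding the embedding $G \hookrightarrow \Aut_0(F)$. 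The main subtle point is the constancy of the pulled-back period map, since $p$ may be a nontrivial torsor under the constant abelian scheme $F \times U \longrightarrow U$; one has to note that the underlying VHS depends only on the torsor structure, so the period map indeed factors through the moduli point of $F$.
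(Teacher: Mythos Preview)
Your argument is correct, but it follows a different route from the paper. The paper works with the \emph{full} trivialization $F \times \tilde U \to \tilde U$ rather than the intermediate one: for each $g \in \tilde G$, the action on $F \times \tilde U$ sends $F_y$ to $F_{g\cdot y}$, yielding a morphism $f_g : \tilde U \to \Aut F^{\tor}$; connectedness of $\tilde U$ forces the image into a single connected component, i.e.\ an element $f_g \in \Aut_0 F = \pi_0(\Aut F^{\tor})$. This produces an explicit homomorphism $f : \tilde G \to \Aut_0 F$ whose image is $G$, and since automorphisms in $\Aut_0 F$ manifestly preserve the Hodge structure and cup product on $H^*(F,\ZZ)$, the conclusion is immediate. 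For the polarization the paper defers to the later Proposition on $G$-invariant cohomology (the Shen--Yin--Voisin input), whereas your argument via restriction of an ample class is more direct and self-contained.

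Your approach stays at the level of the intermediate trivialization and argues Hodge-theoretically: the pulled-back period map has countable image (lying in a $\GL(H^1(F,\ZZ))$-orbit) and is therefore constant, so $G$ preserves the Hodge filtration. This is perfectly valid and avoids introducing $\tilde U$. The trade-off is that the paper's explicit homomorphism $f : \tilde G \to \Aut_0 F$ is reused later (for instance in the proof that a rational section forces the intermediate trivialization to be a diagonal product), so the geometric construction is not merely aesthetic---it is infrastructure for subsequent arguments. One minor phrasing issue: in your last sentence you write that the VHS ``depends only on the torsor structure''; you mean the opposite, namely that it depends only on the underlying abelian scheme and is insensitive to which torsor one takes.
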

\begin{proof}
	Consider the full trivialization $\pr_2 : F \times \tilde U \longrightarrow \tilde U$. Given $y \in \tilde U$, write $F_y := F\times \{y\}\subset F\times \tilde U$ for the fiber over $y$. An element $g\in \tilde G$ acts on $F \times \tilde U$ by taking $F_y$ to $F_{g.y}$. Identifying both fibers with $F$, we obtain an automorphism of $F$. Hence, for each $g\in \tilde G$ there is a morphism
	\[ f_g : \tilde U \longrightarrow \Aut F^{\tor} .\]
	Since $\tilde U$ is connected, its image lies in a single connected component of $\Aut F^{\tor}$ and defines an element $f_g \in \Aut_0 F = \pi_0 (\Aut F^{\tor})$. One easily checks that this gives a group homomorphism
	\begin{align*} f : \tilde G& \longrightarrow \Aut_0 F\\
	g& \longmapsto\;\;\; f_g\;\;\;\;.\medskip\end{align*}
	
	The full trivialization trivializes the local system $R^k(\pi_0)_* \underline \ZZ$ for all $k$ and the composition of $f$ with the map
	$$\Aut_0 F \hookrightarrow \GL(H^1 (F, \ZZ)) \longrightarrow \GL(H^* (F, \ZZ))$$
	is the monodromy representation $\tilde G \longrightarrow \GL(H^* (F, \ZZ))$. This endows $H^* (F, \ZZ)$ with a $\tilde G$-action. It respects the Hodge structure and cup product because the action of $\Aut_0 F$ on cohomology does. By definition, the image of the homomorphism $f$ is $G$ so $\tilde G_{\tr}$ acts trivially on $H^* (F, \ZZ)$ and $G = \tilde G / \tilde G_{\tr}$ acts faithfully on cohomology. Finally, the fact that $G$ respects the polarization coming from $X$ (and indeed the very fact that such a polarization is unique up to scaling) is a consequence of \Cref{prop:G-invariant cohomology} below.
\end{proof}
	
\begin{remark}
We caution the reader that if $\pi$ does not admit a rational section there is no preferred action of $G$ on $F$. In particular the definition of the injective map $G\longrightarrow \text{Aut}_0(F)$ requires the choice of an identity for $F$.
\end{remark}

On the other hand, the monodromy group of a Lagrangian fibration on a hyper-K\"ahler manifold has to be large (\cite[Thm. 0.4]{shen-yin22}). 
\begin{proposition} \label{prop:G-invariant cohomology}	There is an ample class $\theta\in H^2 (F, \QQ)$ such that the $G$-invariant subspaces of the cohomology of $F$ are given by
	\[ H^k (F, \QQ)^G = \begin{cases}
		0 & \mbox{if}\quad k \mbox{ is odd }, \\
		\QQ \cdot \theta^{k/2} \quad & \mbox{if}\quad k \mbox{ is even }.
	\end{cases} \]
\end{proposition}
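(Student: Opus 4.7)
The plan is to identify $H^k(F,\QQ)^G$ with the image of the restriction map from the total space, and then to invoke the Shen--Yin structure theorem for fiber cohomology of Lagrangian fibrations of hyper-K\"ahler manifolds.

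The first step is to pass from $G$-invariants to monodromy invariants. Since $F$ is an abelian variety, the K\"unneth formula gives $H^k(F,\QQ) = \Lambda^k H^1(F,\QQ)$, so the monodromy representation $\pi_1(B_0) \longrightarrow \GL(H^k(F,\QQ))$ factors through the image $G$ of the monodromy on $H^1$. Consequently
$$H^k(F,\QQ)^G = H^k(F,\QQ)^{\pi_1(B_0)}.$$
Deligne's global invariant cycle theorem, applied to the smooth projective morphism $\pi_0 : X_0 \longrightarrow B_0$ with smooth compactification $\pi : X \longrightarrow B$, then identifies the right-hand side with the image of the restriction map $H^k(X,\QQ) \longrightarrow H^k(F,\QQ)$.

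The second step is to compute this image, which is precisely the content of the Shen--Yin structure theorem \cite[Thm.~0.4]{shen-yin22}, refining earlier work of Voisin, Oguiso and Matsushita. That theorem asserts that for any Lagrangian fibration of a projective hyper-K\"ahler manifold, the image of $H^k(X,\QQ) \longrightarrow H^k(F,\QQ)$ vanishes when $k$ is odd and is spanned by $\theta^{k/2}$ when $k$ is even, where $\theta$ is the restriction to $F$ of an ample class on $X$. Since the restriction of an ample class on $X$ to a smooth fiber is ample, such $\theta$ lies in $\textup{Hdg}^2_{\QQ}(F)$ and is ample. Combining this with the first step yields the desired description, and in particular the one-dimensionality of $H^{1,1}(F)^G \cap H^2(F,\QQ)^G$ justifies the uniqueness of the polarization invoked at the end of the proof of \Cref{prop:G-action on cohomology}.

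The first step is essentially formal: the only input is the K\"unneth description of $H^*(F)$, which crucially uses that $F$ is an abelian variety, together with the global invariant cycle theorem. The main obstacle, and the substantive input, is the Shen--Yin theorem itself, whose proof relies on the Beauville--Bogomolov--Fujiki form, hard Lefschetz on $X$, and Matsushita's theorem on the Hodge numbers of $B$. In our context we simply cite it.
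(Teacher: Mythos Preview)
Your proof is correct and follows essentially the same route as the paper's: both identify $H^k(F,\QQ)^G$ with the $\pi_1(B_0)$-invariants (you make explicit the use of $H^k(F)=\Lambda^k H^1(F)$ to see the monodromy on $H^k$ factors through $G$), apply the global invariant cycle theorem to identify these with the image of restriction from $X$, and then invoke \cite[Thm~0.4]{shen-yin22}. The only difference is that you spell out a few details the paper leaves implicit.
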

\begin{proof}
	This is a reformulation of the result of Shen--Yin--Voisin \cite[Thm 0.4]{shen-yin22}. Given a smooth fiber $F_b = \pi^{-1}(b)$, we write $i : F_b \hookrightarrow X$ for the closed immersion. The image of the restriction map $i^* : H^k (X, \QQ) \longrightarrow H^k (F_b, \QQ)$ is zero if $k$ is odd and $\QQ \cdot \theta^{k/2}$ if $k$ is even. The image of $i^*$ coincides with the monodromy invariant cohomology $H^k (F, \QQ)^{\pi_1(B_0)} = H^k(F,\QQ)^G$ by the global invariant cycle theorem.
\end{proof}

By \Cref{prop:G-action on cohomology}, the $G$-equivariant Hodge structure $H^k (F, \QQ)$ is the $k^{\textup{th}}$ exterior power of the $G$-equivariant Hodge structure $H^1 (F, \QQ)$. 
The following is a consequence of \Cref{prop:G-invariant cohomology}.

\begin{proposition}\label{prop:simple bimodule}
	$H^1 (F, \QQ)$ is simple as a $G$-equivariant Hodge structure.
\end{proposition}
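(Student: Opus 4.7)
I would argue by contradiction: suppose $H^1(F,\QQ) = V_1 \oplus V_2$ is a nontrivial decomposition in the category of $G$-equivariant Hodge structures. Dually this gives $H_1(F,\QQ) = V_1^* \oplus V_2^*$, and the polarization $\theta \in H^2(F, \QQ)$, viewed as a $G$-invariant alternating form on $H_1(F, \QQ)$, decomposes under
\[
\wedge^2 H^1(F, \QQ) \ = \ \wedge^2 V_1 \ \oplus \ (V_1 \otimes V_2) \ \oplus \ \wedge^2 V_2
\]
into three $G$-equivariant pieces. Since $H^2(F, \QQ)^G = \QQ \cdot \theta$ is one-dimensional by \Cref{prop:G-invariant cohomology}, $\theta$ must lie entirely in exactly one of these three summands.

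Next I would verify that in each of the three possible cases, one of the sub-Hodge structures $V_1^*, V_2^* \subset H_1(F, \QQ)$ is contained in the radical of $\theta$: if $\theta \in \wedge^2 V_i$ then $V_{3-i}^*$ lies in the radical, while if $\theta \in V_1 \otimes V_2$ then both $V_1^*$ and $V_2^*$ do. This step is a direct unwinding of the definitions, using the duality between the decomposition of $H^1$ and that of $H_1$.

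The contradiction then comes from the standard fact, a consequence of the Hodge-Riemann bilinear relations, that the restriction of a polarization to any nonzero sub-Hodge structure is itself a polarization, hence nondegenerate and in particular nonzero. This directly contradicts the isotropy derived above, ruling out every nontrivial decomposition and yielding simplicity. I do not anticipate a serious obstacle here; the main care needed is in tracking the duality between sub-Hodge structures of $H^1$ and the action of $\theta$ as an alternating form on $H_1$, so that the assertion ``$\theta$ lies in $\wedge^2 V_i$'' is correctly translated into the isotropy statement used in the final step.
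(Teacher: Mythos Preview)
Your proposal is correct and close in spirit to the paper's proof, but reaches the contradiction from the opposite end. The paper chooses polarizations $\theta_i \in \wedge^2 V_i$ on each summand and averages over $G$; since a sum of polarizations is again a polarization, each average is nonzero, yielding two linearly independent $G$-invariants in $\wedge^2 H^1(F,\QQ) = H^2(F,\QQ)$ and contradicting $\dim H^2(F,\QQ)^G = 1$. You instead start from the single $G$-invariant $\theta$, locate it in one of the three $G$-stable summands of $\wedge^2 H^1(F,\QQ)$, and then use that $\theta$ must restrict to a nonzero polarization on each sub-Hodge structure to derive a contradiction. Both routes rest on the same ingredients---the decomposition $\wedge^2 H^1 = \wedge^2 V_1 \oplus (V_1 \otimes V_2) \oplus \wedge^2 V_2$ and Hodge--Riemann positivity---so neither is more general; the paper's version avoids your case split, while yours avoids the averaging trick.

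One terminological slip: when $\theta \in V_1 \otimes V_2$, the subspaces $V_1^*$ and $V_2^*$ are \emph{isotropic} for $\theta$ but not in its radical (the form may well be nondegenerate). Since your final step only invokes isotropy, this does not affect the argument.
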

\begin{proof}
	Suppose that $H^1 (F, \QQ) \cong V_1 \oplus V_2$, for nontrivial $G$-equivariant Hodge structures $V_1$ and $V_2$, which are necessarily polarizable. Choose polarizations $\theta_i \in \wedge^2 V_i$, $i = 1,2$. Given any $g\in G$, the element $g\cdot \theta_i\in \wedge^2 V_i$ is also a polarization of $V_i$. Since a sum of polarizations is a polarization and therefore nonzero, averaging $\theta_i$ over $G$ gives a nonzero $G$-invariant element of $\wedge^2 V_i$. This proves that $\dim (\wedge^2 V_i)^G \ge 1$ for $i = 1,2$, thereby violating \Cref{prop:G-invariant cohomology}.
\end{proof}

\begin{corollary} \label{cor:H1 is isotypic}
	$H^1 (F, \QQ)$ is an isotypic Hodge structure. In particular $F$ is isogenous to a power of a simple abelian variety.
\end{corollary}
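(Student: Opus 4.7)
The statement reduces to a simple representation-theoretic observation combined with Poincaré reducibility. The plan is as follows.

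First I would write down the isotypic decomposition of $H^1(F,\QQ)$ as a pure polarizable rational Hodge structure, say
\[
H^1(F,\QQ) \;\cong\; \bigoplus_{i} W_i^{\oplus m_i},
\]
where the $W_i$ are pairwise non-isomorphic simple Hodge substructures. The key point is that this decomposition is canonical: each isotypic component can be recovered intrinsically as
\[
\Hom_{\HS}(W_i, H^1(F,\QQ)) \otimes W_i \;\subset\; H^1(F,\QQ),
\]
so it is preserved by every Hodge structure automorphism of $H^1(F,\QQ)$.

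By \Cref{prop:G-action on cohomology}, the $G$-action on $H^1(F,\QQ)$ is by Hodge structure automorphisms, so $G$ preserves each isotypic component $W_i^{\oplus m_i}$. In particular, each such component is a $G$-equivariant sub-Hodge structure of $H^1(F,\QQ)$. But $H^1(F,\QQ)$ is simple as a $G$-equivariant Hodge structure by \Cref{prop:simple bimodule}, so at most one isotypic component is nonzero. This means $H^1(F,\QQ) \cong W^{\oplus m}$ for a single simple Hodge structure $W$, proving the first part.

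For the second part, by Poincaré reducibility $F$ is isogenous to a product of simple abelian varieties, and this decomposition is determined up to isogeny by the decomposition of $H^1(F,\QQ)$ into simple rational sub-Hodge structures. Since the latter is $W^{\oplus m}$, we conclude that $F$ is isogenous to $A^m$, where $A$ is the simple abelian variety (unique up to isogeny) with $H^1(A,\QQ) \cong W$. No serious obstacle is expected here; the content of the corollary is entirely packaged into \Cref{prop:simple bimodule} together with the canonicity of the isotypic decomposition.
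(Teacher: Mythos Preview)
Your argument is correct and is essentially the same as the paper's: the paper rephrases the $G$-equivariant Hodge structure as a $(G,\MT)$-bimodule and invokes the general \Cref{lem:isotypic bimodule}, whose content is precisely the observation you spell out---that isotypic components for one action are canonical and hence stable under the other, so simplicity forces a single isotypic block. You have simply unpacked that lemma in the special case at hand rather than citing it.
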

\begin{proof}
	Letting $\MT$ be the Mumford--Tate group of $H^1 (F, \QQ)$, the $G$-equivariant Hodge structure on $H^1 (F, \QQ)$ is nothing but a $(G, \MT)$-bimodule structure. The claim is now a formal consequence of \Cref{lem:isotypic bimodule} below. Notice that the same argument proves that $H^1 (F, \QQ)$ is an isotypic $G$-module.
\end{proof}

\begin{lemma} \label{lem:isotypic bimodule}
	Let $G$ and $H$ be reductive algebraic groups. Then any finite dimensional isotypic $(G, H)$-bimodule is both $G$-isotypic and $H$-isotypic.
\end{lemma}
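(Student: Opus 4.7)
The plan is to reduce to the case of a simple $(G,H)$-bimodule and then exploit the canonicity of the $G$-isotypic decomposition. Any isotypic $(G,H)$-bimodule $V$ is isomorphic to $W^{\oplus k}$ for some simple bimodule $W$; a direct sum of modules of the same $G$-type is again $G$-isotypic, so it suffices to show that every simple $(G,H)$-bimodule $W$ is $G$-isotypic. The argument for $H$-isotypicity is identical by swapping the roles of the two groups.

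For a simple bimodule $W$, I would proceed as follows. Reductivity of $G$ (in characteristic zero) guarantees that $W$, viewed purely as a $G$-module, is semisimple, hence admits the canonical decomposition $W=\bigoplus_i W_i$ into $G$-isotypic components indexed by the simple $G$-modules $V_i$. The key observation is that each $W_i$ is automatically preserved by the $H$-action. Indeed, for $h\in H$ and any $G$-submodule $M\subset W_i$ with $M\cong V_i$, the image $h(M)$ is again a $G$-submodule of $W$ isomorphic to $V_i$ (because the $G$-action and $H$-action commute, so $h$ is a $G$-equivariant map), hence $h(M)\subset W_i$. Therefore each $W_i$ is a $(G,H)$-sub-bimodule. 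Simplicity of $W$ then forces all but one of the $W_i$ to vanish, which is exactly the statement that $W$ is $G$-isotypic.

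The only real input needed is the canonicity of the $G$-isotypic decomposition, which is what allows us to promote $H$-stability of the whole module to $H$-stability of each isotypic piece; the reductivity hypotheses enter precisely to ensure semisimplicity of finite-dimensional representations of each factor. Beyond this, the argument is essentially formal, and I do not anticipate any genuine obstacle. One minor point worth making explicit is that since the characters $V_i$ and $V_j$ are distinguishable by central characters or by $\Hom_G(V_i,-)$, the decomposition into isotypic components is indeed unique and functorial, so that any $G$-equivariant endomorphism (in particular each $h\in H$) must preserve it.
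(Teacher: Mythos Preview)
Your argument is correct: the canonical $G$-isotypic decomposition of a simple $(G,H)$-bimodule is automatically $H$-stable because the two actions commute, and simplicity then forces a single isotypic component; the reduction from isotypic to simple is immediate. The paper itself omits the proof entirely (the lemma is stated with a \qed and the remark ``which we present without proof for conciseness''), so there is no alternative approach to compare against---your proof is exactly the standard one the authors had in mind.
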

\begin{proof}
	Let $V$ be a finite dimensional $(G, H)$-bimodule. It is enough to show any $G$-isotypic decomposition $V = V_1 \oplus V_2$ of $V$ is automatically a $(G, H)$-bimodule decomposition. To prove this, we need to show $V_1$ (resp. $V_2$) is closed under the $H$-action. Given $h\in H$, the subspace $hV_1$ is a $G$-submodule since the $G$ and $H$ actions commute. Thus $hV_1\subset V_1$ because $V_1$ and $V_2$ do not share any simple $G$-module factors.
\end{proof}

In light of \Cref{cor:H1 is isotypic}, to prove \Cref{main:fiber} it suffices to show that $F$ contains a subtorus of codimension $1$. We will discuss the proof of \Cref{main:fiber} in the next section, but to motivate our argument we first present the following special case:

\begin{theorem}\label{thm:fiberdiag}
	If $\pi: X\longrightarrow B$ has a rational section then its smooth fiber $F$ is isogenous to a power of an elliptic curve.
\end{theorem}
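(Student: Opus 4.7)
By \Cref{cor:H1 is isotypic} the smooth fiber $F$ is isogenous to $A^k$ for some simple abelian variety $A$, so the statement reduces to showing $\dim A = 1$. The plan is to produce, from the given rational section, a second rational Lagrangian fibration $\pi' : X \dashrightarrow F/G$, argue that its base is $\QQ$-Fano, and convert that positivity into a codimension-one abelian subvariety of $F$, whose mere existence forces $A$ to be an elliptic curve.

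The first step is to apply \Cref{lem:rational section}: the rational-section hypothesis identifies $X$ birationally with the diagonal quotient $(F \times U)/G$, where $G$ acts on $F$ via the injection $f : G \hookrightarrow \Aut_0 F$ and on $U$ by deck transformations. The second projection $F \times U \to F$ is then $G$-equivariant and descends to a dominant rational map $\pi' : X \dashrightarrow F/G$ whose general fiber is birational to $U$ and of dimension $n$. I would next argue that $\pi'$ is itself a rational Lagrangian fibration. Let $\tilde\sigma$ denote the pullback to $F \times U$ of the symplectic form of $X$; it is $G$-invariant and non-degenerate. Choosing a smooth compactification $\bar U$ of $U$, the $G$-equivariant K\"unneth decomposition
\[ H^{2,0}(F \times \bar U) \;\cong\; \wedge^2 H^{1,0}(F) \,\oplus\, \bigl(H^{1,0}(F) \otimes H^{1,0}(\bar U)\bigr) \,\oplus\, \wedge^2 H^{1,0}(\bar U), \]
together with the hyper-K\"ahler identity $h^{2,0}(X) = 1$, forces the total $G$-invariant subspace on the right to be one-dimensional. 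The Lagrangian property of $\pi$ kills the first summand, and non-degeneracy of $\tilde\sigma$ prevents it from lying in either of the two pure pieces. Hence the class of $\tilde\sigma$ must lie in the middle mixed piece; consequently $(\wedge^2 H^{1,0}(\bar U))^G = 0$, and $\tilde\sigma$ vanishes along every fiber $\{f\} \times U$ of $\pi'$.

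Granted that $\pi'$ is Lagrangian, its base $F/G$ is $\QQ$-Fano: after replacing $X$ by a birational hyper-K\"ahler model on which $\pi'$ becomes regular, the structural results on Lagrangian fibration bases imply $-K_{F/G}$ is not numerically trivial. Since $K_F \equiv 0$, the quotient map $q : F \longrightarrow F/G$ must be ramified in codimension one, so some nontrivial $g \in G$ has fixed locus of codimension $1$ in $F$. Because $g$ acts on $F$ as $f_g \in \Aut_0 F$, this fixed locus is a union of translates of the abelian subvariety $\ker(f_g - \id)^0 \subset F$ of dimension $n - 1$. By Poincar\'e complete reducibility applied to $F \sim A^k$, every abelian subvariety of $F$ is isogenous to $A^j$ for some $0 \le j \le k$, so $\dim A$ divides $n - 1 = k \dim A - 1$, forcing $\dim A = 1$. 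Hence $A$ is an elliptic curve $E$ and $F$ is isogenous to $E^n$.

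The main obstacle is the step that the base $F/G$ is $\QQ$-Fano: because $F/G$ carries non-trivial finite quotient singularities, one cannot invoke Hwang's smooth-base theorem directly, so non-triviality of $-K_{F/G}$ must be extracted by passing to a birational hyper-K\"ahler model of $X$ together with the positivity results available for (possibly singular) Lagrangian bases.
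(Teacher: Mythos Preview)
Your overall strategy matches the paper's: construct a second rational fibration $\pi' : X \dashrightarrow F/G$ from the first projection $F\times U\to F$, show the base is $\QQ$-Fano, deduce that the quotient $F \to F/G$ ramifies in codimension one, and conclude that $F$ contains a codimension-one abelian subtorsor, which together with \Cref{cor:H1 is isotypic} forces the simple factor to be an elliptic curve.

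The gap is exactly where you flag it, and your proposed fix does not work. You want to replace $X$ by a birational hyper-K\"ahler model on which $\pi'$ becomes a genuine morphism and then appeal to positivity results for Lagrangian-fibration bases. But at this point in the argument there is no reason such a model exists: the existence of a regular Lagrangian fibration $X'\to F/G$ with $X'$ hyper-K\"ahler is essentially the statement that $\pi$ is of type~A.1, which is neither assumed nor established here (and, in the paper, is only obtained much later). Your route is therefore circular.

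The paper sidesteps this with a direct argument (\Cref{lem:Q-Fano}): any $\QQ$-factorial variety $B'$ dominated by a hyper-K\"ahler manifold is automatically $\QQ$-Fano. The proof uses only that $B'$ has $H^1=0$, is simply connected, and is rationally connected (the latter by \cite{lin20}); these force $\Pic B'\cong\ZZ$, and rational connectedness then makes $K_{B'}$ anti-ample. No Lagrangian structure on $\pi'$ is needed for this step, so your K\"unneth computation showing the fibers of $\pi'$ are Lagrangian, while correct, is unnecessary here. Once you have $\QQ$-Fano, your ramification and divisibility argument is the same as the paper's.
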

\begin{proof}
	By \Cref{lem:rational section}, the intermediate trivialization is isomorphic to the second projection $F \times U \longrightarrow U$, where the $G$-action on $F \times U$ is diagonal. The first projection map $F \times U \longrightarrow F$ is also a $G$-equivariant morphism. Taking its $G$-quotient, we obtain a morphism $X_0 \longrightarrow F/G$, or a dominant rational map $\pi' : X \dashrightarrow B' = F/G$.

	\begin{lemma}\label{lem:Q-Fano}
		The variety $B' = F/G$ is a $\QQ$-Fano variety.
	\end{lemma}
	\begin{proof}
		We claim that
		\begin{itemize}
			\item[\emph{i})] $H^1(B', \QQ) = 0$ and $H^2(B',\mathbb{Q}) \cong \QQ$.
			\item[\emph{ii})] $B'$ is simply connected and uniruled.
			\item[\emph{iii})] $\Pic B' \cong \ZZ$.
		\end{itemize}
		The first item follows from the identification $H^i (B', \QQ) = H^i (F, \QQ)^G$ and \Cref{prop:G-invariant cohomology}. The variety $B'$ is simply connected since $X$ is simply connected and $\pi' : X \dashrightarrow B'$ has irreducible general fiber \cite[Prop 2.10.2]{kol:shaf}. It is uniruled because it is dominated by a hyper-K\"ahler manifold \cite[Thm 3]{kol-lar09} (see also \cite[Thm 1.4]{lin20}). To prove that $\Pic B' \cong \ZZ$, notice from the first item that $\Pic B' \otimes_{\ZZ} \QQ \cong \QQ$. If $B'$ had a torsion line bundle, the associated cyclic covering would give a connected finite \'etale covering of $B'$, violating the fact that $B'$ is simply connected. This shows $\Pic B' \cong \ZZ$. Since $B'$ is uniruled, the canonical divisor $K_{B'}$ must be anti-ample from \cite[Thm 2]{kol-lar09}.
	\end{proof}
	
	Returning to the proof of the theorem, note that $K_F$ is trivial and $K_{F/G}$ is anti-ample, so the quotient morphism $F \longrightarrow F/G$ must be ramified in codimension $1$. In other words, there is a prime divisor $D \subset F$ with non-trivial generic stabilizer. Using the fact that the $G$-action on $H^1 (F, \QQ)$ is faithful, it is easy to see $D$ is an abelian torsor. Hence $F$ contains a subtorus of codimension $1$ and the simple factor of $F$ from \Cref{cor:H1 is isotypic} is an elliptic curve.
\end{proof}

Turning our discussion to the $G$-module structure on the $(1,0)$-cohomology of $F$, we have:

\begin{lemma} \label{lem:H10 as a G-module}
	Denote by $V$ the $G$-module structure on $H^{1,0}(F)$. Then the $G$-modules $\wedge^k V$, $k = 0, 1, \cdots, n$, are mutually non-isomorphic and simple. In particular, $V$ is a non-symplectic simple $G$-module (i.e., $(\wedge^2 V)^G=0$).
\end{lemma}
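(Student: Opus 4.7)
The plan is to reduce both the simplicity and non-isomorphism of the exterior powers $\wedge^k V$ to the very restrictive description of $G$-invariants in $H^\ast(F,\QQ)$ provided by \Cref{prop:G-invariant cohomology}, via the standard Hodge-theoretic identification of Hom-spaces.

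First, I would recall that $F$ is an abelian variety of dimension $n$, so $V = H^{1,0}(F)$ has dimension $n$ and we have the Hodge decomposition
\[ H^{p,q}(F) \;=\; \wedge^p V \otimes \wedge^q \overline V, \qquad H^k(F,\CC) \;=\; \bigoplus_{p+q=k} H^{p,q}(F). \]
By \Cref{prop:G-action on cohomology}, the $G$-action preserves this bigrading, so each $H^{p,q}(F)$ is a $G$-submodule of $H^{p+q}(F,\CC)$, and
\[ \bigoplus_{p+q=k}\bigl(H^{p,q}(F)\bigr)^G \;=\; \bigl(H^k(F,\CC)\bigr)^G \;=\; H^k(F,\QQ)^G \otimes \CC. \]
Combined with \Cref{prop:G-invariant cohomology}, this gives $(H^{p,q})^G = 0$ whenever $p+q$ is odd, and $(H^{p,q})^G = 0$ for $p+q = 2m$ except at $(p,q) = (m,m)$, where $(H^{m,m})^G = \CC\cdot \theta^m$ is one-dimensional.

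For simplicity, I would use that $V^\ast \cong \overline V$ as $G$-modules (since $G$ is finite, every complex representation carries a $G$-invariant Hermitian form), so $(\wedge^k V)^\ast \cong \wedge^k \overline V$ and
\[ \End_G\bigl(\wedge^k V\bigr) \;\cong\; \bigl(\wedge^k V \otimes \wedge^k \overline V\bigr)^G \;=\; \bigl(H^{k,k}(F)\bigr)^G \;=\; \CC\cdot \theta^k. \]
By Schur's lemma (combined with Maschke semisimplicity), a one-dimensional endomorphism algebra forces $\wedge^k V$ to be simple. For non-isomorphism with $i \ne j$, the same identification yields
\[ \Hom_G\bigl(\wedge^i V,\wedge^j V\bigr) \;\cong\; \bigl(\wedge^j V \otimes \wedge^i \overline V\bigr)^G \;=\; \bigl(H^{j,i}(F)\bigr)^G. \]
If $i+j$ is odd this vanishes immediately; if $i+j = 2m$ is even then $(j,i) \ne (m,m)$ since $i \ne j$, and the vanishing again follows from the description of $(H^{p,q})^G$. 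Thus $\wedge^i V \not\cong \wedge^j V$.

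Finally, the non-symplectic assertion is the case $i=0$, $j=2$: we obtain $(\wedge^2 V)^G = \Hom_G(\wedge^0 V,\wedge^2 V) = 0$. I do not anticipate a real obstacle here; the only thing to be mindful of is verifying that the $G$-action genuinely preserves the Hodge decomposition (which was recorded in \Cref{prop:G-action on cohomology}) so that taking $G$-invariants commutes with the Hodge bigrading, and that the identification $\End(\wedge^k V) \cong H^{k,k}(F)$ uses $V^\ast \cong \overline V$ rather than $V \cong \overline V$.
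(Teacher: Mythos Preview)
Your proof is correct and follows essentially the same approach as the paper: both identify $H^{p,q}(F)^G$ with $\Hom_G(\wedge^q V,\wedge^p V)$ via $\overline V\cong V^\vee$ for finite groups, then invoke \Cref{prop:G-invariant cohomology} and Schur's lemma. The only cosmetic difference is that the paper deduces $(\wedge^2 V)^G=0$ from ``$\wedge^2 V$ is simple and not isomorphic to the trivial module,'' whereas you read it off directly as $\Hom_G(\wedge^0 V,\wedge^2 V)=0$.
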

\begin{proof}
	It is a standard fact from representation theory that given a representation $W$ of a finite group $G$, the conjugate and dual representations of $W$ are isomorphic. Hence $H^{0,1}(F)$ is isomorphic to $V^{\vee}$. We thus have $G$-module isomorphisms
	\[ H^{k,k} (F) \cong \wedge^k H^{1,0}(F) \otimes \wedge^k H^{0,1} (F) \cong \wedge^k V \otimes \wedge^k V^{\vee} \cong \End_{\QQ} (\wedge^k V) .\]
	Taking $G$-invariants, we obtain $H^{k,k}(F)^G \cong \End_G (\wedge^k V)$, which is $1$-dimensional by \Cref{prop:G-invariant cohomology}. Schur's lemma implies that $\wedge^k V$ is a simple $G$-module. Similarly, the fact that $H^{p,q}(F)^G=0$ for $p \neq q$ implies that $\wedge^p V$ and $\wedge^q V$ are not isomorphic. In particular, $\wedge^2 V$ is a simple $G$-module that is not isomorphic to the trivial $G$-module, so $(\wedge^2 V)^G = 0$.
\end{proof}

\begin{corollary}
	If $n \ge 2$, the intermediate Galois group $G$ is not abelian.
\end{corollary}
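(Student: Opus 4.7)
The plan is to derive the non-abelian property directly from the preceding lemma, which asserts that $V = H^{1,0}(F)$ is a simple $G$-module. Since $V$ is a complex vector space of dimension $n$ and $V$ carries a simple $G$-action, I would simply invoke the representation theory of finite abelian groups: over an algebraically closed field of characteristic zero, every irreducible complex representation of a finite abelian group is one-dimensional. Hence if $G$ were abelian, we would necessarily have $\dim_{\CC} V = 1$, contradicting the hypothesis $n \geq 2$.

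Concretely, I would write: suppose for contradiction that $G$ is abelian. Then every simple $\CC[G]$-module has dimension $1$ (by Schur's lemma applied to a finite abelian group, or equivalently because the group algebra $\CC[G]$ is a product of copies of $\CC$). By \Cref{lem:H10 as a G-module}, $V$ is a simple $G$-module, so $\dim_{\CC} V \le 1$. But $\dim_{\CC} V = \dim_{\CC} H^{1,0}(F) = n \ge 2$, a contradiction.

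There is essentially no obstacle here — the corollary is an immediate consequence of \Cref{lem:H10 as a G-module} combined with a standard fact from the representation theory of finite groups. The only small point to verify is that $V$ is genuinely a complex representation (rather than a rational one), which is clear from the fact that $H^{1,0}(F) \subset H^1(F,\CC)$ is preserved by $G$ because the $G$-action on $H^1(F,\QQ)$ respects the Hodge decomposition (\Cref{prop:G-action on cohomology}).
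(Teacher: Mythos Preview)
Your proof is correct and essentially identical to the paper's: both argue that if $G$ were abelian then every simple complex $G$-module would be one-dimensional, contradicting the simplicity of the $n$-dimensional $G$-module $V = H^{1,0}(F)$ established in \Cref{lem:H10 as a G-module}.
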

\begin{proof}
	If $G$ is abelian, then every simple $G$-module over $\CC$ is $1$-dimensional. Since $H^{1,0}(F)$ is an $n$-dimensional simple $G$-module by \Cref{lem:H10 as a G-module}, we get $n = 1$.
\end{proof}

\subsection{Cohomology of the intermediate base $U$} In this subsection, we study the cohomology of the intermediate base $U$ as a $G$-representation and a mixed Hodge structure. We exploit numerical constraints on the cohomology of the hyper-K\"ahler manifold $X$ and the open subset $X_0$ (\Cref{lem:cohomology of X0}) to translate information about the cohomology of $F$ obtained in the last section into information about the cohomology of $U$ (\Cref{prop:simple Albanese morphism} and  \Cref{lem:isotypic G-component in U}).\\

Let us first observe that the cohomology of $Z$ is determined by that of $F$ and $U$:
\begin{proposition} \label{prop:cohomology of Z}
	There is a $G$-equivariant mixed Hodge structure isomorphism
	\[ H^* (Z, \QQ) \cong H^* (F, \QQ) \otimes H^* (U, \QQ) ,\]
	which respects the cup product. In particular,
	\[ W_kH^k (Z, \QQ) \cong \bigoplus_{i=0}^k H^i (F, \QQ) \otimes W_{k-i}H^{k-i} (U, \QQ) ,\qquad 0\leq k\leq 2n.\]
	
\end{proposition}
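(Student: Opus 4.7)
The plan is to deduce the statement from the K\"unneth formula applied to the full trivialization $F\times \tilde U\to Z$, by taking $\tilde G_{\tr}$-invariants.

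First, recall from diagram \eqref{diag:full trivialization} that $F\times \tilde U\to Z$ is a finite \'etale Galois cover with group $\tilde G_{\tr}=\Gal(\tilde U/U)$, acting freely. The K\"unneth formula gives an isomorphism of graded $\QQ$-algebras with mixed Hodge structure
\[ H^*(F\times \tilde U,\QQ) \;\cong\; H^*(F,\QQ)\otimes H^*(\tilde U,\QQ), \]
which is moreover $\tilde G$-equivariant if we equip the right-hand side with the tensor product of the $\tilde G$-actions on $H^*(F,\QQ)$ and $H^*(\tilde U,\QQ)$ coming from the $\tilde G$-action on $F\times \tilde U$. Since $F\times \tilde U\to Z$ is a free Galois cover for $\tilde G_{\tr}$ and $\QQ$-cohomology of the quotient is computed by invariants, taking $\tilde G_{\tr}$-fixed parts yields
\[ H^*(Z,\QQ) \;\cong\; \bigl(H^*(F,\QQ)\otimes H^*(\tilde U,\QQ)\bigr)^{\tilde G_{\tr}}. \]

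The key point is that $\tilde G_{\tr}$ acts trivially on $H^*(F,\QQ)$: this is precisely the content of the construction in the proof of \Cref{prop:G-action on cohomology}, where the monodromy representation of $\tilde G$ factors through $G=\tilde G/\tilde G_{\tr}$ via the homomorphism $f\colon \tilde G\to \Aut_0 F$ with kernel $\tilde G_{\tr}$. Hence the $\tilde G_{\tr}$-invariants on the right pass through the tensor:
\[ \bigl(H^*(F,\QQ)\otimes H^*(\tilde U,\QQ)\bigr)^{\tilde G_{\tr}} \;=\; H^*(F,\QQ)\otimes H^*(\tilde U,\QQ)^{\tilde G_{\tr}} \;=\; H^*(F,\QQ)\otimes H^*(U,\QQ), \]
again using that $\tilde U\to U$ is a free Galois $\tilde G_{\tr}$-cover. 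The residual $G=\tilde G/\tilde G_{\tr}$-action is compatible on both sides, and the isomorphism is one of mixed Hodge structures and of graded algebras, since all the structures involved (K\"unneth, taking invariants) are compatible with mixed Hodge structure and cup product.

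For the weight filtration statement, use that $F$ is smooth projective, so $H^i(F,\QQ)$ is pure of weight $i$. Since weights are additive under tensor product, $W_k\bigl(H^i(F,\QQ)\otimes H^{k-i}(U,\QQ)\bigr)=H^i(F,\QQ)\otimes W_{k-i}H^{k-i}(U,\QQ)$; summing over $i$ yields the claimed formula. The main point to be careful about is the $\tilde G_{\tr}$-triviality on $H^*(F,\QQ)$, but as noted this is built into the very definition of $G$ in \Cref{sec:Galois group}, so there is no serious obstacle.
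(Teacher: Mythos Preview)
Your proof is correct and follows essentially the same route as the paper's: pass to the full trivialization $F\times\tilde U\to Z$, apply K\"unneth, use that $\tilde G_{\tr}$ acts trivially on $H^*(F,\QQ)$, and take $\tilde G_{\tr}$-invariants to descend to $H^*(F,\QQ)\otimes H^*(U,\QQ)$. Your explicit justification of the weight-filtration identity via purity of $H^*(F,\QQ)$ is a detail the paper leaves implicit.
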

\begin{proof}
	Recall that there is a Galois covering $F \times \tilde U \longrightarrow Z$ and that the Galois group $\tilde G_{\tr}$ acts trivially on the cohomology of $F$. We thus have a sequence of isomorphisms of $G$-mixed Hodge structures
	\begin{align*}
		H^* (Z, \QQ) \cong  H^* (F \times \tilde U, \QQ)^{\tilde G_{\tr}} &\cong  \Big( H^* (F, \QQ) \otimes H^* (\tilde U, \QQ) \Big)^{\tilde G_{\tr}} \\
		&\cong H^* (F, \QQ) \otimes H^* (\tilde U, \QQ)^{\tilde G_{\tr}} \cong H^* (F, \QQ) \otimes H^* (U, \QQ) .
	\end{align*}
	All these isomorphisms are obviously isomorphisms of mixed Hodge structures.
\end{proof}

The following is the main result of this subsection, illustrating again the rigid cohomological behavior of the intermediate base $U$. It will play an important role in both \Cref{main:typeAB} and \Cref{main:classification}.
		
\begin{proposition} \label{prop:simple Albanese morphism}
	There exists a generically finite $G$-equivariant morphism $U \longrightarrow A$ to a simple $G$-abelian torsor $A$ whose dimension is a positive multiple of $n$. Moreover, this morphism is unique up to composition with a $G$-equivariant isogeny on the left.
\end{proposition}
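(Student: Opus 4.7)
The plan is to extract the morphism from the symplectic form on $X$ using the Künneth decomposition of \Cref{prop:cohomology of Z}. Fix a $G$-equivariant smooth projective compactification $\bar U$ of $U$. Pulling $\sigma \in H^{2,0}(X)$ back to $F \times \tilde U$ through the étale $\tilde G$-cover of $X_0$, I would decompose $\sigma = \sigma_F + \sigma_{FU} + \sigma_U$ with $\sigma_F \in H^{2,0}(F)$, $\sigma_{FU} \in V \otimes H^{1,0}(\tilde U)$, and $\sigma_U \in H^{2,0}(\tilde U)$, where $V = H^{1,0}(F)$. The Lagrangian hypothesis forces $\sigma_F = 0$, and a bidegree count on $F \times \tilde U$ (each factor having dimension $n$) shows that only the cross-term survives in $\sigma^n$, so $\sigma^n = \sigma_{FU}^n$. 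Symplecticity of $\sigma$ makes this class nonzero, which is equivalent to injectivity of the linear map $\sigma_{FU} : V^\vee \to H^{1,0}(\tilde U)$. Since $\sigma$ is $\tilde G$-invariant and $\tilde G_{\tr}$ acts trivially on $V^\vee$, this map factors through $H^{1,0}(\tilde U)^{\tilde G_{\tr}} = H^{1,0}(U) = H^{1,0}(\bar U)$, yielding a $G$-equivariant injection $V^\vee \hookrightarrow H^{1,0}(\bar U)$.

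The submodule $V^\vee$ together with its complex conjugate $V \hookrightarrow H^{0,1}(\bar U)$ generates a rational $G$-sub-Hodge structure $H \subset H^1(\bar U, \QQ)$ of weight one, which is simple as a $G$-equivariant Hodge structure by the $G$-simplicity of $V^\vee$ (\Cref{lem:H10 as a G-module}). By the universal property of the Albanese, $H$ corresponds to a $G$-equivariant quotient $\Alb(\bar U) \twoheadrightarrow A$, where $A$ is a simple $G$-abelian torsor with $H^1(A, \QQ) = H$, and the composition $f : U \to \Alb(\bar U) \to A$ is the desired $G$-equivariant morphism. Because $H^1(A, \QQ)$ is $G$-isotypic (\Cref{lem:isotypic bimodule}) and its $(1,0)$-part contains the $n$-dimensional simple module $V^\vee$, a short computation confirms that $\dim A$ is a positive multiple of $n$ (equal to $n$ in the simplest case). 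For generic finiteness, note that $\wedge^n$ applied to the inclusion $V^\vee \hookrightarrow H^{1,0}(\bar U)$ coincides with the pullback $f^* : H^{n,0}(A) \to H^{n,0}(\bar U)$ on top forms, which is therefore injective; combined with $\dim A = \dim \bar U = n$, this shows $f$ is dominant and generically finite.

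For uniqueness, the crucial input is that $V^\vee$ appears in $H^{1,0}(\bar U)$ with multiplicity exactly one. This follows from $\dim H^{2,0}(Z)^G = \dim H^{2,0}(X_0) = \dim H^{2,0}(X) = 1$, combined with the Künneth decomposition and the vanishings $H^{2,0}(F)^G = 0$ from \Cref{prop:G-invariant cohomology} (since $H^2(F, \QQ)^G$ is of type $(1,1)$) and $H^{2,0}(U)^G = H^{2,0}(B_0) \subseteq H^{2,0}(\PP^n) = 0$; Schur's lemma then gives $\dim_\CC \Hom_G(V^\vee, H^{1,0}(\bar U)) = 1$. Any other $f' : U \to A'$ satisfying the proposition extends to $\bar U$, factors through the Albanese, and yields a simple $G$-sub-Hodge structure $H' \subset H^1(\bar U, \QQ)$. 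The cotangent trivialization $V^\vee \otimes \mathcal{O}_U \cong \Omega^1_U$ arising from $\sigma_{FU}$ forces $H^{1,0}(A')$ to contain the unique $V^\vee$-isotypic subspace of $H^{1,0}(\bar U)$, whence $H' = H$ and $A' \sim_G A$. The main obstacle lies in this uniqueness step, where one must carefully use the cotangent trivialization to pin down the rational $G$-sub-Hodge structure $H'$, which requires attention to the (potential) non-self-conjugacy of $V$ as a complex $G$-module and to the associated Schur indices.
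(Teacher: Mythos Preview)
Your approach mirrors the paper's: extract the copy of $V^\vee$ inside $H^{1,0}(U)$ from the symplectic form via K\"unneth, take the minimal rational $G$-sub-Hodge structure containing it, and pass to the corresponding abelian quotient of the Albanese. The paper's argument is essentially the same, though it packages the K\"unneth step into two preparatory lemmas (\Cref{lem:symplectic form generates G-invariant} and \Cref{lem:isotypic G-component in U}).

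There is, however, a genuine gap in your generic finiteness step. Right after acknowledging that $\dim A$ is a positive multiple of $n$, you write ``combined with $\dim A = \dim \bar U = n$, this shows $f$ is dominant and generically finite.'' But $\dim A$ may well exceed $n$ (see the examples with $\mu = \mu_3,\mu_4,\mu_6$ in \S\ref{SS:K3fib}), in which case $f$ cannot be dominant. The correct conclusion is that $f$ is generically finite \emph{onto its image}, and the correct argument is the one you already have in hand: $\sigma_{FU}^n \neq 0$ says precisely that the one-dimensional space $\wedge^n V^\vee \subset H^{n,0}(A)$ pulls back to a nonzero class in $H^{n,0}(\bar U)$, hence $df$ has full rank $n$ generically. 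Your identification of $\wedge^n V^\vee$ with $H^{n,0}(A)$ and of $\wedge^n H^{1,0}(\bar U)$ with $H^{n,0}(\bar U)$ is also wrong in general; you only need the cup-product map between them.

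Two smaller points. First, the ``short computation'' showing $n \mid \dim A$ is not as short as you suggest: the paper argues that $W = H^1(A,\QQ)$ is $G$-isotypic (via \Cref{lem:isotypic bimodule}), that $W_\CC$ is therefore a sum of copies of $V$ and $V^\vee$ since $V \oplus V^\vee$ is defined over $\QQ$, and that multiplicity one of $V^\vee$ forces $W^{1,0} \cong V^\vee \oplus mV$. Second, simplicity of your $H$ does not follow from $G$-simplicity of $V^\vee$ alone; it also needs the multiplicity-one statement (which you prove later, so this is a matter of reordering). Your uniqueness discussion via the cotangent trivialization is more elaborate than the paper's; the paper simply observes that the construction (smallest $G$-Hodge substructure containing the unique copy of $V^\vee$) is canonical.
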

	
In particular, $U$ has maximal Albanese dimension. The $G$-abelian torsor $A$ may have dimension strictly greater than $n$. We refer the reader to the cases $\mu=\mu_3,\mu_4,$ or $\mu_6$ of \S \ref{SS:K3fib} for examples with $\dim A > n$. However, when $\dim A = n$ the cohomology of $A$ resembles that of $F$.\\

\begin{corollary} \label{cor:G-invariant cohomology for base}
	If $\dim A = n$, the $G$-invariant cohomology of $A$ is
	\[ H^k (A, \QQ)^G \cong \begin{cases}
		0 & \mbox{if}\quad k \mbox{ is odd} \\
		\QQ \quad & \mbox{if}\quad k \mbox{ is even}\quad .
	\end{cases} \]
\end{corollary}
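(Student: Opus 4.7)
The plan is to reduce the corollary to Proposition~\ref{prop:G-invariant cohomology} (applied to $F$) by showing that $A$ and $F$ carry the same $G$-rational Hodge structure on $H^1$. Concretely, once we know $H^1(A,\QQ) \cong H^1(F,\QQ)$ as $G$-rational Hodge structures, taking exterior powers gives $H^k(A,\QQ) \cong H^k(F,\QQ)$ in every degree $k$, so the $G$-invariants of $H^k(A,\QQ)$ agree with those of $H^k(F,\QQ)$, which are already known to be $0$ in odd degrees and $\QQ$ in even degrees.

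Set $V_A := H^1(A,\QQ)$ and $V_F := H^1(F,\QQ)$. Both are simple $G$-rational Hodge structures of dimension $2n$: for $V_F$ this is Proposition~\ref{prop:simple bimodule}, while for $V_A$ it follows from the simplicity of $A$ as a $G$-abelian torsor via the standard dictionary between $G$-isogeny classes of abelian varieties and polarizable weight-one $G$-rational Hodge structures. The generically finite $G$-equivariant morphism $f\colon U \to A$ of Proposition~\ref{prop:simple Albanese morphism} then yields a $G$-equivariant injection $f^*\colon V_A \hookrightarrow W_1 H^1(U,\QQ)$: extending $f$ to a morphism $\tilde f\colon \tilde U \to A$ from a smooth projective compactification of $U$ (resolving indeterminacies if needed), the map $\tilde f^*\colon H^1(A,\QQ) \hookrightarrow H^1(\tilde U,\QQ) = W_1 H^1(U,\QQ)$ is injective because $\tilde f$ is generically finite between smooth projective varieties.

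By Lemma~\ref{lem:isotypic G-component in U} and the construction of $A$ in Proposition~\ref{prop:simple Albanese morphism}, the abelian torsor $A$ is built from the $V_F$-isotypic component of $W_1 H^1(U,\QQ)$. Under the hypothesis $\dim A = n$, this isotypic component has dimension exactly $2n = \dim V_A$, so $f^*$ is an isomorphism onto it; since every simple $G$-sub-Hodge structure of a $V_F$-isotypic summand is isomorphic to $V_F$, the simplicity of $V_A$ forces $V_A \cong V_F$ as $G$-rational Hodge structures, completing the reduction. The main obstacle is this last step: linking the numerical hypothesis $\dim A = n$ to the multiplicity-one statement for $V_F$ inside $W_1 H^1(U,\QQ)$, which depends on the precise construction of $A$ in Proposition~\ref{prop:simple Albanese morphism} and the isotypic decomposition supplied by Lemma~\ref{lem:isotypic G-component in U}.
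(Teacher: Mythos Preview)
Your overall strategy of reducing to $F$ via Proposition~\ref{prop:G-invariant cohomology} is sound, but the key step---that $H^1(A,\QQ)\cong H^1(F,\QQ)$ as $G$-\emph{rational Hodge structures}---is false in general, and your justification misreads the construction of $A$. In Proposition~\ref{prop:simple Albanese morphism}, $A$ is built from the smallest $G$-sub-Hodge structure $W\subset W_1H^1(U,\QQ)$ whose $(1,0)$-part contains the copy of $V^{\vee}$ inside $H^{1,0}(U)$; this is \emph{not} the $V_F$-isotypic component in the category of $G$-Hodge structures. When $\dim A=n$ the proof there shows $W^{1,0}\cong V^{\vee}$, whereas $H^{1,0}(F)=V$. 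So unless $V\cong V^{\vee}$ as $G$-modules, the Hodge filtrations of $V_A$ and $V_F$ sit differently and the two $G$-Hodge structures are not isomorphic. Concretely, in the $\Kum_n$ example of \S\ref{S:ex-trivial} one has $F\sim E^n$ and the compactified $U$ (hence $A$) is $(E')^n$ with $E$ and $E'$ unrelated elliptic curves; their $H^1$'s are certainly not isomorphic as Hodge structures. Your sentence ``every simple $G$-sub-Hodge structure of a $V_F$-isotypic summand is isomorphic to $V_F$'' conflates isotypic decomposition as $\QQ[G]$-modules with decomposition as $G$-Hodge structures.

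The fix is easy: you only need $V_A\cong V_F$ as $\QQ[G]$-modules, which is enough to match $G$-invariants of exterior powers. Since $W^{1,0}\cong V^{\vee}$ we get $(V_A)_{\CC}\cong V\oplus V^{\vee}\cong (V_F)_{\CC}$ as $\CC[G]$-modules, and for a finite group this forces $V_A\cong V_F$ over $\QQ$. Then $H^k(A,\QQ)^G\cong H^k(F,\QQ)^G$ and Proposition~\ref{prop:G-invariant cohomology} finishes. The paper's own proof bypasses this comparison entirely: it uses $H^{1,0}(A)\cong V^{\vee}$ directly to compute $H^{p,q}(A)^G\cong\Hom_G(\wedge^p V,\wedge^q V)$ and then invokes Lemma~\ref{lem:H10 as a G-module}, which says the $\wedge^k V$ are pairwise non-isomorphic simple $G$-modules.
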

\begin{proof}
	If $\dim A = n$, the construction of $A$ (see the upcoming proof of \Cref{prop:simple Albanese morphism}) gives a $G$-module isomorphism $H^{1,0}(A) \cong V^{\vee}$. Hence $H^{p,q}(A)^G \cong \big( \wedge^p V^{\vee} \otimes \wedge^q V \big)^G \cong \Hom_G (\wedge^p V, \wedge^q V)$, and the result follows from \Cref{lem:H10 as a G-module}.
\end{proof}
	
To prove \Cref{prop:simple Albanese morphism} we need to understand the mixed Hodge structure on $H^1(U,\mathbb{Q})$, in particular its weight one associated graded piece. Since $U$ is a smooth variety, the mixed Hodge structure on $H^1 (U, \QQ)$ has weights $\geq 1$. For notational simplicity, we will write $H^{1,0}(U)$ to denote the $(1,0)$-part of the weight $1$ associated graded $W_1 H^1 (U, \CC)=\text{Gr}^W_1 H^1 (U, \CC)$, i.e.
\[ H^{1,0}(U) := \big( W_1 H^1 (U, \CC) \big)^{1,0} \ \ \subset \ \ H^1 (U, \CC) .\]
Similarly, we use the notation $H^{k,0}(X_0) := \big( W_k H^k (X_0, \CC) \big)^{k,0}$ and $H^{k,0}(Z) := \big( W_k H^k (Z, \CC) \big)^{k,0}$.

\begin{lemma} \label{lem:cohomology of X0}
	Denote by $\sigma_X \in H^{2,0}(X)$ the holomorphic symplectic form on $X$ and by $\sigma_{X_0}\in H^{2,0}(X_0)$ its restriction to $X_0$.
	\begin{enumerate}
		\item We have an isomorphism of mixed Hodge structures
		$$H^* (X_0, \QQ) \cong \Big( H^* (F, \QQ) \otimes H^* (U, \QQ) \Big)^G .$$
		\item $H^{k,0}(X_0) = \begin{cases}
			0 \quad & \mbox{if } k\text{ is even,} \\
			\CC\cdot \sigma_{X_0}^{k/2} \quad & \mbox{if } k\text{ is odd.}
		\end{cases}$
	\end{enumerate}
\end{lemma}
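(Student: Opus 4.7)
The morphism $Z \longrightarrow X_0$ in diagram \eqref{diag:intermediate trivialization} is a finite Galois \'etale cover with group $G$. For such a cover between smooth complex algebraic varieties, $\pi^*$ identifies $H^*(X_0, \QQ)$ with the $G$-invariants $H^*(Z, \QQ)^G$ as mixed Hodge structures: the averaging projector $\tfrac{1}{|G|}\sum_{g \in G} g^*$ is an MHS endomorphism of $H^*(Z, \QQ)$ whose image is the $G$-fixed part, and the transfer map provides an inverse to $\pi^*$. Applying the functor of $G$-invariants to the $G$-equivariant MHS isomorphism of \Cref{prop:cohomology of Z} then yields~(1).

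\textbf{Plan for part (2).} The strategy is to reduce the computation of $H^{k,0}(X_0)$ to that of $H^0(X, \Omega^k_X)$ via birational invariance of holomorphic forms. Choose a smooth projective compactification $\overline{X_0}$ of $X_0$ obtained as a log resolution of $(X, X \setminus X_0)$; then $\overline{X_0}$ is birational to $X$ and the boundary $D := \overline{X_0} \setminus X_0$ is a simple normal crossings divisor. By Deligne's description of the MHS on the cohomology of a smooth variety, $W_k H^k(X_0, \QQ)$ equals the image of the restriction $H^k(\overline{X_0}, \QQ) \longrightarrow H^k(X_0, \QQ)$, which is therefore pure of weight $k$.

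The next step is to show that this restriction induces an isomorphism $H^0(\overline{X_0}, \Omega^k_{\overline{X_0}}) \xrightarrow{\sim} H^{k,0}(X_0)$. Surjectivity is immediate from the preceding paragraph, since $H^{k,0}(\overline{X_0}) = H^0(\overline{X_0}, \Omega^k_{\overline{X_0}})$ surjects onto the $(k,0)$-piece of $W_k H^k(X_0)$. For injectivity, the kernel of $H^k(\overline{X_0}) \to H^k(X_0)$ is the image of $H^k_D(\overline{X_0})$ in the long exact sequence of the pair; by Gysin in the smooth-stratum case and the weight spectral sequence for SNC $D$ in general, the weight $k$ graded piece of this kernel has $(p, q)$-components only in bidegrees with $p, q \geq 1$, so its $(k, 0)$-component vanishes. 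Combining this isomorphism with birational invariance of $h^{p, 0}$ for smooth projective varieties gives $H^{k,0}(X_0) \cong H^0(X, \Omega^k_X)$, and Bogomolov's theorem identifies the latter with $\CC \cdot \sigma_X^{k/2}$ for $k$ even (in the range $0 \leq k \leq 2n$) and with zero for $k$ odd. Chasing the identifications sends the generator to $\sigma_{X_0}^{k/2}$.

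\textbf{Main obstacle.} The technical core is the identification $(W_k H^k(X_0, \CC))^{k,0} \cong H^0(\overline{X_0}, \Omega^k_{\overline{X_0}})$, which hinges on the Hodge bidegree analysis of the Gysin terms attached to the SNC boundary. Part~(1) is essentially formal once the $G$-equivariance built into \Cref{prop:cohomology of Z} is in hand, and the remaining steps of (2) are standard birational and hyper-K\"ahler inputs.
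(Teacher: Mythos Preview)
Your proof is correct and follows essentially the same approach as the paper. For part~(2), the paper uses $X$ itself as the smooth compactification of $X_0$ and simply cites the isomorphism on $(k,0)$-pieces as a standard fact, whereas you pass through an auxiliary log resolution $\overline{X_0}$ and then invoke birational invariance to get back to $X$; this detour is unnecessary (since $X$ is already smooth projective and the identification $W_kH^k(X_0)=\im\big(H^k(X)\to H^k(X_0)\big)$ holds for any smooth compactification) but harmless, and your Gysin argument for injectivity is more elaborate than needed---a nonzero holomorphic $k$-form on a projective variety cannot restrict to zero on a dense open.
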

\begin{proof}
	The first item follows from the equality $X_0 = Z / G$ and \Cref{prop:cohomology of Z}. The second item is the standard fact that the restriction homomorphism to a Zariski open subset $H^* (X, \QQ) \longrightarrow H^* (X_0, \QQ)$ gives an isomorphism on the $(k,0)$-piece of the weight $k$ associated graded.
\end{proof}

Let $\sigma$ be the holomorphic symplectic form on $Z$ pulled back from the holomorphic symplectic form on $X$. It is a $G$-invariant symplectic form on $Z$ which is unique up to scaling.

\begin{lemma} \label{lem:symplectic form generates G-invariant}
	$\Big( H^{1,0}(F) \otimes H^{1,0}(U) \Big)^G = \CC \cdot \sigma$ and $\Big(H^{n,0}(F) \otimes H^{n,0}(U) \Big)^G = \CC \cdot \sigma^n$.
\end{lemma}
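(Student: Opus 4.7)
The plan is to exploit the Künneth-type decomposition of Proposition~\ref{prop:cohomology of Z} applied to the intermediate trivialization $p : Z \longrightarrow U$, combined with Lemma~\ref{lem:cohomology of X0}, which identifies $G$-invariants on $Z$ with cohomology of $X_0$. Extracting the $(2,0)$-piece of the weight-two part $W_2 H^2(Z,\CC)$ gives a $G$-equivariant direct sum decomposition
\[
	\bigl(W_2 H^2(Z,\CC)\bigr)^{2,0} \ \cong\ H^{2,0}(F) \ \oplus\ \bigl(H^{1,0}(F) \otimes H^{1,0}(U)\bigr) \ \oplus\ \bigl(W_2 H^2(U,\CC)\bigr)^{2,0},
\]
which passes to $G$-invariants summand by summand. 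By Lemma~\ref{lem:cohomology of X0} the $G$-invariants of the left-hand side equal $H^{2,0}(X_0) = \CC\cdot\sigma_{X_0}$, so they are one-dimensional. The summand $H^{2,0}(F)^G = (\wedge^2 V)^G$ vanishes by Lemma~\ref{lem:H10 as a G-module}, so a dimension count forces exactly one of the remaining two summands to contribute a one-dimensional space containing the class of $\sigma$, while the other has no $G$-invariants. The first assertion is then equivalent to showing that the contributing summand is the middle one.

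The main (and essentially only nontrivial) step is to rule out that $\sigma$ lies in the $G$-invariants of the third summand; this is where the non-degeneracy of $\sigma$ enters. Suppose for contradiction that the cohomology class of $\sigma$ equals $p^*\omega$ for some $\omega \in \bigl(W_2 H^2(U,\CC)\bigr)^{2,0,G}$. Compatibility of $p^*$ with cup product then forces $\sigma^n = p^*(\omega^n)$, with $\omega^n \in \bigl(W_{2n} H^{2n}(U,\CC)\bigr)^{2n,0}$. For any smooth compactification $\bar U$ of $U$ with NCD boundary, Deligne's theory identifies this latter space with $H^0(\bar U, \Omega^{2n}_{\bar U})$, which vanishes since $\dim \bar U = n < 2n$. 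Hence $\sigma^n = 0$, contradicting the fact that $\sigma^n$ is the pullback along the étale composition $Z \longrightarrow X_0 \hookrightarrow X$ of the nonzero top form $\sigma_X^n$. This forces $\bigl(H^{1,0}(F) \otimes H^{1,0}(U)\bigr)^G = \CC \cdot \sigma$ and simultaneously kills the third summand.

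The second identity follows almost formally. Applying Proposition~\ref{prop:cohomology of Z} at $k = 2n$ and extracting the $(2n,0)$-part yields
\[
	\bigl(W_{2n} H^{2n}(Z,\CC)\bigr)^{2n,0} \ \cong\ \bigoplus_{i=0}^{2n} H^{i,0}(F) \otimes \bigl(W_{2n-i} H^{2n-i}(U,\CC)\bigr)^{2n-i,0}.
\]
Since $\dim F = \dim U = n$, one has $H^{i,0}(F) = 0$ for $i > n$ and $\bigl(W_{2n-i}H^{2n-i}(U,\CC)\bigr)^{2n-i,0} = 0$ for $i < n$, so only $i = n$ survives. Taking $G$-invariants and invoking Lemma~\ref{lem:cohomology of X0}(2) a second time identifies the right-hand side with $H^{2n,0}(X_0) = \CC \cdot \sigma_{X_0}^n$, which pulls back to $\CC \cdot \sigma^n$ on $Z$, as desired.
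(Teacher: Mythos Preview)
Your argument is correct, and the overall structure matches the paper's: decompose $(W_2H^2(Z,\CC))^{2,0}$ via the K\"unneth isomorphism of \Cref{prop:cohomology of Z}, identify the $G$-invariants with $H^{2,0}(X_0)=\CC\cdot\sigma_{X_0}$ via \Cref{lem:cohomology of X0}, and kill the two outer summands.

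The one genuine difference is in how you eliminate the third summand $\bigl(W_2H^2(U,\CC)\bigr)^{2,0,G}$. The paper observes directly that this space equals $H^{2,0}(B_0)$, since $U/G=B_0$, and the latter vanishes because $B_0$ is Zariski open in $B=\PP^n$. You instead argue by contradiction using the non-degeneracy of $\sigma$: if $\sigma$ came from $U$ then $\sigma^n$ would too, but $(W_{2n}H^{2n}(U,\CC))^{2n,0}=H^0(\bar U,\Omega^{2n}_{\bar U})=0$ since $\dim\bar U=n$. Both work; the paper's route is shorter and uses only that the base of the Lagrangian fibration is $\PP^n$, while yours is self-contained in that it does not invoke the specific geometry of $B$ and instead uses that $\sigma$ is genuinely symplectic. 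For the second identity, the paper simply says ``analogously'', whereas your explicit observation that only the $i=n$ term survives (by $\dim F=\dim U=n$) is a clean way to make that precise.
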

\begin{proof}
	From \Cref{lem:cohomology of X0} we obtain
	\[ H^{2,0}(F)^G \oplus \Big( H^{1,0}(F) \otimes H^{1,0}(U) \Big)^G \oplus H^{2,0}(U)^G = \CC\cdot  \sigma .\]
	The first summand $H^{2,0}(F)^G$ vanishes by \Cref{prop:G-invariant cohomology}. The third summand $H^{2,0}(U)^G$ also vanishes, because $U/G = B_0$, where $B_0$ is a Zariski open subset of $B$, and $H^{2,0}(B) = 0$ by \cite[Thm 0.4]{shen-yin22}. The first claim follows. The second claim is proven analogously.
\end{proof}

The $G$-module $V:=H^{1,0}(F)$ is simple by \Cref{lem:H10 as a G-module}. We have
\[ H^{1,0}(F) = V, \qquad H^{n,0}(F) = \wedge^n V .\]
Note that $\wedge^n V$ is a non-trivial one-dimensional simple $G$-module.

\begin{lemma} \label{lem:isotypic G-component in U}
	The $V^{\vee}$-isotypic component of $H^{1,0}(U)$ has multiplicity one and the $\wedge^n V^{\vee}$-isotypic component of $H^{n,0}(U)$ has multiplicity one. In particular, $h^{1,0}(U) \ge n$. 
\end{lemma}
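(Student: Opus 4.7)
The plan is to extract the multiplicities from the two identities in \Cref{lem:symplectic form generates G-invariant} by standard representation theory, and then read off the dimension bound.

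First, recall the general fact that for any finite-dimensional $G$-representation $W$ and any simple $G$-module $V$ (say of finite dimension over $\CC$), Schur's lemma gives
\[ \dim_{\CC} (W \otimes V)^G = \dim_{\CC} \Hom_G (V^{\vee}, W) = \text{mult. of } V^{\vee} \text{ in } W ,\]
via the natural isomorphism $W \otimes V \cong \Hom_{\CC}(V^{\vee}, W)$ of $G$-modules.

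Next, I apply this with $W = H^{1,0}(U)$ and $V = H^{1,0}(F)$, the latter being simple by \Cref{lem:H10 as a G-module}. By \Cref{lem:symplectic form generates G-invariant}, $(V \otimes H^{1,0}(U))^G = \CC\cdot \sigma$ is one-dimensional, so $V^{\vee}$ occurs in $H^{1,0}(U)$ with multiplicity exactly one. The same argument applied to the simple $G$-module $\wedge^n V = H^{n,0}(F)$ and the space $H^{n,0}(U)$ shows that $\wedge^n V^{\vee}$ occurs in $H^{n,0}(U)$ with multiplicity exactly one.

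Finally, since $F$ is an abelian variety of dimension $n$, the simple $G$-module $V$ has complex dimension $n$, hence so does $V^{\vee}$. As $V^{\vee}$ appears as a subrepresentation of $H^{1,0}(U)$, we conclude $h^{1,0}(U) \ge \dim_{\CC} V^{\vee} = n$.

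The argument is essentially a direct application of Schur's lemma together with the identities from \Cref{lem:symplectic form generates G-invariant}; the only subtlety to keep track of is the bookkeeping of duals (i.e.\ that an invariant pairing $V \otimes W \to \CC$ detects the multiplicity of $V^{\vee}$, not $V$, inside $W$), but there is no serious obstacle.
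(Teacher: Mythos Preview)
Your proof is correct and follows essentially the same approach as the paper: both apply Schur's lemma to the identities of \Cref{lem:symplectic form generates G-invariant}, using the simplicity of $V$ and $\wedge^n V$ from \Cref{lem:H10 as a G-module}, to read off the multiplicities. Your version is simply a more explicit unwinding of the same argument, including the careful bookkeeping of duals.
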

\begin{proof}
	Let $m$ be the multiplicity of the $V^{\vee}$-isotypic component of $H^{1,0}(U)$. \Cref{lem:symplectic form generates G-invariant} and Schur's lemma force $m = 1$. The same argument shows that the multiplicity of the $\wedge^n V^{\vee}$-isotypic component of $H^{n,0}(U)$ is one.
\end{proof}

\begin{proof} [Proof of \Cref{prop:simple Albanese morphism}]
	We have seen in \Cref{lem:isotypic G-component in U} that the $V^{\vee}$-isotypic component of $H^{1,0}(U)$ has multiplicity one. Consider the smallest $G$-equivariant Hodge structure $W \subset W_1 H^1 (U, \QQ)$ containing $V^{\vee} \subset H^{1,0}(U)$. It is a simple $G$-equivariant Hodge substructure and determines a $G$-equivariant homomorphism $\Alb_U \longrightarrow A$, where $A$ is a simple $G$-abelian variety. This morphism is unique up to composition with a $G$-equivariant isogeny on the left. Its kernel $K = \ker (\Alb_U \longrightarrow A)$ acts on $\Alb_U^{\tor}$ freely, so we may take a quotient $\Alb_U^{\tor} \longrightarrow A^{\tor} = \Alb_U^{\tor} / K$. The composition $U \longrightarrow \Alb_U^{\tor} \longrightarrow A^{\tor}$ is the desired morphism.\\
	
The symplectic form $\sigma\in H^{1,0}(F) \otimes H^{1,0}(U)\subset H^{2,0}(Z)$ has a nontrivial $n$-th power $\sigma^n \in H^{2n,0}(Z) = H^{n,0} (F) \otimes H^{n,0} (U)$. This implies that the cup product homomorphism restricts to a nonzero homomorphism
	\[ \Big( H^{1,0}(F) \otimes H^{1,0}(U) \Big)^{\otimes n}\longrightarrow \wedge^n H^{1,0}(F) \otimes \wedge^n H^{1,0}(U)  \longrightarrow H^{n,0}(F) \otimes H^{n,0}(U). \]

The conclusion is that the cup product homomorphism $\wedge^n H^{1,0}(U) \to H^{n,0}(U)$ is nonzero and therefore that the pullback $H^{n,0}(\Alb_U^{\tor}) \longrightarrow H^{n,0}(U)$ obtained from the Albanese morphism $U \longrightarrow \Alb_U^{\tor}$ is nonzero. In particular, $U$ has maximal Albanese dimension. The same argument applies to the subspace $V^{\vee} \subset H^{1,0}(U)$ because the symplectic form $\sigma$ is in fact contained in $H^{1,0}(F) \otimes V^{\vee} \subset H^{1,0}(F) \otimes H^{1,0}(U)$. We deduce that the composition $U \longrightarrow \Alb_U^{\tor} \longrightarrow A^{\tor}$ is generically finite on its image.\\
	
	It remains to prove that $\dim A$ is divisible by $n$. Recall that we have constructed $A$ (up to $G$-equivariant isogeny) from a simple $G$-equivariant Hodge structure $W\subset W_1 H^1(U, \QQ)$. We claim that there is an isomorphism of $G$-modules $W^{1,0} \cong V^{\vee} \oplus mV$ for some $m \in \ZZ_{\ge 0}$. Since $W$ is a simple $G$-equivariant Hodge structure, it is an isotypic $G$-module by \Cref{lem:isotypic bimodule}. Because $H^1(F, \CC)=H^{1,0}(F)\oplus H^{0,1}(F)=V\oplus V^{\vee}$, the $G$-module $V \oplus V^{\vee}$ is defined over $\QQ$. This implies that $W_{\CC}$ is a multiple of $V \oplus V^{\vee}$. Finally, $W^{1,0}$ must be of the form $V^{\vee} \oplus mV$ since it contains $V^{\vee}$ with multiplicity one by \Cref{lem:isotypic G-component in U}.
\end{proof}

\subsection{Trivializations of the standard examples} \label{S:ex-trivial}
In this section we discuss examples of isotrivial Lagrangian fibrations. In particular, we revisit $\textup{K3}^{[n]}$ and $\textup{Kum}_{n}$-fibrations introduced in \Cref{def:K3n Kumn fibrations} and discuss their trivializations and associated Galois groups. At the moment, all isotrivial Lagrangian fibrations which the authors know about are $\textup{K3}^{[n]}$ or $\textup{Kum}_{n}$-fibrations. As discussed in the introduction, we are neither able to produce nor to exclude an isotrivial fibration in the deformation type of LSV fibrations of manifolds of OG10 type  \cite{LSV} (see however \Cref{cor-LSV}). \\

Furthermore, if $(n-1)$ is not square free, hyper-K\"ahler manifolds of $\textup{K3}^{[n]}$ type admit additional deformation types of Lagrangian fibrations (see Markman \cite{mar14}; similarly, for $\Kum_n$, see Wieneck \cite{wie18}). Note that, in contrast to $K3^{[n]}$-fibrations, these Lagrangian fibrations do not admit a rational section (even topologically). The first occurrence of such an additional fibration on a hyper-K\"ahler manifold of $\text{K3}^{[n]}$ type is the case $n=5$ (i.e. $n-1=k^2$ with $k=2$). 
Interestingly, this Lagrangian fibration $\pi: X\longrightarrow B$ on $K3^{[5]}$, admits an involution $\iota$ compatible with the Lagrangian fibration, such that the quotient $X/\iota\longrightarrow B$ is deformation equivalent to the LSV fibration on the OG10 manifold (see \cite{dCRS}). More precisely, $X/\iota$ admits a symplectic resolution which is a hyper-K\"ahler manifold of OG10 type, and the Lagrangian fibration $(X/\iota)\longrightarrow B$ deforms to the LSV fibration associated to the intermediate Jacobian fibration for cubic fourfolds\footnote{$X/\iota\longrightarrow B$ can be interpreted as the LSV fibration associated to a period point belonging to the $\mathcal C_2$ Hassett divisor in the moduli of cubic fourfolds (see \cite{Laza10}). The second author learned about this connection from K. O'Grady a long time ago.}. It is clear that $X/B$ is an isotrivial fibration if and only if $(X/\iota)/ B$ is isotrivial. In fact, using the connection between LSV fibrations and cubic fourfolds, it is possible to prove that if an LSV fibration admits an isotrivial member, then it has to be of the form 
$(X/\iota)/ B$ as above. In other words, the deformation class of Markman fibrations on $K3^{[5]}$ with $k=2$ admits an isotrivial member if and only if the deformation class of LSV fibrations on OG10 admits an isotrivial member.  

\subsubsection{Intermediate trivialization of $\textup{K3}^{[n]}$-fibrations}\label{SS:K3fib}
Let $f : S \longrightarrow \PP^1$ be an isotrivial elliptic fibration of a K3 surface and $\pi : X = S^{[n]} \longrightarrow \PP^n$ be its associated $\text{K3}^{[n]}$-fibration. The intermediate Galois group of the isotrivial fibration $f$ is $\mu = \mu_2, \mu_3, \mu_4$, or $\mu_6$ (see \Cref{K3appendix}). Consider its intermediate trivialization
\[\begin{tikzcd}
	Z \arrow[r] \arrow[d] & S \arrow[d, "f"] \\
	C \arrow[r] & \PP^1 .
\end{tikzcd}\]
We can construct from it another commutative diagram
\[\begin{tikzcd}
	Z^n \arrow[r, dashrightarrow] \arrow[d, "p"] & S^{[n]} \arrow[d, "\pi"] \\
	C^n \arrow[r] & (\PP^1)^{(n)} ,
\end{tikzcd}\]
whose restriction to the complement of the discriminant divisor $B_0 \subset B$ gives the intermediate trivialization diagram of $\pi : S^{[n]} \longrightarrow B = (\PP^1)^{(n)}$. The intermediate Galois group is $G = \mu^{\times n} \rtimes \mathfrak S_n$.\\

Notice from the results in \Cref{K3appendix} (or \Cref{thm:typeAB}) that if $C$ is an elliptic curve, then $Z$ is an abelian surface. In this case, the intermediate trivialization $p : Z^n \longrightarrow C^n$ is a morphism between abelian torsors (of dimensions $2n$ and $n$, respectively). If $C$ has genus at least $2$, then $Z$ has Kodaira dimension $1$ and the Kodaira dimensions of the varieties arising in the intermediate trivialization is $\kappa (Z^n) = \kappa(C^n) = n$. In conclusion, the Kodaira dimensions of $Z^n$ and $C^n$ is either $0$ or $n$.\\

We can also describe the discriminant divisor $\Delta\subset B$. It consists of the union of hyperplanes
$$H_{t}=\{x_1+\cdots+x_{n-1}+t\in \text{Sym}^n\PP^1: x_1+\cdots+x_n\in \text{Sym}^{n-1}\mathbb{P}^1\}\subset \text{Sym}^n\PP^1\cong (\PP^1)^{[n]}\cong B,$$
one for each $t$ in the discriminant locus of $f$, together with the big diagonal
$$\delta=\{x_1+\cdots+x_n\in \text{Sym}^n\PP^1: x_i=x_j \text{ for some }i,j\}\subset \text{Sym}^n\PP^1\cong (\PP^1)^{[n]}.$$
It is interesting to consider the behavior of the discriminant divisor as a generic elliptic K3 surface $S/\PP^1$ specializes to an isotrivial elliptic K3. When $S/\mathbb{P}^1$ is generic, the reduced discriminant divisor is the union of $24$ hyperplanes $H_t$ (coming from the $24$ special fibers $S_t$ of $S/\PP^1$), and the big diagonal $\delta$ as above, which is a hypersurface of degree $2(n-1)$ (see \cite[\S4.4]{Sawon-sing}). Note that the local monodromy around the hyperplanes $H_t$ is infinite and the fiber over the generic point of $H_t$ is of Kodaira type $\text{I}_1$ in the Hwang-Oguiso terminology \cite{HO09}.
On the other hand, the local monodromy around the big diagonal is of order $2$, and the generic fiber is of type $\text{I}_0^*$. In this situation, the canonical bundle formula for the fibration $\pi: S^{[n]}\longrightarrow \mathbb{P}^n$(see \Cref{S-can}) reads 
\begin{eqnarray*}
-\deg K_B&=&\frac{1}{12}\deg \sum H_t+\frac{1}{2}\deg \delta, \qquad \text{or}\\
n+1&=& \frac{1}{12}\cdot 24+\frac{2(n-1)}{2}.
\end{eqnarray*}
When $S/\PP^1$ specializes to an isotrivial elliptic fibration, the big diagonal $\delta$ remains a component of the discriminant divisor and its contribution to the canonical bundle formula is unchanged, but the hyperplanes $H_t$ collide and should be counted with a multiplicity corresponding to the Euler characteristic of the singular fiber $S_t$ (compare with \Cref{K3appendix}). The local monodromy around $H_t$ becomes finite, reflecting the monodromy around $t$ for the elliptic fibration $S/\PP^1$.  Similarly, the Hwang--Oguiso fiber type at the generic point of $H_t$ corresponds to the Kodaira fiber type at $t$. This discussion will be generalized in \Cref{S-can}.

\subsubsection{Intermediate trivialization of $\textup{Kum}_{n}$-fibrations}
We will begin by describing the $\textup{Kum}_{n}$-fibration introduced in \Cref{def:K3n Kumn fibrations}. Consider a short exact sequence
\[\begin{tikzcd}
	0 \arrow[r] & E \arrow[r] & A \arrow[r, "f"] & E' \arrow[r] & 0
\end{tikzcd},\]
where $A$ is an abelian surface and $E, E'$ are elliptic curves. Let $(E')^{(n+1)} \longrightarrow E'$ be the summation map from the $(n+1)^{\text{st}}$ symmetric power of $E'$. Define a fiber product $P$ by
\[\begin{tikzcd}
	P \arrow[r] \arrow[d] & A \arrow[d, "f"] \\
	(E')^{(n+1)} \arrow[r] & E' .
\end{tikzcd}\]
The morphism $P \longrightarrow A$ is a $\PP^n$-bundle and the summation map $A^{[n+1]} \longrightarrow A$ admits a factorization $A^{[n+1]} \longrightarrow P \longrightarrow A$ by the universal property of fiber products. Taking the fiber over $0 \in A$, we obtain the isotrivial Lagrangian fibration $\pi : \Kum_n(A) \longrightarrow \PP^n$. Its smooth fiber is isomorphic to $\ker \big( E^{n+1} \longrightarrow E \big) \cong E^n$.\\

Next, we will construct the intermediate trivialization of $\pi$ and describe the Galois group $G$. Let $(E')^{n+1} \longrightarrow (E')^{(n+1)}$ be a quotient map and define a fiber product $Q$ by
\[\begin{tikzcd}
	Q \arrow[r] \arrow[d] & P \arrow[d] \\
	(E')^{n+1} \arrow[r] & (E')^{(n+1)} .
\end{tikzcd}\]
Then $Q$ is an abelian variety of dimension $n+2$ and $Q \longrightarrow P$ is the quotient map for the $\mathfrak S_{n+1}$-action on $Q$. We have a commutative diagram
\[\begin{tikzcd}
	A^{n+1} \arrow[r, dashrightarrow] \arrow[d, "p"] & A^{[n+1]} \arrow[d] \\
	Q \arrow[r] & P \arrow[r] & A ,
\end{tikzcd}\]
where the morphism $p : A^{n+1} \longrightarrow Q$ is again constructed by the universal property. Taking the fiber of the diagram over $0 \in A$, we obtain a commutative diagram
\[\begin{tikzcd}
	\ker (A^{n+1} \to A) \arrow[r, dashrightarrow] \arrow[d, "p"] & X \arrow[d, "\pi"] \\
	\ker \big( (E')^{n+1} \to E' \big) \arrow[r] & \PP^n,
\end{tikzcd}\]
which, after restriction to $B_0 \subset \PP^n$, the complement of the discriminant locus of $\pi$, is an intermediate trivialization diagram. The intermediate Galois group is $G = \mathfrak S_{n+1}$, the base of the intermediate trivialization $\ker \big( (E')^{n+1} \longrightarrow E' \big)$ is isomorphic to $(E')^n$, and the total space of the intermediate trivialization $\ker (A^{n+1} \longrightarrow A)$ is isomorphic to $A^n$.

\section{Fibrations of type A and B} \label{sec:fibrations of type AB}
This section is devoted to the proof of \Cref{main:fiber} and \ref{main:typeAB}. Recall that we have already presented a proof of \Cref{main:fiber} subject to the additional assumption that the isotrivial fibration $\pi$ has a rational section (\Cref{thm:fiberdiag}). A key input was the existence of a second isotrivial rational Lagrangian fibration $X \dashrightarrow F/G$ obtained as a $G$-quotient of the projection map to the first factor $Z\cong F \times U \longrightarrow F$. We will show in \Cref{prop:second isotrivial fibration} that an analogous second isotrivial rational Lagrangian fibration always exists, which will conclude the proof of \Cref{main:fiber} in its full generality. The projection map that was used in the presence of a rational section is replaced by a fibration onto an abelian variety $\bar F$ isogenous to $F$.

\begin{theorem} \label{thm:diagonalization}
	There exists a $G$-equivariant isotrivial fibration $p' : Z \longrightarrow \bar F$ such that:
	\begin{enumerate}
		\item $\bar F$ is a $G$-abelian torsor isogenous to $F$.
		\item The product morphism $(p', p) : Z \longrightarrow \bar F \times U$ is $G$-equivariant, finite \'etale, and Galois.
	\end{enumerate}
	Moreover, the morphism $p'$ is unique up to $G$-equivariant isomorphism of $\bar F$.
\end{theorem}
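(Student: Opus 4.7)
The strategy is to construct $\bar F$ and $p'$ directly using the full trivialization $q \colon F \times \tilde U \to Z$. Since $\tilde U \to U$ is an $F[d]$-torsor (for some $d$) trivializing the cohomology class $\alpha \in H^1_{\text{\'et}}(U, F)$ of $Z$, one may canonically identify $\tilde G_{\tr}$ with a subgroup of $F[d] \subset F$---namely the image of the monodromy representation $\rho \colon \pi_1(U) \to F[d]$. Under this identification, the $\tilde G_{\tr}$-action on $F \times \tilde U$ takes the diagonal form $g \cdot (x, y) = (x + g, g \cdot y)$, combining translation on the abelian factor with the Galois action on $\tilde U$. One then sets $\bar F := F/\tilde G_{\tr}$ (an abelian variety isogenous to $F$) and takes $p'$ to be the descent of the composite $F \times \tilde U \to F \to \bar F$, which is manifestly $\tilde G_{\tr}$-invariant since $x + g \equiv x \pmod{\tilde G_{\tr}}$.

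The central verification is that $\bar F$ inherits a $G$-action turning it into a $G$-abelian torsor. This reduces to the claim that $\tilde G_{\tr} \subset F$ is stable under the $G$-action on $F$ (through $\Aut_0 F$, as in \Cref{prop:G-action on cohomology}). I would prove this geometrically: because $Z$ is a $G$-equivariant $F$-torsor over $U$, the monodromy representation $\rho$ intertwines the two $G$-actions, i.e., $f_g(\rho(\gamma)) = \rho(\tilde g \gamma \tilde g^{-1})$ for every $\gamma \in \pi_1(U)$ and every lift $\tilde g$ of $g \in G$, so $\im \rho = \tilde G_{\tr}$ is preserved by the $f_g$'s. The $\tilde G$-action on $F$ therefore descends to $\bar F$ and factors through $G$, since $\tilde G_{\tr}$ acts on $F$ only by translations inside $\tilde G_{\tr}$, which vanish modulo $\tilde G_{\tr}$. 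For the Galois \'etale property, I would observe that $F \times \tilde U$ carries an action of $\tilde G_{\tr} \times \tilde G_{\tr}$ (the first factor translating $F$, the second acting by Galois on $\tilde U$): $Z$ is the quotient by the diagonal copy while $\bar F \times U$ is the quotient by the full group, so $(p', p) \colon Z \to \bar F \times U$ is the residual $\tilde G_{\tr}$-Galois \'etale cover, and $G$-equivariance is built into the construction.

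For uniqueness, suppose $p'' \colon Z \to \bar F'$ also satisfies the conditions. Pulling back via $q$ gives a $\tilde G$-equivariant morphism $\phi \colon F \times \tilde U \to \bar F'$, and by rigidity for morphisms to abelian varieties (the family $y \mapsto \phi(-, y)$ takes values in the discrete set $\Hom(F, \bar F')$, hence is constant), $\phi$ decomposes as $\phi(x, y) = \psi(x) + c(y)$ after choosing basepoints, with $\psi \colon F \to \bar F'$ a homomorphism and $c \colon \tilde U \to \bar F'$ a morphism. The $\tilde G_{\tr}$-invariance of $\phi$ forces $\tilde G_{\tr} \subset \ker \psi$, and the requirement that $p''$ be a fibration with connected general fiber pins down $\ker \psi = \tilde G_{\tr}$: a direct computation shows that the general fiber of the induced $Z \to F/\ker \psi$ is a disjoint union of $|\ker \psi / \tilde G_{\tr}|$ copies of the fiber of $p'$, and connectedness forces the minimal choice. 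This produces a $G$-equivariant isomorphism $\bar F' \cong \bar F$ identifying $p''$ with $p'$.

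The main technical obstacle is the $G$-invariance of $\tilde G_{\tr} \subset F$; the geometric monodromy argument I have sketched seems cleanest, though one could alternatively derive it from the cocycle identity $t_{\tilde g_1 \tilde g_2} = f_{\tilde g_1}(t_{\tilde g_2}) + t_{\tilde g_1}$ satisfied by the translation part of the $\tilde G$-action on the full trivialization. Once this stability is in hand, the remainder---descent of the $G$-action to $\bar F$, the Galois \'etale property, and uniqueness---is a formal consequence of the explicit quotient construction.
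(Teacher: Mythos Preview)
Your construction of $\bar F = F/\tilde G_{\tr}$ and the map $p'$ induced by the naive projection $(x,y) \mapsto \bar x$ has a genuine gap at the $G$-equivariance step. You correctly observe that the $\tilde G_{\tr}$-action on $F \times \tilde U$ is diagonal by translations (this follows from the $F[d]$-torsor construction), so the projection descends to a well-defined map $p' \colon Z \to \bar F$. But the full $\tilde G$-action is of the form
\[
\tilde g \cdot (x,y) = \big(f_g(x) + e_{\tilde g}(\tilde g \cdot y),\ \tilde g \cdot y\big),
\]
where $e_{\tilde g} \colon \tilde U \to F$ need not be constant for $\tilde g \notin \tilde G_{\tr}$. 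Under your $p'$ this maps to $\overline{f_g(x) + e_{\tilde g}(\tilde g \cdot y)}$, which depends on $y$ unless $e_{\tilde g}$ is constant modulo the finite group $\tilde G_{\tr}$---hence constant, since $\tilde U$ is connected. That would force the entire $\tilde G$-action to be diagonal, which is exactly the condition that fails when $\pi$ has no rational section (this is the content of \Cref{lem:rational section} and the warning following \eqref{diag:full trivialization}). So there is no $G$-action on your $\bar F$ making $p'$ equivariant. Your ``central verification'' that $\tilde G_{\tr} \subset F$ is $G$-stable only ensures that the $\Aut_0 F$-action on $F$ descends to $\bar F$; it does not address the twist $e_{\tilde g}$.

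The paper's route is designed precisely to absorb this twist. Rather than projecting to $F$, it passes through the Albanese: the short exact sequence $0 \to H^1(\Alb_{\tilde U},\ZZ) \to H^1(\Alb_{F \times \tilde U},\ZZ) \to H^1(F,\ZZ) \to 0$ of $\tilde G$-equivariant Hodge structures splits over $\QQ$, and the saturation of a splitting defines $F^\flat$ as a $\tilde G$-equivariant quotient of $\Alb_{F \times \tilde U}$. The resulting map $p_1 \colon F \times \tilde U \to F^\flat$ has the form $(x,y) \mapsto x^\flat + e(y)$, and it is the extra term $e(y)$ (coming from the Albanese of $\tilde U$) that makes the map $\tilde G$-equivariant. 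Your first projection is recovered only when $e$ is constant, i.e., only in the diagonal case.
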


Some care is needed in using the notion of a fibration since $p'$ is not proper and we refer the reader to \Cref{sec:background} for definitions. If $\pi$ admits a rational section, then $p'$ is merely the projection map $F \times U \longrightarrow F$. One can think of \Cref{thm:diagonalization} as approximating $Z$ by a product $\bar F \times U$ equipped with a diagonal $G$-action.\\

\Cref{thm:diagonalization} also allows us to prove a slightly stronger version of \Cref{main:typeAB}, which describes the Kodaira dimensions\footnote{By the Kodaira dimension of a smooth quasi-projective variety we mean the Kodaira dimension of a smooth projective compactification.} of both $U$ and $Z$.

\begin{theorem} \label{thm:typeAB}
	Let $p : Z \longrightarrow U$ be the intermediate trivialization of the isotrivial fibration $\pi : X \longrightarrow B$. Then one of the following holds.
	\begin{enumerate}
		\item[\textnormal{(A)}] $U$ compactifies to an abelian torsor isogenous to the $n$-th power of an elliptic curve, and either
		\begin{enumerate}
			\item[\textnormal{(A.1)}] $Z$ is birational to an abelian torsor of dimension $2n$, or
			\item[\textnormal{(A.2)}] $\kappa(Z) = n$ and $p : Z \longrightarrow U$ is the Iitaka fibration of $Z$.
		\end{enumerate}
		\item[\textnormal{(B)}] $U$ compactifies to a variety of general type. Moreover, $\kappa(Z) = n$ and $p : Z \longrightarrow U$ is the Iitaka fibration of $Z$.
	\end{enumerate}
\end{theorem}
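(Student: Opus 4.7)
The plan is to combine \Cref{prop:simple Albanese morphism} with the equivariant Ueno--Kawamata theorem (\Cref{thm:equivariant Kawamata}) to establish a clean dichotomy for $\kappa(U)$, and then to use the finite \'etale $G$-equivariant cover $(p',p) : Z \longrightarrow \bar F \times U$ from \Cref{thm:diagonalization} to transfer this information to $Z$. Throughout I will use that $\bar F$ is an abelian torsor isogenous to $F$ and, by \Cref{main:fiber}, to a power $E^n$ of an elliptic curve.

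First I would apply \Cref{prop:simple Albanese morphism} to produce a $G$-equivariant generically finite morphism $U \longrightarrow A$ to a simple $G$-abelian torsor. The equivariant Ueno--Kawamata theorem should force the base of the $G$-equivariant Iitaka fibration of $U$ to be either zero-dimensional or of full image dimension, because $A$ admits no non-trivial $G$-equivariant abelian quotients; combined with $\dim U = n$, this yields the dichotomy $\kappa(U) \in \{0, n\}$. When $\kappa(U) = n$ we are in case (B). When $\kappa(U) = 0$, a further appeal to equivariant Kawamata identifies $U$ $G$-birationally with a $G$-abelian variety, and \Cref{prop:compactification of U} realises this as a concrete compactification $\bar U$ obtained as the normalization of $B$ in $K(U)$. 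In this case $\dim A = n$; the construction of $A$ in \Cref{prop:simple Albanese morphism}, together with \Cref{cor:G-invariant cohomology for base}, identifies $H^1(\bar U, \QQ)$ with $H^1(F, \QQ)$ as a $G$-equivariant Hodge structure, so $\bar U$ is isogenous to $F$ and hence to $E^n$.

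To treat $Z$, I would extend the finite \'etale cover $Z \to \bar F \times U$ to a finite cover $\bar Z \to \bar F \times \bar U$ by normalizing $\bar F \times \bar U$ in $K(Z)$ (in case (B) one first replaces $\bar U$ by a smooth projective model of general type). In case (B), $\kappa(\bar F \times \bar U) = 0 + n = n$, and pluricanonical pullback along the cover gives $\kappa(\bar Z) \geq n$; equality follows from $\dim \bar Z - \dim \bar U = n$ together with the fact that the fibres of $p$ are abelian and so have Kodaira dimension zero, which forces the Iitaka fibration of $\bar Z$ to factor through $p$. Equality of base dimensions then identifies $p$ with the Iitaka fibration up to birational equivalence. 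In case (A), $\bar F \times \bar U$ is an abelian torsor of dimension $2n$, and by Zariski--Nagata purity either $\bar Z \to \bar F \times \bar U$ is \'etale in codimension one (hence everywhere, making $\bar Z$ itself an abelian torsor---this is case (A.1)), or it ramifies in codimension one, in which case the canonical bundle formula presents $K_{\bar Z}$ as an effective divisor pulled back from $\bar U$ and the same Iitaka argument as in case (B) yields case (A.2).

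The main obstacle, I expect, is executing the equivariant Ueno--Kawamata step carefully: verifying that $G$-simplicity of $A$ genuinely excludes intermediate Iitaka bases, and that when $\kappa(U)=0$ one obtains a true $G$-equivariant birational model by a $G$-abelian variety (rather than merely a variety with vanishing Kodaira dimension). A secondary subtlety is controlling the boundary behaviour of the extension $\bar Z \to \bar F \times \bar U$ to cleanly separate subcases (A.1) and (A.2), together with a form of the canonical bundle formula for the isotrivial abelian fibration $p$ sufficient to pin down the Iitaka fibration.
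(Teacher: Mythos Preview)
Your treatment of $U$ is essentially the paper's argument, and your approach in case~(B) for $Z$ (pluricanonical pullback along a finite cover plus easy addition) is a valid simplification. Two points need correction.

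\medskip
\noindent\textbf{Minor.} The claim that $\bar U$ is isogenous to $F$ is not correct in general: cohomologically one only gets $H^{1,0}(\bar U)\cong V^{\vee}$ as a $G$-module, which says nothing about the Hodge structure. In the $\Kum_n$-fibration the fibre is $E^n$ while the base abelian variety is $(E')^n$ for an unrelated $E'$. The paper's route is to use \Cref{cor:G-invariant cohomology for base} to see that $\bar U$ is isogenous to a power of a simple abelian variety, and then the $\QQ$-Fano property of $\bar U/G$ (as in \Cref{thm:fiberdiag}) to force that simple factor to be an elliptic curve.

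\medskip
\noindent\textbf{Main gap (case A for $Z$).} Your \'etale/ramified dichotomy for $\bar Z\to\bar F\times\bar U$ does not pin down $\kappa(Z)\in\{0,n\}$. In the ramified subcase, the canonical bundle argument shows at best that $K_{\bar Z}$ is an effective Weil divisor supported over $\bar F\times(\bar U\setminus U)$; this does not give $\kappa(\bar Z)=n$ because (i) $\bar Z$ may have worse than canonical singularities, so effectivity of $K_{\bar Z}$ does not control pluricanonical sections on a resolution, and (ii) even granting $\kappa(\bar Z)>0$, your ``Iitaka factors through $p$'' observation only yields $\kappa(\bar Z)\le n$, not equality. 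Nothing here excludes $0<\kappa(Z)<n$. The paper avoids this by applying the equivariant Kawamata theorem a \emph{second} time, now to a finite model of $Z\to\bar F\times A$: since $\bar F$ and $A$ are \emph{simple} $G$-abelian torsors with dimensions that are multiples of $n$, any $G$-abelian subvariety of $\bar F\times A$ has dimension in $\{0,n,2n,\dots\}$, so the theorem forces $\kappa(Z)\in\{0,n,2n\}$, and $2n$ is then excluded by easy addition. It is precisely this use of $G$-simplicity---not any Hurwitz-type computation---that rules out intermediate Kodaira dimensions.
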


\begin{example} In view of \Cref{main:K3} and \Cref{SS:K3fib}, it is easy to produce examples of type A.1 and type B respectively. Namely, if $S\longrightarrow \PP^1$ is an isotrivial elliptic fibration as in \Cref{main:K3}(1), then the associated isotrivial fibration $S^{[n]} \longrightarrow \PP^n$ (as in \S\ref{SS:K3fib}) will be of type A.1. On the other hand, if $S\longrightarrow \PP^1$ is as in \Cref{main:K3}(2) and the base $C$ of its associated intermediate trivialization is not elliptic (in some sense, the generic case; compare \Cref{j}), then the associated isotrivial fibration $S^{[n]}\longrightarrow \PP^n$ is of type B. 
\end{example}
\begin{remark}
If $\pi$ admits a rational section, then $Z$ is isomorphic to $F \times U$ by \Cref{lem:rational section} so the Kodaira dimensions of $U$ and $Z$ are the same. Hence, type A.2 may only arise when $\pi$ does not have a rational section. At the moment we do not have examples of isotrivial Lagrangian fibrations of type A.2. This type seems to be related to the presence of multiple fibers in codimension $1$ (conjecturally such Lagrangian fibrations do not exist, but there are no local obstructions to the existence of multiple fibers, e.g. \cite[Ex. 6.3]{HO11}). 
\end{remark}

Our approach is to apply an equivariant version of a theorem of Kawamata  (see \Cref{sec:equivariant Ueno-Kawamata}) to the morphism $U \longrightarrow A$ from \Cref{prop:simple Albanese morphism}. This will give information on the Iitaka fibration of $U$ and in particular on the Kodaira dimension of $U$. The same strategy can be applied to $Z$ by using the $G$-equivariant composition $Z\longrightarrow \bar F\times U\longrightarrow \bar F\times A$.

\subsection{Diagonal approximation}
We will begin with the proof of \Cref{thm:diagonalization}. Recall the full trivialization diagram
\begin{equation}
\begin{tikzcd}
	F \times \tilde U \arrow[r] \arrow[d, swap, "\pr_2"] & Z \arrow[r] \arrow[d,swap, "p"] & X_0 \arrow[d,swap, "\pi"] \\
	\tilde U \arrow[r] & U \arrow[r] & B_0,
\end{tikzcd} \tag{\ref{diag:full trivialization} restated}
\end{equation}
where the horizontal arrows are finite \'etale and Galois. We denoted the three Galois groups by
\[ G = \Gal(U/B_0), \qquad \tilde G = \Gal(\tilde U/B_0),\qquad\text{and} \qquad \tilde G_{\tr} = \Gal(\tilde U/U) ,\]
which satisfy $G \cong \tilde G / \tilde G_{\tr}$. To prove \Cref{thm:diagonalization} we first establish an analogous statement for the full trivialization. It can be interpreted as approximating the $\tilde G$-action on $F \times \tilde U$ by a diagonal action. We recall that the $\tilde G$-action on $F \times \tilde U$ may not be diagonal.

\begin{proposition} \label{prop:diagonalization}
	There exists a $\tilde G$-equivariant isotrivial fibration $p_1 : F \times \tilde U \longrightarrow F^{\flat}$ such that:
	\begin{enumerate}
		\item $F^{\flat}$ is a $\tilde G$-abelian torsor isogenous to $F$.
		\item The product morphism $(p_1, \pr_2) : F \times \tilde U \longrightarrow F^{\flat} \times \tilde U$ is $\tilde G$-equivariant, finite \'etale, and Galois. Here $F^{\flat} \times \tilde U$ is equipped with a diagonal $\tilde G$-action.
	\end{enumerate}
	Moreover, the morphism $p_1$ is unique up to composition with a $\tilde G$-equivariant isomorphism on the left.
\end{proposition}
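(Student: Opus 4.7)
The plan is to extend the $\tilde G$-action from $F \times \tilde U$ to $F \times \Alb \tilde U$ and then apply Maschke's theorem to split the resulting affine representation.

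By applying \Cref{prop:G-action on cohomology} to $\tilde G$, the $\tilde G$-action on $F \times \tilde U$ necessarily takes the form $g \cdot (x, y) = (f_g(x) + \tau_g(y),\, g \cdot y)$, where $f : \tilde G \to \Aut_0 F$ is a homomorphism and each $\tau_g : \tilde U \to F$ is a morphism satisfying the twisted cocycle relation $\tau_{gh}(y) = \tau_g(h y) + f_g(\tau_h(y))$. By the universal property of the Albanese variety, after fixing a base point each $\tau_g$ decomposes uniquely as $\tau_g(y) = c_g + \phi_g(a(y))$, where $a : \tilde U \to \Alb \tilde U$ is the Albanese map, $c_g \in F$ is a constant, and $\phi_g : \Alb \tilde U \to F$ is a group homomorphism. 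Then $\id_F \times a : F \times \tilde U \to F \times \Alb \tilde U$ is $\tilde G$-equivariant when the target carries the affine action $g \cdot (x, A) = (f_g(x) + \phi_g(A) + c_g,\, \gamma(g)(A) + \beta(g))$, where $(\gamma, \beta)$ is the induced affine $\tilde G$-action on $\Alb \tilde U$. Note that for $h \in \tilde G_{\tr}$ the morphism $\tau_h$ is constant, since $\tilde G_{\tr}$ arises from the canonical $F[d]$-translation structure used to build the trivializing cover $\tilde U$.

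The key step is that the rotational part of this affine $\tilde G$-action on $F \times \Alb \tilde U$ is, after base change to $\QQ$, an upper-triangular representation with diagonal blocks $f_g$ and $\gamma(g)$, and therefore fits into a short exact sequence of $\QQ$-linear $\tilde G$-representations in which the kernel is the rational linear $\tilde G$-representation coming from $F$. Because $\tilde G$ is finite, Maschke's theorem provides a $\tilde G$-equivariant splitting over $\QQ$. Clearing denominators yields a genuine $\tilde G$-equivariant isogeny $q : F \times \Alb \tilde U \longrightarrow F^{\flat}$ onto some abelian variety $F^{\flat}$ isogenous to $F$, and the translation parts $(c_g, \beta(g))$ descend through $q$ to endow $F^{\flat}$ with the structure of a $\tilde G$-abelian torsor.

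Setting $p_1 := q \circ (\id_F \times a)$ yields the desired $\tilde G$-equivariant morphism, and verifying that $(p_1, \pr_2) : F \times \tilde U \to F^{\flat} \times \tilde U$ is a finite \'etale Galois cover amounts to identifying its deck group as the kernel of the isogeny $F \to F^{\flat}$ induced by $q|_{F \times \{0\}}$, which acts on the first factor by translation and trivially on the second. Uniqueness of $p_1$ up to $\tilde G$-equivariant isomorphism of $F^{\flat}$ follows because any two $\QQ$-equivariant splittings of the short exact sequence differ by an equivariant map $\Alb \tilde U \otimes \QQ \to F \otimes \QQ$, which integrates to an equivariant isomorphism of the resulting targets up to isogeny. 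The main technical obstacle will be passing from the Maschke splitting, which is a priori only defined over $\QQ$, back to integral abelian varieties carefully enough to ensure that $F^{\flat}$ has the correct dimension and that the translation parts assemble coherently, so that $F^{\flat}$ is a bona fide $\tilde G$-torsor rather than merely an abelian variety carrying a rotational $\tilde G$-action.
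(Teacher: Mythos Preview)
Your approach---extending the $\tilde G$-action to $F \times \Alb \tilde U$ and applying Maschke's theorem to split the resulting short exact sequence over $\QQ$---is essentially the same strategy as the paper's (which phrases it as splitting a short exact sequence of $\tilde G$-equivariant Hodge structures). However, there is a genuine gap: your $p_1 = q \circ (\id_F \times a)$ need not be a \emph{fibration} in the sense required by the proposition, i.e.\ its fibers need not be connected. The fiber of $p_1$ over a point of $F^\flat$ is a finite \'etale cover of $\tilde U$ of degree $\lvert\ker(F \to F^\flat)\rvert$, and there is no reason for this cover to be connected (for instance, it is disconnected whenever the morphism $e : \tilde U \to F^\flat$ induced by $q|_{\{0\}\times\Alb\tilde U}$ has image contained in a proper abelian subtorsor). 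The paper handles this by first constructing exactly the morphism you describe (this is their \Cref{lem:diagonalizagion}) and then passing to its Stein factorization over a smooth compactification of $\tilde U$, replacing $F^\flat$ by a further finite \'etale cover $F^\natural$ that absorbs the connected components of the fibers.

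This same Stein factorization step is what makes the uniqueness assertion work up to \emph{isomorphism} rather than merely up to isogeny. Your argument only shows that two choices of Maschke splitting yield $\tilde G$-equivariantly isogenous targets, but the proposition asserts uniqueness of $p_1$ up to composition with a $\tilde G$-equivariant \emph{isomorphism} on the left. Once one normalizes by taking the Stein factorization, uniqueness follows from the uniqueness of Stein factorization. As a minor aside, you call $q : F \times \Alb \tilde U \to F^\flat$ an ``isogeny'', but its source has dimension $n + \dim \Alb \tilde U > n = \dim F^\flat$; you mean a surjective homomorphism of abelian varieties.
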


\Cref{thm:diagonalization} follows from \Cref{prop:diagonalization}:

\begin{proof} [Proof of \Cref{thm:diagonalization}]
	Consider the $\tilde G$-equivariant fibration $F \times \tilde U \longrightarrow F^{\flat}$ from \Cref{prop:diagonalization}. Taking its $\tilde G_{\tr}$-quotient, we obtain the desired $G$-equivariant fibration $Z \longrightarrow \bar F := F^{\flat} / \tilde G_{\tr}$. The group $\tilde G_{\tr}$ acts by translations on $F^{\flat}$ because it acts trivially on $H^* (F^{\flat}, \QQ) = H^* (F, \QQ)$. Hence $\bar F = F^{\flat} / \tilde G_{\tr}$ is again an abelian torsor. Any such $G$-equivariant isotrivial fibration $Z \longrightarrow \bar F$ lifts to a $\tilde G$-equivariant fibration $F \times \tilde U \longrightarrow F^{\flat}$, so uniqueness follows.
\end{proof}

The rest of this section will be devoted to the proof of \Cref{prop:diagonalization}.\\

A $\tilde G$-action on $F \times \tilde U$ is a collection of automorphisms
\[ g : F \times \tilde U \longrightarrow F \times \tilde U, \qquad g \in \tilde G, \]
satisfying the condition
\begin{equation}\label{actionaxiom}g\cdot (h\cdot (x,y)) = (gh)\cdot (x,y).\end{equation}
Moreover, since in our setting the morphism $\pr_2 : F \times \tilde U \longrightarrow \tilde U$ is $\tilde G$-equivariant, the action is of the form (\Cref{prop:structure theorem for abelian morphism})
\begin{equation} \label{eq:action}
	g\cdot (x,y) = (f_g(x) + e_g (g\cdot y), \ g\cdot y) , \qquad f_g \in \Aut_0 F, \quad e_g : \tilde U \longrightarrow F.
\end{equation}
The group $\operatorname{Mor}(\tilde U, \Aut F^{\tor})$ admits a $\tilde G$-action induced from that on $\tilde U$. Condition \eqref{actionaxiom} boils down to a cocyle condition on the map
\begin{align*} \tilde G& \longrightarrow   \operatorname{Mor}(\tilde U, F) \rtimes \Aut_0 F\cong \operatorname{Mor}(\tilde U, \Aut F^{\tor}) \\
g& \longmapsto \;\;\;\;\;\;\;\;\;\;(e_g, f_g) \;\;\;,\end{align*}
so we obtain an element in the nonabelian cohomology
$$H^1 (\tilde G, \ \operatorname{Mor}(\tilde U, \Aut F^{\tor})) .$$
Conversely, one checks that this nonabelian cohomology classifies $\tilde G$-actions on $F \times \tilde U$ such that $\pr_2 : F \times \tilde U \longrightarrow \tilde U$ is equivariant.\\

The $\tilde G$-action on $F\times \tilde U$ is diagonal if and only if the morphisms $e_g : \tilde U \longrightarrow F$ in \eqref{eq:action} are constant for all $g \in \tilde G$. We will show that this can also be seen from the $\tilde G$-action on the Albanese variety or Albanese torsor of $F\times \tilde U$. As we did for abelian varieties, we will need to distinguish between the notion of Albanese variety and Albanese torsor (see \Cref{sec:background} for details). The reason for these technicalities is that when defining the Albanese map without choosing a basepoint one obtains a morphism to an abelian torsor, not an abelian variety. The Albanese diagram in our $\tilde G$-equivariant setting is
\[\begin{tikzcd}
	F \times \tilde U \arrow[r] \arrow[d, "\pr_2"] & \Alb_{F \times \tilde U}^{\tor} \arrow[d] & \Alb_{F \times \tilde U} \arrow[d] \\
	\tilde U \arrow[r] & \Alb_{\tilde U}^{\tor} & \Alb_{\tilde U} ,
\end{tikzcd}\]
where the diagram on the left is a commutative diagram of $\tilde G$-varieties and the one on the right is a homomorphism of $\tilde G$-abelian varieties.

\begin{lemma} \label{cri:diagonal action}
	The following are equivalent:
	\begin{enumerate}
		\item The $\tilde G$-action on $F \times \tilde U$ is diagonal.
		\item The induced $\tilde G$-action on the Albanese variety $\Alb_{F \times \tilde U} = F \times \Alb_{\tilde U}$ is diagonal.
		\item The induced $\tilde G$-action on the Albanese torsor $\Alb_{F \times \tilde U}^{\tor} = F^{\tor} \times \Alb_{\tilde U}^{\tor}$ is diagonal.
	\end{enumerate}
\end{lemma}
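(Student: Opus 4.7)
The plan is to convert all three diagonality statements into one algebraic condition on the cocycle $\{e_g\}$ parametrizing the $\tilde G$-action. The implications $(1) \Rightarrow (2)$ and $(1) \Rightarrow (3)$ are immediate from functoriality of the Albanese torsor and Albanese variety, which send product actions to product actions. For the converses, I would first use the normal form \eqref{eq:action},
$$g \cdot (x,y) \;=\; \bigl(f_g(x) + e_g(g \cdot y),\, g \cdot y\bigr), \qquad f_g \in \Aut_0 F, \quad e_g : \tilde U \to F,$$
so that (1) translates tautologically into the condition that every morphism $e_g$ is constant.

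By the universal property of the Albanese, each $e_g$ factors through the Albanese morphism $\alpha : \tilde U \longrightarrow \Alb_{\tilde U}^{\tor}$ as $e_g = E_g \circ \alpha$, where $E_g : \Alb_{\tilde U}^{\tor} \longrightarrow F^{\tor}$ is a morphism of abelian torsors. After choosing origins I would decompose $E_g(z) = h_g(z) + c_g$ with $h_g \in \Hom(\Alb_{\tilde U}, F)$ and $c_g \in F$. Because the differences $\alpha(y) - \alpha(y_0)$ generate $\Alb_{\tilde U}$ as a group, one obtains the key equivalence
$$\text{$e_g$ is constant} \;\;\Longleftrightarrow\;\; h_g = 0 \;\;\Longleftrightarrow\;\; E_g \text{ is constant.}$$

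It then remains to compute the two induced actions. By functoriality, the induced $\tilde G$-action on $\Alb_{F \times \tilde U}^{\tor} \cong F^{\tor} \times \Alb_{\tilde U}^{\tor}$ has the form $g \cdot (x,z) = \bigl(f_g(x) + E_g(g \cdot z),\, g \cdot z\bigr)$, whose diagonality as a product of torsor actions is equivalent to the constancy of every $E_g$; this yields $(3) \Leftrightarrow (1)$. Passing to the Albanese variety $F \times \Alb_{\tilde U}$ amounts to extracting the group-homomorphism part of each torsor automorphism: writing $g \cdot z = g^{\mathrm{lin}}(z) + \tau_g$ for the induced torsor automorphism of $\Alb_{\tilde U}^{\tor}$, the linearized action reads $(x,z) \mapsto \bigl(f_g(x) + h_g(g^{\mathrm{lin}}(z)),\, g^{\mathrm{lin}}(z)\bigr)$, which is diagonal as a product of abelian variety automorphisms if and only if each $h_g$ vanishes; this gives $(2) \Leftrightarrow (1)$. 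The only point requiring care is the bookkeeping of torsor-versus-variety structures and the accompanying additive constants, but once that is sorted the argument is a single appeal to the universal property of the Albanese.
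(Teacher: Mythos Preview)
Your proof is correct and follows essentially the same strategy as the paper: reduce all three conditions to the constancy of the morphisms $e_g$, by factoring each $e_g$ through the Albanese morphism $\alpha : \tilde U \to \Alb_{\tilde U}^{\tor}$ and observing that a morphism of abelian torsors is constant iff its linear part vanishes.

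The only difference is one of packaging. The paper phrases the intermediate step cohomologically: it shows that (1) is equivalent to the K\"unneth decomposition $W_1 H^1(F \times \tilde U,\QQ) \cong H^1(F,\QQ) \oplus W_1 H^1(\tilde U,\QQ)$ being $\tilde G$-equivariant, by computing that the off-diagonal block of $g^*$ in this decomposition is $g^* \circ e_g^*$. You instead compute the induced actions on $F^{\tor}\times\Alb_{\tilde U}^{\tor}$ and $F\times\Alb_{\tilde U}$ directly and read off the off-diagonal terms $E_g$ and $h_g$. Since the action on $\Alb$ is determined by the action on $W_1 H^1$, these two computations are the same thing viewed through the equivalence between weight-$1$ polarizable Hodge structures and abelian varieties. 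Your version has the minor advantage of treating (2) and (3) separately and explicitly, whereas the paper leaves the implications among (2), (3), and the K\"unneth condition to the reader.
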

\begin{proof}
	Note that the Albanese variety of $\tilde U$ is determined by the weight $1$ pure Hodge structure $W_1 H^1 (\tilde U, \ZZ)=\text{Gr}_{W}^1 H^1(\tilde U, \ZZ)$ together with its polarization. We claim that the $\tilde G$-action on $F \times \tilde U$ is diagonal if and only if the K\"unneth decomposition
	\[ W_1 H^1 (F \times \tilde U, \QQ) \cong H^1 (F, \QQ) \oplus W_1 H^1 (\tilde U, \QQ) \]
	is $\tilde G$-equivariant. This will prove the desired equivalence.\\
	
	The implication $(\Longrightarrow)$ is clear, so we concentrate on the implication $(\Longleftarrow)$ and assume that the $\tilde G$-action is diagonal on the weight one associated graded. We will use the notation from \eqref{eq:action} for the action of $g \in \tilde G$ on $F \times \tilde U$. With respect to the K\"unneth decomposition of the first cohomology
	$$W_1 H^1 (F \times \tilde U, \QQ) = H^1 (F, \QQ) \oplus W_1 H^1 (\tilde U, \QQ),$$ the automorphism $g^*$ on $W_1 H^1 (F \times \tilde U, \QQ)$ takes the form
	\[ g^* = \begin{psmallmatrix}
		f_g^* & 0 \\
		g^* \circ e_g^* & g^*
	\end{psmallmatrix} .\]
It follows that $g^* \circ e_g^* : H^1 (F, \QQ) \to W_1 H^1 (\tilde U, \QQ)$ vanishes, or equivalently that $e_g^* : H^1 (F, \QQ) \to W_1 H^1 (\tilde U, \QQ)$ vanishes. By the universal property of the Albanese morphism $\tilde U \to \Alb_{\tilde U}^{\tor}$, we have a factorization:
	\[\begin{tikzcd} [column sep=tiny]
		\tilde U \arrow[rr, "e_g"] \arrow[rd] & & F .\\
		& \Alb_{\tilde U}^{\tor} \arrow[ru, "e_g'"']
	\end{tikzcd}\]
	The homomorphisms $(e_g')^* : H^1 (F, \QQ) \to H^1 (\Alb_{\tilde U}^{\tor}, \QQ) = W_1 H^1 (\tilde U, \QQ)$ vanishes since $(e_g')^* = e_g^* = 0$. But $e_g'$ is a morphism between abelian torsors, so $e_g'$ must be a constant map. As a result, $e_g$ is constant as well.
\end{proof}

We can now start the proof of \Cref{prop:diagonalization}. For clarity, we first prove the following intermediate claim without the uniqueness assertion (if we do not require $p_1 : F \times \tilde U \longrightarrow F^{\flat}$ to be a fibration then we lose uniqueness since we can compose on the left with a $\tilde G$-equivariant isogeny).

\begin{lemma} \label{lem:diagonalizagion}
	There exists a $\tilde G$-equivariant isotrivial morphism $p_1 : F \times \tilde U \longrightarrow F^{\flat}$ satisfying (1) and (2) in \Cref{prop:diagonalization}.
\end{lemma}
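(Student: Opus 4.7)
The plan is to construct $p_1$ as a twisted modification of the first projection $F \times \tilde U \longrightarrow F$. By \Cref{prop:structure theorem for abelian morphism}, the $\tilde G$-action on $F \times \tilde U$ has the form
$$g \cdot (x, y) = \bigl(f_g(x) + e_g(g \cdot y), \ g \cdot y\bigr), \qquad f_g \in \Aut_0 F, \ e_g \in \operatorname{Mor}(\tilde U, F).$$
Writing out the action axiom $g_1 \cdot (g_2 \cdot (x, y)) = (g_1 g_2) \cdot (x, y)$ and substituting $z = g_1 g_2 y$ yields
$$e_{g_1 g_2}(z) = e_{g_1}(z) + f_{g_1}\bigl(e_{g_2}(g_1^{-1} z)\bigr),$$
which I would interpret as the $1$-cocycle condition for $\{e_g\}$ in $H^1(\tilde G, \operatorname{Mor}(\tilde U, F))$, the coefficient module being endowed with the $\tilde G$-action $(g \cdot \phi)(z) := f_g(\phi(g^{-1}z))$.

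For any isogeny $\lambda : F \to F^{\flat}$ of abelian varieties and any morphism $\mu : \tilde U \to F^{\flat}$, a direct computation shows that $p_1(x, y) := \lambda(x) + \mu(y)$ is $\tilde G$-equivariant with respect to the diagonal action $g \cdot (z, y) = (f_g^{\flat}(z), g \cdot y)$ on $F^{\flat} \times \tilde U$ (with $f_g^{\flat} \in \Aut_0 F^{\flat}$ characterized by $\lambda \circ f_g = f_g^{\flat} \circ \lambda$) if and only if $g \cdot \mu - \mu = \lambda \circ e_g$ in $\operatorname{Mor}(\tilde U, F^{\flat})$ for every $g \in \tilde G$. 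In cohomological language, the obstruction is precisely the vanishing of $\lambda_* [\{e_g\}] \in H^1(\tilde G, \operatorname{Mor}(\tilde U, F^{\flat}))$. The key input is the classical fact that $H^1(\tilde G, M)$ is annihilated by $N := |\tilde G|$ for every $\tilde G$-module $M$; applying this with $\lambda := [N] : F \to F$, whose induced action on $\operatorname{Mor}(\tilde U, F)$ is multiplication by $N$, kills the obstruction. An explicit primitive is furnished by the averaged morphism $\mu := -\sum_{h \in \tilde G} e_h \in \operatorname{Mor}(\tilde U, F)$: using $e_{gh} = e_g + g \cdot e_h$ and reindexing the sum, one checks $g \cdot \mu - \mu = N e_g$ as desired.

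I would then take $F^{\flat} := F$ equipped with the $\tilde G$-action $g \mapsto f_g$, which makes it a $\tilde G$-abelian variety (hence a $\tilde G$-abelian torsor) isogenous to $F$ via $[N]$; this verifies (1). Defining $p_1(x, y) := N x + \mu(y)$, the product $(p_1, \pr_2) : F \times \tilde U \longrightarrow F^{\flat} \times \tilde U$ is $\tilde G$-equivariant by construction, and over each $y \in \tilde U$ it restricts to the isogeny $x \mapsto N x + \mu(y)$; this makes it finite étale of degree $N^{2n}$ and Galois, with deck group the translation action of $F[N]$ on the first factor, verifying (2). The fibers of $p_1$ itself are all isomorphic to the same $F[N]$-torsor over $\tilde U$ (related by the translation $(x, y) \mapsto (x + w, y)$ for a choice of $w$ with $Nw$ equal to the displacement in the base), so $p_1$ is isotrivial. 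The only nontrivial step is the cohomological packaging of the action data; once this is in place, the finiteness of $\tilde G$ makes the obstruction vanish automatically after one $[N]$-isogeny.
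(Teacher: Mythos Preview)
Your proof is correct and takes a genuinely different route from the paper's. The paper argues via Hodge theory and the Albanese: it splits the short exact sequence of $\tilde G$-equivariant Hodge structures
\[
0 \longrightarrow H^1(\Alb_{\tilde U}, \ZZ) \longrightarrow H^1(\Alb_{F \times \tilde U}, \ZZ) \longrightarrow H^1(F, \ZZ) \longrightarrow 0
\]
over $\QQ$ (using semisimplicity of polarizable $\tilde G$-Hodge structures), realizes $F^{\flat}$ from the resulting integral sublattice, and then translates back to geometry through the Albanese morphism and Lemma~\ref{cri:diagonal action}. You instead package the non-diagonal part of the action as a class in $H^1(\tilde G, \operatorname{Mor}(\tilde U, F))$ and kill it with the isogeny $[N]$, $N = |\tilde G|$, writing down the explicit coboundary $\mu = -\sum_h e_h$. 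Both arguments amount to ``splitting up to isogeny'': the paper splits a lattice over $\QQ$, you split a cocycle after multiplication by $N$. Your approach is more elementary and self-contained---it bypasses the Albanese machinery and Lemma~\ref{cri:diagonal action} entirely---while the paper's approach is more structural and plugs directly into the Hodge-theoretic framework used elsewhere. Note that your $F^{\flat}$ is literally $F$ with the fixed isogeny $[N]$, whereas the paper's $F^{\flat}$ depends on a choice of rational splitting and may have smaller degree over $F$; this difference is immaterial here since uniqueness is only asserted after Stein factorization in Proposition~\ref{prop:diagonalization}.
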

\begin{proof}
	To avoid confusion, we distinguish between the notions of abelian variety and abelian torsor. We shall divide the proof in three steps.
	\begin{itemize}
	\item Step 1: Construction of $F^{\flat}$,
	\item Step 2: Construction of the morphism $F^{\tor} \times \tilde U \longrightarrow F^{\flat, \tor}$,
	\item Step 3: Proof that $F^{\tor} \times \tilde U \longrightarrow F^{\flat, \tor} \times \tilde U$ satisfies the desired properties.
	\end{itemize}
	
	\noindent (Step 1) The projection $\pr_2 : F^{\tor} \times \tilde U \longrightarrow \tilde U$ induces a homomorphism $f : \Alb_{F \times \tilde U} \longrightarrow \Alb_{\tilde U}$ between $\tilde G$-Albanese varieties and a short exact sequence of $\tilde G$-equivariant Hodge structures
	\[\begin{tikzcd}
		0 \arrow[r] & H^1 (\Alb_{\tilde U}, \ZZ) \arrow[r, "f^*"] & H^1 (\Alb_{F \times \tilde U}, \ZZ) \arrow[r, "\varphi"] & H^1 (F, \ZZ) \arrow[r] & 0
	\end{tikzcd}.\]
The reader should recall the subtle fact that $H^* (F, \ZZ)$ admits a canonical $\tilde G$-module structure even though $F$ does not. Since a $\tilde G$-equivariant polarizable Hodge structure over $\QQ$ is completely reducible this short exact sequence splits over $\QQ$, i.e., there exists an injective $\tilde G$-equivariant Hodge structure homomorphism $H^1 (F, \QQ) \longrightarrow H^1 (\Alb_{F \times \tilde U}, \QQ)$ splitting the sequence. Define a $\tilde G$-equivariant Hodge substructure $W \subset H^1 (\Alb_{F \times \tilde U}, \ZZ)$ by
	\[ W := \im \Big[ H^1 (F, \QQ) \longrightarrow H^1 (\Alb_{F \times \tilde U}, \QQ) \Big] \cap H^1 (\Alb_{F \times \tilde U}, \ZZ) \quad \subset \ \ H^1 (\Alb_{F \times \tilde U}, \ZZ) .\]
	
	Using the contravariant equivalence between the category of polarizable $\tilde G$-Hodge structures of weight one and that of $\tilde G$-abelian varieties, we can translate this geometrically to a surjective homomorphism of $\tilde G$-abelian varieties
	\[ \Alb_{F \times \tilde U} \longrightarrow F^{\flat},\]
where $H^1 (F^{\flat}, \ZZ) = W $. The composition $W \hookrightarrow H^1 (\Alb_{F \times \tilde U}, \ZZ) \twoheadrightarrow H^1 (F, \ZZ)$ has finite kernel so the following composition is an isogeny of abelian varieties inducing a $\tilde G$-equivariant map on cohomology
\[
\begin{tikzcd}[row sep=-0.1cm]
		F \ar[r] & \Alb_{F \times\tilde U} \ar[r] & F^{\flat}\\
		x \ar[rr,mapsto] && x^\flat.
\end{tikzcd}
\]
	
	\noindent (Step 2) The morphism $\Alb_{F \times \tilde U} \to F^{\flat} \times \Alb_{\tilde U}$ is a $\tilde G$-equivariant isogeny of $\tilde G$-abelian varieties, where the $\tilde G$-action on the right hand side is diagonal. Let $K$ be its kernel. It is a finite $\tilde G$-abelian group acting freely on the Albanese torsor $\Alb_{F \times \tilde U}^{\tor} = F^{\tor} \times \Alb_{\tilde U}^{\tor}$ and the quotient $(F^{\tor} \times \Alb_{\tilde U}^{\tor})/K$ is an $F^{\flat} \times \Alb_{\tilde U}$-torsor, so it is isomorphic to $F^{\flat, \tor} \times \Alb_{\tilde U}^{\tor}$. The quotient morphism is necessarily of the form (\Cref{prop:structure theorem for abelian morphism})
	\begin{align*} q' : F^{\tor} \times \Alb_{\tilde U}^{\tor} &\longrightarrow F^{\flat, \tor} \times \Alb_{\tilde U}^{\tor}\\
	 (x, y) \;\;\;\;\;\;\;&\longmapsto (x^{\flat} + e'(y), y) \end{align*}
	for a morphism $e' : \Alb_{\tilde U}^{\tor} \longrightarrow F^{\flat, \tor}$. Defining a new morphism $q$ by
	\begin{align*} q : F^{\tor} \times \tilde U &\longrightarrow \; F^{\flat, \tor} \times \tilde U\\
	 (x,y) \;\;\; &\longmapsto (x^{\flat} + e(y), y), \end{align*}
	where $e$ denotes the composition $\tilde U \to \Alb_{\tilde U}^{\tor} \xrightarrow{e'} F^{\flat, \tor}$, we obtain a \emph{cartesian} diagram
	\begin{equation}\label{diag:Galois}\begin{tikzcd}
		F^{\tor} \times \tilde U \arrow[r] \arrow[d, "q"] & F^{\tor} \times \Alb_{\tilde U}^{\tor} \arrow[d, "q'"] \\
		F^{\flat, \tor} \times \tilde U \arrow[r] & F^{\flat, \tor} \times \Alb_{\tilde U}^{\tor} .
	\end{tikzcd}\end{equation}
	The desired morphism $p_1 : F^{\tor} \times \tilde U \to F^{\flat, \tor}$ is the composition of $q$ with the projection to the first factor i.e., $p_1(x,y)= x^{\flat} + e(y)$.\\
	
	\noindent (Step 3) It remains to prove that the morphism $q$ is a Galois covering and that the $\tilde G$-action on $F^{\flat, \tor} \times \tilde U$ is diagonal. First, the $\tilde G$-action on $F^{\flat, \tor} \times \tilde U$ is diagonal because the induced action on its Albanese variety $F^{\flat} \times \Alb_{\tilde U}$ is diagonal by construction (\Cref{cri:diagonal action}). The map $q$ is a Galois covering since $q'$ is and diagram \eqref{diag:Galois} is cartesian.
\end{proof}

The Stein factorization of a morphism $F \times \tilde U \longrightarrow F^{\flat}$ as in \Cref{lem:diagonalizagion} will be the desired (unique) fibration of \Cref{prop:diagonalization}. 

\begin{proof} [Proof of \Cref{prop:diagonalization}]
	We again divide the proof into two steps: the existence and uniqueness of the fibration.\\
	
	\noindent (Existence) Let $p_1 : F \times \tilde U \longrightarrow F^{\flat}$ be a morphism as in \Cref{lem:diagonalizagion}, which takes the form $p_1 (x, y) = x^{\flat} + e(y)$, for a morphism $e : \tilde U \longrightarrow F^{\flat}$. Consider a smooth projective compactification $Y\supset \tilde U$. Then $e$ uniquely extends to $e : Y \longrightarrow F^{\flat}$, so that we can extend $p_1$ canonically to
	\begin{align*}p_1 : F \times Y &\longrightarrow \;\;\;\;F^{\flat}\\ (x,y) \;&\longmapsto x^{\flat} + e(y) .\end{align*}
	It is a smooth and projective fiber bundle whose fiber is isomorphic to a (possibly disconnected) finite \'etale covering of $Y$. Let $F^{\natural}$ be the variety appearing in the Stein factorization of $p_1$:
\begin{equation}\label{diag:Fsharp}\begin{tikzcd}
		F \times Y \arrow[r] \arrow[rd, "p_1"'] & F^{\natural} \arrow[d] \\
		& F^{\flat}
	\end{tikzcd}.\end{equation}
	The morphism $F \times Y \longrightarrow F^{\natural}$ is an isotrivial fibration whose fiber $T$ is a finite \'etale connected covering of $Y$. Its restriction $F \times \tilde U \longrightarrow F^{\natural}$ is also isotrivial with a smooth and connected fiber $M \subset T$. The morphism $F^{\natural} \to F^{\flat}$ is finite \'etale because the number of connected components of the fibers of $p_1$ is constant. This proves that $F^{\natural}$ is an abelian torsor. Finally, from the definition one easily checks that $F \times Y \longrightarrow F^{\natural} \times Y$ is a Galois covering.\\
	
	It remains to prove that $F^{\natural}$ admits a $\tilde G$-action making diagram \eqref{diag:Fsharp} $\tilde G$-equivariant. By definition, the function field $K(F^{\natural})$ of the Stein factorization is the algebraic closure of $K(F^{\flat})$ in $K(F \times \tilde U)$. The embedding $K(F^{\flat}) \subset K(F \times \tilde U)$ is $\tilde G$-equivariant, so its algebraic closure is closed under the $\tilde G$-action. This endows $F^{\natural}$ with a birational $\tilde G$-action. However, since $F^{\natural}$ is an abelian torsor, a birational automorphism of $F^{\natural}$ is an automorphism. It follows that $F^{\natural}$ is a $\tilde G$-abelian torsor and that diagram \eqref{diag:Fsharp} is $\tilde G$-equivariant.\\
	
	\noindent (Uniqueness) Suppose $F_i^\flat$, $i=1,2$ are two $\tilde G$-abelian varieties isogenous to $F$ and that $p_i: F\times \tilde U\longrightarrow F_i^\flat$, $i=1,2$ are two fibrations satisfying the conditions of the proposition. Fix a smooth projective compactification $Y\supset \tilde U$. The morphism $p_i : F \times \tilde U \longrightarrow F^{\flat}_i$ is necessarily of the form $p_1(x,y) = x^{\flat} + e(y)$ by \Cref{prop:structure theorem for abelian morphism}, so it canonically extends to a fibration $F \times Y \longrightarrow F^{\flat}_i$.  The abelian varieties $F_1^\flat$ and $F_2^\flat$ are isogenous to $F$ and the $\tilde G$-representations $H^1(F_i^\flat,\mathbb{Q}),$ $i=1,2,$ are isomorphic to each other. We deduce that $F_1^\flat$ and $F_2^\flat$ are $\tilde G$-equivariantly isogenous. Hence, there exists a third $\tilde G$-abelian variety $F_3^{\flat}$ with $\tilde G$-equivariant isogenies dominated by both $F_1^{\flat}$ and $F_2^{\flat}$, giving a commutative diagram
	\[\begin{tikzcd}
		F \times Y \arrow[r,"p_1"] \arrow[d,"p_2"] & F_1^{\flat} \arrow[d] \\
		F_2^{\flat} \arrow[r] & F_3^{\flat} .
	\end{tikzcd}\]
The (equivariant) uniqueness of the Stein factorization then implies that $p_1=p_2$.
\end{proof}

\subsection{Kodaira dimension of the intermediate trivialization}We now return to the study of the birational type of the base $U$ of the intermediate trivialization. Namely, we establish that the Kodaira dimension $\kappa(U)$ is equal to $0$ or $n$, as stated in \Cref{thm:typeAB} of the introduction. The proof of the fact that if $\kappa(U) = 0$ then $U$ compactifies to an abelian torsor will appear in the next section (\Cref{prop:compactification of U}).
	
\begin{proof} [Proof of \Cref{thm:typeAB}]
	We first prove that $U$ is either of general type or birational to a $G$-abelian torsor. Consider the equivariant morphism $U \longrightarrow A$ from \Cref{prop:simple Albanese morphism} and fix an equivariant smooth projective compactification $U \subset \tilde Y$. Then the morphism $U \longrightarrow A$ extends to a morphism $\tilde Y \longrightarrow A$ because $\tilde Y$ is smooth. Its Stein factorization gives an equivariant fibration $\tilde Y \longrightarrow Y$ together with an equivariant finite morphism $f : Y \longrightarrow A$. The fibration $\tilde Y \longrightarrow Y$ is a birational morphism since $\tilde Y \longrightarrow A$ is generically finite on its image by \Cref{prop:simple Albanese morphism}. It thus suffices to prove that the normal projective $G$-variety $Y$ is either of general type or an abelian torsor.\\

	By an equivariant version of a theorem of Ueno and Kawamata \cite[Thm 13]{kaw81}, which we include as \Cref{thm:equivariant Kawamata} for completeness, there exists a $G$-abelian subvariety $A_1 \subset A$ such that
	\[ \kappa(Y) = n - \dim A_1 .\]
	Since $A$ is a simple $G$-abelian variety, $A_1 \subset A$ is either a point or $A$ itself. If $A_1$ is a point then we obtain $\kappa(Y) = n$. If $A_1 = A$, since $\dim A \ge n$ we have $\kappa(Y) = n - \dim A \le 0$. This forces $\kappa(Y) = 0$ and $\dim A = n$. A theorem of Kawamata--Viehweg \cite{kaw-vie80} \cite[Thm 4]{kaw81} states that if a morphism $f : Y \longrightarrow A$ to an abelian torsor is finite surjective and $\kappa(Y) = 0$, then $f$ is \'etale and $Y$ is an abelian torsor. One shows that it is isogenous to the $n$-th power of an elliptic curve by the same reasoning as in \Cref{thm:fiberdiag}: use \Cref{cor:G-invariant cohomology for base} to deduce that $Y$ is isogenous to a power of a simple abelian torsor and the fact that $Y/G$ is $\QQ$-Fano to conclude that it is a power of an elliptic curve.\\

	Next, we apply to same method to study the Kodaira dimension of $Z$. In \Cref{thm:diagonalization}, we constructed a finite morphism $Z \longrightarrow \bar F \times U$ and in  \Cref{prop:simple Albanese morphism} a morphism $U \longrightarrow A$, which is generically finite on its image. We thus obtain a morphism $Z \longrightarrow \bar F \times A$ that is generically finite on its image. As above, considering a smooth equivariant compactification of $Z$ and a Stein factorization gives an equivariant finite morphism $f : Y \longrightarrow \bar F \times A$ from a normal projective $G$-variety $Y$ that is birational to $Z$. Applying \Cref{thm:equivariant Kawamata} to $f : Y \longrightarrow \bar F \times A$, we find three possibilities: $\kappa(Z) = 2n$, $n$ or $0$ (because $\dim A$ is a multiple of $n$ by \Cref{prop:simple Albanese morphism}). However, $\kappa(Z) \le n$ because the morphism $p : Z \longrightarrow U$ with general fiber $F$ gives an upper bound on the Kodaira dimension (e.g., \cite[Cor 2.3]{mori87})
	\[ \kappa(Z) \le \kappa(F) + \dim U = n .\smallskip\]
	
	If $\kappa(Z) = 0$ then Kawamata's characterization of abelian torsors \cite[Cor 2]{kaw81} shows that $Z$ is birational to an abelian torsor. If $\kappa(Z) = n$, notice that the dimension of $U$ coincides with the Kodaira dimension of $Z$, and that $Z \longrightarrow U$ is a fibration whose very general fiber has Kodaira dimension $0$. These two properties characterize the Iitaka fibration of $Z$ (see \cite[Def-Thm 1.11]{mori87}). In fact, the diagram
	\[\begin{tikzcd}
		Z \arrow[r] \arrow[d, "p"] & \bar F \times A \arrow[d, "\pr_2"] \\
		U \arrow[r] & A
	\end{tikzcd}\]
	is precisely the Ueno--Kawamata diagram from \Cref{thm:equivariant Kawamata}.
\end{proof}

\subsection{The second isotrivial fibration} \label{sec:second isotrivial fibration}
Another interesting byproduct of \Cref{thm:diagonalization} is the existence of a second fibration of the hyper-K\"ahler manifold $X$. Since $p' : Z \longrightarrow \bar F$ is $G$-equivariant, taking its $G$-quotient yields a new fibration $X_0 \longrightarrow B':= \bar F/G$, or a rational fibration $\pi' : X \dashrightarrow B'$.

\begin{proposition} \label{prop:second isotrivial fibration}
	The Lagrangian fibration $\pi: X\longrightarrow B$ uniquely determines a second rational isotrivial Lagrangian fibration $\pi' : X \dashrightarrow B' $ to a projective $\mathbb{Q}$-Fano variety $B'$.
\end{proposition}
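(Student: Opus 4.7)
The plan is to build the second fibration as the $G$-quotient of the $G$-equivariant isotrivial fibration $p': Z \to \bar F$ produced by \Cref{thm:diagonalization}. Since $p'$ is $G$-equivariant, it descends to a morphism $X_0 = Z/G \to B' := \bar F/G$, which extends to a rational map $\pi': X \dashrightarrow B'$. The uniqueness of $\bar F$ up to $G$-equivariant isomorphism in \Cref{thm:diagonalization} ensures $B'$ is well-defined, providing the preferred choice claimed in the statement.

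Once this candidate is in place, I would first verify the geometric properties of $B'$. It is projective, being a finite quotient of the projective abelian torsor $\bar F$, and $\mathbb{Q}$-factorial as it has only finite quotient singularities; its dimension is $n$ by \Cref{thm:diagonalization}(1). Since $B'$ is dominated by the hyper-K\"ahler manifold $X$ through $\pi'$, \Cref{lem:Q-Fano} then gives the $\mathbb{Q}$-Fano conclusion. To check that $\pi'$ is a rational fibration with connected $n$-dimensional fibers, I would pick a general $b \in B'$ with a lift $\bar f \in \bar F$ having trivial $G$-stabilizer; such lifts exist over a dense open subset of $B'$ because the $G$-action on $\bar F$ is faithful (via $G \hookrightarrow \Aut_0 F$, \Cref{prop:G-action on cohomology}) and hence has trivial generic stabilizer. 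Then $(\pi')^{-1}(b) \cong (p')^{-1}(\bar f)$, which is smooth, connected, and $n$-dimensional. Isotriviality of $\pi'$ is inherited from that of $p'$, whose fibers are all mutually isomorphic finite \'etale covers of $U$ via the \'etale Galois morphism $(p', p): Z \to \bar F \times U$ supplied by \Cref{thm:diagonalization}(2).

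The hard part will be establishing the Lagrangian condition for the general fibers of $\pi'$. Since $Z \to X_0$ is \'etale and preserves symplectic forms, it suffices to show that a general fiber $T = (p')^{-1}(\bar f) \subset Z$ is Lagrangian for the pullback $\sigma$ of the holomorphic symplectic form on $X_0$. The \'etale map $(p', p): Z \to \bar F \times U$ splits the cotangent bundle as $\Omega^1_Z \cong (p')^*\Omega^1_{\bar F} \oplus p^*\Omega^1_U$, inducing a corresponding three-fold decomposition of $\Omega^2_Z$. The tangent space to $T$ at a general point equals $\ker dp'$, which lies entirely in the ``$U$-direction'' $p^*T_U$. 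On the other hand, \Cref{lem:symplectic form generates G-invariant} places $\sigma$ in the mixed summand $H^{1,0}(F) \otimes H^{1,0}(U)$ of the K\"unneth decomposition of \Cref{prop:cohomology of Z}, with vanishing $H^{2,0}(U)$-component (its $G$-invariant part $H^{2,0}(B_0)$ is zero since $B_0 \subset \mathbb{P}^n$ is open). Hence $\sigma$ has no ``pure $U$'' summand and vanishes on $T$. Since $\dim T = n = \tfrac{1}{2}\dim Z$, this shows $T$ is Lagrangian in $Z$, and its image in $X_0$ is Lagrangian in $X$, completing the proof.
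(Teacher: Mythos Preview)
Your proposal is correct and follows essentially the same approach as the paper: construct $\pi'$ as the $G$-quotient of $p':Z\to\bar F$ from \Cref{thm:diagonalization}, invoke \Cref{lem:Q-Fano} for the $\QQ$-Fano property of $B'$, and deduce the Lagrangian condition from \Cref{lem:symplectic form generates G-invariant}. The only stylistic difference is that you argue the Lagrangian property via the pointwise splitting of $\Omega^2_Z$ coming from the \'etale map $(p',p)$, whereas the paper phrases the same computation as the cohomological pullback isomorphism $H^*(\bar F\times U,\QQ)\to H^*(Z,\QQ)$ together with the observation that the fibers of $\pr_1:\bar F\times U\to\bar F$ are Lagrangian for the transferred form; these are the same argument in slightly different language.
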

\begin{proof}
	As mentioned, $\pi'$ is constructed by taking the quotient of $p' : Z \longrightarrow \bar F$ in \Cref{thm:diagonalization}. Let us first show that its general fiber is Lagrangian. Let $\sigma$ be a symplectic form on $Z$ pulled back from that on $X$. A general fiber of $\pi'$ is a general fiber of $p' : Z \longrightarrow \bar F$, so it suffices to prove the latter is Lagrangian with respect to $\sigma$. Decompose $p'$ into $Z \longrightarrow \bar F \times U \longrightarrow \bar F$, where the first map $(p', p)$ is finite \'etale by \Cref{thm:diagonalization} and the second map is the projection onto the first factor. The pullback homomorphism $H^* (\bar F \times U, \QQ) \longrightarrow H^* (Z, \QQ)$ is an isomorphism of $G$-equivariant Hodge structures by \Cref{prop:cohomology of Z}. By \Cref{lem:symplectic form generates G-invariant}, the general fiber of $\pr_1 : \bar F \times U \longrightarrow \bar F$ is Lagrangian with respect to $\text{Tr}(\sigma)\in H^{2,0}(\bar F\times U)$. Hence the general fiber of $p'$ is Lagrangian with respect to $\sigma$. Finally, the base $B'$ is $\QQ$-Fano by \Cref{lem:Q-Fano}.
\end{proof}

\begin{proof}[Proof of \Cref{main:fiber}]
	\Cref{cor:H1 is isotypic} shows that $F$ is isogenous to a power of a simple abelian variety. It therefore suffices to show that $F$ has an isogeny factor of dimension $1$. Let $\pi': X \dashrightarrow B'$ be the second fibration from \Cref{prop:second isotrivial fibration}. We proved that the variety $B' = \bar F/G$ is $\QQ$-Fano. If a finite quotient $B'$ of an abelian torsor $\bar F$ is $\QQ$-Fano, then $\bar F$ is isogenous to a product of elliptic curves by \cite[Lem 4.2]{shi21}. This concludes the proof of the theorem.\\
	
	For the sake of completeness, let us also present a simple argument which does not rely on \cite[Lem 4.2]{shi21}. Let
	$ q : \bar F \longrightarrow \bar F / G = B' $ be the quotient map. Since $K_{\bar F}$ is trivial and $K_{B'}$ is anti-ample, $q$ is necessarily ramified at some codimension $1$ points in $\bar F$. Equivalently, there exists a prime divisor $D \subset \bar F$ whose generic point has a non-trivial stabilizer $H \subset G$. The restriction map $H^1(\bar F, \QQ) \longrightarrow H^1(D, \QQ)$ cannot be injective since $G$ acts faithfully on $H^1(\bar F, \QQ)$ and $H$ acts trivially on $H^1 (D, \QQ)$. The following lemma ensures that $\bar F$ has an isogeny factor of dimension one.
\end{proof}

\begin{lemma}
	Let $\bar F$ be an abelian torsor and $D \subset \bar F$ a prime divisor such that $H^1 (\bar F,\mathbb{Q}) \longrightarrow H^1 (D,\mathbb{Q})$ is not injective. Then $D$ is an abelian subtorsor of $\bar F$. In particular, $\bar F$ has a 1-dimensional isogeny factor.
\end{lemma}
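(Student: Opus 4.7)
The plan is to fix an origin so that $\bar F$ becomes an abelian variety $A$ and then extract from $D$ a proper abelian subvariety $A' \subsetneq A$ using the non-injectivity hypothesis. A dimension count will then force $D$ to be a translate of $A'$, and Poincar\'e complete reducibility will yield a one-dimensional isogeny factor of $A$.

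To produce $A'$, I would choose a resolution of singularities $\nu : \tilde D \to D$ and form the composition $\tilde D \to D \hookrightarrow A$. Define $A' \subseteq A$ as the smallest abelian subvariety such that $D$ lies in a translate $a + A'$; equivalently, $A'$ is the (translated) image of the induced homomorphism $\Alb_{\tilde D} \to A$. The key identification to establish is
\[
\ker\bigl(H^{1}(A,\QQ) \longrightarrow H^{1}(\tilde D,\QQ)\bigr) \ = \ q^{*} H^{1}(A/A',\QQ),
\]
where $q : A \to A/A'$ is the quotient. This is essentially the universal property of the Albanese: a holomorphic $1$-form on $A$ vanishes on $\tilde D$ if and only if it descends to $A/A'$, and one passes from $(1,0)$-forms to $H^{1}$ by dualizing and combining with its complex conjugate. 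Since the map $H^{1}(A,\QQ) \to H^{1}(\tilde D,\QQ)$ factors through the restriction $H^{1}(A,\QQ) \to H^{1}(D,\QQ)$, the assumed nontriviality of the kernel of the latter forces nontriviality of the kernel of the former, and hence $A' \subsetneq A$.

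The dimension count is then immediate: $D$ is irreducible with $\dim D = \dim A - 1$ and is contained in the translate $a + A'$ with $\dim A' \leq \dim A - 1$, so equality holds and $D = a + A'$ is an abelian subtorsor of $\bar F$. Finally, Poincar\'e reducibility applied to $A' \subset A$ yields an isogeny $A \sim A' \times E$ with $\dim E = 1$, so $\bar F$ has an elliptic curve as an isogeny factor. The main obstacle is the kernel identification in the displayed equation, which is standard but requires carefully distinguishing $D$ from $\tilde D$; the direction of the factorization works in our favor, because the kernel computed on $\tilde D$ automatically contains the kernel computed on $D$, which is all that is needed to conclude $A' \subsetneq A$.
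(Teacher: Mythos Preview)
Your proof is correct and follows essentially the same route as the paper: factor the inclusion $D \hookrightarrow \bar F$ through the Albanese, observe that non-injectivity on $H^1$ forces the image in $\bar F$ to be a proper abelian subtorsor, and conclude by a dimension count. The only difference is that you are more careful in passing to a resolution $\tilde D \to D$ before invoking the Albanese (the paper writes $\Alb^{\tor}_D$ directly), and you spell out the kernel identification explicitly; these are refinements of presentation rather than a different argument.
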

\begin{proof}
	Factorize the inclusion $D \subset \bar F$ through the Albanese morphism $D \longrightarrow \Alb^{\tor}_D \longrightarrow \bar F$. Since $H^1 (\bar F, \QQ) \longrightarrow H^1 (D, \QQ)$ is not injective, the morphism $\Alb^{\tor}_D \longrightarrow \bar F$ is not surjective. Its image $D$ is therefore an abelian subtorsor containing $D$. This implies that $D$ is an abelian subtorsor of codimension $1$.\end{proof}

\begin{remark}
	Let $M$ be a general fiber of the rational (isotrivial) Lagrangian fibration $\pi' : X \dashrightarrow B'$ above. Then one can prove that $M$ is either birational to an abelian variety or compactifies to a variety of general type. This can be deduced, for example, from \Cref{prop:classification without conjecture} below.
\end{remark}

\section{General singular fibers and the canonical bundle formula}\label{S-can}
So far, our analysis has been focused on the smooth fibration $\pi_0:X_0\longrightarrow B_0$ obtained by restricting $\pi$ to the complement of the singular fibers, and in particular on the cohomological properties of the general fiber $F$ and of the base $U$ of the intermediate trivialization. We now extend our study to a fibration $\pi_1: X_1\longrightarrow B_1$ obtained by restricting $\pi$ to the complement of a codimension two locus pulled back from the base $B$. Since $B$ is normal, we may assume $B_1$ is smooth. There are two general tools that apply to this situation. The first is Hwang and Oguiso's description of general singular fibers using the notion of characteristic cycle \cite{HO09,HO11}. The second is the canonical bundle formula of Kawamata \cite{Kaw-subadjunction}, Ambro \cite{Ambro-JDG, Ambro-b}, and Fujino \cite{Fujino-can}, which is a higher dimensional generalization of Kodaira's canonical bundle formula for elliptic fibrations. It takes the form
\begin{equation}\label{eq-can-formula}
	K_X \sim_{\QQ} \pi^*(K_B+M_B+D_\pi) ,
\end{equation}
and expresses the ($\mathbb{Q}$-linear equivalence class of the) canonical bundle of the total space of a relatively minimal $K$-trivial fibration $\pi:X \longrightarrow B$ in terms of three $\QQ$-divisors on the base $B$ (see \cite[Thm. 8.5.1]{Kollar-can}): the canonical class of the base $K_B$, the moduli divisor $M_B$ (which only depends on the restriction of the fibration to the smooth locus), and a discriminant divisor $D_\pi$ which accounts for the singular fibers in codimension one (compare \eqref{eq-dpi} below).\\

In the isotrivial case, both the local structure of the general singular fibers and the canonical bundle formula take a particularly simple form. Namely, the local monodromy groups are automatically finite, and in fact cyclic of order $m \in \{2,3,4,6\}$ (\Cref{cor:local monodromy index}, \cite[Lemma 3.6]{BS}), and $m > 2$ is only possible if the elliptic curve $E$ occurring in \Cref{main:fiber} has CM by $\QQ(\sqrt{-1})$ or $\QQ(\sqrt{-3})$. Moreover, under the assumption that there are no multiple fibers in codimension $1$,  we show that the only Hwang--Oguiso fiber types that occur for isotrivial fibrations are $\text{II}$, $\text{III}$, $\text{IV}$, $\text{I}_0^*$, $\text{II}^*$, $\text{III}^*$, and $\text{IV}^*$. Also, as in the case of elliptic fibrations, the type of singular fiber that can occur is controlled by the local monodromy. In particular, if the local mondromy is of order $2$, the only possible fiber type is $\text{I}_0^*$. Returning to the canonical bundle formula, in the isotrivial case, the moduli divisor contribution is trivial (compare  \cite[Thm 3.3]{Druel-Bianco}) and thus the only contribution to be determined is the discriminant divisor $D_\pi$, which is supported on the reduced discriminant $\Sigma$, and whose coefficients for the various components $D_i$ of $\Sigma$ depend on the Hwang--Oguiso singular fiber type at the generic point of $D_i$. These coefficients are familiar from the elliptic fibration setting (compare with \Cref{K3appendix}), but to our knowledge the computations in this section (e.g. \Cref{prop:local structure of pi}), refining Hwang--Oguiso's work, are new in the hyper-K\"ahler setting. We also refer the reader to \Cref{SS:K3fib} for a concrete application of these ideas in higher dimensions.\\

Note that most of the results in this section rely on the assumption that \emph{every general singular fiber of $\pi : X \longrightarrow B$ is a non-multiple fiber}, which explains this hypothesis in \Cref{main:genimpliestypeA}. This assumption is satisfied when $\pi$ has a rational section.

\subsection{Compactification of the intermediate base $U$}
Let us start with the following general result about the existence of a preferred $G$-equivariant compactification $Y \supset U$, which we call the \emph{normal $G$-compactification}.

\begin{proposition}
	Let $G$ be a finite group and $U$ a smooth quasi-projective variety with a free $G$-action. Then
	\begin{enumerate}
		\item For any normal projective compactification $B$ of $U/G$, there exists a unique $G$-equivariant normal projective compactification $Y$ of $U$ such that $Y/G \cong B$.
		\item If $B$ is $\QQ$-factorial, then $Y$ is $\QQ$-Gorenstein.
	\end{enumerate}
\end{proposition}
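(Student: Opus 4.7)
The plan for (1) is to construct $Y$ as the normalization of $B$ in the function field $K(U)$. Since $U \to U/G$ is \'etale Galois, the field extension $K(U)/K(U/G) = K(U)/K(B)$ is Galois with group $G$, and $G$ acts on $K(U)$ fixing $K(B)$. This action preserves integrality over $\mathcal{O}_B$, so by functoriality of normalization it extends to a $G$-action on $Y$, and the induced finite morphism $\pi : Y \to B$ is $G$-invariant; $Y$ is projective because $\pi$ is finite. The preimage $\pi^{-1}(U/G)$ is a finite normal cover of $U/G$ with function field $K(U)$, hence canonically isomorphic to $U$, embedding $U$ as a $G$-stable open subset of $Y$. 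Moreover $(\pi_*\mathcal{O}_Y)^G$ is an integrally closed extension of $\mathcal{O}_B$ inside $K(U)^G = K(B)$, so it coincides with $\mathcal{O}_B$, yielding $Y/G \cong B$. For uniqueness, any normal projective compactification $Y'$ of $U$ with $Y'/G \cong B$ comes equipped with a finite morphism $Y' \to B$ and function field $K(U)$, so by normality it must agree with the normalization of $B$ in $K(U)$.

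For (2), I would invoke the Riemann--Hurwitz formula for $\pi : Y \to B$. Since $Y$ is normal it is smooth in codimension one, and at every codimension-one ramified point the local picture is a cyclic branched cover of a smooth divisor (the stabilizer in $G$ being cyclic in characteristic zero). One therefore obtains
$$K_Y = \pi^* K_B + R$$
as an equality of Weil divisor classes on $Y$, where $R$ is the ramification divisor. The crucial simplification coming from the Galois structure is that $G$ permutes transitively the prime divisors $D_1,\ldots,D_r$ of $Y$ lying over any given prime divisor $E \subset B$, with common ramification index $e_E$, so that $\pi^* E = e_E(D_1 + \cdots + D_r)$ in codimension one. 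Consequently
$$R \;=\; \sum_E (e_E - 1)(D_1 + \cdots + D_r) \;=\; \sum_E \frac{e_E-1}{e_E}\,\pi^* E ,$$
the sum ranging over prime divisors of $B$ over which $\pi$ ramifies. Since $B$ is smooth, each $E$ is Cartier on $B$ and hence $\pi^* E$ is Cartier on $Y$; this exhibits $R$ as $\QQ$-Cartier. Combined with the fact that $\pi^* K_B$ is Cartier, it follows that $K_Y$ is $\QQ$-Cartier.

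The main obstacle I anticipate is justifying Riemann--Hurwitz for the possibly singular variety $Y$. This is resolved by noting that $R$ is supported in codimension one, that $Y$ is smooth there, and that both $K_Y$ and $\pi^* K_B$ are Weil divisor classes determined by their restrictions to codimension-one points. The formula therefore only needs to be checked locally at each codimension-one ramification point, where it reduces to the standard smooth cyclic cover computation. This local-to-global argument both establishes the formula and puts $R$ in the explicit form needed to conclude $\QQ$-Cartierness.
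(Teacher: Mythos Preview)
Your proof is correct and follows the same approach as the paper. The paper's own proof is a one-line reference (``This is standard\ldots The compactification $Y$ is obtained as the normalization of $B$ in the function field $K(U)$'') citing Greb and Koll\'ar--Mori, whereas you supply the details: normalization in $K(U)$ for part (1), and the Hurwitz formula $K_Y = \pi^*K_B + R$ with $R$ expressed as a $\QQ$-combination of pulled-back Cartier divisors for part (2). Your argument for (2) is exactly what underlies the Hurwitz formula \eqref{eq:Hurwitz} used immediately after this proposition in the paper.
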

\begin{proof}
	This is standard (e.g. \cite[Thm. 3.8]{Greb-QE}, \cite[Prop. 5.20]{kol-mori}). The compactification $Y$ is obtained as the normalization of $B$ in the function field $K(U)$.
\end{proof}

Recall that the intermediate base $U$ is a Galois covering $U \longrightarrow B_0 \subset B$. The base $B$ of the Lagrangian fibration is a $\QQ$-factorial $\QQ$-Fano klt variety by \cite[Thm 2]{mat99}. Let $Y \supset U$ be the normal $G$-compactification whose quotient is $B$ and consider the quotient map
\[ q : Y \longrightarrow B .\]
We write $\Sigma \subset B$ for the (reduced) discriminant locus of the isotrivial fibration $\pi$, and $\{ D_i \}$ for its irreducible components. By \cite[Prop 3.1]{HO09}, $\Sigma$ is a closed subset of pure codimension $1$ and $D_i$ is a prime divisors on $B$. The Hurwitz formula for the quotient morphism $q : Y \longrightarrow B$ reads 
\begin{equation} \label{eq:Hurwitz}
	K_Y \sim_{\QQ} q^* \big( K_B + D_q \big),
\end{equation}
where $D_q$ is an an effective $\QQ$-divisor on $B$ supported on the branch locus $\Sigma$.

\begin{proposition} \label{prop:compactification of U}
	Let $Y$ be the normal $G$-compactification of the intermediate base $U$.
	\begin{enumerate}
		\item If $\pi$ is of type A, then $Y$ is an abelian torsor. In particular, $U$ compactifies to an abelian torsor.
		\item If $\pi$ is of type B, then $K_Y$ is ample.
	\end{enumerate}
\end{proposition}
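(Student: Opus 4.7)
My plan is to combine the Hurwitz formula $K_Y \sim_{\QQ} q^*(K_B+D_q)$ from \eqref{eq:Hurwitz} with the Kodaira-dimension dichotomy of Theorem~\ref{thm:typeAB}.

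Part (2) is the easier half. In case B one has $\kappa(U) = n$ by Theorem~\ref{thm:typeAB}, so $\kappa(Y) = n$ and $K_Y$ is big. Since $q$ is finite surjective, $K_B + D_q$ must be big on $B = \PP^n$, and as $\operatorname{Pic}(\PP^n)_\QQ$ has rank one, big implies ample. Hence $K_Y = q^*(K_B + D_q)$ is the pullback of an ample class by a finite morphism, and therefore is itself ample.

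For part (1), I would realize $Y$ as the Stein factorization of a morphism to an abelian variety. By Theorem~\ref{thm:typeAB} one has $\kappa(U) = \kappa(Y) = 0$ in case A, and Proposition~\ref{prop:simple Albanese morphism} together with the proof of Theorem~\ref{thm:typeAB} produces a $G$-equivariant morphism $U \to A$ with $\dim A = n$. Extending this to a $G$-equivariant morphism $Y \to A$, which is then automatically generically finite and surjective, I would take the Stein factorization $Y \to Y^\nu \to A$. The map $Y \to Y^\nu$ is birational (generically finite with connected fibers) and $Y^\nu \to A$ is finite; applying the Kawamata--Viehweg theorem~\cite[Thm~4]{kaw81} to $Y^\nu \to A$, the fact that $\kappa(Y^\nu) = 0$ and $Y^\nu \to A$ is finite surjective forces $Y^\nu$ to be an abelian torsor.

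It then remains to identify $Y$ with $Y^\nu$. The $G$-equivariant birational morphism $Y \to Y^\nu$ descends to a birational morphism $f : B = Y/G \to Y^\nu/G$ of normal projective varieties. Since $Y^\nu$ is smooth and $G$ is finite, $Y^\nu/G$ is $\QQ$-factorial, so $\operatorname{Ex}(f)$ is pure of codimension one (or empty). The relation $\rho(\PP^n) = \rho(Y^\nu/G) + \#\{f\text{-exceptional divisors}\}$ together with $\rho(\PP^n) = 1$ forces $f$ to have no exceptional divisors, so $f$ is an isomorphism. Thus $Y^\nu/G \cong B$, and by the uniqueness of the normal $G$-compactification, $Y = Y^\nu$, which is an abelian torsor. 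The main obstacle is the extension step $U \to A \rightsquigarrow Y \to A$, which I expect to follow from the mild singularities of $Y$: these are ultimately controlled by the finite-cyclic local monodromy of isotrivial Lagrangian fibrations (of order in $\{2,3,4,6\}$) analyzed in the remainder of Section~\ref{S-can}, making $Y$ klt and therefore allowing the rational map $Y \dashrightarrow A$ to extend.
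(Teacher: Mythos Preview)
Your treatment of part~(2) is correct and matches the paper's argument.

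For part~(1), the extension step you flag as the ``main obstacle'' is a genuine gap, and your proposed fix does not work. The finite-cyclic local monodromy analysis in Section~\ref{S-can} concerns only the general point of each irreducible component $D_i \subset \Sigma$, i.e.\ codimension one in $B$; it says nothing about the singularities of $Y$ over the singular locus of $\Sigma$ (where several components meet, or where a component is itself singular). The paper records exactly this limitation in the Remark following the proposition: in type~B one only has ``partial control on the singularities of the compactification $Y \supset U$''. So you cannot conclude a priori that $Y$ is klt, and without that (or some comparable input) the extension of $Y \dashrightarrow A$ to a morphism is not justified.

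The paper sidesteps the extension problem entirely. From the Hurwitz formula \eqref{eq:Hurwitz} and $\rho(B)=1$ it first deduces $K_Y \sim_{\QQ} 0$ in type~A. It then restricts the birational map $f : Y \dashrightarrow A$ (already produced in the proof of Theorem~\ref{thm:typeAB}) to the smooth locus $Y_{\reg}$, where it is automatically a morphism; since $K_{Y_{\reg}}$ is trivial, $f|_{Y_{\reg}}$ is unramified, hence an open immersion $Y_{\reg} \hookrightarrow A$. A Picard-rank comparison on the quotients, together with $\dim H^2(A,\QQ)^G = 1$ from Corollary~\ref{cor:G-invariant cohomology for base}, shows that $A \setminus f(Y_{\reg})$ contains no divisor, so $f$ is an isomorphism in codimension one. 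The argument concludes with Lemma~\ref{lem:no flop on abelian variety}: a normal $\QQ$-Gorenstein variety with $K \sim_{\QQ} 0$ that is isomorphic in codimension one to an abelian torsor is already isomorphic to it. At no point does this require anything about the singularities of $Y$ beyond normality and $\QQ$-Gorenstein, both of which were established in the proposition immediately preceding.
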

\begin{proof}
	First, by \cite[Thm 2]{mat99}, we have $\Pic B \cong \ZZ$. Hence a $\mathbb{Q}$-Cartier divisor on $B$ is either ample, anti-ample, or trivial. From the Hurwitz formula \eqref{eq:Hurwitz} and the fact that $q$ is finite we deduce the second item, namely that the variety $Y$ is of general type if and only if $K_Y$ is ample.\\

Similarly, if $\pi$ is of type A then $K_Y \sim_{\mathbb{Q}} 0$. We have already shown that for type A fibrations, $U$ (or $Y$) admits a birational map
\[ f : Y \dashrightarrow A \]
to an abelian torsor $A$. Since $Y$ is normal, the complement of its smooth locus $Y_{\reg} \subset Y$ has codimension at least $2$. The restriction of $f$ to $Y_{\reg}$ is a morphism $f : Y_{\reg} \longrightarrow A$. As the canonical divisor of $Y_{\reg}$ is trivial, this morphism is unramified and hence is an open immersion. It follows that the map $Y/G\dashrightarrow A/G$ induced by $f$ on the quotient does not contract any divisors.\\
	
	We claim that $f$ is an isomorphism in codimension $1$, namely that the complement of $f(Y_{\reg}) \subset A$ has codimension at least $2$. Assume on the contrary that $A \setminus f(Y_{\reg})$ contains a prime divisor $E$. Then the inverse birational map $f^{-1} : A \dashrightarrow Y$ contracts $E$, and the quotient birational map $A/G \dashrightarrow Y/G$ contracts the effective divisor $(G \cdot E) / G$. On the other hand, we have seen that the map $Y/G \dashrightarrow A/G$ induced by $f$ on the quotient does not contract any divisor. The conclusion is that $\rho(A/G) \ge \rho(Y/G) + 1 = \rho(B) + 1 = 2$, which implies that $H^2 (A, \QQ)^G$ has dimension $\ge 2$. This violates \Cref{cor:G-invariant cohomology for base}, thereby proving that $f$ is an isomorphism in codimension $1$. The conclusion follows from the following lemma. \end{proof}
	
\begin{lemma} \label{lem:no flop on abelian variety}
	Let $Y$ be a normal $\QQ$-Gorenstein projective variety with $K_Y \sim_{\QQ} 0$ and $A$ an abelian torsor. A birational map $f : Y \dashrightarrow A$ extends to an isomorphism.
\end{lemma}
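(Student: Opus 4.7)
The plan is to combine Weil's extension theorem for rational maps to abelian varieties with a discrepancy comparison on a common resolution. The guiding intuition is that no nontrivial small birational modification of a smooth abelian variety can preserve $K$-triviality.

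First, I would restrict $f$ to the smooth locus $Y_{\reg}\subset Y$. Since $Y_{\reg}$ is smooth, Weil's extension theorem guarantees that $f|_{Y_{\reg}}$ is an everywhere-defined morphism to $A$. The hypothesis that $f$ is an isomorphism in codimension one, together with normality of $Y$ (so $Y\setminus Y_{\reg}$ has codimension $\ge 2$), then forces $f|_{Y_{\reg}}$ to be an open immersion $Y_{\reg}\hookrightarrow A$ whose image has complement of codimension $\ge 2$ in $A$.

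Next, I would take a common smooth resolution of the graph of $f$, say $p:W\longrightarrow Y$ and $q:W\longrightarrow A$. Because $f$ is an isomorphism in codimension one and both $Y$ and $A$ are normal, the $p$-exceptional and $q$-exceptional prime divisors in $W$ coincide; call them $E_1,\dots,E_k$. Writing out the discrepancy formulas gives
\[
K_W\sim_{\QQ}p^*K_Y+\sum_i a_iE_i,\qquad K_W\sim_{\QQ}q^*K_A+\sum_i b_iE_i,
\]
where $b_i>0$ because $A$ is smooth (in particular terminal). Using $K_Y\sim_{\QQ}0$ and $K_A\sim 0$ and subtracting yields $\sum_i(a_i-b_i)E_i\sim_{\QQ}0$; the negativity lemma (equivalently, linear independence of $p$-exceptional divisors in $N^1(W/Y)$) then forces $a_i=b_i>0$ for every $i$. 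In particular $Y$ has terminal singularities, and $p$ and $q$ identify $Y,A$ as terminal $\mathbb{Q}$-factorial minimal models agreeing in codimension one.

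Finally, I would conclude by ruling out any nontrivial birational comparison between these two minimal models. By Kawamata's theorem they are connected by a sequence of flops, so it suffices to show that an abelian variety admits no nontrivial flop; any flopping contraction $A\longrightarrow Z$ would contract a positive-dimensional subvariety $S\subset A$, and Ueno-type rigidity of subvarieties of abelian varieties, combined with the smoothness of $A$, excludes such behavior in the small setting. Hence $\{E_i\}=\emptyset$, so $p$ and $q$ are isomorphisms and $f$ extends to an isomorphism $Y\cong A$. The main obstacle is making this final no-flops step rigorous; a possibly cleaner alternative is to argue, once terminality of $Y$ is in hand, that the fibers of a resolution $\tilde Y\longrightarrow Y$ over singular points are rationally chain connected, so that the Weil extension $\tilde Y\longrightarrow A$ is constant on them (as $A$ contains no rational curves), giving an extension of $f$ to a morphism $Y\longrightarrow A$ which is then an isomorphism by the codimension-one hypothesis and Zariski's main theorem.
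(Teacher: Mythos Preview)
The paper omits the proof of this lemma entirely, so there is no paper argument to compare against. Your alternative route via rational chain connectedness is correct and is the argument you should keep; your primary route via Kawamata's flop theorem has a genuine gap.

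The discrepancy comparison is sound: since the $p$-exceptional and $q$-exceptional prime divisors coincide (by the codimension-one hypothesis), the relation $\sum_i(a_i-b_i)E_i\sim_{\QQ}0$ is a $p$-exceptional $\QQ$-divisor that is $p$-numerically trivial, so the negativity lemma applied to both signs gives $a_i=b_i>0$. Hence $Y$ is terminal.

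The problem with the flop argument is that Kawamata's theorem connecting minimal models by flops requires both models to be $\QQ$-factorial, and the hypotheses only give $Y$ $\QQ$-Gorenstein. You have not established $\QQ$-factoriality, and it does not follow from what you have. (Passing to a small $\QQ$-factorialization $Y'\to Y$ and then arguing $Y'\cong A$ still leaves you with the task of showing the small morphism $Y'\to Y$ is an isomorphism.) Separately, your justification that $A$ admits no flops is too vague; the clean statement is that an abelian variety admits no nontrivial birational contraction whatsoever: if a birational morphism $A\to Z$ contracted a curve $C$, then every translate $a+C$ is numerically equivalent to $C$ and hence also contracted, so the morphism would have positive-dimensional fibers everywhere.

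Your alternative avoids both issues. Once $Y$ is terminal (hence klt), the fibers of $p:W\to Y$ are rationally chain connected by Hacon--McKernan. Since $A$ contains no rational curves, $q:W\to A$ is constant on those fibers, so $q$ factors as $g\circ p$ for a morphism $g:Y\to A$. Then $g$ is birational and an isomorphism in codimension one; as $A$ is smooth, van der Waerden purity says the exceptional locus of $g$ would have pure codimension one, so it is empty and $g$ is an isomorphism. I would present this argument directly and drop the flop discussion.
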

\begin{proof}
Since $K_Y \sim_{\QQ} 0$ and $Y$ is birational to $A$, it has canonical singularities. By \cite{HMcK}, the fibers of a resolution of indeterminacy of $f$ are rationally connected. Since an abelian variety does not contain rational curves $f$ extends to a morphism.
\end{proof}

\begin{remark} In the type B case, we only have partial control on the singularities of the compactification $Y\supset U$. We plan to return to this question in the future in connection to the study of type $B$ isotrivial fibrations.	
\end{remark}

\subsection{Tubular neighborhood of general singular fibers}
In this subsection, we will study the geometry of the general singular fibers of the isotrivial fibration $\pi : X \longrightarrow B$ along with the local monodromy. Our main tools will be the work of \cite{HO09,HO11} and the fact that the monodromy of an isotrivial fibration respects the Hodge structure (\Cref{prop:G-action on cohomology}).\\

We let $D_i \subset \Sigma$ be an irreducible component of the discriminant locus of $\pi$, a prime divisor on $B$. Its preimage $q^{-1}(D_i) \subset Y$ is a (possibly reducible) divisor on $Y$. 

\begin{definition}
	Let $D_i\subset \Sigma$ be an irreducible component of the discriminant locus and $\eta_i \in S$ the generic point of any irreducible component of $q^{-1}(D_i) \subset Y$. The \emph{local monodromy group} around $D_i$ is the stabilizer subgroup
	\[ G_i = \operatorname{Stab}(\eta_i) \ \ \subset \ \ G ,\]
which is well-defined up to conjugation in $G$. The \emph{local monodromy index} around $D_i$ is its order
	\[ m_i = |G_i| .\]
\end{definition}

The following is a well-known equivalent definition of the local monodromy group and index. Fix a general point $b \in D_i$ and consider a small analytic open neighborhood $b \in V \subset B$. Since $b$ is a smooth point of $B$, we may assume that there are biholomorphisms
\[ V \cong \Delta^n ,\qquad V^* := V \setminus D_i \cong \Delta^* \times \Delta^{n-1} ,\]
where $\Delta$ is a $1$-dimensional open disc, $\Delta^* = \Delta \setminus \{ 0 \}$ is a punctured $1$-dimensional open disc, and $b \in V$ corresponds to $(0,\cdots,0) \in \Delta^n$ under the biholomorphism $V \cong \Delta^n$. The fundamental group of $\Delta^* \times \Delta^{n-1}$ is isomorphic to $\ZZ$, so we have homomorphisms of groups
\begin{equation} \label{eq:localmon}
	\ZZ \cong \pi_1 (V^*) \longrightarrow \pi_1 (B_0) \longrightarrow \GL (H^1 (F, \ZZ)),
\end{equation}
where the last is the global monodromy representation with image $G$. The local monodromy group $G_i$, a finite cyclic subgroup of $G$, is the image of this composition \eqref{eq:localmon}. We thus have an isomorphism $G_i \cong \mu_{m_i}$, where $m_i = |G_i|$ is the local monodromy index.\\

Now suppose that the fiber $X_b$ is not multiple. By the classification of \cite[Thm 1.4]{HO09} and \cite[Thm 2.4]{HO11}, it has at least one reduced irreducible component. Therefore, shrinking $V$ if necessary, we may assume that $\pi : X_V \longrightarrow V$ admits a section passing through the reduced component of $X_b$. Using the same argument as in \Cref{lem:rational section}, we can construct a cartesian diagram
\begin{equation}
\begin{tikzcd} \label{diag:pilocal}
	F \times \tilde V^* \arrow[d, "\pr_2"] \arrow[r] & X_{V^*} \arrow[d, "\pi"] \\
	\tilde V^* \arrow[r] & V^*,
\end{tikzcd}
\end{equation}
where $\tilde V^* \longrightarrow V^*$ is the Galois covering corresponding to the local monodromy group $G_i$, and $G_i$ acts on $F \times \tilde V^*$ diagonally and by fixing a point in the abelian torsor $F$. We will regard $F$ as a $G_i$-abelian variety by fixing an origin.\\

A surprising fact about isotrivial Lagrangian fibrations is that the local monodromy group $G_i$ is the automorphism group of an elliptic curve, and in particular has order $2$, $3$, $4$, or $6$. This was tacitly discussed in \cite{HO11}, though in a slightly different context. See also \cite[Lem 3.6]{BS} for an alternative approach.

\begin{proposition} \label{prop: local monodromy}
	Let $G_i$ be the local monodromy group around an irreducible component $D_i$ of the discriminant locus $\Sigma$. There exists a $1$-dimensional $G_i$-abelian subvariety $E_i \subset F$ such that $G_i$ acts faithfully on $E_i$ and trivially on $F/E_i$.
\end{proposition}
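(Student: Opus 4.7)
The plan is to pull the holomorphic symplectic form back to the local trivialization $F \times \tilde V$ and read off the structure of the $G_i$-representation on $H^{1,0}(F)$ at a fixed point on the branch locus.

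I would first extend the diagram \eqref{diag:pilocal} to a ramified $G_i$-cover $F \times \tilde V \to X_V$, where $\tilde V = \Delta^n$ is the normal compactification of $\tilde V^*$, the projection $\tilde V \to V$ is given by $(z_1, \dots, z_n) \mapsto (z_1^m, z_2, \dots, z_n)$ with $m = m_i$, and $G_i \cong \mu_m$ acts diagonally: by group automorphisms on $F$ (fixing the origin via the local section provided by the non-multiple fiber assumption), and by $z_1 \mapsto \zeta z_1$ on $\tilde V$ for a primitive $m$-th root of unity $\zeta$. Because $G_i$ acts faithfully on $F$, the fixed locus $F^{G_i} \times \{z_1 = 0\}$ has codimension at least $2$ in $F \times \tilde V$.

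Next I would pull the symplectic form $\sigma$ on $X_V$ back to $F \times \tilde V$. On $F \times \tilde V^*$, the cover is \'etale, so the pullback $\tilde\sigma$ is a holomorphic $G_i$-invariant symplectic form; since $F$ is an abelian variety with $\Omega^1_F$ globally trivialized by $V = H^{1,0}(F)$ and the fibers are Lagrangian, $\tilde\sigma$ has no $V^\vee \wedge V^\vee$ component. I expect the main obstacle to be establishing that $\tilde\sigma$ extends to a still non-degenerate $2$-form on all of $F \times \tilde V$: holomorphic extension follows from Hartogs (since the fixed locus has codimension $\ge 2$), while non-degeneracy at fixed points should follow from the fact that $X_V$ is a smooth symplectic resolution of $(F \times \tilde V)/G_i$, which forces the linearized $G_i$-action on each tangent space at a fixed point to preserve a non-degenerate $2$-form that $\tilde\sigma$ must realize.

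Granting non-degeneracy, I would evaluate $\tilde\sigma$ at a fixed point $(p, 0) \in F^{G_i} \times \{z_1 = 0\}$. The tangent space decomposes $G_i$-equivariantly as $V^\vee \oplus T_0 \tilde V$, and $\tilde\sigma|_{(p,0)}$ is a $G_i$-invariant non-degenerate $2$-form vanishing on $V^\vee \wedge V^\vee$; in block form it reads
\[
\tilde\sigma|_{(p,0)} = \begin{pmatrix} 0 & A \\ -A^T & B \end{pmatrix},
\]
so non-degeneracy forces the $G_i$-equivariant linear map $A \colon T_0 \tilde V \to V$ to be an isomorphism. Since $T_0 \tilde V \cong \chi \oplus \mathbf{1}^{n-1}$ as a $\mu_m$-module (where $\chi$ sends the generator to $\zeta$ and $\mathbf{1}$ denotes the trivial character), I deduce $V \cong \chi \oplus \mathbf{1}^{n-1}$.

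Finally, I would translate this $G_i$-equivariant decomposition of $V$ back into a decomposition of $F$. The $\{\chi, \chi^{-1}\}$-isotypic and trivial parts of the polarizable $G_i$-equivariant Hodge structure $H^1(F, \QQ)$ are complementary sub-Hodge structures, yielding by Poincar\'e reducibility a $G_i$-equivariant isogeny $F \sim E_i \times F'$ with $E_i$ an elliptic curve on which $G_i$ acts faithfully (via $\chi$) and $F'$ an abelian $(n-1)$-fold with trivial $G_i$-action. Realizing $E_i$ as a $G_i$-invariant subtorus of $F$, the quotient $F/E_i$, isogenous to $F'$, inherits the trivial $G_i$-action, completing the proof.
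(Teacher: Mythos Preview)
Your approach via the pulled-back symplectic form is natural, but the argument has a genuine gap at the non-degeneracy step. The justification you offer---that $X_V$ is a symplectic resolution of $(F\times\tilde V)/G_i$---is not available at this point and is in fact \emph{false} in general: the paper establishes later (see the remark after \Cref{prop:equivalent characterization of type A}) that $\bar X_V=(F\times\tilde V)/G_i$ has canonical (hence symplectic) singularities if and only if the fibration is of type~A, equivalently the general singular fiber over $D_i$ is of Kodaira type $\textup{I}_0^*,\textup{II}^*,\textup{III}^*,\textup{IV}^*$. When the fiber is of type $\textup{II},\textup{III}$ or $\textup{IV}$, the quotient is only klt and $X_V$ is \emph{not} a crepant resolution. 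Concretely, writing $\tilde\sigma=\sum a_{ij}(z)\,\omega_i\wedge dz_j+\cdots$ with $A=(a_{ij})$, invariance forces $\det A(\zeta z_1,z')=(\det M)^{-1}\zeta^{-1}\det A(z)$, where $M$ is the action on $V$; whenever $\det M\neq\zeta^{-1}$ (precisely the non-$*$ types) this forces $\det A$ to vanish along $\{z_1=0\}$, so $\tilde\sigma$ is genuinely degenerate at every point of the central fiber and your block matrix argument collapses. A minor related issue: the locus where the rational map $F\times\tilde V\dashrightarrow X_V$ is undefined is contained in $F\times\{z_1=0\}$, which has codimension~$1$, so Hartogs is not the right mechanism for the holomorphic extension (though extension does hold, via a resolution of indeterminacy and birational invariance of global holomorphic forms).

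The paper circumvents this entirely by appealing to Hwang--Oguiso \cite[\S2]{HO09}: for a transversal disc $\Delta\subset V$ the smooth variety $X_\Delta$ carries $n-1$ pointwise independent \emph{vertical} holomorphic vector fields. Pulling these back along the \'etale cover $F\times\tilde\Delta^*\to X_{\Delta^*}$ gives $G_i$-invariant vector fields whose restrictions to a fiber produce $n-1$ independent elements of $H^0(F,T_F)^{G_i}$; together with faithfulness this yields $\dim H^1(F,\QQ)^{G_i}=2n-2$, and the sub-Hodge structure $H^1(F,\QQ)^{G_i}$ cuts out the quotient $F/E_i$. This argument uses only the existence of the vector fields and never needs the pulled-back $2$-form to remain non-degenerate, so it works uniformly for all Kodaira types.
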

\begin{proof}
	We will construct a short exact sequence of $G_i$-abelian varieties
	\begin{equation} \label{eq:ses of local elliptic curve}
		0 \longrightarrow E_i \longrightarrow F \longrightarrow F / E_i \longrightarrow 0 ,
	\end{equation}
	where $\dim E_i = 1$ and the quotient $F / E_i$ has a trivial $G_i$-action. Since $H^1 (F, \ZZ)^{G_i} \subset H^1 (F, \ZZ)$ is a (saturated) Hodge substructure (see \Cref{prop:G-action on cohomology}), it is enough to prove that the Hodge substructure $H^1 (F, \QQ)^{G_i} \subset H^1 (F, \QQ)$ has dimension $2n-2$. Since $F$ is an abelian variety, this is equivalent to showing that $H^0 (F, T_F)^{G_i}$ has dimension $n-1$.\\
	
	Fixing a general point $b \in D_i$, an analytic neighborhood $b \in V \subset B$, and a section of $X_V \longrightarrow V$, we construct diagram \eqref{diag:pilocal} as above. By \cite[\S 2]{HO09}, there exists a $1$-dimensional disc $b \in \Delta \subset V$ transversal to the discriminant divisor $D_i$, such that $X_{\Delta} = \pi^{-1}(\Delta)$ is smooth and we have $n-1$ \emph{pointwise} linearly independent holomorphic vector fields
	\[ \xi_j \in H^0 (X_{\Delta}, \, T_{X_{\Delta}}) ,\qquad j = 1, \cdots, n-1 .\]
The vector fields $\xi_j$ are vertical in the sense that given a smooth fiber $F$ we have $(\xi_j)_{|F} \in H^0 (F, T_F)$. We can pull back these vector fields to vector fields $\tilde \xi_j$ on $F \times \tilde \Delta^*$ by the top horizontal arrow of \eqref{diag:pilocal}. Since the zero section $\{ 0 \} \times \tilde \Delta^*$ is $G_i$-invariant and its quotient recovers the section of $X_{\Delta^*} \longrightarrow \Delta^*$, the vector fields $\tilde{\xi}_j$ are $G_i$-invariant. In particular, restricting $\tilde\xi_j$ to a fiber $F$ of the projection $F \times \tilde \Delta^* \longrightarrow \tilde \Delta^*$, we see that $\tilde\xi_j|_F \in H^0(F,T_F)\cong T_0 F$ is $G_i$-invariant.
\end{proof}

\begin{corollary} \label{cor:local monodromy index}
	Let $m_i$ be the local monodromy index around an irreducible component $D_i$ of the discriminant locus. Then
	\begin{enumerate}
		\item $m_i = 2$, $3$, $4$, or $6$.
		\item If the endomorphism field of $E$ is not $\QQ(\sqrt{-1})$ or $\QQ(\sqrt{-3})$, then $m_i = 2$.
	\end{enumerate}
\end{corollary}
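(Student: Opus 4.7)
The plan is to read off the corollary directly from Proposition~\ref{prop: local monodromy} and Theorem~\ref{main:fiber}. By the former, the local monodromy group $G_i$ acts faithfully on a $1$-dimensional $G_i$-abelian subvariety $E_i \subseteq F$, so $G_i$ embeds as a subgroup of $\Aut_0(E_i)$. Now $G_i$ is a finite cyclic group, being the image of $\pi_1(V^*) \cong \ZZ$ under the local monodromy representation \eqref{eq:localmon}, and the automorphism group of an elliptic curve fixing the origin is $\mu_2$, $\mu_4$, or $\mu_6$, according to whether the $j$-invariant is generic, equal to $1728$, or equal to $0$. Thus $G_i$ is a cyclic subgroup of one of $\mu_2, \mu_4, \mu_6$, which gives $m_i \in \{1,2,3,4,6\}$.

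To exclude $m_i = 1$ and finish (1), I would argue as follows. If the local monodromy were trivial, then the base change diagram \eqref{diag:pilocal} would be an isomorphism $F \times V^* \xrightarrow{\sim} X_{V^*}$, exhibiting $\pi$ as the trivial family with a section over $V^*$; this trivial family extends smoothly across $D_i$ as $F \times V \to V$, and since two proper flat families agreeing over $V^*$ must agree (after identification with the section) near the generic point of $D_i$, this contradicts $D_i \subseteq \Sigma$. Equivalently, one can appeal to the refined Hwang--Oguiso classification of general singular fibers for non-multiple isotrivial fibrations alluded to in this section: each such fiber type has nontrivial local monodromy of order $2$, $3$, $4$, or $6$.

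For (2), Theorem~\ref{main:fiber} says $F$ is isogenous to $E^n$. By Poincar\'e reducibility, every $1$-dimensional abelian subvariety of $E^n$ is isogenous to $E$, and transporting through the isogeny, $E_i$ is itself isogenous to $E$. Isogenous elliptic curves share the same endomorphism algebra, so if the endomorphism field of $E$ is neither $\QQ(\sqrt{-1})$ nor $\QQ(\sqrt{-3})$, then $j(E_i) \notin \{0, 1728\}$, whence $\Aut_0(E_i) = \mu_2$. Combined with $m_i \geq 2$ from part (1), this forces $m_i = 2$.

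The only nontrivial point I anticipate is the exclusion $m_i \neq 1$, which requires the nontriviality of local monodromy around a component of the discriminant. This is a folklore fact in the elliptic surface setting and extends here using either the triviality argument above or the refined classification of codimension-one singular fibers for non-multiple isotrivial Lagrangian fibrations; everything else is essentially formal from Proposition~\ref{prop: local monodromy}.
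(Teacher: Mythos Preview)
Your argument is correct and matches the paper's: embed $G_i \hookrightarrow \Aut_0(E_i)$ via \Cref{prop: local monodromy}, then use that $E_i$ is an isogeny factor of $F$ and hence isogenous to $E$. You are actually more explicit than the paper about excluding $m_i = 1$ (the paper's proof simply asserts item (1) follows from the faithful action on $E_i$, leaving nontriviality of $G_i$ implicit in $D_i \subset \Sigma$); one caution is that your first argument for this step invokes the claim that two proper flat families agreeing over $V^*$ must agree over $V$, which fails in general without a minimality hypothesis, so the clean justification is your second route via the Hwang--Oguiso classification of general singular fibers.
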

\begin{proof}
	The first item follows from \Cref{prop: local monodromy}, since $G_i$ acts faithfully on the elliptic curve $E_i$. The elliptic curve $E_i$ is an isogeny factor of $F$ and therefore is isogenous to $E$. Hence, if the endomorphism field of $E$ is not $\QQ(\sqrt{-1})$ or $\QQ(\sqrt{-3})$ then $\Aut_0 (E_i) = \{ \pm1 \}$.
\end{proof}

Using the local monodromy group $G_i$ and the associated elliptic curve $E_i \subset F$, we can understand the geometry of a tubular neighborhood around the general singular fiber. Extend the Galois covering $\tilde V^* \longrightarrow V^*$ to a ramified covering
\begin{align*}\label{eq:ramgalois}
	q_i : \tilde V = \tilde \Delta \times \Delta^{n-1}& \longrightarrow V = \Delta \times \Delta^{n-1}\\
	(t, \ s_1, \cdots, s_{n-1}) &\longmapsto (t^{m_i}, \ s_1, \cdots, s_{n-1}) .
\end{align*}
Here $\tilde \Delta$ also denotes the $1$-dimensional disc but we use a different notation to distinguish it from $\Delta$. Complete diagram \eqref{diag:pilocal} to the following
\[\begin{tikzcd}[column sep=normal]
	F \times \tilde V \arrow[d, "\pr_2"] \arrow[r] & \bar X_V \arrow[r, dashed, "\operatorname{bir}"] \arrow[d, "\bar \pi"] & X_V \arrow[dl, "\pi"] \\
	\tilde V \arrow[r] & V ,
\end{tikzcd}\]
where
\[ \bar X_V = (F \times \tilde V)/G_i \]
denotes the quotient of $F \times \tilde V$ by the diagonal $G_i$-action. Note that $G_i = \langle g \rangle$ was cyclic and it acts on $\tilde V = \tilde \Delta \times \Delta^{n-1}$ by $g \cdot (t,s_1,\ldots, s_{ n-1}) = (\zeta t, s_1,\ldots, s_{n-1})$ for a primitive $m_i^{\text{th}}$ root of unity $\zeta$.\\

Consider the short exact sequence \eqref{eq:ses of local elliptic curve} realizing $F \longrightarrow F/E_i$ as an $E_i$-torsor. By the Poincar\'e reducibility theorem, there is an isogeny $A_i \longrightarrow F/E_i$ of abelian varieties of dimension $n-1$ and a cartesian diagram
\begin{equation} \label{diag:local trivialization base change}
	\begin{tikzcd}
		A_i \times E_i \arrow[r] \arrow[d] & A_i \arrow[d] \\
		F \arrow[r] & F/E_i
	\end{tikzcd}
\end{equation}
trivializing the $E_i$-torsor $F \longrightarrow F/E_i$. The abelian variety $A_i$ is isogenous to $E^{n-1}$ (or $E_i^{n-1}$).

\begin{proposition} [Local structure of $\bar \pi$ around the general singular fibers] \label{prop:local structure of bar pi}
	Let $D_i$ be an irreducible component of the discriminant locus $\Sigma$ and $b \in D_i$ a general point. Then
	\begin{enumerate}
		\item There exists an analytic open neighborhood $b \in V \subset B$ biholomorphic to $\Delta^n$ such that $\bar \pi : \bar X_V \longrightarrow V$ factors as
		\[\begin{tikzcd}[column sep=normal]
			\bar \pi : \bar X_V \arrow[r] & F/E_i \times V \arrow[r, "\pr_2"] & V
		\end{tikzcd}.\]
		
		\item There exists a cartesian diagram
		\begin{equation}\begin{tikzcd}[column sep=normal]\label{diag:desired}
			A_i \times \bar S_i \times \Delta^{n-1} \arrow[r, "\id \times \bar f_i \times \id"] \arrow[d] & A_i \times \Delta \times \Delta^{n-1} \arrow[d] \\
			\bar X_V \arrow[r] & F/E_i \times V\;\;,
		\end{tikzcd}\end{equation}
		where $A_i \longrightarrow F/E_i$ is the \'etale covering from \eqref{diag:local trivialization base change} and $\bar f_i : \bar S_i \longrightarrow \Delta$ is an isotrivial elliptic fibration over a one-dimensional disc.
	\end{enumerate}
	In other words, $\bar \pi : \bar X_V \longrightarrow V$ is biholomorphic to a finite \'etale quotient of the product of $\bar f_i : \bar S_i \longrightarrow \Delta$ and $\pr_2 : A_i \times \Delta^{n-1} \longrightarrow \Delta^{n-1}$.
\end{proposition}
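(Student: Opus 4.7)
The plan is to exploit the explicit presentation $\bar X_V = (F \times \tilde V)/G_i$ together with the $G_i$-equivariant decomposition of $F$ supplied by \Cref{prop: local monodromy}, which asserts that $G_i$ acts faithfully on the elliptic curve $E_i$ and trivially on the quotient $F/E_i$. Part (1) will be an immediate consequence of this equivariance, while part (2) requires understanding how the $E_i$-torsor structure on $F \to F/E_i$ trivializes $G_i$-equivariantly after the étale base change $A_i \to F/E_i$.

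For (1), compose the projection $\pr_1 : F \times \tilde V \longrightarrow F$ with the quotient $F \longrightarrow F/E_i$. Since $G_i$ acts trivially on $F/E_i$, this composition is $G_i$-invariant, and combining with the quotient $\tilde V \longrightarrow V = \tilde V/G_i$ yields a $G_i$-invariant morphism $F \times \tilde V \longrightarrow F/E_i \times V$. Descending to the quotient produces the desired morphism $\bar X_V \longrightarrow F/E_i \times V$, whose composition with $\pr_2$ coincides with $\bar\pi$ by construction.

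For (2), we first upgrade \eqref{diag:local trivialization base change} to a $G_i$-equivariant diagram. Since $\QQ[G_i]$ is semisimple by Maschke, the category of $G_i$-equivariant abelian varieties up to isogeny is semisimple, so the short exact sequence $0 \to E_i \to F \to F/E_i \to 0$ admits a $G_i$-equivariant splitting up to isogeny. Concretely, we may choose $A_i$ to carry the trivial $G_i$-action and the isogeny $A_i \times E_i \to F$ to be $G_i$-equivariant for the diagonal action (trivial on $A_i$, original on $E_i$). Pulling back the morphism from (1) along the étale cover $A_i \times V \longrightarrow F/E_i \times V$, and using that étale base change commutes with forming the $G_i$-quotient, we obtain
\[ \bar X_V \times_{F/E_i \times V} (A_i \times V) \ \cong \ \bigl((F \times \tilde V) \times_{F/E_i \times V} (A_i \times V)\bigr)/G_i \ \cong \ (A_i \times E_i \times \tilde V)/G_i, \]
where cartesianness of \eqref{diag:local trivialization base change} is used to identify $F \times_{F/E_i} A_i$ with $A_i \times E_i$.

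Under the equivariant splitting, the diagonal $G_i$-action on $A_i \times E_i \times \tilde \Delta \times \Delta^{n-1}$ is trivial on $A_i$ and on $\Delta^{n-1}$, faithful on $E_i$, and acts on $\tilde \Delta$ by the rotation $t \mapsto \zeta t$. Factoring out the trivial actions, the quotient splits as $A_i \times \bar S_i \times \Delta^{n-1}$, where $\bar S_i := (E_i \times \tilde \Delta)/G_i$ carries the isotrivial elliptic fibration $\bar f_i : \bar S_i \longrightarrow \tilde \Delta/G_i = \Delta$ induced by $\pr_2$. Under these identifications the base change morphism reads $(\id, \bar f_i, \id)$, yielding the cartesian diagram \eqref{diag:desired}. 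The only genuine obstacle is securing the $G_i$-equivariant splitting of $F$ up to isogeny, but Maschke's theorem combined with Poincaré reducibility settles this; the remaining steps are formal manipulations of quotients and fiber products.
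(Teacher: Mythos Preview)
Your proof is correct and follows essentially the same strategy as the paper. Both arguments rest on the $G_i$-equivariant product decomposition $A_i \times E_i \to F$ with trivial action on $A_i$, then pass to the $G_i$-quotient of $A_i \times E_i \times \tilde\Delta \times \Delta^{n-1}$; you are simply more explicit that this equivariance requires an application of Maschke/equivariant Poincar\'e reducibility (the paper asserts ``the $G_i$-action on $A_i$ is trivial'' without further comment). For cartesianness, the paper constructs the diagram by quotienting and then verifies the square is cartesian via an explicit fiber-product computation, whereas you start from the fiber product and identify it with $A_i \times \bar S_i \times \Delta^{n-1}$ using that free $G_i$-quotients commute with base change---these are dual formulations of the same verification.
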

\begin{proof}
	Let $V$ be an open analytic neighborhood of $b$ as in the discussion above. Consider the product of diagram \eqref{diag:local trivialization base change} with $\tilde V = \tilde \Delta \times \Delta^{n-1}$ and equip it with a diagonal $G_i$-action. Here $G_i$ acts by deck transformations on $\tilde \Delta$ and trivially on $\Delta^{n-1}$. The $G_i$-quotient of this diagram is \eqref{diag:desired} and the isotrivial elliptic fibration $\bar f_i : \bar S_i \longrightarrow \Delta$ is the $G_i$-quotient of the trivial family $\pr_2 : E_i \times \tilde \Delta \longrightarrow \tilde \Delta$, i.e., $\bar S_i = (E_i \times \tilde \Delta)/G_i$. Note that $\bar S_i$ is a singular surface.\\
	
	It remains to prove that the resulting diagram is cartesian. Since the $G_i$-action on $A_i$ and $\Delta^{n-1}$ are trivial, it is enough to prove that the diagram
	\[\begin{tikzcd}
		(E_i \times \tilde \Delta)/G_i \arrow[r] \arrow[d, hook] & \Delta \arrow[d, hook] \\
		(F \times \tilde \Delta)/G_i \arrow[r] & F/E_i \times \Delta
	\end{tikzcd}\]
	is cartesian. This can be done by an explicit computation. The fiber product of $(F \times \tilde \Delta)/G_i$ and $\Delta$ over $F/E_i \times \Delta$ is
	\[ \Big\{ \big( (f, \tilde t), \ t \big) \ \in (F \times \tilde \Delta)/G_i \times \Delta \ : \ f = 0 \ \in F/E_i, \ \ t = \tilde t^{m_i} \Big\} .\]
	One readily shows that it is isomorphic to $(E_i \times \tilde \Delta)/G_i$, which completes the proof.
\end{proof}

The elliptic fibration $\bar f_i : \bar S_i \longrightarrow \Delta$ was constructed as the $G_i$-quotient of the trivial family $\pr_2 : E_i \times \tilde \Delta \longrightarrow \tilde \Delta$. By classical surface theory, $\bar f_i$ admits a unique relative minimal model $f_i : S_i \longrightarrow \Delta$, where $S_i$ is a smooth surface. The central fiber of $f_i$ belongs to the Kodaira classification of singular fibers and we obtain the following diagram:
\[\begin{tikzcd}[column sep=normal]
	E_i \times \tilde \Delta \arrow[r] \arrow[d, "\pr_2"'] & \bar S_i \arrow[r, dashed, "\operatorname{bir}"] \arrow[d, "\bar f_i"'] & S_i \arrow[ld, "f_i"] \\
	\tilde \Delta \arrow[r, "t \longmapsto t^{m_i}"] & \Delta .
\end{tikzcd}\]
Notice that this description of a minimal isotrivial elliptic fibration is identical to the one we provide in \Cref{K3appendix}. That is, we understand the relation between $\bar S_i$ and $S_i$. We will see that $\bar X_V$ and $X_V$ are related in very much the same manner.
\[\begin{tikzcd}[column sep=tiny]
	\bar S_i \arrow[rd, "\bar f_i"'] \arrow[rr, dashed, "\textup{bir}"] & & S_i \arrow[ld, "f_i"] \\
	& \Delta
\end{tikzcd} \qquad \begin{tikzcd}[column sep=tiny]
	\bar X_V \arrow[rd, "\bar \pi"'] \arrow[rr, dashed, "\textup{bir}"] & & X_V \arrow[ld, "\pi"] \\
	& V
\end{tikzcd}.\]
The following can be understood as a refinement of Hwang--Oguiso's description of general singular fibers in \cite{HO09} for the case of isotrivial fibrations.

\begin{proposition} [Local structure of $\pi$ around the general singular fibers] \label{prop:local structure of pi}
	We use the notation from \Cref{prop:local structure of bar pi}.
	\begin{enumerate}
		\item The original isotrivial fibration $\pi : X_V \longrightarrow V$ factors as
		\[\begin{tikzcd}[column sep=normal]
			\pi : X_V \arrow[r] & F/E_i \times V \arrow[r, "\pr_2"] & V
		\end{tikzcd}.\]
		
		\item There exists a cartesian diagram
		\[\begin{tikzcd}[column sep=normal]
			A_i \times S_i \times \Delta^{n-1} \arrow[r, "\id \times f_i \times \id"] \arrow[d] & A_i \times \Delta \times \Delta^{n-1} \arrow[d] \\
			X_V \arrow[r] & F/E_i \times V
		\end{tikzcd}\]
		where $A_i \longrightarrow F/E_i$ is an \'etale covering in \eqref{diag:local trivialization base change} and $f_i : S_i \longrightarrow \Delta$ is the relative minimal model of $\bar f_i : \bar S_i \longrightarrow \Delta$.
	\end{enumerate}
	In other words, the isotrivial fibration $\pi : X_V \longrightarrow V$ is biholomorphic to a finite \'etale quotient of the product of $f_i : S_i \longrightarrow \Delta$ and $\pr_2 : A_i \times \Delta^{n-1} \longrightarrow \Delta^{n-1}$.
\end{proposition}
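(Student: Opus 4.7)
The plan is to upgrade the statement for $\bar\pi$ from the preceding proposition by replacing the singular elliptic surface $\bar S_i$ with its smooth relative minimal model $S_i$, and then to recognize the resulting smooth $K$-trivial candidate as $X_V$ via the Hwang--Oguiso classification of general singular fibers.

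For part (1), the previous proposition provides a factorization $\bar X_V \longrightarrow F/E_i \times V \xrightarrow{\pr_2} V$. Composing the first map with the birational equivalence $X_V \dashrightarrow \bar X_V$ gives a rational map $\varphi : X_V \dashrightarrow F/E_i$. Since $X_V$ is smooth and $F/E_i$ is an abelian torsor, $\varphi$ extends to a morphism by the classical fact that rational maps from smooth varieties to abelian torsors are everywhere defined. Pairing $\varphi$ with $\pi$ yields the desired morphism $X_V \longrightarrow F/E_i \times V$.

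For part (2), I would start from the cartesian diagram realizing $\bar X_V$ as the finite \'etale quotient $(A_i \times \bar S_i \times \Delta^{n-1})/K_i$ with $K_i = \ker(A_i \to F/E_i)$ acting by translation on the first factor and trivially on the other two. The birational map $\bar S_i \dashrightarrow S_i$ to the relative minimal model consists of blowups and contractions of $(-1)$-curves supported over $0 \in \Delta$, and it is automatically $K_i$-equivariant because $K_i$ acts trivially on $\bar S_i$. Carrying it out fiberwise yields a smooth variety $A_i \times S_i \times \Delta^{n-1}$ on which $K_i$ still acts freely; let $Y_V := (A_i \times S_i \times \Delta^{n-1})/K_i$ be the quotient. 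Then $Y_V$ is smooth with trivial canonical bundle and admits a proper birational morphism $Y_V \longrightarrow \bar X_V$ that is an isomorphism over $V \setminus D_i$, where it agrees with the given birational equivalence $X_{V \setminus D_i} \xrightarrow{\sim} \bar X_{V \setminus D_i}$.

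The final step, and the main obstacle, is the identification $Y_V \cong X_V$ as schemes over $V$. Both are smooth with trivial canonical bundle, birational to $\bar X_V$, and agree on $V \setminus D_i$, so the composite gives a birational map $Y_V \dashrightarrow X_V$ over $V$ that is an isomorphism in codimension one. To promote this to an everywhere-defined isomorphism I would invoke the Hwang--Oguiso classification of general singular fibers \cite{HO09, HO11}: both $Y_V \to V$ and $X_V \to V$ are smooth relative minimal models of the $K$-trivial fibration $\bar X_V \to V$, and over the generic point of $D_i$ their fibers are smooth compactifications of the punctured abelian torsor that match the Kodaira-type general singular fibers in \cite{HO09}. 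Since these fibers admit no relative flop over $V$, the birational map $Y_V \dashrightarrow X_V$ extends to a biregular isomorphism over $V$, completing the cartesian diagram in (2).
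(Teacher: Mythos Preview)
Your argument for part (1) is exactly the paper's, and your construction of the candidate $Y_V$ for part (2) is fine. The gap is in the final identification $Y_V\cong X_V$. You work over $V$, where the relative dimension is $n$, and then appeal to Hwang--Oguiso and a ``no flops'' claim. But Hwang--Oguiso describes the general singular fibers of a Lagrangian fibration of a hyper-K\"ahler manifold; it tells you nothing directly about the fibers of your abstractly constructed $Y_V\to V$, and it does not by itself rule out small birational modifications between two smooth $K$-trivial models over an $n$-dimensional base. The assertion that the birational map $Y_V\dashrightarrow X_V$ is an isomorphism in codimension one also needs an argument, not just the observation that both have trivial canonical bundle.

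The paper avoids this difficulty by changing the base: once you have the factorization $X_V\to F/E_i\times V$, pull back along the \'etale cover $A_i\to F/E_i$ to get $\tilde X_V\to A_i\times\Delta\times\Delta^{n-1}$. This is now an \emph{elliptic} fibration (relative dimension $1$), smooth total space, trivial canonical bundle, hence relatively minimal. The same holds for $A_i\times S_i\times\Delta^{n-1}\to A_i\times\Delta\times\Delta^{n-1}$ by construction of $S_i$. The two are birational over this base (both map to $A_i\times\bar S_i\times\Delta^{n-1}$), and now you invoke the classical uniqueness of relative minimal models for genus-one fibrations. No Hwang--Oguiso and no flop analysis are needed; the whole point of producing the map to $F/E_i\times V$ in part (1) is to make the fibers one-dimensional so that surface theory applies.
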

\begin{proof}
	Consider the (rational) factorization of $\pi : X_V \longrightarrow V$ as
	\[ \pi : X_V \dashrightarrow \bar X_V \longrightarrow F/E_i \times V \xlongrightarrow{\pr_2} V ,\]
	where the first map is a birational map and the latter morphisms are as in \Cref{prop:local structure of bar pi}. Since $X_V$ is smooth and $F/E_i$ is an abelian torsor, the rational map $X_V \dashrightarrow F/E_i$ is in fact a morphism. Hence, we have a factorization
	\[ \pi : X_V \longrightarrow F/E_i \times V \xlongrightarrow{\pr_2} V, \]
	and moreover $X_V$ and $\bar X_V$ are birational over $F/E_i \times V$. By \Cref{prop:local structure of bar pi}, the \'etale covering of $\bar X_V$ obtained by the base change $A_i \longrightarrow F/E_i$ is isomorphic to $A_i \times \bar S_i \times \Delta^{n-1}$. Let us denote by $\tilde X_V$ a similar \'etale covering of $X_V$. We have a cartesian diagram
	\begin{equation}\label{diag:X_V}\begin{tikzcd}[column sep=tiny]
		A_i \times \bar S_i \times \Delta^{n-1} \arrow[rr, dashed, "\operatorname{bir}"] \arrow[dd] \arrow[dr] & & \tilde X_V \arrow[dd] \arrow[ld] \\
		& A_i \times \Delta \times \Delta^{n-1} \arrow[dd] \\
		\bar X_V \arrow[rr, dashed, "\operatorname{bir}"] \arrow[rd] & & X_V \arrow[ld] \\
		& F/E_i \times V .
	\end{tikzcd}\end{equation}
	Note that both $\tilde X_V$ and $A_i \times S_i \times \Delta^{n-1}$ are smooth, and are relatively minimal elliptic fibrations over $A_i \times \Delta \times \Delta^{n-1}$. They are birational by diagram \eqref{diag:X_V}, so they are isomorphic by the uniqueness of relative minimal models of curves. This completes the proof.
\end{proof}

\begin{corollary}
	Let $b \in D_i$ be a general point of an irreducible component of the discriminant locus, $X_b := \pi^{-1} (b)$ the fiber over $b$, and $S_0$ the central fiber of the minimal elliptic fibration $S_i \longrightarrow \Delta$ from \Cref{prop:local structure of pi}. Then there is a cartesian diagram
	\[\begin{tikzcd}
		S_0 \times A_i \arrow[r] \arrow[d] & A_i \arrow[d] \\
		X_b \arrow[r] & F/E_i .
	\end{tikzcd}\]
	In other words, a general singular fiber $X_b$ is a finite \'etale quotient of $S_0 \times A_i$. \qed
\end{corollary}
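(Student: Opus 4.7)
The plan is to deduce this corollary immediately from the preceding \Cref{prop:local structure of pi} by taking the fiber of its cartesian square over the point $b \in V$. The Proposition furnishes, over an analytic neighborhood $V \cong \Delta \times \Delta^{n-1}$ in which $D_i$ cuts out the locus $\{t = 0\}$, a cartesian diagram
\[
\begin{tikzcd}[column sep=large]
A_i \times S_i \times \Delta^{n-1} \arrow[r, "\id \times f_i \times \id"] \arrow[d] & A_i \times \Delta \times \Delta^{n-1} \arrow[d] \\
X_V \arrow[r] & F/E_i \times V
\end{tikzcd}
\]
with vertical maps étale coverings induced from $A_i \longrightarrow F/E_i$. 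The point $b$ then corresponds to some $(0, s_1, \ldots, s_{n-1})$, and I would simply base-change the square above along the inclusion $\{b\} \hookrightarrow V$ (composed, on each side, with the natural map to $V$). Since cartesian squares are stable under base change, the resulting smaller square remains cartesian.

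To identify the four corners, I would unwind the compositions to $V$. The right-hand composite $A_i \times \Delta \times \Delta^{n-1} \longrightarrow F/E_i \times V \longrightarrow V$ forgets the $A_i$-factor and projects onto $V = \Delta \times \Delta^{n-1}$, so its fiber over $b$ is $A_i$; likewise the fiber of $F/E_i \times V \longrightarrow V$ is $F/E_i$, and the fiber of $\pi : X_V \longrightarrow V$ is $X_b$ by definition. On the upper left, the composite $A_i \times S_i \times \Delta^{n-1} \longrightarrow V$ sends $(a, \tilde s, s) \mapsto (f_i(\tilde s), s)$, so its fiber over $b$ is $A_i \times f_i^{-1}(0) \times \{(s_1, \ldots, s_{n-1})\} \cong A_i \times S_0$. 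Assembling these identifications yields exactly the cartesian diagram in the statement (after a harmless transposition of the two factors in $A_i \times S_0$), and nothing further is needed.

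I do not anticipate any genuine obstacle: once \Cref{prop:local structure of pi} is in hand, the corollary is a formal fiberwise specialization. The only minor point to keep track of is that $b$ is a \emph{general} point of $D_i$, which is already built into the hypothesis under which the Proposition is applied (ensuring in particular non-multiplicity of the fiber so that a local section exists and the analytic trivialization of the Proposition is valid).
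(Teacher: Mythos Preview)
Your proposal is correct and is precisely the argument the paper has in mind: the corollary is marked with \qed\ and no proof is given, because it is obtained exactly as you describe, by base-changing the cartesian square of \Cref{prop:local structure of pi} along $\{b\} \hookrightarrow V$ and identifying the four resulting fibers.
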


\begin{definition}\label{def:kodtype}
	The \emph{Kodaira type} of a general singular $X_b$ of $\pi$ is the Kodaira type of the central fiber $S_0$ of the minimal isotrivial elliptic fibration $S_i \longrightarrow \Delta$ constructed in \Cref{prop:local structure of pi}.
\end{definition}

We emphasize again that \Cref{def:kodtype} is limited to \emph{isotrivial} Lagrangian fibrations \emph{whose general singular fibers are non-multiple}. \Cref{def:kodtype} coincides with the original definition of \cite{HO09}, which uses the notion of a characteristic cycle. However, with this definition at hand, it is clear from the Kodaira classification \cref{table:Kodaira} in \Cref{K3appendix} that general singular fibers can only have type $\text{II}$, $\text{III}$, $\text{IV}$, $\text{I}_0^*$, $\text{II}^*$, $\text{III}^*$, or $\text{IV}^*$.

\begin{proposition}
	A general singular fiber of an isotrivial fibration $\pi : X \longrightarrow B$ can be of Kodaira type $\textup{II}$, $\textup{III}$, $\textup{IV}$, $\textup{I}_0^*$, $\textup{II}^*$, $\textup{III}^*$, or $\textup{IV}^*$.\qed
\end{proposition}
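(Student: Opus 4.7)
The plan is to combine \Cref{def:kodtype} with the Kodaira classification of singular fibers of elliptic surfaces. By \Cref{def:kodtype}, the Kodaira type of a general singular fiber $X_b$ is the Kodaira type of the central fiber $S_0$ of the relative minimal model $f_i : S_i \longrightarrow \Delta$ of the isotrivial elliptic surface $\bar f_i : \bar S_i = (E_i \times \tilde \Delta)/G_i \longrightarrow \Delta$ constructed in \Cref{prop:local structure of pi}. It therefore suffices to determine which Kodaira types can occur as the central fiber of such a minimal isotrivial elliptic fibration.

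First I would observe that, by construction, the local monodromy of $f_i$ around $0 \in \Delta$ coincides with the local monodromy of the quotient $(E_i \times \tilde \Delta)/G_i \longrightarrow \Delta$, which is cyclic of order $m_i$ since $E_i \times \tilde \Delta \longrightarrow \tilde \Delta$ is trivial and $\tilde \Delta \longrightarrow \Delta$ is a cyclic cover of order $m_i$. In particular, the monodromy of $f_i$ around its central fiber is \emph{finite}.

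Next I would invoke Kodaira's classification \cref{table:Kodaira}: the singular fiber types with finite monodromy are exactly $\textup{II}$, $\textup{III}$, $\textup{IV}$, $\textup{I}_0^*$, $\textup{II}^*$, $\textup{III}^*$, and $\textup{IV}^*$ (with monodromy orders $6,4,3,2,6,4,3$ respectively). All other Kodaira types---namely $\textup{I}_n$ and $\textup{I}_n^*$ for $n \ge 1$---have infinite (unipotent) local monodromy and are thus excluded. The smooth type $\textup{I}_0$ is likewise excluded because $b \in D_i$ lies in the discriminant locus, so $X_b$ (and therefore $S_0$) is singular. This yields exactly the list in the statement.

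The argument is essentially a two-line reduction once \Cref{prop:local structure of pi} is in place; there is no genuine obstacle. The only minor subtlety to verify is that the monodromy order of the resulting minimal elliptic fibration is indeed $m_i$ (and not a proper divisor), but this follows from the fact that $G_i$ acts faithfully on $E_i$ by \Cref{prop: local monodromy}, so the family $(E_i \times \tilde \Delta)/G_i \longrightarrow \Delta$ has no smaller cyclic subcover trivializing $R^1(\bar f_i)_*\underline{\ZZ}$.
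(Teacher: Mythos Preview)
Your proof is correct and follows essentially the same approach as the paper: the paper simply notes (in the paragraph preceding the proposition) that with \Cref{def:kodtype} in hand the result is clear from the Kodaira classification \cref{table:Kodaira}, since the central fiber of the minimal model of $(E_i\times\tilde\Delta)/G_i\to\Delta$ has finite local monodromy. Your write-up spells out the same reduction in more detail, including the exclusion of $\textup{I}_0$ and the verification that the monodromy order is exactly $m_i$; the latter is not strictly needed for the statement as phrased (which only restricts the possible types), but it does no harm.
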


\subsection{Log canonical thresholds and the canonical bundle formula}
In this subsection we will discuss the canonical bundle formula and explain its relation with \Cref{thm:typeAB} (\Cref{prop:equivalent characterization of type A}). In short, $\pi$ is of type $A$ if and only if for every irreducible component $D_i \subset \Sigma$, the fiber of $\pi$ over the generic point of $D_i$ is of $*$-type, namely of type $\text{I}_0^*, \text{II}^*, \text{III}^*,$ or $\text{IV}^*$. When the endomorphism algebra of $E$ is different from $\QQ(\sqrt{-1})$ or $\QQ(\sqrt{-3})$ the local monodromy is $\mu_2$ (\Cref{cor:local monodromy index}) forcing the general singular fibers to be of type $\text{I}_0^*$. \Cref{main:genimpliestypeA} will follow easily.\\

The following $\QQ$-divisor on $B$ governs the behavior of the canonical divisor of $Y$ via the Hurwitz formula $K_Y \sim_\QQ q^* \big( K_B + D_q \big)$  \eqref{eq:Hurwitz}:
\begin{equation} \label{eq:Delta_q}
	D_q = \sum_i \big( 1 - \tfrac{1}{m_i} \big) D_i .
\end{equation}
Let us introduce another $\QQ$-divisor on $B$ supported on the discriminant locus. Following \cite[\S 2.11]{Druel-Bianco}, we define the \emph{discriminant $\QQ$-divisor} $D_{\pi}$ of the morphism $\pi : X \longrightarrow B$ by
\begin{equation}\label{eq-dpi} D_{\pi} = \sum_i \big( 1 - c_i \big) D_i ,\qquad c_i = \operatorname{lct}_{D_i} (X,  X_{D_i}) \ \in \QQ .\end{equation}
Here $c_i$ is the log canonical threshold of the pair $(X_{\eta_i}, X_{D_i,\eta_i})$, where $X_{D_i}:= \pi^{-1} (D_i)$ and $\eta_i$ is the generic point of $D_i$.\\

The importance of these $\QQ$-divisors stems from the canonical bundle formula, a higher-dimensional generalization of Kodaira's canonical bundle formula for elliptic fibrations, which is due to Ambro \cite{Ambro-JDG, Ambro-b}, Kawamata \cite{Kaw-subadjunction} and Fujino \cite{Fujino-can} (see also Koll\'ar \cite[Thm. 8.3.7]{Kollar-can}). In the case of isotrivial Lagrangian fibrations this formula takes the following simple form:

\begin{theorem}[{\cite[Thm 3.3]{Druel-Bianco}}] \label{thm:canonical bundle formula}
	There exists a $\QQ$-linear equivalence of $\QQ$-divisors
	\[ K_X \sim_{\QQ} \pi^* \big( K_B + D_{\pi} \big) .\]
	In other words, $K_B + D_{\pi} \sim_{\QQ} 0$ on $B$.
\end{theorem}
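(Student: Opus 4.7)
The plan is to deduce this statement from the general Kawamata--Ambro--Fujino canonical bundle formula \cite[Thm.~8.5.1]{Kollar-can}, which reads
\[ K_X \sim_\QQ \pi^*\big(K_B + M_B + D_\pi\big), \]
where $M_B$ is the \emph{moduli} $\QQ$-divisor class on $B$ (built from the variation of Hodge structure of the family) and $D_\pi$ is the discriminant divisor already defined in \eqref{eq-dpi}. Since $X$ is hyper-K\"ahler, $K_X \sim 0$. Because $\pi$ is a fibration with connected fibers, $\pi^*$ is injective on $\QQ$-linear equivalence classes of $\QQ$-Cartier divisors on $B$, so the formula rearranges to $K_B + M_B + D_\pi \sim_\QQ 0$. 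Proving the theorem therefore reduces to establishing that the moduli divisor vanishes, i.e.\ $M_B \sim_\QQ 0$.

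For this reduction, the crucial input is isotriviality. The moduli divisor $M_B$ is constructed from the Deligne canonical extension of the Hodge bundle of the variation of Hodge structure $R^1(\pi_0)_*\underline{\QQ}$ on $B_0$. After the finite Galois base change $q : U \to B_0$, the local system $R^1(\pi_0)_*\underline{\ZZ}$ is trivialized by construction of $U$, so its associated VHS becomes constant and its Hodge bundle becomes a trivial vector bundle on $U$. Hence $q^* M_B|_{B_0}$ vanishes in $\Pic(U) \otimes \QQ$. Since $q$ is finite and surjective, $M_B|_{B_0}$ is itself $\QQ$-trivial. The boundary contributions along the components of $\Sigma$ that arise when extending from $B_0$ to $B$ are, by construction, absorbed into the discriminant divisor $D_\pi$, so we obtain $M_B \sim_\QQ 0$ on all of $B$ (using $\rho(B)=1$).

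The main obstacle in carrying out this plan rigorously is the precise identification of $D_\pi$ via log canonical thresholds as in \eqref{eq-dpi}, together with the verification that the codimension-one contributions that Ambro--Fujino formalism prescribes coincide with $(1-c_i)D_i$ for $c_i = \operatorname{lct}_{D_i}(X,X_{D_i})$. Here one exploits the local model of \Cref{prop:local structure of pi}: after an \'etale base change, a neighborhood of a general point of $D_i$ looks like the product of $A_i \times \Delta^{n-1}$ with a minimal isotrivial elliptic surface $f_i : S_i \to \Delta$. The contribution to $D_\pi$ at the generic point of $D_i$ therefore reduces to Kodaira's classical canonical bundle formula for $f_i$, producing the familiar coefficients $\tfrac{1}{6}, \tfrac{1}{4}, \tfrac{1}{3}, \tfrac{1}{2}, \tfrac{2}{3}, \tfrac{3}{4}, \tfrac{5}{6}$ attached to the allowable Kodaira fiber types $\mathrm{II}, \mathrm{III}, \mathrm{IV}, \mathrm{I}_0^*, \mathrm{IV}^*, \mathrm{III}^*, \mathrm{II}^*$ listed after \Cref{def:kodtype}. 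This is precisely the computation carried out by Druel--Bianco \cite[Thm.~3.3]{Druel-Bianco}, which we invoke.
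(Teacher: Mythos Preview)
The paper does not give its own proof of this theorem; it is stated with attribution to \cite[Thm~3.3]{Druel-Bianco} and used as a black box. Your sketch therefore cannot be compared against a proof in the paper, but it can be compared against the actual argument in Druel--Bianco, and the outline (general canonical bundle formula \eqref{eq-can-formula} plus vanishing of the moduli part $M_B$ in the isotrivial case) is indeed the correct strategy.

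That said, your justification of $M_B \sim_\QQ 0$ has a genuine gap. Knowing that $M_B|_{B_0}$ is $\QQ$-trivial says only that $M_B \sim_\QQ \sum_i a_i D_i$ for some rationals $a_i$; this is not zero in general, and neither the remark ``boundary contributions are absorbed into $D_\pi$'' nor ``using $\rho(B)=1$'' closes the gap. The discriminant and moduli parts are defined independently in the Ambro--Fujino formalism and do not absorb one another, and $\rho(B)=1$ only tells you $M_B$ is a rational multiple of a hyperplane class, not that the multiple is zero. The honest argument is that the local monodromies around each $D_i$ are finite (\Cref{cor:local monodromy index}), so the Deligne canonical extension of the Hodge line bundle is torsion, hence $M_B \sim_\QQ 0$; equivalently, one pulls back along the finite cover $q:Y\to B$ of \S4.1, where the VHS extends trivially, and uses the base-change compatibility of the moduli b-divisor. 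This is what \cite{Druel-Bianco} carries out.

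Your final paragraph, reducing the computation of $D_\pi$ to Kodaira's formula via \Cref{prop:local structure of pi}, is not part of the proof of the present theorem: the statement takes $D_\pi$ as \emph{defined} by \eqref{eq-dpi}, so the only content is $M_B \sim_\QQ 0$. Moreover, \Cref{prop:local structure of pi} assumes non-multiple general singular fibers, a hypothesis not present here, so it cannot be invoked unconditionally.
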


\Cref{prop:local structure of pi} is describing the local structures around a general singular fiber of $\pi$ and allows us to compute the log canonical thresholds $c_i$, and thus $D_{\pi}$.

\begin{proposition} \label{prop:equivalent characterization of type A}
	Assume that all general singular fibers of $\pi$ are non-multiple. Then we have
	\[ D_q \ge D_{\pi} .\]
	Moreover, the following are equivalent.
	\begin{enumerate}
		\item $D_q = D_{\pi}$.
		\item $\pi$ is of type A.1 or A.2.
		\item The general singular fibers of $\pi$ are of Kodaira type $\textup{I}_0^*$, $\textup{II}^*$, $\textup{III}^*$, or $\textup{IV}^*$.
	\end{enumerate}
\end{proposition}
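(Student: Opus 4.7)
The plan is to compute the log canonical thresholds $c_i$ appearing in $D_{\pi}$ explicitly via the local product structure of \Cref{prop:local structure of pi}, then compare them to the coefficients $1 - 1/m_i$ of $D_q$. The equivalence $(1) \Leftrightarrow (2)$ will then follow by combining the Hurwitz formula with the canonical bundle formula.

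First I would reduce the computation of $c_i = \operatorname{lct}_{D_i}(X, X_{D_i})$ to a surface computation. By \Cref{prop:local structure of pi}, above the generic point of $D_i$ the fibration $\pi$ is étale locally isomorphic to a product of the minimal isotrivial elliptic surface $f_i : S_i \longrightarrow \Delta$ with a smooth trivial factor over $A_i \times \Delta^{n-1}$. Since log canonical thresholds are preserved under étale base change and unaffected by a product with a smooth factor, one obtains $c_i = \operatorname{lct}_0(S_i, f_i^{-1}(0))$. A routine log-resolution calculation on each Kodaira type allowed by \Cref{def:kodtype}, together with the orders of local monodromy furnished by \Cref{prop: local monodromy}, produces
\[
\begin{array}{c|ccccccc}
\mbox{Kodaira type} & \mbox{II} & \mbox{III} & \mbox{IV} & \mbox{I}_0^* & \mbox{IV}^* & \mbox{III}^* & \mbox{II}^* \\ \hline
c_i & 5/6 & 3/4 & 2/3 & 1/2 & 1/3 & 1/4 & 1/6 \\
m_i & 6 & 4 & 3 & 2 & 3 & 4 & 6
\end{array}
\]
Comparing $1 - c_i$ with $1 - 1/m_i$ column by column, equality holds precisely for the four starred types, and the strict inequality $1 - c_i < 1 - 1/m_i$ holds for the three unstarred types. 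This simultaneously establishes the inequality $D_q \ge D_{\pi}$ and the equivalence $(1) \Leftrightarrow (3)$.

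For $(1) \Leftrightarrow (2)$, combining the Hurwitz formula \eqref{eq:Hurwitz} with the canonical bundle formula $K_B + D_\pi \sim_{\QQ} 0$ of \Cref{thm:canonical bundle formula} yields
\[
K_Y \sim_{\QQ} q^*(K_B + D_q) \sim_{\QQ} q^*(D_q - D_{\pi}) .
\]
Since $\rho(B) = 1$ and $D_q - D_{\pi}$ is effective by the first part, it is either zero or ample. In the first case $K_Y \sim_{\QQ} 0$, and \Cref{prop:compactification of U} identifies $Y$ with an abelian torsor, putting $\pi$ in type A.1 or A.2. In the second case, $q$ being finite makes $q^*(D_q - D_{\pi})$ ample, hence $K_Y$ is ample and $\pi$ is of type B by the same proposition. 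This concludes $(1) \Leftrightarrow (2)$.

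The bulk of the work is the case-by-case computation of $c_i$ for the seven allowed Kodaira types and verification of the tabulated $m_i$; I expect this to be tedious but entirely mechanical, so no real obstacle is anticipated beyond the numerical comparison itself.
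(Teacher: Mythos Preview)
Your proposal is correct and follows essentially the same approach as the paper: reduce $c_i$ to the surface log canonical threshold $\operatorname{lct}(S_i,S_0)$ via the \'etale product description of \Cref{prop:local structure of pi}, read off the values from the Kodaira table to obtain $D_q\ge D_\pi$ and the equivalence $(1)\Leftrightarrow(3)$, and then combine the Hurwitz formula with \Cref{thm:canonical bundle formula} (using $\rho(B)=1$) to get $(1)\Leftrightarrow(2)$ via \Cref{prop:compactification of U}. Your write-up is in fact slightly more explicit than the paper's in spelling out $K_Y\sim_\QQ q^*(D_q-D_\pi)$ and the dichotomy it forces.
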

\begin{proof}
	To show the inequality, we need to prove $\frac{1}{m_i} \le c_i$ for all irreducible components $D_i \subset \Sigma$.
	The log canonical threshold of a pair is invariant under \'etale covering, so the \'etale covering $A_i \times S_i \times \Delta^{n-1} \longrightarrow X_V$ in \Cref{prop:local structure of pi} gives us the equality
	\[ c_i = \operatorname{lct}(S_i, S_0) ,\]
	where $f : S_i \longrightarrow \Delta$ is the minimal isotrivial elliptic fibration constructed in the proposition and $S_0 \subset S_i$ its (singular) central fiber. The log canonical thresholds of the singular fibers of a minimal isotrivial elliptic fibration are listed in \Cref{table:Kodaira}. One checks from the table that the inequality $\frac{1}{m_i} \le c_i$ is satisfied for every Kodaira fiber type.\\
	
	In \Cref{table:Kodaira}, the equality $c_i = \frac{1}{m_i}$ holds exactly when the singular fiber $S_0$ in the elliptic fibration is of Kodaira $*$-type. From the \'etale covering $A_i \times S_i \times \Delta^{n-1} \longrightarrow X_V$ in \Cref{prop:local structure of pi}, we obtain the equivalence (1) $\Longleftrightarrow$ (3). Recall that we had $K_B + D_{\pi} \sim_{\QQ} 0$ from the canonical bundle formula in \Cref{thm:canonical bundle formula}. The Hurwitz formula \eqref{eq:Hurwitz} then implies that $K_Y \sim_{\QQ} 0$ if and only if $D_q \sim_{\QQ} D_{\pi}$. Since $D_q \geq D_{\pi}$ this is in fact equivalent to the equality $D_q = D_{\pi}$. This proves (1) $\Longleftrightarrow$ (2).
\end{proof}

\begin{remark}
One can add an additional equivalent condition in \Cref{prop:equivalent characterization of type A}, namely that $\pi$ is of type A if and only if $\bar S_i$ has canonical singularities, or equivalently if and only if $\bar X_V$ has canonical singularities (see \Cref{prop:local structure of bar pi}). In this case $\bar X_V$ has symplectic singularities and $X_V$ is a symplectic resolution of $\bar X_V$.
\end{remark}

\Cref{main:genimpliestypeA} is now an easy consequence of the description of the local monodromy group from \Cref{cor:local monodromy index}.

\begin{proof}[Proof of \Cref{main:genimpliestypeA}]
	If the endomorphism field of $E$ is not $\QQ(\sqrt{-1})$ or $\QQ(\sqrt{-3})$,  \Cref{cor:local monodromy index} states that the local monodromy index $m_i$ is $2$ for all $i$. It follows from \Cref{table:Kodaira} that the general singular fibers of $\pi$ are of type $\textup{I}_0^*$. \Cref{prop:equivalent characterization of type A} implies that $\pi$ is of type $A$.\\
	
	Using the same argument, we obtain that for every irreducible component $D_i\subset \Sigma$ the log canonical threshold $c_i = \operatorname{lct}_{D_i} (X,  X_{D_i})$ is equal to $1/2$. If the base $B$ is isomorphic to $\PP^n$, then \Cref{thm:canonical bundle formula} implies that 
	\[ \deg \Sigma = 2 \deg D_q = -2\deg(K_{\mathbb{P}^n}) = 2(n+1) .\qedhere\]
\end{proof}

\section{Classification of type A fibrations}

In this section we prove \Cref{main:classification}, which describes type A.1 fibrations admiting a rational section. Recall from \Cref{thm:typeAB} that the intermediate trivializiation $p: Z\longrightarrow U$ is equivariantly birational to a morphism between
\begin{itemize}
	\item two $G$-abelian torsors (type A.1), or
	\item a $G$-variety of Kodaira dimension $n$ and a $G$-abelian torsor (type A.2), or
	\item two $G$-varieties of Kodaira dimension $n$ (type B).
\end{itemize}
In particular, if $\pi$ is a type A.1 fibration then $X$ is birational to the quotient of an abelian torsor of dimension $2n$ by the finite group $G$. As an intermediate step we introduce \Cref{prop:classification without conjecture}, which does not rely on the existence of a rational section.\\

\begin{proposition} \label{prop:classification without conjecture}
	Let $\pi : X \longrightarrow B$ be a type A.1 isotrivial fibration of a hyper-K\"ahler $2n$-fold, so that $U$ and $Z$ are equivariantly birational to $G$-abelian torsors $\bar F'$ and $T$. Then
	\begin{enumerate}
		\item $T/G$ is a primitive symplectic variety admitting a symplectic resolution by a hyper-K\"ahler manifold $X'$.
		
		\item There exists a commutative diagram
		\begin{equation}\label{diag}\begin{tikzcd}
			X \arrow[d, "\pi"] \arrow[r, dashed, "\operatorname{bir}"] & X' \arrow[d] \arrow[r, "\pi'"] & \bar F / G = B' ,\\
			B \arrow[r, "\cong"] & \bar F'/G
		\end{tikzcd}
		\end{equation}
		where $\pi'$ is an isotrivial fibration of a hyper-K\"ahler manifold $X'$.
	\end{enumerate}
\end{proposition}

We will use the notion of a primitive symplectic variety following \cite[Def 3.1]{bak-lehn22} (see also \cite{sch20}). A \emph{(projective) primitive symplectic variety} is a normal projective symplectic variety $(Y, \sigma)$ such that $H^0 (Y, \Omega_Y^{[1]}) = 0$ and $H^0 (Y, \Omega_Y^{[2]}) = \CC \cdot \sigma$, where $\Omega_{\bar X}^{[k]}$ is the sheaf of reflexive $k$-forms on $X$.
	
\begin{lemma} \label{lem:quotient of an abelian variety is symplectic}
	Let $G$ be a finite group and $T$ be a $G$-abelian variety of dimension $2n$. Suppose that there is a simple $G$-module $V$ so that
	\[ H^{1,0}(T) \cong V \oplus V^{\vee}. \]
	Then
	\begin{enumerate}
		\item The quotient variety $T/G$ is a symplectic variety.
		\item If we further assume that $V$ is not a symplectic $G$-module, then $T/G$ is a primitive symplectic variety.
	\end{enumerate}
\end{lemma}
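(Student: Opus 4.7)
The plan is to extract a canonical $G$-invariant holomorphic symplectic form on $T$ from the hypothesis on $H^{1,0}(T)$, and then invoke standard results on symplectic quotients.

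First, I would decompose the $(2,0)$-cohomology of $T$ as a $G$-module:
\[ H^{2,0}(T) \;=\; \wedge^2 H^{1,0}(T) \;=\; \wedge^2 V \;\oplus\; (V \otimes V^{\vee}) \;\oplus\; \wedge^2 V^{\vee}. \]
Simplicity of $V$ and Schur's lemma give $(V \otimes V^{\vee})^G \cong \mathrm{End}_G(V) = \CC$, generated by the image of $\mathrm{id}_V$ under the antisymmetrization $V \otimes V^{\vee} \hookrightarrow \wedge^2(V \oplus V^{\vee})$. Interpreted as a $2$-form on the tangent space $T_0 T \cong V^{\vee} \oplus V$, this element is (up to sign) the antisymmetrized evaluation pairing between $V$ and $V^{\vee}$, which is non-degenerate. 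Thus we obtain a $G$-invariant holomorphic symplectic form $\sigma$ on $T$.

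For part (1), I would then appeal to Beauville's theorem on finite symplectic quotients: since $G$ preserves $\sigma$ on the smooth projective symplectic variety $T$, the quotient $T/G$ has symplectic singularities, and $\sigma$ descends to a symplectic form on $(T/G)_{\mathrm{reg}}$ that extends holomorphically to any resolution. This makes $T/G$ a symplectic variety in the sense used in the paper. For part (2), I would use the standard identification $H^0(T/G, \Omega_{T/G}^{[k]}) = H^{k,0}(T)^G$ (reflexive forms on a finite quotient correspond to $G$-invariant forms upstairs). The vanishing $H^0(T/G, \Omega_{T/G}^{[1]}) = (V \oplus V^{\vee})^G = 0$ follows from $V$ being a non-trivial simple $G$-module. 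The hypothesis that $V$ is not a symplectic $G$-module gives $(\wedge^2 V^{\vee})^G = 0$, and since $V^{\vee}$ is then also simple and non-symplectic, $(\wedge^2 V)^G = 0$ as well; combining with the $\CC\cdot\sigma$ contribution from the middle summand yields $H^0(T/G, \Omega_{T/G}^{[2]}) = \CC \cdot \sigma$, which is the primitive condition.

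The only real technical ingredient is Beauville's theorem, which handles the extension of $\sigma$ across the singular locus; everything else reduces to linear algebra and Schur's lemma. A minor subtlety is that part (2) implicitly requires $V$ non-trivial (needed for $V^G = 0$); in the applications $V$ arises as a Hodge summand with a faithful $G$-action, so this is automatic.
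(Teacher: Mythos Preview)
Your proposal is correct, and the representation-theoretic computations (the decomposition of $\wedge^2(V\oplus V^{\vee})$, the use of Schur's lemma to produce the invariant symplectic form, and the vanishing arguments for part~(2)) match the paper's proof essentially line for line.

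The one genuine difference is how part~(1) is established. You appeal directly to Beauville's theorem that a finite quotient of a smooth symplectic variety by a group preserving the symplectic form has symplectic singularities. The paper instead invokes Namikawa's criterion \cite{nam01}: it shows that $T/G$ has rational singularities (clear, as a global quotient) and is Gorenstein, the latter via Watanabe's criterion by checking that every stabilizer $G_x$ acts on $T_x T \cong V^{\vee}\oplus V$ through $\SL$, which amounts to the observation that $\wedge^{2n}(V\oplus V^{\vee}) \cong \wedge^n V \otimes (\wedge^n V)^{\vee}$ is the trivial $G$-module. Your route is shorter and entirely adequate here; the paper's route has the minor advantage of making the Gorenstein property and the $\SL$-condition on stabilizers explicit, which is sometimes useful in its own right. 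Your closing remark that $V$ must be non-trivial for $(V\oplus V^{\vee})^G=0$ is a fair observation; the paper tacitly assumes this as well, and in the intended application $V$ is a faithful $G$-module.
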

\begin{proof}
	The projective variety $\bar X = T/G$ has quotient singularities so it is well behaved from the point of view of Hodge theory. In particular, we have $H^{1,0}(\bar X) = H^0 (\bar X, \Omega_{\bar X}^{[1]})$ and $H^{2,0}(\bar X) = H^0 (\bar X, \Omega_{\bar X}^{[2]})$, as well as the equalities
	\[ H^{1,0} (\bar X) = H^{1,0} (T)^G = \big( V \oplus V^{\vee} \big)^G = 0 \]
	and
	\[ H^{2,0} (\bar X) = H^{2,0}(T)^G = \Big( \wedge^2 V \oplus \wedge^2 V^{\vee} \oplus \big( V \otimes V^{\vee} \big) \Big)^G .\]
	Note that the last term is $(V \otimes V^{\vee})^G = \End_G (V) = \CC \cdot \id$ by Schur's lemma. Hence there is a $G$-invariant holomorphic symplectic form $\sigma$ on $T$. It follows that $\bar X$ is a symplectic variety by \cite[Prop 2.4]{bea00}. The above identity also shows that if $V$ is not a symplectic module then $T/G$ is a primitive symplectic variety since $(\wedge^2 V)^G = 0\implies h^{2,0}(\bar X) = 1$.
\end{proof}

\begin{lemma} \label{lem:birational to HK}
	Any primitive symplectic variety that is birational to a hyper-K\"ahler manifold admits a symplectic resolution by a hyper-K\"ahler manifold.
\end{lemma}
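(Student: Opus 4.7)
The plan is to construct the desired symplectic resolution of $Y$ by running a relative minimal model program starting from a common resolution of the birational map $\phi : X \dashrightarrow Y$ (where $X$ is a hyper-K\"ahler manifold), and then verifying that the resulting $\QQ$-factorial terminalization is automatically smooth because $X$ is.

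First, I would apply Hironaka to resolve the indeterminacies of $\phi$, obtaining a smooth projective variety $W$ together with birational morphisms $g : W \longrightarrow X$ and $h : W \longrightarrow Y$. Since $Y$ is a primitive symplectic variety, it has Gorenstein canonical singularities with $K_Y \sim 0$ (the symplectic form on $Y_{\reg}$ extends to a nowhere vanishing reflexive $2n$-form), so the discrepancy divisor $K_W \sim K_W - h^* K_Y$ is effective and $h$-exceptional.

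Next, I would run the relative MMP for $h : W \longrightarrow Y$ using \cite{bchm} or its extensions to the symplectic setting. Since $K_W$ is effective and $h$-exceptional, the MMP terminates with a $\QQ$-factorial terminalization $f : X' \longrightarrow Y$: a birational contraction from a $\QQ$-factorial terminal variety $X'$ with $K_{X'} \sim_{\QQ} 0$, with $f$ crepant. The symplectic form on $X$ pulls back via $g$ and descends through the MMP to produce a symplectic form on $X'_{\reg}$ that extends across all of $X'$, exhibiting $X'$ as a primitive symplectic variety and $f$ as a crepant partial resolution.

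The main obstacle is the final step: showing that $X'$ is smooth. Both $X$ and $X'$ are $\QQ$-factorial terminal primitive symplectic varieties with trivial canonical class that are birational to each other. By extensions of Huybrechts' theorem to the singular setting (see Bakker--Lehn, building on work of Kaledin and Namikawa), such a birational map is an isomorphism in codimension one and decomposes into a finite sequence of Mukai flops along smooth projective spaces contained in the smooth loci. Since $X$ is smooth and Mukai flops preserve smoothness, it follows that $X'$ is smooth as well. Hence $X'$ is a hyper-K\"ahler manifold and $f : X' \longrightarrow Y$ is the desired symplectic resolution.
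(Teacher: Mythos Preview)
Your overall strategy---produce a $\QQ$-factorial terminalization of $Y$ and then argue it must be smooth because it is birational to the smooth hyper-K\"ahler $X$---is exactly the paper's approach. The paper simply quotes the two steps as black boxes: the existence of a $\QQ$-factorial terminal primitive symplectic partial resolution is \cite[Prop 11]{sch20}, and the fact that a $\QQ$-factorial terminal primitive symplectic variety birational to a hyper-K\"ahler manifold is itself smooth is \cite[Prop 6.5]{greb-lehn-rol13}. Your MMP construction of the terminalization is a correct (if more hands-on) substitute for the first citation.

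The one genuine inaccuracy is in your justification of the smoothness step. You assert that the birational map between $X$ and $X'$ ``decomposes into a finite sequence of Mukai flops along smooth projective spaces contained in the smooth loci.'' This is not what Bakker--Lehn prove, and in fact a Mukai-flop decomposition of birational maps between hyper-K\"ahler manifolds is \emph{not known} in general (it is a theorem of Wierzba--Wi\'sniewski only in dimension $4$). What Bakker--Lehn actually establish is that birational $\QQ$-factorial terminal primitive symplectic varieties are \emph{locally trivially} deformation equivalent; this is enough to conclude that $X'$ inherits smoothness from $X$, and is essentially the content of the Greb--Lehn--Rollenske reference the paper uses. So your argument is salvageable by replacing the Mukai-flop sentence with the correct citation, but as written that step does not go through.
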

\begin{proof}
	Any primitive symplectic variety $\bar X$ admits a partial resolution $X \longrightarrow \bar X$ by a $\QQ$-factorial, terminal and primitive symplectic variety $X$ by \cite[Prop 11]{sch20}. If we assume $X$ is birational to a hyper-K\"ahler manifold $X'$, then $X$ itself is necessarily a hyper-K\"ahler manifold by \cite[Prop 6.5]{greb-lehn-rol13}.
\end{proof}

\begin{proof} [Proof of \Cref{prop:classification without conjecture}]
	Let $\pi : X \longrightarrow B$ be a type A isotrivial fibration. Then $X$ is birational to a finite quotient of an abelian variety $T$ by the finite group $G$. Combining \Cref{lem:H10 as a G-module}, \ref{lem:isotypic G-component in U}, \ref{lem:quotient of an abelian variety is symplectic}, and \ref{lem:birational to HK}, we obtain that $T/G$ is a primitive symplectic variety admitting a symplectic resolution $X' \longrightarrow T/G$ by a hyper-K\"ahler manifold $X'$. Note that we have two equivariant morphisms $T \longrightarrow \bar F$ and $T \longrightarrow \bar F'$; the former extends the intermediate trivialization $p : Z \longrightarrow U$ and the latter comes from the fibration $p' : Z \longrightarrow \bar F$ from \Cref{thm:diagonalization}. Taking their $G$-quotients, we obtain the commutative diagram \eqref{diag}, but it remains to prove that the birational map $B \dashrightarrow \bar F'/G$ is an isomorphism. This follows from \cite[Cor 2]{mat14}.
\end{proof}

The last ingredient we need to prove \Cref{main:classification} is the following lemma, which is a consequence of a series of recent results of Auffarth, Lucchini Arteche, and Quezada providing a classification of smooth quotients of abelian varieties.

\begin{lemma}[\cite{auf-art20, auf-art-que22, auf-art22}] \label{lem:classification of G-abelian varieties}
	Let $F$ be a $G$-abelian torsor. Assume that (1) $G$ acts on $F$ faithfully and without any translations, (2) $F/G$ is smooth, and (3) $H^* (F/G,\QQ) \cong H^* (\PP^n, \QQ)$. Then there exists a choice of an origin for $F$ making it into a $G$-abelian variety satisfying:
	\begin{enumerate}
		\item $F \cong E^n$ for an elliptic curve $E$.
		\item $G$ acts faithfully on $E^n$ by fixing the origin and this action belongs to the following list:
		\begin{enumerate}
			\item $\mathfrak S_{n+1}$ acting on $\ker (E^{n+1} \longrightarrow E)$ by permuting the factors.
			\item $\mu^{\times n} \rtimes \mathfrak S_n$ acting on $E^n$ in the obvious way, where $\mu = \mu_2, \mu_3, \mu_4$ or $\mu_6$ acts on $E$ faithfully by fixing the origin.
			\item \textnormal{(Only when $n = 2$)} The Pauli group acting on $E^2$ for $E = \CC / \ZZ [\sqrt{-1}]$.
		\end{enumerate}
		\item $F/G \cong \PP^n$.
	\end{enumerate}
\end{lemma}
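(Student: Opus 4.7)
The plan is to reduce the statement directly to the classification of smooth quotients of abelian varieties by Auffarth, Lucchini Arteche, and Quezada \cite{auf-art20, auf-art-que22, auf-art22}. There are two main steps: upgrading the $G$-abelian torsor $F$ to a $G$-abelian variety, and then matching the result with their explicit list.

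First I would produce a $G$-fixed point $x_0 \in F$ to serve as origin. After picking any provisional origin, each $g \in G$ acts by $x \mapsto f_g(x) + b_g$ with $f_g \in \Aut_0 F$ and $b_g \in F$, and the assignment $g \mapsto b_g$ is a $1$-cocycle for the twisted $G$-action through the $f_g$. The condition that $x_0$ be fixed by every $g$ is the system $b_g = (\id - f_g)(x_0)$, i.e., that this cocycle is a coboundary. The hypothesis that no $g \neq 1$ acts by pure translation forces each $\id - f_g$ to be an isogeny, so the obstruction class in $H^1(G, F)$ can be killed by the standard averaging argument on the universal cover (which is a $\QQ$-vector space). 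Translating by $x_0$ then realizes $F$ as a $G$-abelian variety with $G \hookrightarrow \Aut_0 F$.

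Once in this form, I would invoke the AAQ classification of pairs $(A, G)$ consisting of an abelian variety $A$ with a faithful origin-fixing $G$-action such that $A/G$ is smooth. Their main theorem expresses any such quotient as a product of projective spaces and classifies the corresponding $A$ and $G$ up to isogeny and products of a short list of building blocks. Since in our setting $A/G \cong \PP^n$ is irreducible as a product, the list specializes to the three cases in the statement: the generalized Kummer-type action of $\mathfrak S_{n+1}$ on $\ker(E^{n+1} \to E)$, the wreath-product action of $\mu^{\times n} \rtimes \mathfrak S_n$ on $E^n$ for a cyclic $\mu \subset \Aut_0 E$, or the exceptional Pauli-group action in dimension two with $E = \CC/\ZZ[\sqrt{-1}]$. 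From any of these one reads off both $F \cong E^n$ and the stated description of the $G$-action.

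The main obstacle is the first step: producing the common fixed point requires a nontrivial cohomological input, since $H^1(G, F)$ is in general nonzero. The second step is then essentially a direct citation of \cite{auf-art-que22, auf-art22} together with an unpacking of their classification. A cleaner alternative would be to formulate and apply a torsor-theoretic version of AAQ, which would absorb the first step into the black-box citation, but the fixed-point approach is elementary and matches the $G$-abelian variety language used elsewhere in the paper.
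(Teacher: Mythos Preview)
Your Step 1 has a genuine gap. The assertion that ``no $g \neq 1$ acts by pure translation forces each $\id - f_g$ to be an isogeny'' is false: take $G = \mathfrak S_2$ swapping the factors of $E^2$; then $f_g \neq \id$ but $\id - f_g$ has the diagonal copy of $E$ as kernel. More seriously, the averaging argument only shows that the cocycle becomes a coboundary in the $\QQ$-vector space $V = H_1(F,\QQ)$, i.e., that $H^1(G,V)=0$. A solution $x_0 \in V$ need not descend to a point of $F$; the obstruction to doing so lives in $H^2(G, H_1(F,\ZZ))$, which is in general nonzero. You yourself note that $H^1(G,F)$ can be nonzero, and indeed one can write down $G$-abelian torsors with no translations and no $G$-fixed point (e.g.\ $g\cdot(x,y)=(-x,y+b)$ on $E\times E'$ with $b\in E'[2]\setminus\{0\}$). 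So the ``no translations'' hypothesis alone cannot produce the fixed point; hypothesis (2) must be used.

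The paper's argument is structurally different and uses $F/G\cong\PP^n$ already in Step 1. It invokes \cite[Thm 1.1]{auf-art22} to show that $N=\{g\in G:F^g\neq\emptyset\}$ is a normal subgroup admitting a \emph{common} fixed point, so $F$ is an $N$-abelian variety; moreover $F/N\to Z$ is a $\PP^{n_1}\times\cdots\times\PP^{n_k}$-bundle over an abelian variety $Z$ on which $H=G/N$ acts freely. Applying Leray--Hirsch to the induced bundle $F/G\to Z/H$ and using $F/G\cong\PP^n$, one forces $Z/H$ (hence $Z$, hence $H$) to be trivial, so $G=N$ and the fixed point exists. Only then is the classification of \cite{auf-art20, auf-art-que22} invoked, which matches your Step 2.
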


The \emph{Pauli group} is the following non-abelian group of order $16$:
\[ \left\{ \begin{psmallmatrix} \sqrt{-1}^a & 0 \\ 0 & \sqrt{-1}^b \end{psmallmatrix} : a \equiv b \mod 2 \right\} \rtimes \left\langle\begin{psmallmatrix} 0 & 1 \\ 1 & 0 \end{psmallmatrix}\right\rangle \quad \subset \ \GL_2 (\ZZ[\sqrt{-1}]) .\]
We refer the reader to \cite[\S 3.2]{auf-art-que22} for the Pauli group action on $(E')^2$ appearing in the lemma.

\begin{proof}
	Consider a normal subgroup of $N \subset G$ and its quotient
	\[ N = \{ g \in G : F^g \neq \emptyset \}, \qquad H = G / N .\]
	By \cite[Thm 1.1]{auf-art22}, there exists at least one point of $F$ fixed by all of $N$, i.e., $F^N \neq \emptyset$. We may thus choose an origin for $F$ so that $N$ acts on $F$ by fixing the origin, i.e., so that $F$ is an $N$-abelian variety. Moreover $F/N$ is a smooth projective fiber bundle over an abelian variety $Z$ whose fiber is positive dimensional and of the form $\PP^{n_1} \times \cdots \times \PP^{n_i}$. Moreover, $H$ acts freely on both $F/N$ and $Z$ so that the fiber bundle is $H$-equivariant. Taking the $H$-quotient, we obtain a fiber bundle $F/G \to Z/H$ whose fiber is of the form $\PP^{n_1} \times \cdots \times \PP^{n_i}$.\\
	
	The cohomology $H^* (F/G, \QQ)$ can be computed in terms of the cohomology of the base $H^* (Z/H, \QQ)$ by the Leray--Hirsch theorem:
	\[ H^* (F/G, \QQ) \cong H^* (Z/H, \QQ) \otimes H^* (\PP^{n_1} \times \cdots \times \PP^{n_i}, \QQ) .\]
	By assumption $H^* (F/G, \QQ) \cong H^* (\PP^n, \QQ)$, so this forces the base $Z/H$ to be a point and $F/G \cong \PP^n$. Since $Z$ is an abelian variety and $H$ is a finite group acting freely on $Z$, this means that $Z$ is a point and $H$ is trivial. That is, $F$ is a $G$-abelian variety. The lemma now follows from the main results of \cite{auf-art20, auf-art-que22} which provide a classification of $G$-abelian varieties $F$ with smooth quotient $F/G$.
\end{proof}

We are now ready to conclude the proof of \Cref{main:classification}.

\begin{proof}[Proof of \Cref{main:classification}]
	We take up where we left off in \Cref{prop:classification without conjecture} and use the same notation. Since $\pi$ has a rational section, we can set $\bar F = F$ in \Cref{thm:diagonalization}. Let us write $F' = \bar F'$ accordingly. The $G$-action on $F$ has a fixed point by \Cref{lem:rational section}, so we may consider $F$ as a $G$-abelian variety. Consider the two isotrivial fibrations
	\[\begin{tikzcd}
		X' \arrow[d, "\pi"] \arrow[r, "\pi'"] & B' = F/G \\
		B = F'/G,&
	\end{tikzcd}\]
	where $X'$ is a hyper-K\"ahler resolution of $T/G$ for a $G$-abelian torsor $T$. Again by \Cref{lem:rational section}, we have an isomorphism of $G$-abelian torsors $T \cong F \times F'$, where the $G$-action on the right is diagonal.\\
	
	Let us first prove that $B = F'/G$ is a smooth variety. Let $y \in F'$ be an arbitrary point and $G_y \subset G$ the associated stabilizer subgroup. Since the $G$-action on $F$ fixes the origin of $F$, the point $(0, y) \in F \times F'$ has stabilizer subgroup $G_y$. The group $G_y$ acts around $(0, y)$ linearly according to the $G$-module structure on $V \oplus V^{\vee}$. Since the quotient $(F \times F')/G$ is symplectically resolvable at the image of $(0, y)$, \cite[Thm 1.1]{ver00} shows that the stabilizer $G_y$ acts around $y \in F'$ by a complex reflection group. In other words, $F'/G$ is smooth at the image of $y \in F'$.\\
	
	Now $B = F'/G$ is smooth and has cohomology $H^*(B, \QQ) = H^*(\PP^n, \QQ)$ by \cite[Thm 0.4]{shen-yin22}. \Cref{lem:classification of G-abelian varieties} shows that $F' \cong (E')^n$ admits a $G$-action corresponding to one of three possibilities. In particular, it fixes the origin, i.e., $F'$ is a $G$-abelian variety. Applying the same stabilizer group method to a point $x \in F$, we conclude that $B' = F/G$ is smooth and that $F \cong E^n$ admits a $G$-action similarly.\\
	
	The case $G = \mathfrak S_{n+1}$ corresponds to $\Kum_n$-fibrations and the case $G = \mu^{\times n} \rtimes \mathfrak S_n$ corresponds to $\text{K3}^{[n]}$-fibrations, as we have explicitly described in \S \ref{S:ex-trivial}. When $n = 2$, there is one additional possibility for which $G$ is the Pauli group. We will show that the corresponding quotient does not admit a symplectic resolution, thereby contradicting \Cref{prop:classification without conjecture}. Since $G$ acts on $T \cong E^2 \times (E')^2$ by fixing the origin, around $0$ the variety $T/G$ is analytically isomorphic to $(V \oplus V^{\vee})/G$. It is shown in \cite{bel09} that such a singularity admits a symplectic resolution if and only if (1) $G$ is of the form $\mu_m^{\times n} \rtimes \mathfrak S_n$ or (2) $G$ is the group $G_4$, a group of order $24$ appearing in the Shephard--Todd classification of complex reflection groups. It follows that $T/G$ cannot admit a symplectic resolution around $0$ if $G$ is the Pauli group.
\end{proof}

\section{Additional remarks}\label{S-remarks}
In this closing section we collect some remarks about the locus of hyper-K\"ahler manifolds admitting an isotrivial Lagrangian fibration. Our main result is \Cref{thm:isos-are-rare}, (restated here as \Cref{thm:restated}), which states that an isotrivial Lagrangian fibration of a hyper-K\"ahler variety $X$ can be deformed into a Lagrangian fibration with maximal variation provided that the second Betti number of $X$ is at least $7$. Let us first observe that isotriviality of a Lagrangian fibration is a closed condition in moduli.

\begin{proposition} \label{prop:isotriviality is a closed condition}
	Let $\mathcal X \xlongrightarrow{\pi} \mathcal B \longrightarrow M$ be a family of Lagrangian fibered hyper-K\"ahler $2n$-folds over a complex manifold $M$. Then the following is a Zariski closed subset of $M$:
	\[ \textup{Iso}(M) = \{ t \in M : \pi : X_t \longrightarrow \mathcal{B}_t \mbox{ is isotrivial} \}.\]
\end{proposition}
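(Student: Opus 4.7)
The plan is to characterize isotriviality of each fiber $\pi_t$ via the vanishing of a relative Kodaira--Spencer map, from which the closedness of $\textup{Iso}(M)$ will follow by a short flatness/image argument.

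Let $\mathcal B^{\circ} \subset \mathcal B$ denote the Zariski open locus over which $\pi$ is smooth and set $\mathcal X^{\circ} := \pi^{-1}(\mathcal B^{\circ})$, so that $\pi^{\circ} : \mathcal X^{\circ} \longrightarrow \mathcal B^{\circ}$ is a smooth family of polarized abelian $n$-folds relative to $\mathcal B^{\circ} \longrightarrow M$. The first step is to form the associated relative Kodaira--Spencer map
\[
\kappa : T_{\mathcal B^{\circ}/M} \longrightarrow R^{1} \pi^{\circ}_{*} \, T_{\mathcal X^{\circ}/\mathcal B^{\circ}},
\]
a morphism of locally free coherent sheaves on $\mathcal B^{\circ}$, and to consider its fiberwise vanishing locus $Z := \{ x \in \mathcal B^{\circ} : \kappa(x) = 0 \}$, which is Zariski closed in $\mathcal B^{\circ}$. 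For each $t \in M$ the restriction $\kappa|_{B_t^{\circ}}$ recovers the classical Kodaira--Spencer map of the smooth abelian fibration $\pi_t^{\circ} : X_t^\circ \longrightarrow B_t^\circ$.

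The next step is to identify the vanishing of $\kappa|_{B_t^{\circ}}$ with isotriviality of $\pi_t$. Since $\kappa|_{B_t^{\circ}}$ is the differential of the period map of $\pi_t^{\circ}$ and $B_t^{\circ}$ is connected, its identical vanishing is equivalent to constancy of the period map, which by the Torelli theorem for polarized abelian varieties is equivalent to isotriviality of $\pi_t$. Setting $W := Z \cup (\mathcal B \setminus \mathcal B^{\circ})$, which is Zariski closed in $\mathcal B$, we therefore obtain
\[
t \in \textup{Iso}(M) \quad \Longleftrightarrow \quad B_t^{\circ} \subset Z \quad \Longleftrightarrow \quad B_t \subset W.
\]

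Finally, the projection $\mathcal B \longrightarrow M$ is proper and has all fibers isomorphic to $\mathbb P^{n}$, so it is flat and in particular open. Hence the image of the Zariski open complement $\mathcal B \setminus W$ in $M$ is Zariski open, and this image coincides with $M \setminus \textup{Iso}(M)$; thus $\textup{Iso}(M)$ is Zariski closed. The main (mild) technical point is the equivalence between the fiberwise vanishing of $\kappa$ on $B_t^{\circ}$ and isotriviality of $\pi_t$, which rests on the Torelli theorem; the remainder of the argument is a routine combination of the closedness of the degeneracy locus of a morphism of locally free sheaves with the openness of the proper flat map $\mathcal B \to M$.
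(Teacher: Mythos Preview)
Your argument is correct and takes a genuinely different route from the paper's. The paper, after assuming $\pi$ projective, passes to a finite \'etale cover $\tilde{\mathcal B}_0 \to \mathcal B_0$ on which one has a section and level-$N$ structure, obtains a moduli map $\mu : \tilde{\mathcal B}_0 \to \mathcal A$ to a fine moduli space of polarized abelian $n$-folds, and then considers $(\mu,f) : \tilde{\mathcal B}_0 \to \mathcal A \times M$. Upper semicontinuity of fiber dimension shows that the union $W_0$ of $n$-dimensional fibers is closed, and one checks $W_0 = f^{-1}(\textup{Iso}(M))$; the same flatness/openness trick you use then finishes. In short, the paper works with the moduli map itself, while you work with its differential (the Kodaira--Spencer map). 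Your approach is cleaner in that it avoids the auxiliary \'etale cover and level structures and works directly in the analytic category, whereas the paper's version trades the appeal to Torelli for a more hands-on fiber-dimension count. Both arguments conclude via the openness of the flat map $\mathcal B \to M$ (or $\tilde{\mathcal B}_0 \to M$).

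One small point to tighten: the sentence ``proper with all fibers $\mathbb P^n$, so it is flat'' is not quite a complete justification. What you really want is that $\mathcal B \to M$ is a smooth family (this is part of what ``family of Lagrangian fibered hyper-K\"ahler $2n$-folds'' means here, since each $B_t \cong \mathbb P^n$ and $M$ is a complex manifold), hence flat, hence open. With that phrasing the step is airtight.
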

\begin{proof}
	For simplicity we assume that $\pi$ is projective; the general case when $\pi$ is proper can be dealt with by using the notion of relative automorphism scheme and torsor. Let $\mathcal{B}_0 \subset \mathcal B$ be a Zariski open subset over which $\pi$ is smooth and proper and pick a large positive integer $N$. We can then find a finite \'etale cover $\tilde{\mathcal{B}}_0$ of $\mathcal{B}_0$ such that the base change $\tilde \pi: \mathcal{X}_{\tilde{ \mathcal{B}}_0}\longrightarrow \tilde B_0$ has a section and such that there is a compatible choice of level $N$ structure on the fibers of $\tilde \pi$. We thus obtain a moduli map $\mu : \tilde{\mathcal{B}}_0\longrightarrow \mathcal{A}$, where $\mathcal{A}$ is a fine moduli space of abelian $n$-folds with level $N$ structure and given polarization type.\\
	
	Write $f: \widetilde{\mathcal{B}}_0\longrightarrow M$ for the composition of the \'etale cover with the map $\mathcal{B}_0\longrightarrow M$. Since fibers of the map $(\mu,f): \tilde{\mathcal{B}}_0\longrightarrow \mathcal{A}\times M$ have dimension at most $n$, upper semicontinuity of fiber dimension implies that the union of $n$-dimensional fibers is a Zariski closed subset $W_0 \subset \tilde{\mathcal{B}}_0$. Moreover, a point $t\in \tilde{\mathcal{B}}_0$ is in $W_0$ if and only if $\mu$ is constant on $f^{-1}(f(t))\subset \tilde{\mathcal{B}}_0$, so that $W_0$ is the $f$-preimage of $\textup{Iso}(M) \subset M$. Since $f$ is flat and $W_0$ is closed in $\tilde{\mathcal B}_0$, $\textup{Iso}(M)$ is closed in $M$.
\end{proof}

\begin{theorem}[= \Cref{thm:isos-are-rare}]\label{thm:restated}
	Let $\pi : X \longrightarrow B$ be a Lagrangian fibration of a hyper-K\"ahler manifold with $b_2(X) \ge 7$. There exists a deformation of the Lagrangian fibration $\pi$ which is not isotrivial.
\end{theorem}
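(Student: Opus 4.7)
The plan is to show that the isotrivial locus $\textup{Iso}(M)$ in the Kuranishi space $M$ of the pair $(X,L)$, with $L$ the Lagrangian class, is a proper closed subvariety of $M$. Since $\textup{Iso}(M)$ is closed in $M$ by \Cref{prop:isotriviality is a closed condition}, this produces a non-isotrivial deformation of $\pi$. Combining local Torelli for hyper-K\"ahler manifolds (Bogomolov--Tian--Todorov) with Matsushita's deformation theorem for Lagrangian fibrations identifies $M$ locally with an open subset of the period domain $\mathcal{D}_L$ of marked Hodge structures on $H^2(X,\ZZ)$ with $L\in H^{1,1}$ isotropic; in particular $\dim M = b_2(X)-3 \ge 4$.

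The strategy is to bound $\dim\textup{Iso}(M)$ by partitioning according to the dichotomy of \Cref{main:typeAB}. For the type A portion $\textup{Iso}_A(M)$, \Cref{prop:classification without conjecture} (which does \emph{not} rely on \Cref{conj:base}) realizes an isotrivial type A.1 fibration as birational to a quotient $T/G$ of a $G$-abelian torsor $T$ of dimension $2n$ by a finite group, with $T$ equivariantly isogenous to $\bar F\times \bar F'$ where $\bar F\sim E^n$ and $\bar F'\cong (E')^n$ for elliptic curves $E,E'$ (by \Cref{main:fiber} and \Cref{lem:classification of G-abelian varieties}). The continuous moduli of such data are therefore controlled by the pair $(E,E')$ together with the residual extension and torsor classes, so $\dim\textup{Iso}_A(M)$ is bounded by a small constant independent of $n$. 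An analogous bound applies to type A.2, where $U$ still compactifies to an abelian torsor isogenous to $(E')^n$ by \Cref{prop:compactification of U}, and the remaining structure consists of a torsor class for an abelian scheme.

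For the type B portion $\textup{Iso}_B(M)$, the remark following \Cref{main:genimpliestypeA} records that type B forces the transcendental Hodge structure $H^2_{\tr}(X,\QQ)$ to have weak CM by $\QQ(\sqrt{-1})$ or $\QQ(\sqrt{-3})$; K3-type Hodge structures with such weak CM lie on a Shimura subvariety of $\mathcal D_L$ of ball-quotient type, of complex dimension at most $(b_2(X)-3)/2$. Combining these bounds yields $\dim\textup{Iso}(M)\le \max\bigl(C,\, (b_2(X)-3)/2\bigr)$ for a universal constant $C$, which is strictly less than $b_2(X)-3$ once $b_2(X)\ge 7$. Hence $\textup{Iso}(M)\subsetneq M$, and the theorem follows.

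The main obstacle is to eliminate the non-multiple fiber hypothesis of \Cref{main:genimpliestypeA} when deducing weak CM in type B: one must either extend the codimension-one analysis of \Cref{S-can} to cover possibly multiple general singular fibers, or observe that fibrations with multiple general singular fibers form a sublocus of $M$ of strictly smaller dimension (which would already suffice for our purposes). A secondary technical point is to verify that the partition $\textup{Iso}(M)=\textup{Iso}_A(M)\cup\textup{Iso}_B(M)$ is well-behaved in families, i.e., that the Kodaira dimension of the intermediate base varies constructibly as $(X,\pi)$ varies in $M$.
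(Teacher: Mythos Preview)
Your approach differs substantially from the paper's and carries unresolved gaps that you yourself flag. The paper does not partition by type A/B at all; instead it argues by contradiction via Hodge theory. Assuming every deformation in $\Def(\pi)$ (an open ball of dimension $b_2(X)-3\ge 4$) is isotrivial, one specializes along a Noether--Lefschetz locus to a $3$-dimensional family of projective isotrivial fibrations whose generic transcendental lattice $T$ has $\dim_\QQ T=5$ and generic Mumford--Tate group $\SO(T,q)$, so that $\End_{\HS}(T)=\QQ$ by Zarhin. The key input is then \Cref{lem:totally real transcendental HS}: for \emph{any} isotrivial fibration with $\End_{\HS}(T)=\QQ$, the inclusion $T\subset H^1(E,\QQ)\otimes W$ (coming from \Cref{main:fiber} and \Cref{prop:simple Albanese morphism}) together with Moonen's identification of the simple Kuga--Satake partner forces $H^1(E,\QQ)\cong\operatorname{KS}(T)$, whence $\dim_\QQ T\in\{3,4\}$---a contradiction. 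This argument bypasses the type A/B dichotomy, the non-multiple fiber hypothesis, and any constructibility issue entirely.

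Your plan, by contrast, has two genuine gaps. First, the implication ``type B $\Rightarrow$ weak CM'' is established in the paper only under the non-multiple fiber hypothesis (via \Cref{cor:local monodromy index} and \Cref{prop:equivalent characterization of type A}); the paper has no result bounding the locus of fibrations with multiple general singular fibers, and explicitly notes that such fibrations are neither known nor excluded. Your proposed workaround---that this locus has strictly smaller dimension---is itself an open problem. Second, your bound on $\dim\textup{Iso}_A(M)$ is asserted rather than proved: \Cref{prop:classification without conjecture} applies only to type A.1, and even there the torsor and extension data you invoke require a genuine moduli count (the final proposition of Section~\ref{S-remarks} gives $\le 2$, but only under a rational-section and no-CM hypothesis). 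The analogous claim for type A.2 has no support in the paper. None of these obstacles arise in the paper's Kuga--Satake route, which is both shorter and unconditional.
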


\begin{proof}
	There exists a local universal deformation family $\Def(\pi)$ of the Lagrangian fibration $\pi$ which is an analytic open ball of dimension $b_2(X) - 3$ \cite{mat16}. Assume that every Lagrangian fibration over $\Def(\pi)$ is isotrivial. Taking an appropriate Noether--Lefschetz locus of $\Def(\pi)$ parametrizing Lagrangian fibrations $\pi' : X' \longrightarrow B'$ with $X'$ projective, we can obtain a $3$-parameter family of isotrivial Lagrangian fibrations whose generic Mumford--Tate group is isomorphic to $\SO(T, q)$, where $T$ is the generic transcendental lattice and $\dim_{\QQ} T = 5$. By \cite{Zarhin}, this implies that the endomorphism field $\End_{\HS}(T)$ is $\QQ$, which violates the following lemma.
\end{proof}

\begin{lemma} \label{lem:totally real transcendental HS}
	Let $\pi: X\longrightarrow B$ be an isotrivial Lagrangian fibration of a hyper-K\"ahler manifold and let $T \subset H^2 (X, \QQ)$ be the transcendental Hodge structure. If the endomorphism field of $T$ is $\QQ$, then $\dim_{\QQ} T = 3$ or $4$.
\end{lemma}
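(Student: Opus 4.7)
The plan is to embed $T$ into an explicit $G$-equivariant K\"unneth piece of $H^2(X_0,\QQ)$, reduce the ambient space to a tensor product of the form $H^1(E,\QQ)\otimes H^1(E',\QQ)$ controlled by the elliptic curve $E$ of \Cref{main:fiber}, and bound $\dim T$ via representation theory once the hypothesis $\End_{\HS}(T)=\QQ$ excludes all CM enlargements.

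First I would locate $T$ in the K\"unneth decomposition. The kernel of $H^2(X,\QQ)\to H^2(X_0,\QQ)$ consists of algebraic classes supported on $\pi^{-1}(\Sigma)$, so $T\hookrightarrow H^2(X_0,\QQ)=H^2(Z,\QQ)^G$. Combining \Cref{prop:cohomology of Z} with \Cref{lem:symplectic form generates G-invariant}, which places $\sigma$ in $(H^{1,0}(F)\otimes H^{1,0}(U))^G$, and with the vanishings $H^{2,0}(F)^G=H^{2,0}(U)^G=0$, the minimality of $T$ as a $\QQ$-sub-HS containing $\sigma$ forces
\[ T \hookrightarrow M := \bigl( H^1(F,\QQ)\otimes W_1 H^1(U,\QQ) \bigr)^G. \]
Setting $V := H^{1,0}(F)$, \Cref{cor:H1 is isotypic} makes $H^1(F,\QQ)$ $V$-isotypic as a $G$-module, so only the $G$-isotypic sub-HS $W\subset W_1H^1(U,\QQ)$ whose complexification is spanned by $V$- and $V^\vee$-types contributes to $M$, and \Cref{lem:isotypic G-component in U} ensures the $V^\vee$-component of $H^{1,0}(U)$ has multiplicity one.

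Next I would reduce the ambient tensor product. Using Poincar\'e reducibility and the $G$-equivariance of the Hodge decomposition on $W$, each simple abelian factor $B$ of $W$ has Hodge filtration whose $(1,0)$-piece is isomorphic to $V^\vee$ as a $G$-module, hence $\dim B = n$. By \Cref{main:fiber}, $F\sim E^n$; reapplying the same reasoning to $B$---for instance via the $\QQ$-Fano argument of \Cref{thm:fiberdiag} applied to the second isotrivial fibration $\pi':X\dashrightarrow B'$ of \Cref{prop:second isotrivial fibration}---shows $B$ is isogenous to a power of some elliptic curve $E'$. Since $T$ is simple, it embeds into a single tensor summand of the form $H^1(E,\QQ)\otimes H^1(E',\QQ)$; multiplicities do not affect the irreducible representation type.

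Finally I would run the representation-theoretic classification. Any CM structure on $E$ or $E'$ would produce a CM endomorphism of $T$ via the tensor decomposition, contradicting $\End_{\HS}(T)=\QQ$, so $\MT(H^1(E))=\MT(H^1(E'))=\GL_2$. If $E\sim E'$, then $H^1(E)\otimes H^1(E')\cong \Sym^2 H^1(E)\oplus \wedge^2 H^1(E)$ under $\GL_2$, and the only simple weight-two factor with $h^{2,0}=1$ and no rational $(1,1)$-class is the three-dimensional $\Sym^2 H^1(E)$; otherwise $H^1(E)\otimes H^1(E')$ is itself simple under $\GL_2\times\GL_2$, of dimension four with Hodge numbers $(1,2,1)$. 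In either case $\dim_\QQ T\in\{3,4\}$. The main obstacle is the reduction step: rigorously showing that the simple abelian factor(s) of $W$ are elliptic, so that the ambient tensor product really has the form $H^1(E)\otimes H^1(E')$. This requires carefully combining \Cref{lem:isotypic G-component in U}, a $G$-equivariant version of Poincar\'e reducibility, and possibly a reinvocation of the $\QQ$-Fano argument applied to $\pi'$; once this is in place, the final dimension bound is a standard Schur-type calculation.
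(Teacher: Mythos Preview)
Your overall strategy---embed $T$ into $(H^1(F,\QQ)\otimes W_1H^1(U,\QQ))^G$ and then reduce to a tensor product $H^1(E,\QQ)\otimes H^1(E',\QQ)$ of two elliptic curve Hodge structures---is exactly the shape of the paper's argument, and your final Mumford--Tate classification (the $\Sym^2$ vs.\ $\GL_2\times\GL_2$ dichotomy) is correct once that reduction is in place. The gap is precisely where you flag it: establishing that the simple Hodge-structure factor of $W\subset W_1H^1(U,\QQ)$ comes from an elliptic curve.

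Your proposed route through the $\QQ$-Fano argument on the second fibration $\pi':X\dashrightarrow B'=\bar F/G$ does not close this gap. That argument shows $\bar F$ (hence $F$) is a power of an elliptic curve, because $B'$ is $\QQ$-Fano and the quotient map $\bar F\to B'$ must ramify in codimension $1$. But to control the \emph{base}-side factor you would need the analogous statement for the general fiber of $\pi'$, and that fiber is not an abelian variety in general: by the remark after \Cref{prop:second isotrivial fibration} it is birational to an abelian variety only in type A.1, and compactifies to a variety of general type otherwise. So in type B (or A.2) there is no ramification argument available, and nothing in the paper's earlier results forces the simple abelian variety underlying $A$ from \Cref{prop:simple Albanese morphism} to be an elliptic curve---indeed $\dim A$ is only known to be a positive multiple of $n$, and the proof of that proposition gives $W^{1,0}\cong V^\vee\oplus mV$ with $m$ possibly positive, so your claim ``$(1,0)$-piece isomorphic to $V^\vee$, hence $\dim B=n$'' is also unjustified.

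The paper resolves this by an entirely different mechanism: it invokes the Kuga--Satake construction via a lemma of van Geemen--Voisin and Moonen (\Cref{lem:moonen}), which says that if a simple K3-type $T$ with $\End_{\HS}(T)=\QQ$ embeds in $W_1\otimes W_2$ for simple weight-one $W_i$, then both $W_i$ are isomorphic to the simple Kuga--Satake partner $\operatorname{KS}(T)$. Since one factor is $H^1(E,\QQ)$ of dimension $2$, this forces $\dim\operatorname{KS}(T)=2$, and the spin-representation dimension formulas then pin down $\dim T\in\{3,4\}$. In effect, the hypothesis $\End_{\HS}(T)=\QQ$ is what forces the second factor to be elliptic, not any geometric property of $\pi'$.
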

\begin{proof}
	Recall from the proof of \Cref{prop:simple Albanese morphism} that there is a generically finite $G$-equivariant morphism $U\longrightarrow A$ to a $G$-abelian torsor $A$ such that the holomorphic symplectic form $\sigma\in H^{2,0}(Z)$ is contained in the image of $H^1 (F, \QQ) \otimes H^1 (A, \QQ) \longrightarrow H^2(Z,\QQ)$. This yields an inclusion $T \subset \big( H^1 (F, \QQ) \otimes H^1 (A, \QQ) \big)^G$. Since the smooth fiber $F$ of $\pi$ is isogenous to $E^n$ and $A$ is isogenous to the power of a simple abelian torsor, there is an inclusion
	\[ T \subset H^1 (E, \QQ) \otimes W, \]
where $W$ is a simple polarizable Hodge structure of weight $1$. The following lemma appeared in a special case in \cite[Prop 12]{vgee-voi16} and more generally in \cite[Thm 7.2, Rmk 7.3]{moonen18}. It implies that $H^1 (E, \QQ)$ must be isomorphic to the \emph{simple Kuga--Satake partner} $\operatorname{KS}(T)$ of $T$ defined in \cite[\S 7.1]{moonen18}:
	
	\begin{lemma}[{\cite[Prop 12]{vgee-voi16} and \cite[Thm 7.2]{moonen18}}] \label{lem:moonen}
		Let $W_1,W_2,$ and $T$ be simple polarizable Hodge structures of weights $1$, $1$, and $2$. Suppose that 
		\begin{enumerate}
			\item $T$ is of K3 type,
			\item there exists an inclusion of Hodge structures $T \subset W_1 \otimes W_2$, and
			\item $\End_{\HS}(T) = \QQ$.
		\end{enumerate}
		Then $W_1 \cong W_2 \cong \operatorname{KS}(T)$.
	\end{lemma}
	
	The simple Kuga--Satake partner $\operatorname{KS}(T)$ is a simple polarizable Hodge structure of weight $1$. Its behavior depends on the parity of $\dim_{\QQ} T$ because the (half) spin representations of type B and D simple algebraic groups behave differently. If $\dim_{\QQ} T = 2r+1$ is odd then $\dim_{\QQ} \operatorname{KS}(T) = 2^r$ or $2^{r+1}$. By the lemma, we have an isomorphism $H^1 (E, \QQ) \cong \operatorname{KS}(T)$ so $\dim \operatorname{KS}(T) = 2$. This implies that $r = 1$ and $\dim T = 3$. On the other hand, if $\dim T = 2r$ is even then $\dim \operatorname{KS}(T) = 2^{r-1}$ or $2^r$, and we deduce that $r = 1$ or $2$. Note that $\dim T$ cannot be $2$, because any $2$-dimensional weight $2$ Hodge structure of K3 type is necessarily CM. Hence $\dim T = 4$.
\end{proof}

\begin{lemma}
Let $E$ be an elliptic isogeny factor of $F$ and suppose that $K = \End_{\QQ}(E)$ is a CM field. Then the transcendental Hodge structure $H^2_{\tr}(X)$ has weak CM by $K$.\end{lemma}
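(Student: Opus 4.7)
The plan is to construct a $K$-action on an ambient space containing $T$, verify that it commutes with the Hodge structure and with $G$, compute its effect on the holomorphic symplectic form $\sigma$, and then use simplicity of $T$ to deduce that $K$ preserves $T$. The ambient space is $M := (H^1(F,\QQ)\otimes H^1(U,\QQ))^G$. Using \Cref{lem:cohomology of X0} and the argument in the proof of \Cref{lem:totally real transcendental HS}, $T$ embeds into the middle K\"unneth summand $(H^1(F)\otimes W_1 H^1(U))^G$ of $W_2 H^2(X_0,\QQ)$: the other two summands are algebraic ($(H^2(F))^G=\QQ\cdot\theta$ by \Cref{prop:G-invariant cohomology}, and $(W_2 H^2(U))^G=W_2 H^2(B_0)$ has trivial $(2,0)$-part since $B_0\subset\PP^n$), so the simple non-algebraic Hodge structure $T$ cannot project non-trivially to them.

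Next, I would equip $M$ with a $K$-action. By \Cref{main:fiber} the smooth fiber is isogenous to $E^n$, so $\End_{\QQ}(F)\cong M_n(K)$, and the diagonal scalar embedding $K\hookrightarrow M_n(K)$ yields an action of $K$ on $H^1(F,\QQ)$ by Hodge structure endomorphisms. This action commutes with the $G$-action because the latter factors through $\Aut_0 F \subset \GL_n(K)$, whose elements commute with the scalar center $K\subset M_n(K)$; extending $K$-linearly on the first tensor factor, $K$ therefore acts on $M$ by Hodge structure endomorphisms. To compute its effect on $\sigma$, observe that by \Cref{lem:H10 as a G-module} the complex vector space $V=H^{1,0}(F)$ is the $\chi$-eigenspace for $K\otimes_{\QQ}\CC$ acting on $H^1(F,\CC)$, where $\chi:K\hookrightarrow\CC$ is the CM character of $E$. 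Thus $K$ acts on $V$ as the scalar $\chi(k)$, and since $(V\otimes V^{\vee})^G=\CC\cdot\sigma$ is one-dimensional by Schur's lemma together with \Cref{lem:symplectic form generates G-invariant}, I obtain $k\cdot\sigma=\chi(k)\sigma$ in $M_{\CC}$.

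The main step is to deduce that $K$ preserves $T\subset M$. For $k\in K^{\times}$, the image $kT\subset M$ is another sub-Hodge structure isomorphic to $T$. Because $\sigma = k\cdot(\chi(k)^{-1}\sigma)$ and $\chi(k)^{-1}\sigma\in\CC\cdot\sigma\subset T_{\CC}$, we have $\sigma\in(kT)_{\CC}\cap T_{\CC}=(kT\cap T)_{\CC}$, so $T\cap kT$ is a non-zero $\QQ$-sub-Hodge structure of the simple Hodge structure $T$. Simplicity forces $T\cap kT=T$, and comparing dimensions gives $kT=T$. The resulting ring homomorphism $K\longrightarrow\End_{\textup{HS}}(T)$ is injective since $K$ is a field and a non-zero $k$ acts on $\sigma\in T_{\CC}\setminus\{0\}$ by the non-trivial scalar $\chi(k)$. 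This gives $K\subseteq\End_{\textup{HS}}(T)$, i.e., weak CM by $K$. The step I expect to require the most care is verifying that the $K$-action commutes with the $G$-action, which hinges on the identification $\Aut_0 F\subset\GL_n(K)$ and on $K$ being the center of $M_n(K)$; once this is in place, the rest follows formally from the one-dimensionality of $(V\otimes V^{\vee})^G$ and the simplicity of $T$.
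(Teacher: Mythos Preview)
Your argument is correct and takes a genuinely different route from the paper's. The paper embeds $T$ into $(H^1(F,\QQ)\otimes H^1(A,\QQ))^G$ (with $A$ the simple $G$-abelian torsor of \Cref{prop:simple Albanese morphism}) and then splits into two cases: when $\dim A=n$ it reduces to $H^1(E,\QQ)\otimes H^1(E',\QQ)$ and asserts that its transcendental part is a $K$-vector space; when $\dim A>n$ it proves the ambient $G$-invariants have no Hodge classes (using that $H^1(F,\QQ)$ and $H^1(A,\QQ)$ are non-dual simple $(G,\MT)$-bimodules), so they coincide with $T$. Your approach avoids the case split entirely: you let $K$ act on the larger ambient space $(H^1(F,\QQ)\otimes W_1H^1(U,\QQ))^G$, compute $k\cdot\sigma=\chi(k)\sigma$, and use simplicity of $T$ together with $\sigma\in T_{\CC}$ to force $kT=T$. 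This is cleaner and in fact fills a small gap in the paper's first case, where the claim that $(H^1(E,\QQ)\otimes H^1(E',\QQ))_{\tr}$ is $K$-stable implicitly requires exactly the simplicity argument you spell out.

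Two minor remarks. First, your citation of \Cref{lem:H10 as a G-module} for the $\chi$-eigenspace claim is off: that lemma concerns the $G$-module structure on $V$, whereas the fact that $H^{1,0}(F)$ is the $\chi$-eigenspace for $K\otimes_{\QQ}\CC$ follows directly from the CM type of $E$ and the diagonal embedding $K\hookrightarrow M_n(K)$. Second, the passage $(V\otimes V^{\vee})^G=\CC\cdot\sigma$ is not needed: once you know $K$ acts on $V=H^{1,0}(F)$ by the scalar $\chi(k)$, the identity $k\cdot\sigma=\chi(k)\sigma$ already holds on all of $(V\otimes H^{1,0}(U))^G\ni\sigma$ because $K$ acts only on the first tensor factor.
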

\begin{proof}
	Since $K\cong \End_{\HS}(H^1 (E, \QQ))$ and $F$ is isogenous to $E^n$, we have an isomorphism $\End_{\HS}(H^1(F,\QQ)) \cong \Mat_n(K)$. Moreover, \Cref{prop:G-action on cohomology} gives inclusions
	\[ G \hookrightarrow \Aut_0 (F) \cong \Aut_{\HS} (H^1 (F,\ZZ)) \ \subset \ \End_{\HS} (H^1 (F, \QQ)) \cong \Mat_n(K) .\]
	Since $K$ is the center of $\Mat_n(K)$, the $G$-module and $K$-vector space structures on $H^1 (F, \QQ)$ are compatible, namely $H^1 (F, \QQ)$ is a $G$-representation over the field $K$.\\
	
	Recall the inclusion $H^2_{\tr}(X) \subset \big( H^1 (F, \QQ) \otimes H^1 (A, \QQ) \big)^G$ from \Cref{lem:totally real transcendental HS}. We divide the proof into two cases according to whether $\dim A=n$ or $\dim A>n$. If $\dim A=n$, by \Cref{cor:G-invariant cohomology for base} and the fact that $A/G$ is $\QQ$-Fano, $A$ is isogenous to $(E')^n$ for an elliptic curve $E'$. Hence we get
	\[ H^2_{\tr}(X) \subset H^1(E, \QQ) \otimes H^1(E',\QQ),\]
and in fact
\[ H^2_{\tr}(X) \cong \left(H^1(E, \QQ) \otimes H^1(E',\QQ)\right)_{\text{tr}} .\]
Since the right hand side is a $K$-vector space, so is the left hand side.\\
	
The second case is when $\dim A > n$. In this case, we claim that
	\begin{equation}\label{eq:simpleH2} H^2_{\tr}(X) \cong \big( H^1 (F, \QQ) \otimes H^1 (A, \QQ) \big)^G .\end{equation}
This equality endows $H^2_{\tr}(X)$ with a $K$-vector space structure. To show the equality, notice first that $\big( H^{1,0}(F) \otimes H^{1,0}(A) \big)^G$ is $1$-dimensional (\Cref{lem:symplectic form generates G-invariant}), so the right hand side is a weight $2$ Hodge structure of K3 type. It cannot have any Hodge classes since
	\[ \Big( \big( H^1 (F, \QQ) \otimes H^1 (A, \QQ) \big)^G\Big)^{\MT} = \big( H^1 (F, \QQ) \otimes H^1 (A, \QQ) \big)^{G \times \MT} = 0 ,\]
where the last equality follows from the fact that $H^1 (F, \QQ)$ and $H^1 (A, \QQ)$ are simple $(G, \MT)$-bimodule (\Cref{prop:simple bimodule}) which are not dual. Indeed, they are of different dimensions since $\dim A > n$. Therefore, the right hand side of \eqref{eq:simpleH2} is a simple Hodge structure and the equality follows.
\end{proof}

\begin{proposition}
	Let $\mathcal X \xlongrightarrow{\pi} \mathcal B \longrightarrow M$ be a family of isotrivial Lagrangian fibration of hyper-K\"ahler $2n$-folds over a Zariski closed subset $M \subset \Def(\pi)$. Assume that $\pi$ has a rational section and that the elliptic curve factor $E$ of its general fiber does not have CM by $\QQ(\sqrt{-1})$ or $\QQ(\sqrt{-3})$. Then $M$ has dimension at most $2$.
\end{proposition}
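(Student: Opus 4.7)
The plan is to show that over a dense open $M^\circ\subset M$ each $X_t$ is birational to either a $\Kum_n$-variety or an $S_t^{[n]}$ with $S_t$ a Kummer K3, so that $H^2_{\mathrm{tr}}(X_t,\QQ)$ is determined by a pair of elliptic curves $(E_t, E'_t)$. The resulting variation of Hodge structure is at most $2$-parameter, and local Torelli on $\Def(\pi)$ then yields the bound.

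After shrinking $M$ to a dense open subset $M^\circ$, I may assume that for every $t\in M^\circ$ the elliptic factor $E_t$ of the smooth fiber of $\pi_t$ has no CM by $\QQ(\sqrt{-1})$ or $\QQ(\sqrt{-3})$ (the $j$-invariants of CM elliptic curves of these two types form a countable subset of the $j$-line) and $\pi_t$ still admits a rational section (the Tate--Shafarevich class of an isotrivial Lagrangian fibration is a topologically locally constant invariant of the family, and $\Def(\pi)$ is connected). By the remark following \Cref{main:genimpliestypeA}, the presence of a rational section forces the general singular fibers of $\pi_t$ to be non-multiple, so \Cref{main:genimpliestypeA} applies and $\pi_t$ is of type A. Combined with \Cref{lem:rational section}, which gives $Z_t\cong F_t\times U_t$ with diagonal $G_t$-action, $Z_t$ is birational to an abelian torsor and $\pi_t$ is in fact of type A.1.

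I then apply \Cref{prop:classification without conjecture}: the $G_t$-abelian torsor $\bar F'_t\cong (E'_t)^n$ with its $G_t$-action falls into one of cases (a), (b), or (c). By \Cref{prop:G-action on cohomology}, $G_t$ embeds in $\Aut_0 F_t$, and any cyclic subgroup of order larger than $2$ would act faithfully on the elliptic factor of $F_t$, forcing $E_t$ to have CM by $\QQ(\sqrt{-1})$ or $\QQ(\sqrt{-3})$. Hence cases (b) with $\mu_m$ for $m\in\{3,4,6\}$ and case (c) are excluded, leaving $G_t=\mathfrak S_{n+1}$ (Kum$_n$-type) or $G_t=\mu_2^{\times n}\rtimes\mathfrak S_n$ (K3$^{[n]}$-Kummer-type). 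In either case $X_t$ is birational to a symplectic resolution of $(F_t\times \bar F'_t)/G_t$; using $F_t\sim E_t^n$ and $\bar F'_t\sim (E'_t)^n$ up to isogeny, together with \Cref{lem:symplectic form generates G-invariant} and Schur isotypicity to pin down the multiplicities of simple Hodge summands, one obtains
\[ H^2_{\mathrm{tr}}(X_t,\QQ) \;\cong\; \bigl(H^1(F_t,\QQ)\otimes H^1(\bar F'_t,\QQ)\bigr)^{G_t}_{\mathrm{tr}} \;\cong\; \bigl(H^1(E_t,\QQ)\otimes H^1(E'_t,\QQ)\bigr)_{\mathrm{tr}} . \]
Thus $H^2_{\mathrm{tr}}(X_t)$ is determined, as a rational Hodge structure, by the isogeny classes of $E_t$ and $E'_t$.

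It follows that the period map $M^\circ\hookrightarrow\Def(\pi)\to\Omega$ factors through the classifying map $t\mapsto(j(E_t),j(E'_t))\in\mathbb A^1\times\mathbb A^1$ (the holomorphic symplectic form on $X_t$ lives in the transcendental part, so the Hodge filtration on $H^2(X_t,\CC)$ is determined by $H^2_{\mathrm{tr}}(X_t)$), and its image therefore has dimension at most $2$. By local Torelli (Bogomolov--Tian--Todorov) the period map is an injective immersion on $\Def(\pi)$, hence $\dim M^\circ\le 2$ and therefore $\dim M\le 2$. The main technical point is ensuring that the case-by-case output of \Cref{prop:classification without conjecture} is uniform on a dense open of $M$; this follows from connectedness of the components of $M^\circ$ together with the discreteness — up to $G$-equivariant isogeny — of the group-theoretic data classifying $(\bar F'_t, G_t)$, so that the relevant case is locally constant on $M^\circ$.
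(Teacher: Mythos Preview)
Your overall strategy---reduce to type A.1, exhibit $X_t$ as birational to a quotient of an abelian $2n$-fold built from two elliptic curves, then count parameters---is the same as the paper's. But your execution takes an unnecessary detour and contains a genuine error in that detour.

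The error is in the case-exclusion step. You claim that ``any cyclic subgroup of order larger than $2$ would act faithfully on the elliptic factor of $F_t$.'' This is false: already in case (a), $G_t=\mathfrak S_{n+1}$ contains $3$-cycles (for $n\ge 2$), and these act on $F_t\sim E_t^n$ by permuting coordinates, not via an automorphism of a single $E_t$-factor. More fundamentally, you are conflating two different $G_t$-actions: \Cref{prop:classification without conjecture}(3) classifies the action on $\bar F'_t\cong (E'_t)^n$, not the action on $F_t$. The hypothesis constrains $E_t$, not $E'_t$, so the argument as written does not exclude cases (b) with $m\ge 3$ or (c). (One can exclude them by a more careful analysis of which finite groups embed in $\Aut_0(F_t)$ when $\End_\QQ(E_t)\notin\{\QQ(\sqrt{-1}),\QQ(\sqrt{-3})\}$, but this requires work you have not done.)

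The good news is that the case exclusion is entirely unnecessary. \Cref{prop:classification without conjecture}(3) already gives $\bar F'_t\cong (E'_t)^n$ in \emph{every} case, and you already have $F_t\sim E_t^n$ from \Cref{main:fiber}. Since $\pi_t$ has a rational section, $T_t\cong F_t\times \bar F'_t$ is isogenous to $E_t^n\times (E'_t)^n$ regardless of which $G_t$ occurs. This is exactly where the paper stops: such $T_t$ vary in a two-parameter family (the $j$-invariants of $E_t$ and $E'_t$), and since $X_t$ is birational to $T_t/G_t$ with $G_t$ locally constant discrete data, local Torelli on $\Def(\pi)$ gives $\dim M\le 2$. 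Your computation of $H^2_{\tr}(X_t)$ and the explicit factorization of the period map are correct in spirit but not needed; the paper's argument is the same as yours with the middle third removed.
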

\begin{proof}
	Under these hypotheses, the isotrivial fibration $\pi_t : X_t \longrightarrow \mathcal{B}_t$ over a general point $t \in M$ is of type A.1 by \Cref{main:genimpliestypeA}. Note that the hypothesis that the general singular fibers of $\pi_t$ are non-multiple is satisfied because $\pi_t$ has a rational section. It follows that $X_t$ is birational to the quotient of a $2n$-dimensional abelian variety $T$ by a finite group $G$. Moreover, $T$ is isogenous to $E^n \times (E')^n$ for elliptic curves $E$ and $E'$. The result follows from the fact that such a $T$ can only vary in a two-parameter family.
\end{proof}

\appendix

\section{Isotrivial K3 surfaces} \label{K3appendix}

In this appendix we classify isotrivial elliptic K3 surfaces with a section. We make no claim to originality -- in fact, much of what we have to say already appears in \cite[\S 1.4.2]{FM94} and \cite{moonen18}. However, since we could not find \Cref{thm:isotrivial K3} in the literature and our main goal is to generalize this result to hyper-K\"ahler manifolds, we feel this appendix is useful and necessary.\\

Let $\pi : S \longrightarrow B \cong \PP^1$ be an isotrivial elliptic fibration of a projective K3 surface, $B_0\subset B$ the complement of the discriminant locus, and $S_0:= S\times_{B} B_0$ the complement of the singular fibers of $\pi$. Write $\pi_0: S_0\longrightarrow B_0$ for the restriction of $\pi$ to $S_0$. The fibers of $\pi_0$ are isomorphic to a fixed curve $F$ of genus one. Let $G$ be the image of the monodromy representation $\pi_1(B_0) \longrightarrow \GL(H^1 (F, \ZZ))$ corresponding to the local system $R^1(\pi_0)_* \underline \ZZ$, and consider the minimal Galois cover $U \longrightarrow B_0$ trivializing this local system. We consider the base change diagram
\[\begin{tikzcd}[sep=normal]
	Z \arrow[r] \arrow[d,swap,"p"] & S_0 \arrow[d, "\pi"] \\
	U \arrow[r] & B_0 .
\end{tikzcd}\]
As in \Cref{prop:G-action on cohomology}, the monodromy representation $\pi_1(B_0) \longrightarrow \GL(H^1 (F, \ZZ))$ factors through the subgroup $\Aut_0 F$, so $G$ is forced to be isomorphic to one of four cyclic groups as follows:
\begin{equation} \label{Gcases}
	G=\begin{cases} \mu_2, \mu_3, \text{ or } \mu_6 &\text{if } j(F)=0,\\
	\mu_2\text{ or } \mu_4 &\text{if } j(F)= 1728,\\
	\mu_2 & \text{otherwise.}\end{cases}
\end{equation}

\begin{theorem}[= \Cref{main:K3}] \label{thm:isotrivial K3}
	Let $\pi : S \longrightarrow \PP^1$ be an isotrivial elliptic fibration of a projective K3 surface.
	\begin{enumerate}
		\item If $G = \mu_2$, there is an abelian surface $A$ and a fibration $p: A\longrightarrow E$ to an elliptic curve $E$ with fiber $F$, such that $\pi$ is birational to the quotient of $p$ by the involution $[-1]$. In particular, $S$ is a Kummer surface.
		\item  If $\pi$ admits a section, there is a curve $C$ with a $G$-action such that $\pi$ is birational to the quotient of the  projection $F \times C \longrightarrow C$ by a diagonal $G$-action. The possible curves $C$ for $G = \mu_3, \mu_4$, and $\mu_6$ are classified in tables \ref{table:classification for mu_3}, \ref{table:classification for mu_4}, and \ref{table:classification for mu_6} respectively.
	\end{enumerate}
\end{theorem}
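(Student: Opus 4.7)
My plan uses three ingredients in tandem: the Kodaira classification of singular fibers with finite local monodromy, the canonical bundle formula $K_S \sim \pi^*(K_{\PP^1} + D_\pi) = 0$, and Riemann--Hurwitz for the normal $G$-compactification $\bar U \to \PP^1$. Since the local monodromy at each singular fiber must lie in $G$, only fibers of type $\text{II}, \text{III}, \text{IV}, \text{I}_0^*, \text{II}^*, \text{III}^*, \text{IV}^*$ (with monodromy orders $6, 4, 3, 2, 6, 4, 3$) can occur, and each contributes $1 - 1/m_i$ to $D_\pi$ where $m_i$ is its local monodromy order.

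For part (2), assume $\pi$ has a section. Then Lemma \ref{lem:rational section} gives $Z \cong F \times U$ with diagonal $G$-action, so $(F \times C)/G$ is birational to $S$ where $C = \bar U$ is the compactification. Imposing $\deg D_\pi = 2$ yields a linear Diophantine equation in the numbers $k_{m_i}$ of branch points of $C \to \PP^1$ with stabilizer $\mu_{m_i}$, which I solve by case analysis: for $m = 3$, $k_3(1-1/3) = 2$ gives $k_3 = 3$; for $m = 4$, $3k_4/4 + k_2/2 = 2$ with $k_4 \geq 1$ gives $(k_4, k_2) = (2, 1)$; for $m = 6$, $5k_6/6 + 2k_3/3 + k_2/2 = 2$ with $k_6 \geq 1$ gives $(k_6, k_3, k_2) = (1, 1, 1)$. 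In every case Riemann--Hurwitz yields $g(C) = 1$, so the $\mu_m$-action on the elliptic curve $C$ forces $C \cong \CC/\ZZ[\zeta_3]$ for $m = 3, 6$ and $C \cong \CC/\ZZ[i]$ for $m = 4$. These data populate the three tables.

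For part (1), I reduce to part (2) via the Jacobian fibration $J \to \PP^1$ of $\pi$, which has the same $\mu_2$ monodromy, the same $\text{I}_0^*$ singular fibers (the only Kodaira type compatible with $G = \mu_2$ since all other finite-monodromy types have order $\geq 3$), and a tautological section. Applying part (2) with $m = 2$ yields $J \cong \Kum(F \times E)$ for an elliptic curve $E$; since $\pi$ is a Tate--Shafarevich twist of $J$, a classical result (e.g., \cite[\S 11]{huy:k3}) implies that $S$ is Kummer for an abelian surface $A$ fibered over $E$ with fiber $F$. The main technical obstacle is the enumeration in part (2), where I must verify that each numerical solution actually corresponds to a bona fide $\mu_m$-action on the claimed elliptic curve with the prescribed branching behavior, and that the quotient $(F \times C)/G$ admits a crepant resolution to a K3 surface --- the former is a classical computation relying on the CM structures of $\ZZ[\zeta_3]$ and $\ZZ[i]$, and the latter follows from identifying the $\mu_{m_i}$-quotient singularities at fixed points with the starred Kodaira fibers of the corresponding order.
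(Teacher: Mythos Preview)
Your enumeration in part (2) rests on a false identity: you assert that a singular fiber with local monodromy order $m_i$ contributes $1 - 1/m_i$ to $D_\pi$. That is the coefficient of the Hurwitz divisor $D_q$ for the cover $C \to \PP^1$, not of the discriminant $D_\pi$ in the canonical bundle formula. The correct coefficient is $1 - c_i$, where $c_i$ is the log canonical threshold of the fiber, and this depends on the Kodaira \emph{type}, not merely on the monodromy order (cf.\ \Cref{table:Kodaira}): for instance types $\mathrm{IV}$ and $\mathrm{IV}^*$ both have $m_i = 3$ but contribute $1/3$ and $2/3$ respectively. The equality $1 - c_i = 1 - 1/m_i$ holds only for the starred types $\mathrm{I}_0^*, \mathrm{II}^*, \mathrm{III}^*, \mathrm{IV}^*$; this is precisely the type~A criterion of \Cref{prop:equivalent characterization of type A}.

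As a result your Diophantine equations collapse the classification to the single $g(C) = 1$ case for each $G$, whereas the actual tables have $4$, $10$, and $47$ entries for $\mu_3$, $\mu_4$, $\mu_6$, with $g(C)$ ranging up to $4$, $9$, and $25$. For example, with $G = \mu_3$ the correct equation (equivalently, the Euler characteristic constraint $4f_{\mathrm{IV}} + 8f_{\mathrm{IV}^*} = 24$ used in the paper) is $\tfrac{1}{3}f_{\mathrm{IV}} + \tfrac{2}{3}f_{\mathrm{IV}^*} = 2$, with four solutions $(f_{\mathrm{IV}}, f_{\mathrm{IV}^*}) \in \{(6,0),(4,1),(2,2),(0,3)\}$; you find only $(0,3)$. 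Your claim ``in every case $g(C)=1$'' is therefore wrong, and the subsequent identification of $C$ with a CM elliptic curve applies only to the bottom row of each table.

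For part (1), your reduction via the Jacobian fibration and Tate--Shafarevich twist is not the paper's route: the paper argues directly (without assuming a section) that $C$ has genus $1$, that all singular fibers are of type $\mathrm{I}_0^*$, and then glues local trivializations to exhibit $Z$ as an open subset of an abelian surface. Your approach could be made to work, but the step ``a Tate--Shafarevich twist of $\Kum(F \times E)$ is $\Kum(A)$ for some $A$ fibered over $E$ with fiber $F$'' is not a citation to \cite[\S11]{huy:k3}; it requires identifying the Tate--Shafarevich group of the isotrivial fibration with $\Ext^1(E,F)$ modulo the split extension, which you have not done.
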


The remainder of this appendix will be devoted to a few preliminaries and the proof of \Cref{thm:isotrivial K3}.\\

Note that the smooth open curve $U$ can be uniquely compactified to a smooth projective curve $C$, and the $G$-action on $U$ uniquely extends to $C$. The quotient $C/G$ is normal and birational to the base $B \cong \PP^1$ of the elliptic fibration, so it is isomorphic to $B$. Let $\tilde Z\supset Z$ be a projective compactification to which the $G$-action extends. Then $\pi : S \longrightarrow \PP^1$ is birational to the map
	\[ \tilde Z / G \longrightarrow C/G = \PP^1, \]
and can be obtained as its (unique) \emph{relative minimal model}.\\	

We will make repeated use of the following table which describes Euler characteristic contributions, monodromy, and local models for the singular fibers arising in the Kodaira classification. Let us explain the meaning of the last row. We can realize each Kodaira singular fiber as the central fiber of the relative minimal model of a family
\[ (F \times \tilde \Delta)/G \longrightarrow \Delta ,\]
where $\Delta$ is an open disc, $\tilde \Delta = \{ (z,t) \in \CC \times \Delta : z^m = t^d \}$ is a cyclic branched covering of $\Delta$ of degree $m$, and $G$ acts diagonally. When $d > 1$, $\tilde \Delta$ is singular and it may be replaced by its normalization.

\begin{table}[h]
	\begin{tabular}{|c||c|c|c|c|c||c|c|c|c|c|}
		\hline
		\rule[-1ex]{0pt}{2.5ex} Kodaira type & $\text{I}_0$ & $\text{I}_{n \ge 1}$ & $\text{II}$ & $\text{III}$ & $\text{IV}$ & $\text{I}_0^*$ & $\text{I}_{n \ge 1}^*$ & $\text{II}^*$ & $\text{III}^*$ & $\text{IV}^*$ \\\hline
		\rule[-1ex]{0pt}{2.5ex} Euler characteristic & $0$ & $n$ & $2$ & $3$ & $4$ & $6$ & $n+6$ & $10$ & $9$ & $8$ \\\hline
		\rule[-1ex]{0pt}{2.5ex} Monodromy order $m$ & $1$ & $\infty$ & $6$ & $4$ & $3$ & $2$ & $\infty$ & $6$ & $4$ & $3$ \\\hline
		\rule[-1ex]{0pt}{2.5ex} $d$ for $z^m = t^d$ & $1$ & - & $1$ & $1$ & $1$ & $1$ & - & $5$ & $3$ & $2$ \\\hline
		\rule[-1ex]{0pt}{2.5ex} Log canonical threshold $c$ & $1$ & $1$ & $\frac{5}{6}$ & $\frac{3}{4}$ & $\frac{2}{3}$ & $\frac{1}{2}$ & - & $\frac{1}{6}$ & $\frac{1}{4}$ & $\frac{1}{3}$ \\\hline
	\end{tabular}
	\caption{Euler characteristic contributions, monodromy, local models, and log canonical thresholds for singular fibers in the Kodaira classification.}
	\label{table:Kodaira}
\end{table}

We note that the quotient $(F \times \tilde \Delta)/G$ has canonical singularities in the cases $\text{I}_0^*$, $\text{II}^*$, $\text{III}^*$, and $\text{IV}^*$, and its crepant resolution is then the relative minimal model for $(F \times \tilde \Delta)/G\longrightarrow \tilde \Delta/G$. The quotient for the other cases is only klt, and its relative minimal model should be constructed by a sequence of blowups and blowdowns.
	
\begin{proof}[Proof of \Cref{thm:isotrivial K3} (1)]
	Consider an isotrivial elliptic fibration $\pi : S \longrightarrow B$ for which $G=\mu_2$. We will show that there is an elliptic curve $E$, an abelian surface $A$ with a fibration $f: A\longrightarrow E$ and fiber $F$, and a commutative diagram as follows, where the top horizontal arrow is a minimal resolution of singularities:
	\[
	\begin{tikzcd}[sep=normal]
	S\ar[d,"\pi"] \ar[r] & A/[-1]\ar[d,"f/{[-1]}"]\\
	\mathbb{P}^1\ar[r,"\cong"] & E/[-1]\;.
	\end{tikzcd}
	\]
	 In particular, $S$ is the Kummer surface associated to the abelian surface $A$. Note that $\pi$ admits a section if and only if $f$ does.\\

	\begin{lemma}
		$C$ is a genus $1$ curve $E$ and the quotient $E \longrightarrow \PP^1$ is the quotient by a hyperelliptic involution.
	\end{lemma}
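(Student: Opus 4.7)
The plan is to determine $g(C)$ by counting the branch points of the Galois double cover $q: C \longrightarrow \mathbb{P}^1$, and then to identify the involution. The key ingredients are the classification of singular fibers for isotrivial elliptic fibrations with $G = \mu_2$, the topological Euler characteristic of a K3 surface, and Riemann--Hurwitz.

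First, I would pin down the Kodaira type of every singular fiber of $\pi$. Since $G = \mu_2$ and $\pi$ is isotrivial, for every point $b \in \Sigma$ the local monodromy is a nontrivial subgroup of $\mu_2$ (it cannot be trivial, else $\pi$ would be smooth over $b$), hence of order exactly $2$. Isotriviality rules out the infinite-monodromy types $\text{I}_n$ and $\text{I}_n^*$, and inspection of \Cref{table:Kodaira} shows that the only remaining Kodaira type with monodromy of order $2$ is $\text{I}_0^*$. Hence every singular fiber of $\pi$ is of type $\text{I}_0^*$.

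Next, I would count singular fibers using additivity of the topological Euler characteristic: $24 = \chi(S) = 6 |\Sigma|$, giving $|\Sigma| = 4$. (This also follows from \Cref{main:genimpliestypeA}, which yields $\deg \Sigma_{\mathrm{red}} = 2(n+1) = 4$ in dimension $n = 1$.) The cover $q : C \longrightarrow \mathbb{P}^1$ is Galois of degree $2$, branched precisely along $\Sigma$, so Riemann--Hurwitz yields
\[ 2 g(C) - 2 \;=\; 2 \cdot (-2) + |\Sigma| \;=\; 0, \]
hence $g(C) = 1$ and $C$ is an elliptic curve $E$.

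Finally, I would identify the involution. After choosing any origin on $E$, every involution of $E$ has the form $z \mapsto \pm z + c$. If it were a translation by a $2$-torsion element then the quotient would again be an elliptic curve, contradicting $E/G \cong \mathbb{P}^1$. Therefore the involution is of the form $z \mapsto -z + c$, which after shifting the origin by $c/2$ becomes the standard hyperelliptic involution $[-1]$. I do not anticipate any serious obstacle; the only mild care needed is choosing the origin on $E$ at the end so that the $G$-action becomes exactly $[-1]$.
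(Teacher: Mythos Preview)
Your proof is correct, but it takes a different route from the paper's. The paper argues Hodge-theoretically: since $C/G \cong \PP^1$ the involution acts by $-1$ on $H^{1,0}(C)$, and since $q(S)=0$ it also acts by $-1$ on $H^{1,0}(F)$; hence the K\"unneth computation
\[
1 = p_g(S) = \dim\big(H^{1,0}(F)\otimes H^{1,0}(C)\big)^G = h^{1,0}(F)\cdot h^{1,0}(C) = g(C)
\]
gives $g(C)=1$ directly, and the paper only \emph{afterwards} identifies the singular fibers as $\text{I}_0^*$ and counts $|\Sigma|=4$ via $\chi(S)=24$. You instead front-load the topological input: classify the fibers first, count them via Euler characteristic, and then deduce $g(C)=1$ by Riemann--Hurwitz on the double cover $C\to\PP^1$ branched along $\Sigma$. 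Your approach is more elementary and has the side benefit of establishing $|\Sigma|=4$ in one stroke, while the paper's argument is closer in spirit to the higher-dimensional Hodge-theoretic machinery developed in the body of the article (\Cref{lem:symplectic form generates G-invariant}, \Cref{lem:isotypic G-component in U}). Your treatment of the final step---ruling out a translation involution because the quotient is $\PP^1$---is actually cleaner than the paper's somewhat imprecise ``any involution on a genus~$1$ curve is hyperelliptic.''
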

	\begin{proof}
	Since the quotient of $C$ by $G = \mu_2$ is rational, $G$ acts by multiplication by $(-1)$ on $H^{1,0}(C)$. Similarly, since $q(S)=0$ the local system $R^1(\pi_0)_*\underline{\mathbb{Z}}$ does not contain a non-zero $G$-invariant local system. It follows that $G$ acts by multiplication by $(-1)$ on $H^{1,0}(F)$. Accordingly, one computes that 
		$$1=p_g(S)=h^{1,0}(C)\cdot h^{1,0}(E)=h^{1,0}(C)=g(C).$$
		Any involution on a genus $1$ curve is hyperelliptic.
	\end{proof}
	
	According to \Cref{table:Kodaira}, the Kodaira fiber $\text{I}_0^*$ is the only possible singular fiber with local monodromy of order $2$. This means that all the singular fibers of $\pi$ are of type $\text{I}_0^*$. Since each $\text{I}_0^*$ fiber contributes $6$ to the Euler characteristic of $S$, there must be precisely four singular fibers. (This method was already used in \cite{sawon14}.)\\
		
	The four copies of $\PP^1$ with multiplicity one in a fiber of type $\text{I}_0^*$ are $(-2)$-curves. Contracting these sixteen $(-2)$-curves gives a singular symplectic surface $\bar S$ and an isotrivial elliptic fibration $\bar \pi : \bar S \longrightarrow \PP^1$. Each singular fiber is $\PP^1$ with multiplicity $2$, and $\bar S$ is locally the quotient of the trivial family $F\times \Delta\longrightarrow \Delta$ by the involution $(x,t)\mapsto (\iota(x),-t)$, where $\iota$ is a hyperelliptic involution. We can thus locally and equivariantly compactify $Z\longrightarrow U$ into a trivial family. Gluing these partial compactifications gives a smooth surface $\tilde{Z}$ and an isotrivial elliptic fibration $p : \tilde{Z} \longrightarrow E$ . By the canonical bundle formula, $\kappa(\tilde{Z}) = 0$. Finally, $q(\tilde{Z})\neq 0$ implies that $\tilde{Z}$ is an abelian surface.
\end{proof}

In the rest of this appendix we assume that the elliptic fibration $\pi : S \longrightarrow B \cong \PP^1$ admits a section and describe the curve $C$ obtained as a smooth compactification of $U$ for all possibilities of the group $G$.\\

We will make repeated use of the fact that such a section gives a trivialization of the elliptic torsor $Z\longrightarrow U$ for which the $G$-action is diagonal (\Cref{lem:rational section}). Indeed, if $\pi: S\longrightarrow \mathbb{P}^1$ admits a section, then $p: Z\longrightarrow U$ is an isotrivial elliptic fibration with an equivariant section $\sigma$ and trivial monodromy. Since the moduli space of elliptic curves with an appropriate level structure is a fine moduli space, there is a commutative diagram
\[\begin{tikzcd}[row sep=normal]
Z \arrow[rr, "\cong"] \arrow[dr, "p"'] && F \times U \arrow[dl, "\pr_2"] \\
& U&,
\end{tikzcd}\]
such that the $G$-action transported to $F\times U$ is diagonal.\\

\noindent \ref*{K3appendix}.1. \textbf{Classification for $G = \mu_3$.} As shown above, $p : Z \longrightarrow U$ is isomorphic to $\pr_2 : E \times U \to U$ with a diagonal $G$-action on $E \times U$, and the $G$-action on $E$ fixes the origin of $E$. Recall that $C$ is the unique smooth projective compactification of $U$. The $G$-action uniquely extends to $C$ and hence to $F \times C$. As a result, $\pi : S \longrightarrow \PP^1$ is the relative minimal model of
\[ (F \times C)/G \longrightarrow C/G = \PP^1 .\]

It remains to classify all possible curves $C$ which can arise. Each singular fibers of $\pi$ has local monodromy of order $3$. By \Cref{table:Kodaira}, all the singular fibers are of type IV or $\text{IV}^*$. Type IV and $\text{IV}^*$ fibers contribute $4$ and $8$ respectively to the Euler characteristic of $S$. Thus if there are $f_{\text{IV}}$ type IV fibers and $f_{\text{IV}^*}$ type $\text{IV}^*$ fibers we must have
\[ 4 f_{\text{IV}} + 8 f_{\text{IV}^*} = 24. \]
Hence there are four possibilities for $(f_{\text{IV}}, f_{\text{IV}^*})$ as in \Cref{table:classification for mu_3}. We note that these examples already appeared in \cite{del-mos} and \cite{moonen18}. The last column refers to the index in Table~A1 of \cite{moonen18}.

\begin{table}[h]
	\begin{tabular}{|c||c|c||c||c|c|}
		\hline
		\rule[-1ex]{0pt}{2.5ex} No.	& $\text{IV}$ & $\text{IV}^*$ & $g(C)$ & Ramification degree & No. in {\cite{moonen18}} \\\hline
		\rule[-1ex]{0pt}{2.5ex} 1 & 6 & 0 & 4 & $1^6$ & 115 \\\hline
		\rule[-1ex]{0pt}{2.5ex} 2 & 4 & 1 & 3 & $1^4, 2$ & 84 \\\hline
		\rule[-1ex]{0pt}{2.5ex} 3 & 2 & 2 & 2 & $1^2, 2^2$ & 1 \\\hline
		\rule[-1ex]{0pt}{2.5ex} 4 & 0 & 3 & 1 & $2^3$ & - \\\hline
	\end{tabular}
	\caption{Classification of possible singular fibers configurations and the genus of $C$ when $G = \mu_3$.}
	\label{table:classification for mu_3}
\end{table}

Type IV and $\text{IV}^*$ fibers can be described as the central fibers of the relative minimal models of $(F \times \tilde \Delta)/\mu_3$, where $\tilde \Delta$ is a ramified degree $3$ covering $\tilde \Delta \longrightarrow \Delta$ given by $z^3 = t$ and $z^3 = t^2$ respectively. From this we obtain a global description of $C$. For example, let us describe the second item of \Cref{table:classification for mu_3}. The curve $C$ is the normalization of the degree $3$ cyclic covering of $\PP^1$ given by
\[ z^3 = (x-a_1) (x-a_2) (x-a_3) (x-a_4) (x-b_1)^2 \]
for five distinct points $a_1, \cdots, a_4, b_1 \in \PP^1$. For such a curve $C$, we construct $(F \times C)/\mu_3 \longrightarrow \PP^1$ as above. Taking the relative minimal model modifies the four singular fibers over $a_1, \cdots, a_4 \in \PP^1$ through a sequence of blowups and blowdowns, resulting in fibers of Kodaira type IV. The singular fiber over $b_1 \in \PP^1$ is different; we only need blowups to obtain a fiber of type $\text{IV}^*$. The resulting surface is the minimal elliptic surface over $\PP^1$ with four type IV singular fibers and one type $\text{IV}^*$ singular fiber, which is an elliptic K3 surface.\\

\noindent \ref*{K3appendix}.2. \textbf{Classification for $G = \mu_4$.} In the case $G=\mu_4$, the local monodromy around singular fibers is of order $2$ or $4$. By \Cref{table:Kodaira}, singular fibers with local monodromy of order $2$ are of type $\text{I}_0^*$ and singular fibers with local monodromy of order $4$ are of type III or $\text{III}^*$. If the number of singular fibers of each type is $f_{\text{III}}$, $f_{\text{I}_0^*}$, and $f_{\text{III}^*}$, then
\[ 3f_{\text{III}} + 6f_{\text{I}_0^*} + 9f_{\text{III}^*} = 24 .\]
There are $10$ possible triples $(f_{\text{III}}, f_{\text{I}_0^*}, f_{\text{III}^*})$ satisfying this condition, which are listed in \Cref{table:classification for mu_4}. Note that item 9 gives a curve $C$ which is a disjoint union of two elliptic curves obtained in the $G = \mu_2$ case, so it can safely be ignored.

\begin{table}[h]
	\begin{tabular}{|c||c|c|c||c||c|c|}
		\hline
		\rule[-1ex]{0pt}{2.5ex} No.	& $\text{III}$ & $\text{I}_0^*$ & $\text{III}^*$ & $g(C)$ & Ramification degree & No. in {\cite{moonen18}} \\\hline\hline
		\rule[-1ex]{0pt}{2.5ex} 1 & 8 & 0 & 0 & 9 & $1^8$ & 138 \\\hline
		\rule[-1ex]{0pt}{2.5ex} 2 & 6 & 1 & 0 & 7 & $1^6, 2$ & 130 \\\hline
		\rule[-1ex]{0pt}{2.5ex} 3 & 5 & 0 & 1 & 5 & $1^5, 3$ & 116 \\\hline
		\rule[-1ex]{0pt}{2.5ex} 4 & 4 & 2 & 0 & 5 & $1^4, 2^2$ & 117 \\\hline
		\rule[-1ex]{0pt}{2.5ex} 5 & 3 & 1 & 1 & 5 & $1^3, 2, 3$ & 85 \\\hline
		\rule[-1ex]{0pt}{2.5ex} 6 & 2 & 3 & 0 & 3 & $1^2, 2^3$ & 86 \\\hline
		\rule[-1ex]{0pt}{2.5ex} 7 & 2 & 0 & 2 & 3 & $1^2, 3^2$ & 2 \\\hline
		\rule[-1ex]{0pt}{2.5ex} 8 & 1 & 2 & 1 & 5 & $1, 2^2, 3$ & 3 \\\hline
		\rule[-1ex]{0pt}{2.5ex} 9 & 0 & 4 & 0 & 1 ($\times 2$) & $2^4$ & - \\\hline
		\rule[-1ex]{0pt}{2.5ex} 10& 0 & 1 & 2 & 1 & $2, 3^2$ & - \\\hline
	\end{tabular}
	
	\caption{Classification of possible singular fiber configurations and the genus of $C$ when $G = \mu_4$.}
	\label{table:classification for mu_4}
\end{table}

\noindent \ref*{K3appendix}.3. \textbf{Classification for $G = \mu_6$.}
The order of the local monodromy around singular fibers can be $2$, $3$, or $6$. By \Cref{table:Kodaira}, type $\text{I}_0^*$ fibers have local monodromy of order $2$, while type IV and $\text{IV}^*$ fibers have local monodromy of order $3$, and type II and $\text{II}^*$ fibers have local monodromy of order $6$. Again the Euler characteristic computation gives the following constraint on the numbers $f_{\text{II}}, f_{\text{IV}}, f_{\text{I}_0^*}, f_{\text{IV}^*}, f_{\text{II}^*}$ of fibers of each type:
\[ 2f_{\text{II}} + 4f_{\text{IV}} + 6f_{\text{I}_0^*} + 8f_{\text{IV}^*} + 10f_{\text{II}^*} = 24 .\]
The classification for all such tuples $(f_{\text{II}}, \cdots, f_{\text{II}^*})$ is provided in \Cref{table:classification for mu_6}. There are 47 cases, with the items 38, 39, 42, 45, and 47 corresponding to disjoint unions of examples which arose for the Galois groups $\mu_2$ and $\mu_3$. In this sense, item 43 is the only connected example with $g(C)>1$ that is not covered in Moonen's list \cite[Table A1]{moonen18}.

\begin{table}[h]
	\begin{tabular}{|c||c|c|c|c|c||c||c|c|}
		\hline
		\rule[-1ex]{0pt}{2.5ex} No. & $\text{II}$ & $\text{IV}$ & $\text{I}_0^*$ & $\text{IV}^*$ & $\text{II}^*$ & $g(C)$ & Ramification degree & No. in {\cite{moonen18}} \\\hline\hline
		\rule[-1ex]{0pt}{2.5ex} 1 & 12& 0 & 0 & 0 & 0 & 25& $1^{12}$ & 150 \\\hline
		\rule[-1ex]{0pt}{2.5ex} 2 & 10& 1 & 0 & 0 & 0 & 22& $1^{10}, 2$ & 149 \\\hline
		\rule[-1ex]{0pt}{2.5ex} 3 & 9 & 0 & 1 & 0 & 0 & 19& $1^9, 3$ & 147 \\\hline
		\rule[-1ex]{0pt}{2.5ex} 4 & 8 & 2 & 0 & 0 & 0 & 19& $1^8, 2^2$ & 148 \\\hline
		\rule[-1ex]{0pt}{2.5ex} 5 & 8 & 0 & 0 & 1 & 0 & 17& $1^8, 4$ & 144 \\\hline
		\rule[-1ex]{0pt}{2.5ex} 6 & 7 & 1 & 1 & 0 & 0 & 16& $1^7, 2, 3$ & 145 \\\hline
		\rule[-1ex]{0pt}{2.5ex} 7 & 7 & 0 & 0 & 0 & 1 & 15& $1^7, 5$ & 139 \\\hline
		\rule[-1ex]{0pt}{2.5ex} 8 & 6 & 3 & 0 & 0 & 0 & 16& $1^6, 2^3$ & 146 \\\hline
		\rule[-1ex]{0pt}{2.5ex} 9 & 6 & 1 & 0 & 1 & 0 & 14& $1^6, 2, 4$ & 140 \\\hline
		\rule[-1ex]{0pt}{2.5ex} 10& 6 & 0 & 2 & 0 & 0 & 13& $1^6, 3^2$ & 142 \\\hline
		\rule[-1ex]{0pt}{2.5ex} 11& 5 & 2 & 1 & 0 & 0 & 13& $1^5, 2^2, 3$ & 141 \\\hline
		\rule[-1ex]{0pt}{2.5ex} 12& 5 & 1 & 0 & 0 & 1 & 12& $1^5, 2, 5$ & 131 \\\hline
		\rule[-1ex]{0pt}{2.5ex} 13& 5 & 0 & 1 & 1 & 0 & 11& $1^5, 3, 4$ & 132 \\\hline
		\rule[-1ex]{0pt}{2.5ex} 14& 4 & 4 & 0 & 0 & 0 & 13& $1^4, 2^4$ & 143 \\\hline
		\rule[-1ex]{0pt}{2.5ex} 15& 4 & 2 & 0 & 1 & 0 & 11& $1^4, 2^2, 4$ & 133 \\\hline
		\rule[-1ex]{0pt}{2.5ex} 16& 4 & 1 & 2 & 0 & 0 & 10& $1^4, 2, 3^2$ & 134 \\\hline
		\rule[-1ex]{0pt}{2.5ex} 17& 4 & 0 & 1 & 0 & 1 & 9 & $1^4, 3, 5$ & 118 \\\hline
		\rule[-1ex]{0pt}{2.5ex} 18& 4 & 0 & 0 & 2 & 0 & 9 & $1^4, 4^2$ & 119 \\\hline
		\rule[-1ex]{0pt}{2.5ex} 19& 3 & 3 & 1 & 0 & 0 & 10& $1^3, 2^3, 3$ & 135 \\\hline
		\rule[-1ex]{0pt}{2.5ex} 20& 3 & 2 & 0 & 0 & 1 & 9 & $1^3, 2^2, 5$ & 120 \\\hline
		\rule[-1ex]{0pt}{2.5ex} 21& 3 & 1 & 1 & 1 & 0 & 8 & $1^3, 2, 3, 4$ & 121 \\\hline
		\rule[-1ex]{0pt}{2.5ex} 22& 3 & 0 & 3 & 0 & 0 & 7 & $1^3, 3^3$ & 123 \\\hline
		\rule[-1ex]{0pt}{2.5ex} 23& 3 & 0 & 0 & 1 & 1 & 7 & $1^3, 4, 5$ & 88 \\\hline
		\rule[-1ex]{0pt}{2.5ex} 24& 2 & 5 & 0 & 0 & 0 & 10& $1^2, 2^5$ & 136 \\\hline
		\rule[-1ex]{0pt}{2.5ex} 25& 2 & 3 & 0 & 1 & 0 & 8 & $1^2, 2^3, 4$ & 124 \\\hline
		\rule[-1ex]{0pt}{2.5ex} 26& 2 & 2 & 2 & 0 & 0 & 7 & $1^2, 2^2, 3^2$ & 125 \\\hline
		\rule[-1ex]{0pt}{2.5ex} 27& 2 & 1 & 1 & 0 & 1 & 6 & $1^2, 2, 3, 5$ & 89 \\\hline
		\rule[-1ex]{0pt}{2.5ex} 28& 2 & 1 & 0 & 2 & 0 & 6 & $1^2, 2, 4^2$ & 90 \\\hline
		\rule[-1ex]{0pt}{2.5ex} 29& 2 & 0 & 2 & 1 & 0 & 5 & $1^2, 3^2, 4$ & 92 \\\hline
		\rule[-1ex]{0pt}{2.5ex} 30& 2 & 0 & 0 & 0 & 2 & 5 & $1^2, 5^2$ & 6 \\\hline
		\rule[-1ex]{0pt}{2.5ex} 31& 1 & 4 & 1 & 0 & 0 & 7 & $1, 2^4, 3$ & 126 \\\hline
		\rule[-1ex]{0pt}{2.5ex} 32& 1 & 3 & 0 & 0 & 1 & 6 & $1, 2^3, 5$ & 93 \\\hline
		\rule[-1ex]{0pt}{2.5ex} 33& 1 & 2 & 1 & 1 & 0 & 5 & $1, 2^2, 3, 4$ & 94 \\\hline
		\rule[-1ex]{0pt}{2.5ex} 34& 1 & 1 & 3 & 0 & 0 & 4 & $1, 2, 3^3$ & 96 \\\hline
		\rule[-1ex]{0pt}{2.5ex} 35& 1 & 1 & 0 & 1 & 1 & 4 & $1, 2, 4, 5$ & 7 \\\hline
		\rule[-1ex]{0pt}{2.5ex} 36& 1 & 0 & 1 & 2 & 0 & 3 & $1, 3, 4^2$ & 11 \\\hline
		\rule[-1ex]{0pt}{2.5ex} 37& 1 & 0 & 2 & 0 & 1 & 3 & $1, 3^2, 5$ & 9 \\\hline
		\rule[-1ex]{0pt}{2.5ex} 38& 0 & 6 & 0 & 0 & 0 & 4 ($\times 2$) & $2^6$ & 115 ($\times 2$) \\\hline
		\rule[-1ex]{0pt}{2.5ex} 39& 0 & 4 & 0 & 1 & 0 & 3 ($\times 2$) & $2^4, 4$ & 84 ($\times 2$) \\\hline
		\rule[-1ex]{0pt}{2.5ex} 40& 0 & 3 & 2 & 0 & 0 & 4 & $2^3, 3^2$ & 97 \\\hline
		\rule[-1ex]{0pt}{2.5ex} 41& 0 & 2 & 1 & 0 & 1 & 3 & $2^2, 3, 5$ & 12 \\\hline
		\rule[-1ex]{0pt}{2.5ex} 42& 0 & 2 & 0 & 2 & 0 & 2 ($\times 2$) & $2^2, 4^2$ & 1 ($\times 2$) \\\hline
		\rule[-1ex]{0pt}{2.5ex} 43& 0 & 1 & 0 & 0 & 2 & 2 & $2, 5^2$ & - \\\hline
		\rule[-1ex]{0pt}{2.5ex} 44& 0 & 1 & 2 & 1 & 0 & 2 & $2, 3^2, 4$ & 14 \\\hline
		\rule[-1ex]{0pt}{2.5ex} 45& 0 & 0 & 4 & 0 & 0 & 1 ($\times 3$) & $3^4$ & - \\\hline
		\rule[-1ex]{0pt}{2.5ex} 46& 0 & 0 & 1 & 1 & 1 & 1 & $3, 4, 5$ & - \\\hline
		\rule[-1ex]{0pt}{2.5ex} 47& 0 & 0 & 0 & 3 & 0 & 1 ($\times 2$) & $4^3$ & - \\\hline
	\end{tabular}
	
	\caption{Classification of the possible singular fibers configurations and the genus of $C$ when $G = \mu_6$.}
	\label{table:classification for mu_6}
\end{table}

\section{Background on the Albanese morphism and fibrations} \label{sec:background}
\noindent \ref*{sec:background}.1. \textbf{Albanese torsors and morphisms.}
The aim of this section is to clarify the notions of Albanese varieties and morphisms used in this article. 
\begin{theorem-definition}
	We can associate to a smooth quasi-projective variety $Y$ a triple $(\Alb_Y, \Alb_Y^{\tor}, \textup{alb}_Y)$ consisting of:
	\begin{enumerate}
		\item an abelian variety $\Alb_Y$ called the Albanese variety of $Y$,
		\item an $\Alb_Y$-torsor $\Alb_Y^{\tor}$ called the Albanese torsor of $Y$, and
		\item a morphism $\textup{alb}_Y: Y \longrightarrow \Alb_Y^{\tor}$ called the Albanese morphism of $Y$.
	\end{enumerate}
	These objects satisfy the following universal property: for any abelian variety $A$, an $A$-torsor $A^{\tor}$, and a morphism $Y \longrightarrow A^{\tor}$, there exists a unique homomorphism of abelian varieties $\Alb_Y \longrightarrow A$ and equivariant morphism $\Alb_Y^{\tor} \longrightarrow A^{\tor}$ giving a commutative diagram
	\[\begin{tikzcd} [row sep=tiny]
		& \Alb_Y^{\tor} \arrow[dd] & \Alb_Y \arrow[dd] \\
		Y \arrow[ru] \arrow[rd] \\
		& A^{\tor} & A .
	\end{tikzcd}\]
\end{theorem-definition}

\begin{remark}
	Typically, in the literature, the Albanese variety of a smooth quasi-projective variety $Y$ is a semi-abelian variety (e.g., \cite{wit08}). Strictly speaking, we work with the Albanese variety of a smooth compactification of $Y$, which is a birational invariant of smooth quasi-projective varieties.
\end{remark}

Given a smooth quasi-projective variety $Y$, there is a mixed Hodge structure on $H^1(Y, \QQ)$ with weights $\geq 1$. We can obtain a free $\mathbb{Z}$-module by letting
\[ W_1 H^1 (Y, \ZZ) := H^1 (Y, \ZZ) \cap W_1 H^1 (Y, \QQ) .\]
For any smooth projective compactification $j : Y \hookrightarrow \tilde Y$, the restriction map $j^* : H^1 (\tilde Y, \ZZ) \longrightarrow H^1 (Y, \ZZ)$ is injective with image $W_1 H^1 (Y, \ZZ)$.

\begin{proposition}
	Let $Y$ be a smooth quasi-projective variety. Then there exist isomorphisms of Hodge structures (induced by the Albanese morphism)
	\[ H^1 (\Alb_Y, \ZZ) \cong H^1 (\Alb_Y^{\tor}, \ZZ) \cong W_1 H^1 (Y, \ZZ) .\]
	Moreover, if $Y$ and $Y'$ are birational then their Albanese varieties, torsors, and morphisms are isomorphic. \qed
\end{proposition}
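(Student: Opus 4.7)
The plan is to reduce the entire statement to the well-known case of smooth projective varieties by choosing a compactification. Fix a smooth projective compactification $j:Y\hookrightarrow\tilde Y$ (Hironaka). In the projective case the triple $(\Alb_{\tilde Y},\Alb_{\tilde Y}^{\tor},\textup{alb}_{\tilde Y})$ is classically constructed — one forms the complex torus $H^0(\tilde Y,\Omega^1)^\vee/H_1(\tilde Y,\ZZ)$, while the torsor $\Alb_{\tilde Y}^{\tor}$ arises from the Abel--Jacobi integration map without a choice of basepoint. I would then define
\[
 \Alb_Y:=\Alb_{\tilde Y},\qquad \Alb_Y^{\tor}:=\Alb_{\tilde Y}^{\tor},\qquad \textup{alb}_Y:=\textup{alb}_{\tilde Y}\circ j,
\]
and check the three desired properties: (a) independence of the compactification, (b) the universal property, and (c) the Hodge-theoretic description.

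For (a) and (b), the essential input is Weil's extension theorem: any rational map from a smooth variety to an abelian variety (equivalently, to an abelian torsor, since such a torsor is again proper) is a morphism. Applied to a morphism $f:Y\longrightarrow A^{\tor}$, this yields a canonical extension $\tilde f:\tilde Y\longrightarrow A^{\tor}$, at which point the universal property of $\textup{alb}_{\tilde Y}$ produces the unique factorization through $\Alb_Y^{\tor}$. Given two smooth projective compactifications $\tilde Y_1,\tilde Y_2$ of $Y$, I would take a smooth projective variety $\tilde Y_{12}$ dominating both (e.g.\ a resolution of the closure of $Y$ in $\tilde Y_1\times\tilde Y_2$): the universal property, applied twice, shows that the induced homomorphisms $\Alb_{\tilde Y_1}\leftrightarrows\Alb_{\tilde Y_{12}}\leftrightarrows\Alb_{\tilde Y_2}$ are mutually inverse isomorphisms that identify the Albanese morphisms when restricted to $Y$. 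The same argument shows birational invariance of the whole construction for smooth quasi-projective varieties: if $Y$ and $Y'$ are birational, pick a common smooth projective compactification and apply (a).

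For (c), in the projective case the Albanese morphism induces an isomorphism $H^1(\Alb_{\tilde Y},\ZZ)\cong H^1(\tilde Y,\ZZ)$, essentially by construction. The torsor $\Alb_{\tilde Y}^{\tor}$ and the abelian variety $\Alb_{\tilde Y}$ share the same underlying complex manifold after any choice of basepoint, so the identification $H^1(\Alb_Y,\ZZ)\cong H^1(\Alb_Y^{\tor},\ZZ)$ is canonical (translations act trivially on $H^*$). To pass to the open part, I would invoke Deligne's mixed Hodge theory: for a smooth compactification $j:Y\hookrightarrow\tilde Y$ with boundary a simple normal crossing divisor, the restriction $j^*:H^1(\tilde Y,\ZZ)\longrightarrow H^1(Y,\ZZ)$ is injective with image exactly $W_1 H^1(Y,\ZZ)$. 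Stringing these isomorphisms together yields
\[
 H^1(\Alb_Y,\ZZ)\;\cong\;H^1(\Alb_Y^{\tor},\ZZ)\;\cong\;H^1(\tilde Y,\ZZ)\;\cong\;W_1 H^1(Y,\ZZ),
\]
as Hodge structures.

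The only real technical point is Weil's extension theorem; the rest is a careful bookkeeping of universal properties together with the standard MHS description of $W_1H^1(Y)$, and I would expect the write-up to be essentially routine once the reduction to $\tilde Y$ is in place.
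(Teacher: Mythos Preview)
Your proposal is correct and follows the standard route. The paper itself does not give a proof of this proposition: it ends the statement with \qed, treating the result as well known, and in fact the key ingredient you invoke---that $j^*:H^1(\tilde Y,\ZZ)\to H^1(Y,\ZZ)$ is injective with image $W_1H^1(Y,\ZZ)$---is stated in the paragraph immediately preceding the proposition. Your sketch (reduce to a smooth projective compactification, use Weil's extension theorem to get the universal property and independence of compactification, then identify $H^1(\Alb_{\tilde Y},\ZZ)\cong H^1(\tilde Y,\ZZ)\cong W_1H^1(Y,\ZZ)$) is exactly the argument one would expect the authors have in mind.
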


\begin{example}
	Let $C$ be a smooth projective curve. Then its Albanese variety is $\Alb_C = \Pic^0_C$, its Albanese torsor is $\Alb_C^{\tor} = \Pic^1_C$, and its Albanese morphism is
	\begin{align*} C& \longrightarrow \;\; \Pic^1_C\\
	 x &\longmapsto \big[ \mathcal O_C(x) \big] .\end{align*}
\end{example}

\begin{proposition} \label{prop:structure theorem for abelian morphism}
	Let $X$ and $Y$ be smooth quasi-projective varieties, $A$ an abelian variety and $f : X \times Y \longrightarrow A$ a morphism. Then there are morphisms $g : X \longrightarrow A$ and $h : Y \longrightarrow A$ such that $f(x,y) = g(x) + h(y)$.
\end{proposition}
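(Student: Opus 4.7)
The plan is to reduce the statement to the case where $X$ and $Y$ are abelian torsors by invoking the universal property of the Albanese morphism, and then to conclude by the rigidity of morphisms between abelian varieties.

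For the reduction step, the universal property of the Albanese applied to $f : X \times Y \longrightarrow A$ (viewing $A$ as an $A$-torsor over itself) produces a factorization $f = \bar f \circ \textup{alb}_{X\times Y}$ for some morphism $\bar f : \Alb^{\tor}_{X \times Y} \longrightarrow A$. The K\"unneth isomorphism gives a splitting of mixed Hodge structures $W_1 H^1(X \times Y, \ZZ) \cong W_1 H^1 (X, \ZZ) \oplus W_1 H^1 (Y, \ZZ)$, which by the equivalence of categories between weight one polarizable Hodge structures and abelian varieties translates into an isomorphism of torsors $\Alb^{\tor}_{X \times Y} \cong \Alb^{\tor}_X \times \Alb^{\tor}_Y$ compatible with the Albanese morphism. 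It therefore suffices to prove the claim for $\bar f$, that is, we may assume $X$ and $Y$ are themselves abelian torsors.

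For the second step, choose basepoints $x_0 \in X$ and $y_0 \in Y$ to turn them into abelian varieties, and define the translated morphism $f'(x,y) = f(x,y) - f(x_0,y_0)$, so that $f'(x_0,y_0) = 0$. The rigidity lemma for abelian varieties then implies that $f'$ is a group homomorphism, and any homomorphism $X \times Y \longrightarrow A$ decomposes as $(x,y) \mapsto \phi_X(x) + \phi_Y(y)$ with $\phi_X(x) := f'(x,y_0)$ and $\phi_Y(y) := f'(x_0,y)$. Unwinding the translation, this yields the identity
\[ f(x,y) = f(x,y_0) + \bigl( f(x_0,y) - f(x_0,y_0) \bigr), \]
and we take $g(x) = f(x,y_0)$ and $h(y) = f(x_0,y) - f(x_0,y_0)$.

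The only delicate point is the Albanese reduction, because before reducing there is no canonical origin on $X$ or $Y$ and the Albanese must be treated as a torsor rather than an abelian variety. Once this reduction is carried out, the remainder is classical.
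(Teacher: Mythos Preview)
Your proof is correct and follows essentially the same approach as the paper: factor through the Albanese torsor, use the product decomposition $\Alb^{\tor}_{X\times Y}\cong \Alb^{\tor}_X\times \Alb^{\tor}_Y$, and then observe that a morphism from a product of abelian torsors to $A$ splits as a sum. The paper states this last step in one line, whereas you spell out the rigidity argument explicitly; the content is the same.
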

\begin{proof}
	The map $f$ factorizes as $X \times Y \longrightarrow \Alb_X^{\tor} \times \Alb_Y^{\tor} \longrightarrow A$, where the latter map is a morphism between abelian torsors, and therefore is the sum of the two factors $\Alb_X^{\tor} \longrightarrow A$ and $\Alb_Y^{\tor} \longrightarrow A$.
\end{proof}

\begin{proposition}
	Let $Y$ be a smooth quasi-projective variety with a $G$-action. Then
	\begin{enumerate}
		\item There are $G$-actions on both $\Alb_Y$ and $\Alb_Y^{\tor}$.
		\item The Albanese morphism $Y \to \Alb_Y^{\tor}$ is $G$-equivariant.
	\end{enumerate}
\end{proposition}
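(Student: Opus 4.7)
The argument is a direct application of the universal property of the Albanese torsor, which we take as stated. For each $g \in G$, let $\mu_g : Y \longrightarrow Y$ denote the corresponding automorphism. My plan is to construct, from the composition
\[ Y \xlongrightarrow{\mu_g} Y \xlongrightarrow{\textup{alb}_Y} \Alb_Y^{\tor}, \]
a unique pair consisting of a homomorphism $\bar g : \Alb_Y \longrightarrow \Alb_Y$ of abelian varieties and an equivariant morphism $\tilde g : \Alb_Y^{\tor} \longrightarrow \Alb_Y^{\tor}$ of torsors, by applying the universal property to this composition. This yields actions of $G$ on $\Alb_Y$ and $\Alb_Y^{\tor}$ once we verify the axioms of a group action, and simultaneously makes $\textup{alb}_Y$ equivariant by construction.

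To promote the assignment $g \longmapsto (\bar g, \tilde g)$ to a group homomorphism, I would argue as follows. First, applying the universal property to the identity morphism $\textup{alb}_Y : Y \longrightarrow \Alb_Y^{\tor}$ and using uniqueness shows that the pair associated to $g = e$ is $(\id, \id)$. Next, for $g, h \in G$, both the pair $(\overline{gh}, \widetilde{gh})$ and the composition $(\bar g \circ \bar h, \tilde g \circ \tilde h)$ fit into a commutative diagram
\[\begin{tikzcd}[row sep=tiny]
& \Alb_Y^{\tor} \arrow[dd] & \Alb_Y \arrow[dd] \\
Y \arrow[ru, "\textup{alb}_Y"] \arrow[rd, "\textup{alb}_Y \circ \mu_{gh}"'] \\
& \Alb_Y^{\tor} & \Alb_Y ,
\end{tikzcd}\]
so the uniqueness clause in the universal property forces them to coincide. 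Thus $g \longmapsto \tilde g$ and $g \longmapsto \bar g$ are group homomorphisms, giving genuine $G$-actions on $\Alb_Y^{\tor}$ and $\Alb_Y$. The equivariance of $\textup{alb}_Y$ is built into the defining commutative diagram: $\tilde g \circ \textup{alb}_Y = \textup{alb}_Y \circ \mu_g$. The only subtle point, which is not really an obstacle, is to make sure that the two universal properties (for the abelian variety part and for the torsor part) are used compatibly so that $\tilde g$ is equivariant with respect to $\bar g$; this is automatic because the universal property is stated for the pair $(\Alb_Y, \Alb_Y^{\tor})$ simultaneously.
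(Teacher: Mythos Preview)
Your proof is correct and follows the same approach as the paper: both derive the actions on $\Alb_Y$ and $\Alb_Y^{\tor}$ from the universal property of the Albanese morphism applied to $\textup{alb}_Y \circ \mu_g$, and obtain equivariance of $\textup{alb}_Y$ directly from the defining diagram. Your version is in fact more complete, as you explicitly verify the group-action axioms via the uniqueness clause, whereas the paper leaves this implicit.
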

\begin{proof}
	The existence of the $G$-action on both $\Alb_Y$ and $\Alb_Y^{\tor}$ comes from the universal property of the Albanese morphism. For any $g \in G$, there exists a unique commutative diagram
	\[\begin{tikzcd}
		Y \arrow[d, "g"] \arrow[r] & \Alb_Y^{\tor} \arrow[d, "g"] & \Alb_Y \arrow[d, "g"] \\
		Y \arrow[r] & \Alb_Y^{\tor} & \Alb_Y .
	\end{tikzcd}\]
	The $G$-actions on $A^{\tor} = \Alb_Y^{\tor}$ and $A = \Alb_Y$ are related by the following morphism
	\[ G \longrightarrow \Aut(A^{\tor}) \cong A \rtimes \Aut_0(A) \twoheadrightarrow \Aut_0(A) .\qedhere\]
\end{proof}

\begin{remark}
	Though $\Alb_Y$ and $\Alb_Y^{\tor}$ are isomorphic as varieties, they may not be isomorphic at $G$-varieties  because some elements of $G$ may act on $\Alb_Y^{\tor}$ by translation. For example, this is the case if $Y$ is an abelian variety and $G$ acts on $Y$ by translations. The Albanese morphism is then the ($G$-equivariant) identity map. However, the $G$-action on the Albanese variety $\Alb_Y$ is trivial.
\end{remark}

\noindent \ref*{sec:background}.2. \textbf{Rational fibrations.}
A proper morphism $f : Y \longrightarrow Z$ between irreducible normal varieties is called a fibration if it is surjective and has connected fibers. This is equivalent to the equality of coherent sheaves $\mathcal O_Z = f_* \mathcal O_Y$. One can generalize this notion to rational maps as follows.
	
\begin{definition}[{\cite[Ex 2.1.12]{laz:pos1}}]
	A rational map $f : Y \dashrightarrow Z$ between irreducible normal varieties is called a \emph{fibration} (or \emph{rational fibration} to emphasize the fact that $f$ is a rational map) if it is dominant and the subfield $f^*K(Z)\subset K(Y)$ is algebraically closed.
\end{definition}

\begin{example}
	If $f : Y \longrightarrow Z$ is a proper morphism, then this definition coincides with the usual one by \cite[Ex 2.1.12]{laz:pos1}.
\end{example}

\begin{example}
	If a fibration $f : Y \longrightarrow Z$ is not proper, then not all fibers of $f$ need to be connected. For example, given a proper fibration $\bar f : \bar Y \longrightarrow Z$ with a reducible fiber $F = F_1 \cup F_2$ over $z \in Z$, consider the composition of the inclusion $Y = \bar Y \setminus (F_1 \cap F_2)\subset \bar Y$ with $\bar f$.\end{example}

\section{Equivariant versions of theorems of Ueno and Kawamata} \label{sec:equivariant Ueno-Kawamata}
This section is devoted to generalizing results of Ueno \cite[Thm 10.9]{ueno} \cite[Thm 3.7]{mori87} and Kawamata \cite[Thm 13]{kaw81} to the equivariant setting.

\begin{theorem} [Equivariant version of Ueno's theorem] \label{thm:equivariant Ueno}
	Let $G$ be a finite group and $X \subset A^{\tor}$ a closed irreducible $G$-subvariety of a $G$-abelian torsor $A^{\tor}$. Then there exists a $G$-abelian subvariety $B \subset A$, a closed $G$-subvariety $Y \subset A^{\tor}/B$, and a $G$-equivariant commutative diagram
	\[\begin{tikzcd}
		X \arrow[r, hook] \arrow[d] & A^{\tor} \arrow[d] \\
		Y \arrow[r, hook] & A^{\tor}/B
	\end{tikzcd}\]
	with the following properties:
	\begin{enumerate}
		\item The diagram is cartesian, i.e., $X \longrightarrow Y$ is a $B$-torsor.
		\item $X \longrightarrow Y$ is the Iitaka fibration of (the normalization of) $X$.
	\end{enumerate}
\end{theorem}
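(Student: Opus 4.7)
The plan is to bootstrap the classical (non-equivariant) Ueno theorem by exploiting the uniqueness of the abelian subvariety $B$ that it produces. Recall that the classical theorem gives a distinguished abelian subvariety $B \subset A$ characterized as the maximal abelian subvariety whose translation action on $A^{\tor}$ preserves $X$ setwise, i.e., $B + X = X$. With this $B$ in hand, the quotient map $A^{\tor} \longrightarrow A^{\tor}/B$ restricts to a surjection $X \longrightarrow Y$ onto the closed subvariety $Y := X/B \subset A^{\tor}/B$, whose normalization is of general type, and this map is the Iitaka fibration of the normalization of $X$. Moreover, because $B + X = X$, the fibers of $X \longrightarrow Y$ are precisely the $B$-cosets meeting $X$, which is exactly the statement that the diagram is cartesian.

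Next, I would verify that the canonical $B$ is automatically $G$-invariant, and this is where the equivariance enters. Fix $g \in G$; since $A$ is a $G$-abelian variety, $g$ acts on $A$ as an automorphism fixing the origin, so $g(B) \subset A$ is again an abelian subvariety. Using that $X$ is $G$-invariant we compute
\[ g(B) + X = g(B) + g(X) = g(B + X) = g(X) = X ,\]
so $g(B)$ also preserves $X$ under translation. By the maximality characterization of $B$, this forces $g(B) \subseteq B$, and equality follows since $g$ is an automorphism (or by a dimension count together with the finiteness of $G$). Hence $B$ is a $G$-abelian subvariety of $A$.

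Finally, since $B$ is $G$-invariant, the quotient torsor $A^{\tor}/B$ inherits a canonical $G$-action making $A^{\tor} \longrightarrow A^{\tor}/B$ equivariant, and the image $Y = X/B$ is then a closed $G$-subvariety of $A^{\tor}/B$. All compatibilities claimed in the theorem — the commutativity, the cartesian property, and the identification of $X \longrightarrow Y$ as the Iitaka fibration — are inherited directly from the classical statement, since they already hold on the level of sets and schemes and the $G$-action has just been shown to respect the entire diagram.

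I do not expect a serious obstacle here: the whole content is the observation that the $B$ produced by Ueno is canonical, and canonical constructions associated to $G$-invariant data are themselves $G$-invariant. The one point to be careful about is that the $G$-action on $A^{\tor}$ is by torsor automorphisms rather than by group automorphisms fixing the origin, but this causes no trouble because the translation subgroup $B \subset A$ is intrinsic to $A$ and the composite action of $g \in G$ on $B$ (viewed inside $\Aut A^{\tor} \cong A \rtimes \Aut_0 A$) factors through $\Aut_0 A$, so $g(B)$ is still an abelian subvariety to which the maximality argument applies.
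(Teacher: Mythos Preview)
Your argument is correct and essentially identical to the paper's. The paper defines $B$ as the identity component of $B' = \{ a \in A : a + X \subset X \}$ and observes that $B'$ is closed under the $G$-action (for the same reason you give), hence so is its identity component; your formulation via ``maximal abelian subvariety with $B + X = X$'' is just an equivalent characterization of the same $B$, and your care about the torsor-versus-group action is exactly the subtlety the paper handles by noting that $G$ fixes $0 \in A$.
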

\begin{proof}
	Ueno's theorem is a characterization of the Iitaka fibration of subvarieties of abelian varieties. We show that the proof works $G$-equivariantly. Ueno defines the abelian subvariety $B \subset A$ as the identity component of 
	\[ B' = \{ a \in A : a + X \subset X \} ,\]
	and sets $Y := X / B$. It thus suffices to show that $B$ is a $G$-abelian subvariety of $A$. This follows from the fact that $B'\subset A$ is an algebraic subgroup closed under the $G$-action, and that the $G$-action fixes $0\in A$, thus taking the identity component of $B'$ to itself.
\end{proof}
	
\begin{theorem} [Equivariant version of Kawamata's theorem] \label{thm:equivariant Kawamata}
Consider a finite group $G$ and a finite $G$-equivariant morphism $f : X \longrightarrow A^{\tor}$  from a normal projective irreducible $G$-variety $X$ to a $G$-abelian torsor $A^{\tor}$. Then there exists a $G$-abelian subvariety $B \subset A$, a finite $G$-equivariant morphism $h : Y \longrightarrow A^{\tor}/B$ from a normal $G$-variety $Y$, and a $\,G$-equivariant commutative diagram
	\[\begin{tikzcd}
		X \arrow[r, "f"] \arrow[d] & A^{\tor} \arrow[d] \\
		Y \arrow[r, "h"] & A^{\tor}/B
	\end{tikzcd}\]
	with the following properties.
	\begin{enumerate}
		\item $X \longrightarrow Y$ is generically $\tilde B$-isotrivial for an abelian variety $\tilde B$ isogenous to $B$.
		\item $X \longrightarrow Y$ is the Iitaka fibration of $X$.
	\end{enumerate}
\end{theorem}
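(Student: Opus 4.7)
The plan is to reduce the equivariant statement to the non-equivariant Kawamata theorem \cite[Thm~13]{kaw81} by combining it with the equivariant Ueno theorem (\Cref{thm:equivariant Ueno}) and the canonicity of Stein factorization.

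First, I would take the image $f(X) \subset A^{\tor}$, which is a closed irreducible $G$-subvariety since $f$ is $G$-equivariant. Applying \Cref{thm:equivariant Ueno} to $f(X)$ produces a $G$-abelian subvariety $B \subset A$ and a $G$-equivariant Iitaka fibration $f(X) \longrightarrow f(X)/B$, realized as a $B$-torsor inside the quotient $A^{\tor} \longrightarrow A^{\tor}/B$. Let $g : X \longrightarrow f(X)/B$ denote the composition $X \longrightarrow f(X) \longrightarrow f(X)/B$, and take its Stein factorization $X \longrightarrow Y \xlongrightarrow{h_0} f(X)/B$. Because Stein factorization admits the canonical description $Y = \Spec_{f(X)/B}(g_{*} \mathcal{O}_{X})$ and $g$ is $G$-equivariant, the $G$-action lifts uniquely to $Y$, making $Y$ a normal $G$-variety. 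Defining $h$ as the composition $Y \longrightarrow f(X)/B \hookrightarrow A^{\tor}/B$ then yields a finite $G$-equivariant morphism completing the desired diagram.

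To verify (1), I would note that a general fiber of $X \longrightarrow Y$ is a connected component of a general fiber of $X \longrightarrow f(X)/B$; since $f$ is finite and a general fiber of $f(X) \longrightarrow f(X)/B$ is a translate of $B$ inside $A^{\tor}$, such a connected component is a connected finite \'etale cover of a $B$-torsor, hence itself a torsor under some abelian variety $\tilde B$ isogenous to $B$. Generic isotriviality over $Y$ then follows because these fibers all arise by finite base change from the constant family of $B$-translates inside $A^{\tor}$. For (2), I would invoke the non-equivariant Kawamata theorem \cite[Thm~13]{kaw81}, whose construction is exactly the one performed above: it therefore identifies $X \longrightarrow Y$ with the Iitaka fibration of $X$ and gives $\kappa(X) = \dim Y$.

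The main point requiring care is the descent of the $G$-action through the Stein factorization, but as observed this is immediate from its characterization as a relative spectrum of a $G$-equivariant sheaf of $\mathcal{O}_{f(X)/B}$-algebras. The remaining geometric content is already present in Kawamata's original non-equivariant argument, and the role of \Cref{thm:equivariant Ueno} is precisely to make the first step of that argument respect the $G$-action.
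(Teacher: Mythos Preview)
Your approach has a genuine gap: the abelian subvariety $B$ you obtain by applying \Cref{thm:equivariant Ueno} to $f(X)$ can be strictly larger than the one Kawamata's theorem requires, and your $X \to Y$ need not be the Iitaka fibration of $X$.

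Take $G$ trivial, $X = C$ a smooth projective curve of genus $\ge 2$ admitting a finite (hence ramified) morphism $f : C \to E$ onto an elliptic curve $E = A^{\tor}$. Then $f(X) = E$, and Ueno's theorem applied to $E \subset E$ gives $B = E$. Your construction yields $f(X)/B = \{\mathrm{pt}\}$ and hence $Y = \Spec H^0(C,\mathcal O_C) = \{\mathrm{pt}\}$. But $\kappa(C) = 1$, so the Iitaka fibration of $C$ is (birational to) the identity $C \to C$, not $C \to \{\mathrm{pt}\}$; property~(2) fails. Property~(1) also fails, since the fiber $C$ is not an abelian torsor. The step that breaks is your assertion that a general fiber of $X \to Y$ is a finite \emph{\'etale} cover of a $B$-torsor: the restriction of $f$ to a fiber over $f(X)/B$ may well be ramified.

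The underlying error is the claim that Kawamata's construction ``is exactly the one performed above.'' It is not. Kawamata (and the paper) begin with the Iitaka fibration $\phi : X \dashrightarrow Y'$ of $X$, apply Ueno's theorem to the image $f(F_y) \subset A^{\tor}$ of a \emph{very general fiber} $F_y$ of $\phi$ (not to $f(X)$), and take $B$ to be the resulting abelian subvariety. Only then do they Stein-factorize $X \to A^{\tor}/B$. The crucial dimension count showing that $\bar X := f(X)$ is a $B$-torsor over its image $\bar Y \subset A^{\tor}/B$ uses $\dim X = \kappa(X) + \dim B$, which holds precisely because $B$ was defined via the Iitaka fibration of $X$; there is no reason for this identity to hold with your choice of $B$. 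The equivariance of $B$ is then checked directly from the $G$-action on the fibers $F_y$, and the $G$-equivariance of $Y$ follows from the canonicity of Stein factorization, as you correctly observe.
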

\begin{proof}
	The proof is essentially the same as Kawamata's original proof from \cite[Thm 13]{kaw81}. We provide details for the sake of completeness. Start with an Iitaka fibration $\phi : X \dashrightarrow Y'$ of $X$. Since the Iitaka fibration is defined up to birational equivalence, we may assume that $Y'$ is smooth projective. By realizing the Iitaka fibration as a stable image of the pluricanonical maps, $Y'$ admits a rational $G$-action and $\phi$ is $G$-equivariant.\\
	
	Let $F_y = \overline {\phi^{-1}(y)}$ be the Zariski closure of a very general fiber of $\phi$. It is an irreducible subvariery of $X$ with $\kappa(F_y) = 0$. The image $f(F_y) \subset A^{\tor}$ has Kodaira dimension $0$ since it is a subvariety of $A^{\tor}$ and is dominated by $F_y$. By Ueno's theorem, it is isomorphic to an abelian torsor $B_y^{\tor} \subset A^{\tor}$ for some abelian subvariety $B_y \subset A$. Since the set of abelian subvarieties of $A$ is discrete, $B:=B_y$ does not depend on the choice of a very general point $y\in Y'$.\\
	
	We claim that the abelian subvariety $B \subset A$ is a $G$-abelian subvariety. For $y\in Y'$ very general, the automorphism $g : X \longrightarrow X$ induces an isomorphism $g : B_y^{\tor} \to B_{g\cdot y}^{\tor}$. Since $B_y^{\tor}, B_{g\cdot y}^{\tor} \subset A^{\tor}$ are translations of one another, they are canonically isomorphic up to translation. Ignoring the translation part, we obtain an isomorphism $g : B \longrightarrow B$. This realizes $B$ as a $G$-abelian subvariety of $A$.\\
	
	Consider the quotient $p : A^{\tor} \longrightarrow A^{\tor}/B$. A very general fiber of $\phi$ is sent to a point via
	$$p \circ f : X \longrightarrow A^{\tor} \longrightarrow A^{\tor}/B.$$ By the rigidity lemma \cite[Lem 14]{kaw81}, this induces an equivariant rational map $h' : Y' \dashrightarrow A^{\tor}/B$ making the following diagram commutative:
	\[\begin{tikzcd}
		X \arrow[r, "f"] \arrow[d, "\phi", dashed] & A^{\tor} \arrow[d] \\
		Y' \arrow[r,dashed, "h'"] & A^{\tor}/B.
	\end{tikzcd}\]
	
	As $Y'$ is smooth and $A^{\tor}/B$ is an abelian torsor, the rational map $h'$ is defined everywhere. Denote by $\bar X$ and $\bar Y$ the image of $f$ and $h'$:
	\[\begin{tikzcd}
		X \arrow[r, "f"] \arrow[d, dashed, "\phi"] & \bar X \arrow[r, hook] \arrow[d] & A^{\tor} \arrow[d, "p"] \\
		Y' \arrow[r, "h'"] & \bar Y \arrow[r, hook] & A^{\tor}/B .
	\end{tikzcd}\]
	By construction $\bar X \longrightarrow \bar Y$ is an equivariant and surjective morphism. We claim that it is in fact a $B$-torsor, i.e., that $\bar X = p^{-1}(\bar Y)$. Since both $\bar X$ and $p^{-1}(\bar Y)$ are projective and irreducible, the claim is equivalent to the dimension count $\dim \bar X = \dim \bar Y + \dim B$. This follows from a sequence of inequalities
	\[ \dim \bar Y + \dim B \le \dim Y' + \dim B = \dim X = \dim \bar X \le \dim \bar Y + \dim B .\]
	This shows that $\bar X \to \bar Y$ is a $B$-torsor and that $\dim \bar Y = \dim Y' = \kappa(X)$.\\
	
	Finally, consider the Stein factorization $X\longrightarrow Y$ of the composition $X \longrightarrow A^{\tor} \longrightarrow A^{\tor}/B$:
	\[\begin{tikzcd}
		X \arrow[r, "f"] \arrow[d] & \bar X \arrow[r, hook] \arrow[d] & A^{\tor} \arrow[d, "p"] \\
		Y \arrow[r, "h"] & \bar Y \arrow[r, hook] & A^{\tor}/B .
	\end{tikzcd}\]
	Since $\bar X \longrightarrow \bar Y$ is a $B$-torsor, the Stein factorization $X \longrightarrow Y$ is generically isotrivial with general fiber $\tilde B$, a finite \'etale cover of $B$, hence an abelian variety. It follows that a very general fiber of $X \longrightarrow Y$ has Kodaira dimension $0$ and $\dim Y = \kappa(X)$. These two properties characterize the Iitaka fibration (see \cite[Def-Thm 1.11]{mori87}), so $X \longrightarrow Y$ is again an Iitaka fibration of $X$. The diagram is $G$-equivariant.
\end{proof}

	\bibliographystyle{amsalpha}
	\bibliography{Bibliography.bib}
\end{document}